\documentclass[11pt,a4paper]{amsart}

\usepackage{a4wide}
\usepackage{amsfonts,amsthm,amsmath,amssymb}
\usepackage{amscd,color}
\usepackage{graphicx,subfigure}
\usepackage[shortlabels]{enumitem}

\theoremstyle{plain}
\newtheorem{lem}{Lemma}[section]
\newtheorem{prop}[lem]{Proposition}
\newtheorem{thm}[lem]{Theorem}

\newtheorem*{ThmA}{Theorem A}
\newtheorem*{CorA'}{Corollary A'}
\newtheorem*{ThmB}{Theorem B}
\newtheorem*{ThmC}{Theorem C}

\newtheorem*{Koebe-dist-sph}{Spherical Koebe's Distortion Theorem}
\newtheorem*{Koebe-1/4}{Koebe's One-quarter Theorem}

\theoremstyle{definition}
\newtheorem{defn}[lem]{Definition}
\newtheorem*{defn*}{Definition}
\newtheorem{ex}[lem]{Example}

\newtheorem*{ex*}{Example}

\newtheorem{rem}[lem]{Remark}

\newtheorem*{rem*}{Remark}

\theoremstyle{remark}

\DeclareMathOperator{\diam}{diam}
\DeclareMathOperator{\dist}{dist}
\DeclareMathOperator{\Arg}{Arg}
\DeclareMathOperator{\Sing}{Sing}
\DeclareMathOperator{\Crit}{Crit}
\DeclareMathOperator{\Acc}{Acc}

\DeclareMathOperator{\area}{area}

\DeclareMathOperator{\inter}{int}

\DeclareMathOperator{\conv}{conv}
\DeclareMathOperator{\length}{length}
\DeclareMathOperator{\Id}{Id}
\DeclareMathOperator{\Orb}{Orb}

\newcommand{\C}{\mathbb C}
\newcommand{\D}{\mathbb D}
\newcommand{\clC}{\widehat{\C}}
\newcommand{\chat}{\widehat{\C}}

\newcommand{\R}{\mathbb R}
\newcommand{\Z}{\mathbb Z}
\newcommand{\N}{\mathbb N}

\newcommand{\DD}{\mathcal D}

\newcommand{\FF}{\mathcal F}
\newcommand{\GG}{\mathcal G}
\newcommand{\bd}{\partial}
\renewcommand{\Re}{\textup{Re}}
\renewcommand{\Im}{\textup{Im}}

\newcommand{\calf}{\mathcal{F}}

\newcommand{\PP}{\mathcal{P}}

\newcommand{\rp}{F} 
\newcommand{\ap}{G} 
\newcommand{\f}{f} 

\newcommand{\II}{\mathcal I}
\newcommand{\JJ}{\mathcal J}

\newcommand{\beq}{\begin{equation}}
\newcommand{\eeq}{\end{equation}}



\renewcommand{\r}{\color{red}}

\newcommand{\g}{\color{green}}



\begin{document}

\title[Local connectivity of boundaries of tame Fatou components]{Local connectivity of boundaries of tame Fatou components of meromorphic functions}
\date{\today}

\author{Krzysztof Bara\'nski}
\address{Institute of Mathematics, University of Warsaw,
ul.~Banacha~2, 02-097 Warszawa, Poland}
\email{baranski@mimuw.edu.pl}

\author{N\'uria Fagella}
\address{Departament de Matem\`atiques i Inform\`atica, Institut de Matem\`atiques de la 
Universitat de Barcelona (IMUB) and Barcelona Graduate School of Mathematics (BGSMath), Gran Via 585, 08007 Barcelona, Catalonia, Spain}
\email{nfagella@ub.edu}

\author{Xavier Jarque}
\address{Departament de Matem\`atiques i Inform\`atica, Institut de Matem\`atiques de la 
Universitat de Barcelona (IMUB), and Barcelona Graduate School of Mathematics (BGSMath), Gran Via 585, 08007 Barcelona, Catalonia, Spain}
\email{xavier.jarque@ub.edu}

\author{Bogus{\l}awa Karpi\'nska}
\address{Faculty of Mathematics and Information Science, Warsaw
University of Technology, ul.~Ko\-szy\-ko\-wa~75, 00-662 Warszawa, Poland}
\email{boguslawa.karpinska@pw.edu.pl}

\thanks{The first and fourth authors are supported by the National Science Centre, Poland, grant no 2018/31/B/ST1/02495. 
The second and third authors are partially supported by  grants PID2020-118281GB-C32 and CEX2020-001084-M (Maria de Maeztu Excellence program) from the Spanish state research agency. The third author is additionally supported by  ICREA Acad\`emia 2020 from the Catalan government. }
\subjclass[2010]{Primary 37F10, 37F20, 30D05, 30D30.}

\bibliographystyle{amsalpha}

\begin{abstract} We prove local connectivity of the boundaries of invariant simply connected attracting basins for a class of transcendental meromorphic maps. The maps within this class need not be geometrically finite or in class $\mathcal B$, and the boundaries of the basins (possibly unbounded) are allowed to contain an infinite number of post-singular values, as well as the essential singularity at infinity. A basic assumption is that the unbounded parts of the basins are contained in regions which we call `repelling petals at infinity', where the map exhibits a kind of `parabolic' behaviour. In particular, our results apply to a wide class of Newton's methods for transcendental entire maps. As an application, we prove the local connectivity of the Julia set of Newton's method for $\sin z$, providing the first non-trivial example of a locally connected Julia set of a transcendental map outside class $\mathcal B$, with an infinite number of unbounded Fatou components.
\end{abstract}

\maketitle

\section{Introduction}\label{sec:intro}

We consider dynamical systems generated by iterates of transcendental meromorphic maps $f\colon \C\to\chat$, where $\chat=\C\cup \{\infty\}$ is the Riemann sphere and $\infty$ is an essential singularity (we also refer to rational maps $f$ defined on $\clC$). An object of particular interest is the {\em Julia set} of $f$, denoted $J(f)$, which is the smallest closed totally invariant subset of $\clC$ with at least three points, in many cases a fractal set with no interior, where the dynamics of $f$ is chaotic  The topology of $J(f)$ can be complicated, which greatly influences the global dynamics of the map. Note that in this paper we assume $\infty \in J(f)$.

The complement of the Julia set, the {\em Fatou set} $\calf(f)=\clC\setminus J(f)$, is the maximal set of normality (equicontinuity) of the iterates of $f$. Its connected components, called {\em Fatou components}, were already  classified by Fatou \cite{fatou} in the 1920's. If an invariant Fatou component $U$ is simply connected, then a Riemann map conjugates $f|_U$ to a holomorphic self-map of $\D$, and this conjugacy extends to the boundary of $U$ if and only if it is locally connected as a subset of $\clC$. Thus, the local connectivity of the boundary of $U$ (in $\clC$) is a key property that allows to understand the dynamics of $f$ on the closure of its Fatou components. Note that in this paper we restrict the analysis to simply connected Fatou components.

By Torhorst's Theorem \cite[Theorem~2.2]{whyburn}, Fatou components of a map with locally connected Julia set have locally connected boundaries. On the other hand, if $U$ is a completely invariant Fatou component, then the local connectivity of its boundary is equivalent to the local connectivity of the whole Julia set. In particular, such a situation occurs when $U$ is a simply connected basin of attraction to infinity for a polynomial. In this intensively studied area, especially for quadratic polynomials, the classical works on the local connectivity, started in the 1980's by Thurston, Sullivan, Douady, Branner and Hubbard, were continued, among others, by Shishikura, Yoccoz, McMullen, Kahn, Lyubich, Levin, Kozlovski, van Strien, Petersen and Zakeri, see e.g.~\cite{sullivan83, orsay1, orsay2, branner-hubbard,  hubbard3T, mcmullen-book, LvS, yoccoz, milnor-loc_con, petersen-zakeri, milnor, KL, KvS} and references therein. Concerning other polynomial Fatou components, all of them bounded, the results of Roesch and Yin \cite{ry1,ry2} show that their boundaries are always Jordan curves (and hence, are locally connected), provided the component is not eventually mapped to a Siegel disc. 

The question of the local connectivity of the boundaries of (simply connected) Fatou components (or of the whole Julia set) is closely related to the location and dynamical behaviour of the {\em singular values} of $f$ (singularities of the inverse map). In the rational case, the singular values coincide with the critical ones, while in the transcendental case, they consist of the critical and asymptotic values and their accumulation points.
Denote the set of singular values of $f$ by $\Sing(f)$ and define the \emph{post-singular set} of $f$ by
\[
\PP(f) = \bigcup_{n=0}^\infty f^n(\Sing(f)).
\]
The simplest class of functions with locally connected Julia sets consists of polynomial or rational maps with connected Julia sets, which are  \emph{hyperbolic}, i.e.~the closure of $\PP(f)$ (in $\clC$) is disjoint from the Julia set. More generally, Tan and Yin \cite{tanlei-loc_con} proved the local connectivity of connected Julia sets of {\em geometrically finite} rational maps, i.e.~maps $f$ for which every critical point in $J(f)$ has a finite orbit (equivalently, every critical point of $f$ is either eventually periodic or attracted to an attracting or parabolic periodic orbit). In general, the problem of local connectivity of polynomial or rational Julia sets is still open and remains a subject of current research. In particular,  local connectivity was showed for a wide class of {\em Newton maps}, i.e.~Newton's root-finding methods for complex polynomials, see e.g.~\cite{Pascale-Newton, DS-Newton, WYZ-Newton}.

The proofs of the local connectivity of the boundaries of simply connected Fatou components, or of the whole Julia sets, are usually technically complicated. Some of them rely on the use of {\em Yoccoz puzzles} built out of external rays and equipotentials, the tools that are quite specific for polynomials and some rational maps. In other cases, the proofs of the local connectivity of the boundary of a Fatou component $U$ consist in building a conformal Riemannian metric near the boundary of $U$, which is expanding, at least along long blocks of trajectories under $f$. In the hyperbolic case, such metric exists on a neighbourhood of $J(f)$ and can be defined as the \emph{hyperbolic metric} on a suitable domain in $\clC \setminus \PP(f)$. The difficulties caused by the presence of preperiodic critical points in the Julia set (\emph{subhyperbolic case}) can be overcome by the use of an \emph{orbifold metric} with a discrete set of singularities (see e.g.~\cite{carlesongamelin,milnor}, cf.~Subsection~\ref{subsec:orbifold}). For geometrically finite maps, a suitable metric is constructed by patching up the hyperbolic or orbifold metric with another suitable metric near parabolic periodic points (see \cite{orsay2, carlesongamelin}). Once such an expanding metric is defined, one can show that a sequence of Jordan curves approximating the boundary of $U$, constructed by taking successive preimages under inverse branches of $f$ near this boundary, converges uniformly, which proves the local connectivity.

The scenario is completely different when we consider transcendental entire maps $f$, which have an essential singularity at $\infty$. In this case the Julia set $J(f)$ cannot be locally connected if $f$ has an unbounded Fatou component, see \cite{BDconn,osborne}. Moreover, if an unbounded invariant Fatou component $U$ of $f$ has locally connected boundary, then $U$ is a Baker domain and $f$ is univalent on $U$, see \cite{bw,kisaka-loc_con}. Roughly speaking, this is due to the fact that $\infty$ is not only an essential singularity but also an omitted value. Hence, preimages of unbounded paths in $U$ are unbounded, and in many cases induce a `comb-like' structure, which prevents the boundary of $U$ and $J(f)$ from being locally connected. A well-know example is given by the exponential map $z\mapsto \lambda e^z$ for $0<\lambda<1/e$, which has an invariant attracting basin of infinite degree with a dense set of accesses to infinity (see e.g.~\cite{devaney-goldberg}); a finite degree example is provided by the map $z\mapsto z e^{z+1}$, which has an unbounded superattracting basin of degree $2$ with the same property, and which conjecturally contains indecomposable continua of escaping points as parts of its boundary. Compare \cite{AnnaJ} for a related example. 

We extend the definition of hyperbolicity to the case of transcendental maps, assuming $\infty \notin \Sing(f)$ and (in the meromorphic case) neglecting the  undefined terms in the definition of the post-singular $\PP(f)$. Note that in particular, the set $\PP(f)$ for hyperbolic transcendental maps is always bounded. We consider also a transcendental analogue of the class of geometrically finite maps, consisting of maps $f$, for which $\Sing(f) \cap \FF(f)$ is compact, while $\overline{\PP(f)} \cap J(f)$ is finite (with closure taken in $\C$). Note that the Fatou set of a transcendental entire geometrically finite map consists of a finite number of basins of attracting or parabolic periodic orbits. This was showed in \cite[Proposition~2.6]{helena-geom_finite} for entire maps, and the proof extends easily to the meromorphic case by \cite{bergweiler,bak02}. Following \cite{ARS}, we say that a geometrically finite transcendental map is \emph{strongly geometrically finite}, if $J(f)$ does not contain asymptotic values of $f$ and the local degree of $f$ at all points of $J(f)$ is uniformly bounded. 

It is known that for hyperbolic transcendental entire maps, the boundary of a bounded Fatou component is a Jordan curve and even a quasicircle (see \cite{morosawa-loc_con, bfr}). Note that for entire maps, locally connected boundaries of bounded simply connected Fatou components are always Jordan curves by the maximum principle. In \cite{ARS}, it was proved that if a transcendental entire maps $f$ is strongly geometrically finite, then all bounded periodic Fatou components are Jordan curves. The same holds for all Fatou components of $f$ if, additionally, every Fatou component contains at most finitely many critical points. 

The local connectivity of the whole Julia set was proved for hyperbolic and, more generally, strongly geometrically finite transcendental entire maps $f$ with only bounded Fatou components and no asymptotic values, satisfying a uniform bound on the number of critical points (with multiplicities) contained in each of the Fatou components of $f$, see~\cite{morosawa-loc_con,bm,bfr,ARS}. Note that all the results mentioned above consider only transcendental entire maps from the Eremenko--Lyubich class $\mathcal B$, where
\[
\mathcal B= \{f: \Sing(f) \text{ is bounded}\}.
\]

Paradoxically, the situation changes when we take into account a priori more complicated class of transformations, given by transcendental meromorphic maps, for which the essential singularity at infinity is not an omitted value. A trivial example occurs within the tangent family $z\mapsto \lambda \tan z$ for $\lambda >1$, where the Fatou set consists of two completely invariant attracting basins (the upper and lower half plane), and hence the Julia set is equal to the real line together with the point at infinity (i.e.~a circle in~$\clC$). Numerical simulations suggest that also for many meromorphic maps with more complicated dynamics, their unbounded Fatou components may have locally connected boundaries, despite containing the essential singularity at infinity. In some examples, it looks plausible that the whole Julia set has the same property (see the picture on the left side of Figure~\ref{fig:yes_and_no}). Nevertheless, computer pictures indicate that non-locally connected boundaries do exist also for meromorphic maps, similarly to the entire case (see the picture on the right side of Figure~\ref{fig:yes_and_no}). 

\begin{figure}[ht!]
\centerline{
\includegraphics[width=0.45\textwidth]{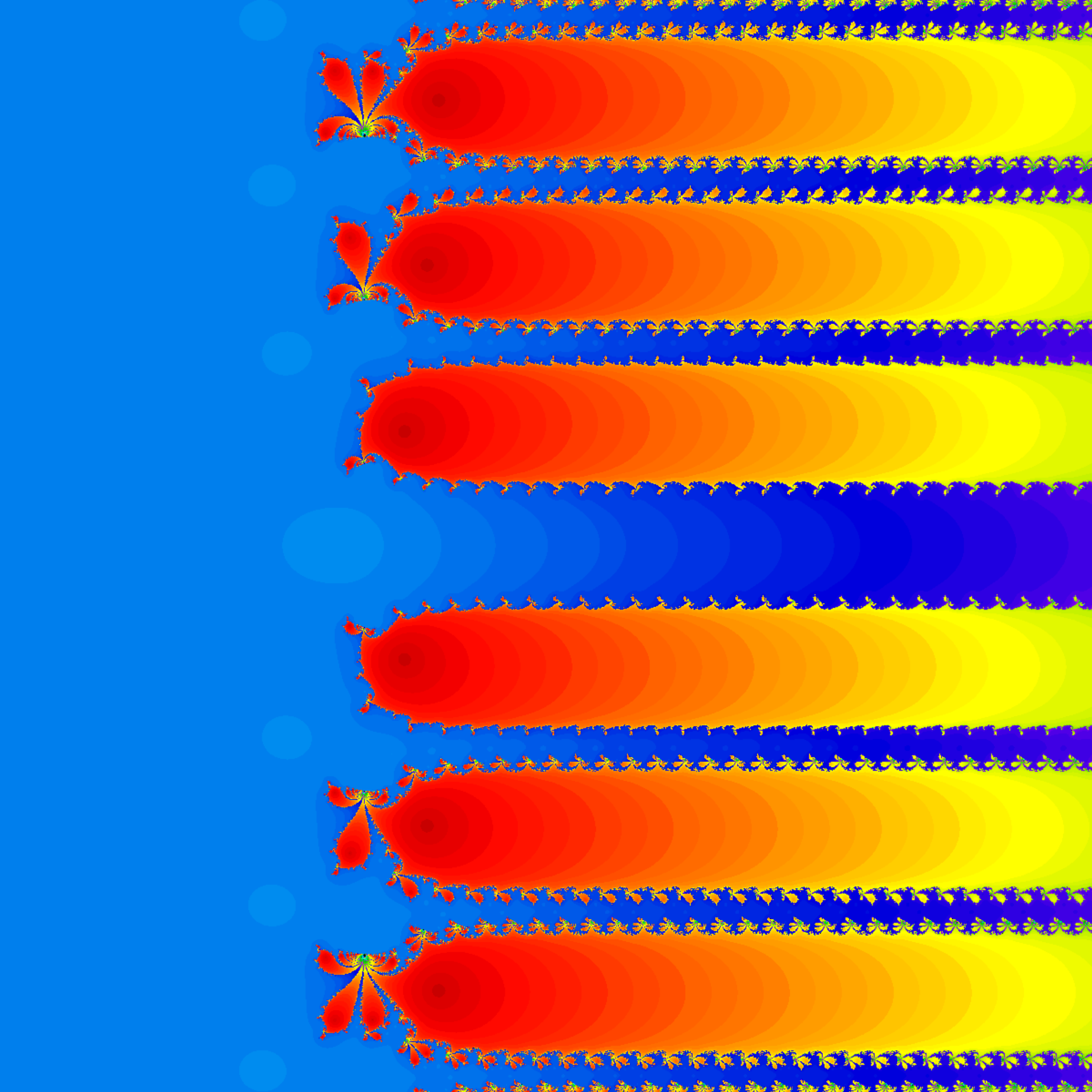} \hfil
\includegraphics[width=0.45\textwidth]{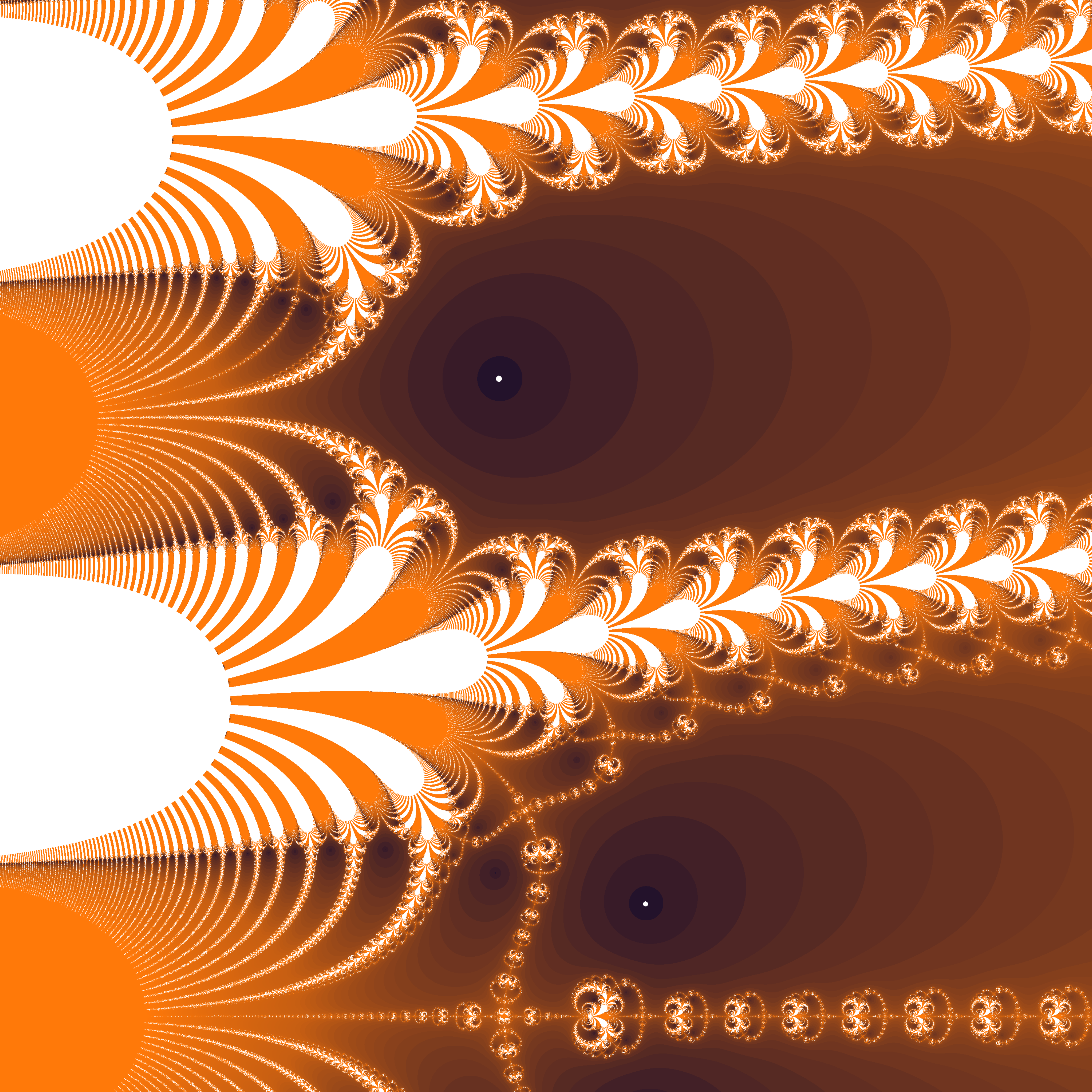}
}
\caption{\small Left: The dynamical plane of $f(z)=\frac{z-1}{1 + e^{-z}}$, Newton's method for $g(z)=z+e^z$. Right: The dynamical plane of the map $f(z) = \frac{z^2-e^{-z}}{1+z}$, Newton's method for $g(z) = 1 + z e^z$.}\label{fig:yes_and_no}
\end{figure}

In this paper we show that for many transcendental meromorphic maps $f$ outside class $\mathcal B$, even with asymptotic values, the presence of (possibly infinitely many) post-singular orbits and the essential singularity in the boundary of a simply connected invariant Fatou component $U$, pose no unsolvable obstacle for local connectivity, as long as $f$ acts `geometrically finitely' on a compact part of the closure $\overline{U}$ in $\C$, and the (possible) unbounded parts of $\overline{U}$ are contained in a finite number of regions, where $f$ is univalent and exhibits a `tame' dynamical behaviour, similar to the one within a repelling petal at a parabolic fixed point. We call these regions {\em repelling petals at infinity}, and their formal definition is given in Section~\ref{sec:petals_at_inf} (see Definition~\ref{defn:petal_at_inf}). Although the definition allows for a quite general behaviour (e.g.~spiralling petals), its simple model is given by Newton' method for the map $z \mapsto z+e^z$, which behaves like the translation $z\mapsto z-1$ for $\Re(z)\to+\infty$ in any sector symmetric with respect to $\R^+$ of angle less than $\pi$ (see the picture on the left side of Figure~\ref{fig:yes_and_no}). Note that unlike petals at parabolic fixed points, repelling petals at infinity may contain points from both the Fatou and Julia set.

In this work we assume that $U$ is an invariant attracting basin, leaving other cases for a forthcoming paper. Note that for transcendental meromorphic maps, periodic components of period larger than $1$ require a separate treatment, since considering an iterate of the map takes us beyond the meromorphic class. 

\begin{defn}\label{defn:tame}
An invariant attracting basin $U$ of a transcendental meromorphic map $f$ is \emph{tame at infinity}, if there exists a disc $D \subset \C$ such that $\overline{U} \setminus D$ is contained in the union of a finite number of repelling petals $P_i$ at infinity for $f$, such that $U \cap f(P_i) \cap P_{i'} = \emptyset$ for $i \ne i'$.
\end{defn}

To formulate our result, define the {\em post-critical} and {\em post-asymptotic set} as 
\begin{align*}
\PP_{crit}(f) &= \{f^n(v) : v \text{ is a critical value of } f, \, n \ge 0\},\\
\PP_{asym}(f) &= \{f^n(v) : v \text{ is an asymptotic value of } f, \, n \ge 0\}
\end{align*}
and write $\overline A$, $\bd A$ for the closure and boundary in $\C$ of a set $A \subset \C$, and $\Acc (A)$ for the set of its accumulation points. We also denote the local degree of a map $f$ at a point $w$ by $\deg_w f$. Our main result is the following.

\begin{ThmA} 
Let $U$ be a simply connected invariant attracting basin of a meromorphic map $f\colon \C \to \clC$. Assume that the following conditions are satisfied.
\begin{enumerate}[$($a$)$]

\item The set $\big(\overline{\PP_{asym}(f)} \cup \Acc(\PP_{crit}(f))\big) \cap \overline{U}$ is contained in the union of a compact subset of $U$ and a finite set of parabolic periodic points of $f$ in $\bd U$.

\item There exists a compact set $L \subset U$ such that for every $z \in \PP_{crit}(\f) \cap \overline{U} \setminus L$ we have $\sup\{\deg_w \f^n: w \in \f^{-n}(z), \, n > 0\} < \infty$.

\item $U$ is tame at infinity.
\end{enumerate}
Then the boundary of $U$ in $\clC$ is locally connected.
\end{ThmA}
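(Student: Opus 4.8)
## Proof proposal

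The plan is to construct an expanding conformal metric in a suitable neighbourhood of $\bd U$ inside $\clC$ and to use it to show that the standard sequence of Jordan curves $\gamma_n$ approximating $\bd U$ — obtained by pulling back a fixed Jordan curve $\gamma_0 \subset U$ close to $\bd U$ under the inverse branches of $f|_U$ — converges uniformly in the spherical metric. Uniform convergence of the $\gamma_n$ gives a continuous extension of the Riemann map $\varphi \colon \D \to U$ to $\overline{\D}$, and hence local connectivity of $\bd U$ in $\clC$ by Carathéodory's theorem. The delicate point compared with the classical (rational, geometrically finite) setting is that the pulled-back curves are now \emph{unbounded} (they pass near the essential singularity), so one has to control the dynamics near $\infty$ separately and then patch the two mechanisms together.

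The first step is to set up the metric on a compact part of $\overline{U}$. Condition (a) says that the only accumulation points of post-critical orbits and the only post-asymptotic limit set touching $\overline{U}$ are a compact subset of $U$ together with finitely many parabolic points in $\bd U$; condition (b) provides a uniform bound on the local degree of all relevant pullbacks outside a compact set $L\subset U$. These are exactly the hypotheses needed to build, away from a neighbourhood of $\infty$, an orbifold-type (or hyperbolic-type) metric that is expanding along long blocks of orbits, with the parabolic points handled by gluing in the usual ``parabolic cylinder'' metric near each parabolic fixed point, following the geometrically finite scheme of \cite{orsay2, carlesongamelin, tanlei-loc_con}. The finitely many critical points with bounded-degree preimages that escape $L$ are incorporated as cone points of the orbifold. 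So on $\clC \setminus B(\infty,\varepsilon)$ one obtains a metric in which every inverse branch of $f$ that stays in this region is a uniform contraction over blocks of bounded length.

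The second step — and the genuinely new part — is the behaviour inside the repelling petals at infinity. By Definition \ref{defn:tame}, $\overline U \setminus D$ lies in a finite union of repelling petals $P_i$, on each of which $f$ is univalent and conjugate (by the results of Section~\ref{sec:petals_at_inf}) to a model resembling the translation $z \mapsto z-1$ on a half-plane or sector. In such a petal the relevant inverse branch is the one moving points \emph{towards} $\infty$, i.e. the ``repelling'' direction, so in the coordinate where $f$ looks like $z\mapsto z-1$ the pullback looks like $z \mapsto z+1$; one then checks that in the Euclidean metric of that coordinate the diameters of the successive pullback arcs inside $P_i$ do not grow — equivalently, that in a metric of the form $|dz|/|z|$ (or the appropriate model metric adapted to the petal, possibly spiralling) the inverse branch is non-expanding, with genuine contraction coming from the parts of the orbit that leave the petal and re-enter the compact region. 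The hypothesis $U \cap f(P_i) \cap P_{i'} = \emptyset$ for $i \ne i'$ guarantees that an orbit cannot jump directly from one petal to another inside $U$, so each excursion to infinity is followed by a return to the compact core before the next excursion, and this is where one harvests the contraction from step one. Combining the two metrics on the overlap region $D \setminus B(\infty,\varepsilon)$ (rescaling one of them so that the glued metric is still conformal and the transition region is crossed only finitely often per unit orbit-length) yields a single metric on a neighbourhood of $\bd U$ in $\clC$ that is expanding along orbits of bounded length.

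Finally, with this metric in hand one runs the standard argument: choose $\gamma_0 \subset U$ a Jordan curve separating a fundamental annulus, with $\gamma_0$ allowed to be unbounded and contained, outside a compact set, in the petals; define $\gamma_n$ as the pullback of $\gamma_0$ by the branch of $f^{-n}$ fixing $U$. One shows the spherical diameters of the complementary pieces of $\clC \setminus \gamma_n$ tend to $0$ uniformly, using expansion of the metric together with a Koebe/bounded-distortion estimate — here the spherical Koebe distortion theorem quoted in the excerpt is the right tool near $\infty$ — and concludes that $\gamma_n$ is a Cauchy sequence in the Hausdorff metric of $\clC$, converging to $\bd U$, which is therefore locally connected. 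The main obstacle I expect is precisely the gluing in step three: ensuring that the petal metric and the orbifold metric can be combined into a globally conformal metric that is \emph{simultaneously} non-expanding on every petal pullback and strictly expanding on every compact-core block, so that over long orbit-blocks the net effect is uniform contraction — this requires careful bookkeeping of how many times and how ``deep'' an orbit can enter the transition collar $D \setminus B(\infty,\varepsilon)$ before returning, which is exactly what conditions (a)–(c) are designed to control.
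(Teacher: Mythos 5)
Your overall frame coincides with the paper's: an expanding conformal metric near $\bd U$, obtained by gluing an orbifold metric on a compact core (using hypotheses (a)--(b)) with model metrics on parabolic petals and on the repelling petals at infinity, followed by pulling back a fixed curve and proving uniform convergence in the spherical metric. The compact/parabolic part of your construction is essentially the paper's Theorem~B. However, your treatment of the petals at infinity contains a genuine gap, and it is located exactly at the point you flag as the ``genuinely new part''.

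You claim that inside a petal $P_i$ at infinity it suffices that the relevant inverse branch be \emph{non-expanding} for a metric of the form $|dz|/|z|$, ``with genuine contraction coming from the parts of the orbit that leave the petal and re-enter the compact region''. This mechanism is quantitatively insufficient. To get uniform convergence of the approximating curves one needs $\dist_\varsigma(\gamma_n(\theta),\gamma_{n+1}(\theta))\le M' b_n$ with $\sum_n b_n<\infty$ (or at the very least tail sums tending to $0$); but for $\theta$ parametrizing the part of $\gamma_{n+1}$ far out in a petal, the orbit segment of length $n$ that lands in the compact fundamental annulus stays in that single petal for all but a bounded number of steps --- in the translation model $z\mapsto z-1$ a point at height about $n$ needs about $n$ iterates to descend --- so there is essentially no core contraction to harvest. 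With mere non-expansion on petal blocks the consecutive distances do not even tend to $0$; and with the metric $|dz|/|z|$ the $n$-step contraction of the inverse branch along such a block is only of order $|z|/|z+n|\asymp 1/n$, which tends to $0$ but is not summable, so the Cauchy argument still fails. This is precisely why the paper uses $d\sigma_{\alpha_\infty}=|dz|/|z|^{\alpha_\infty}$ with $\alpha_\infty>1$ (corrected by the factor $h_i$ to make the gluing work) and proves in Theorem~\ref{thm:der-inf} that petal blocks of length $n$ contract by factors $a_{m,n}$ that are \emph{summable} in $n$ and satisfy a ratio condition permitting concatenation of blocks (Proposition~\ref{prop:expanding_metric}); this summable expansion along arbitrarily long single excursions to infinity, not the returns to the compact core, is the decisive ingredient, and it is absent from your argument. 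Note also that $\alpha_\infty$ cannot simply be taken large: it is bounded above in terms of the local degrees at the preimages of the petals, cf.~\eqref{eq:alpha}, so the choice of the petal metric genuinely has to be balanced against the orbifold gluing.
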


Theorem~A implies the following corollary.

\begin{CorA'}
Let $U$ be a simply connected invariant attracting basin of a strongly geometrically finite meromorphic map, such that $U$ is tame at infinity. Then the boundary of $U$ in $\clC$ is locally connected.
\end{CorA'}

Indeed, if $f$ is strongly geometrically finite, then $\overline{\PP(f)} \cap \bd U$ consist of at most finitely many points, all of them from $\PP_{crit}(f)$. Moreover, $\Sing(f)$ intersects only a finite number of Fatou components, which implies that $\overline{\PP(f)} \cap U$ is compact, as a finite union of holomorphic images of compact sets. These facts together with the definition of strongly geometrically finite maps immediately imply the conditions (a)--(b) of Theorem~A. 

\begin{rem}
Note that in Definition~\ref{defn:tame} we do not assume that the set of repelling petals of $f$ at infinity intersecting $U$ is non-empty. Therefore, the result holds also for all bounded simply connected invariant attracting basins of transcendental entire or meromorphic maps satisfying the conditions (a)--(b). In particular, for bounded periodic attracting basins of transcendental entire maps we obtain a generalization of the mentioned above result from \cite{ARS}. However, the case of unbounded basins is our primary area of interest.
\end{rem}

We remark that an attracting basin $U$ satisfying the hypotheses of Theorem~A necessarily fulfils some properties, as described in the following proposition.

\begin{prop} \label{prop:extrabonus}
Under the hypotheses of Theorem~A, the following hold.
\begin{enumerate}[$($a$)$] 
\item The degree of $f$ on $U$ is finite.
\item $\overline{U}$ contains only a finite number of critical points of $f$.
\item Every post-critical point of $f$ in $\partial U$ has a finite orbit.
\item Every asymptotic curve of an asymptotic value $v \in \overline{U}$ is eventually contained in $\C \setminus \overline{U}$.
\item $f$ maps $\overline{U}$ onto the closure of $U$ in $\clC$. In particular, $f$ has a pole in $\partial U$.
\end{enumerate}
\end{prop}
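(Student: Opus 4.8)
The plan is to reduce everything to one structural fact: \emph{$\f|_U\colon U\to U$ is a proper map}. To prove this, take $D$ and $P_1,\dots,P_N$ as in Definition~\ref{defn:tame} (so $\overline U\subseteq D\cup\bigcup_iP_i$ and $\f$ is univalent on each $P_i$) and show that no sequence $z_k\in U$ with $\f(z_k)$ in a fixed compact $K\subseteq U$ can converge in $\clC$ to a boundary point $\zeta$ of $U$. Indeed, if $\zeta=\infty$ then eventually $z_k\in\overline U\setminus D\subseteq\bigcup_iP_i$, so a subsequence stays in one petal $P_{i_0}$ and the repelling-petal dynamics of Definition~\ref{defn:petal_at_inf} forces $\f(z_k)\to\infty$; if $\zeta\in\C$ is a pole, again $\f(z_k)\to\infty$; and if $\zeta\in\C$ is not a pole then $\f(\zeta)=\lim\f(z_k)\in\overline K\subseteq U\subseteq\FF(\f)$, while $\zeta\in\bd U\subseteq J(\f)$ and complete invariance of $J(\f)$ give $\f(\zeta)\in J(\f)$ --- impossible in all cases. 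Once $\f|_U$ is proper, part~(a) is automatic: a proper holomorphic self-map of a Riemann surface is surjective (open image, closed by properness) and of finite degree (finite fibres, locally constant fibre count), so $\f(U)=U$ and $\deg(\f|_U)<\infty$; moreover the degree is $\ge 2$, since degree $1$ would make $\f|_U$ a conformal automorphism of $U\cong\D$ fixing the attracting point $p$, contradicting that $p$ is attracting.

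Parts~(b) and~(d) then come from the same petal ingredients. For~(b), $\f$ has no critical points on the petals, so $\Crit(\f)\cap\overline U\subseteq\Crit(\f)\cap D$, which is finite because $\Crit(\f)$ is discrete in $\C$ and $D$ is bounded. For~(d), if an asymptotic path $\gamma$ of an asymptotic value $v\in\overline U$ returned to $\overline U$ along a sequence $\gamma(t_n)\to\infty$, then, with a subsequence lying in one petal, $\f(\gamma(t_n))\to\infty\ne v$. For~(e), the petal dynamics also give $\f(z)\to\infty$ as $z\to\infty$ within $\overline U$; hence $\f$ extends continuously to the closure of $U$ in $\clC$ (namely $\overline U\cup\{\infty\}$ when $U$ is unbounded) with value $\infty$ at $\infty$, and this extension maps the compact set $\overline U\cup\{\infty\}$ into itself, since $\f(U)=U$ and boundary points map (by continuity, or to $\infty$ at poles) into $\overline U\cup\{\infty\}$. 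It is onto: each finite $\zeta\in\bd U$ is hit, because preimages in $U$ of a sequence tending to $\zeta$ cannot escape to $\infty$ and hence subconverge to a preimage of $\zeta$; and $\infty$ is hit, because for $y\in U$ with $|y|$ large the non-overlap condition $U\cap\f(P_i)\cap P_{i'}=\emptyset$ lets at most one of the $\ge 2$ preimages of $y$ in $U$ lie near $\infty$ in a petal, so as $|y|\to\infty$ the remaining preimages stay bounded and subconverge to a pole, necessarily in $\bd U$. (When $U$ is bounded the set of petals is empty, $\bd U$ contains no pole, and $\f(\overline U)=\overline U$ follows from the same reasoning with the point $\infty$ omitted.)

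The delicate statement is~(c). Let $z\in\PP_{crit}(\f)\cap\bd U$; discarding the trivial case when $z$ is a pre-pole, one checks that $\f(\bd U\cap\C)\subseteq\overline U\cup\{\infty\}$ together with forward invariance of $J(\f)$ keeps the whole orbit in $\PP_{crit}(\f)\cap\bd U\cap\C$. Assume the orbit is infinite; then its finite accumulation points lie in $\overline U$ and, being in $\bd U$ hence off the compact subset of $U$ appearing in hypothesis~(a) of Theorem~A, they lie among the finitely many parabolic periodic points of $\f$ in $\bd U$. One next argues, using the escaping behaviour in the petals --- and the fact that a pole in $\overline U$ cannot be a parabolic periodic point, again by hypothesis~(a) --- that $\infty$ is not an accumulation point, so the orbit is bounded and clusters only at those parabolic points. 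Finally, near such a parabolic periodic point $w$ the orbit stays in $J(\f)$, hence in the repelling petals of $w$, where forward iteration moves points away from $w$; invoking hypothesis~(b) of Theorem~A to bound the local degrees --- and hence the distortion --- of the first-return maps to a small neighbourhood of $w$, these return maps are uniformly expanding away from $w$, which precludes an orbit returning infinitely often while approaching $w$ arbitrarily closely unless it eventually lands on $w$; but then the orbit is eventually periodic, contradicting infinitude. Hence the orbit is finite.

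I expect~(c) to be the main obstacle, and within it the analysis near the parabolic periodic points: turning the bounded-degree hypothesis into genuine expansion of the first-return maps needs the usual, somewhat technical, distortion estimates for parabolic dynamics, and the exclusion of $\infty$ as an accumulation point must be done carefully from the precise structure of the repelling petals at infinity. By contrast, parts~(a), (b), (d), (e) are essentially soft once properness of $\f|_U$ is in hand, and all the ``escape to infinity'' alternatives are dispatched uniformly by the repelling-petal dynamics.
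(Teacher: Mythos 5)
Parts (a), (b), (d) and (e) of your proposal are correct and stay close to the paper's own route. Your key escape claim (a sequence in a repelling petal at infinity tending to $\infty$ has images tending to $\infty$) is exactly the observation the paper proves via the continuous extension of $\ap_i=(\f|_{P_i})^{-1}$ to $\overline{\f(P_i)}$, and your properness argument for (a) is a harmless variant of the paper's direct preimage count (univalence on the petals plus compactness of $\overline{U}\cap\overline{D}$); (b), (d), (e) then run essentially as in the paper. Two small blemishes: in (e) you should choose the points $y$ off the finitely many critical values in $U$ to guarantee two distinct preimages, and your parenthetical claim that a bounded $U$ has no pole on $\bd U$ is unjustified (though immaterial to the main, unbounded case).

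The genuine gap is in (c), exactly where you predicted trouble. Your sketch rests on two unproved steps: that $\infty$ is not an accumulation point of the orbit (only asserted), and that accumulation at a parabolic periodic point is excluded because the ``first-return maps'' to a small neighbourhood are uniformly expanding, with hypothesis (b) of Theorem~A invoked to control them. Hypothesis (b) only bounds local degrees of $\f^n$ along preimages of post-critical points; it yields no distortion or expansion estimate for return maps, and near a parabolic point no uniform expansion is available at all -- the derivative of the return map in a repelling petal tends to $1$ at the parabolic point, which is precisely the degeneracy the paper has to handle in Sections~\ref{sec:petals_at_par}--\ref{sec:metric} with the metrics $d\sigma_{p,\alpha}$; accordingly the paper's proof of (c) uses neither hypothesis (b) nor any expansion. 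The correct argument is much softer: the orbit lies in $\PP_{crit}(\f)\cap\bd U$, so by hypothesis (a) of Theorem~A (and because the compact core $K$ of Lemma~\ref{lem:prop1} avoids $\bigcup_{i\in\II_{par}}\Orb(p_i)$) it can meet $K$ only finitely often -- infinitely many distinct orbit points in $K$ would give a point of $\Acc(\PP_{crit}(\f))\cap\bd U$ outside the parabolic set. Hence a tail of the orbit lies entirely in $\bigcup_{i\in\II_{par}}\bigcup_{s=0}^{\ell_i-1}\f^s(P_i)\cup\bigcup_{i\in\II_\infty}P_i$; the disjointness statements of Lemma~\ref{lem:prop1} then trap this tail in a single petal (cycle) under the return map $\f^{\ell_i}$, contradicting $\bigcap_{n=0}^\infty\ap_i^n(P_i)=\emptyset$, i.e.\ the fact that every point leaves a repelling petal after finitely many returns. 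In particular your dichotomy (bounded orbit clustering at the parabolic points versus escape to $\infty$) never needs to be addressed, and as written the parabolic half of it does not close.
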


Following the classical ideas explained above, which were used for proving local connectivity for several classes of rational and entire maps (see \cite{orsay2, carlesongamelin, tanlei-loc_con,ARS}), the proof of Theorem~A is based on a construction of a conformal metric $d\varsigma$ with suitable expanding properties on a part of $U$ close to $\bd U$. This is described in the following theorem.

\begin{ThmB}\label{thm:metric_adapted}
Let $U$ be an invariant simply connected attracting basin of a meromorphic map $f\colon \C \to \clC$ satisfying the assumptions of Theorem~A. Then there exists a simply connected domain $A\subset U$ with $\overline{A} \subset U$, such that for every compact set $K \subset U$ one can find a conformal metric $d\varsigma = \varsigma|dz|$ on $U\setminus \overline{A}$ and numbers $b_n \in (0,1)$, $n \in \N$, such that $\sum_{n=1}^\infty b_n < \infty$ and 
\[
|(f^{n})'(z)|_\varsigma > \frac{1}{b_n}
\]
for every $z \in U\setminus \overline{A}$ and $n \in \N$ with $f(z), \ldots, f^{n-1}(z)\in U\setminus \overline{A}$, $f^n(z) \in K \setminus \overline{A}$.
\end{ThmB}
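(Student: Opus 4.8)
The plan is to build the expanding metric $d\varsigma$ by patching together three local models, one for each type of behaviour the basin $U$ can exhibit near its boundary. First I would fix a compact set $L \subset U$ as in hypotheses (a)--(b) of Theorem~A, and enlarge it to a forward-invariant compact ``core'' $A$ inside $U$: since $U$ is an attracting basin, every compact subset is eventually absorbed by a neighbourhood of the attracting cycle, so one can take $A$ to be a simply connected domain with $\overline{A}\subset U$ containing $L$, all critical points of $f$ in $U$ (finitely many by Proposition~\ref{prop:extrabonus}(b)), the parabolic points' interior petals, and large enough to satisfy $f(\overline{A}\cap f^{-1}(U))\subset$ a controlled region; the metric will only be defined on $U\setminus\overline{A}$, and trajectories that enter $K\setminus\overline{A}$ are the ones we must expand along. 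On the part of $U\setminus\overline{A}$ that stays in a bounded region and away from parabolic points, the post-singular set is disjoint from a neighbourhood of the relevant piece of $\partial U$ (by (a) and (b), after removing $L$ the post-critical set there has uniformly bounded local degrees for all preimages, so a standard orbifold construction applies), and I would take $d\varsigma$ to be the hyperbolic (or orbifold) metric of $\clC$ minus a suitable closed set containing $\overline{\PP_{crit}(f)}\cup\overline{\PP_{asym}(f)}$ outside $L$; Schwarz--Pick then gives $|f'|_\varsigma \geq 1$, with strict expansion once the orbit is forced into a smaller compactly contained set.

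Second, near each parabolic periodic point $p\in\partial U$ I would insert the classical ``parabolic'' metric of the form $|dz|/|z-p|$ (in Fatou coordinates, $|d\zeta|$), which is expanding by a factor $1+O(1/|\zeta|)$ along the repelling direction; patching this with the orbifold metric on the overlap region is done exactly as in \cite{orsay2, carlesongamelin, ARS}, producing a metric that is non-contracting everywhere and strictly expanding away from the parabolic points, with the loss of expansion near $p$ controlled so that long blocks of the orbit still expand definitely. Third, and this is the genuinely new ingredient, on each repelling petal $P_i$ at infinity I would use the ``tameness'' of $U$ at infinity (Definition~\ref{defn:tame}) together with the defining properties of a repelling petal (Definition~\ref{defn:petal_at_inf}, which I am assuming gives a univalent behaviour of $f$ conjugate to an expanding model near $\infty$, e.g.\ something like translation $\zeta\mapsto\zeta-1$ in a half-plane coordinate): there one takes a metric of parabolic type adapted to the petal's uniformizing coordinate, so that $|f'|_\varsigma > 1$ with expansion factor $1+O(1/|\zeta|)$ along the direction in which the petal escapes to $\infty$. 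The condition $U\cap f(P_i)\cap P_{i'}=\emptyset$ for $i\neq i'$ guarantees that an orbit inside $U$ cannot jump directly between two different petals without passing through the bounded part, so the three regimes can be glued with a partition-of-unity / taking-the-minimum argument on finitely many overlap annuli.

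With the three pieces and their overlaps fixed, the final step is the bookkeeping that turns ``non-contracting everywhere, definitely expanding on the compact core'' into the summable-$b_n$ estimate. The point is that for an orbit $z, f(z),\ldots,f^{n-1}(z)\in U\setminus\overline{A}$ with $f^n(z)\in K\setminus\overline{A}$, the number of consecutive steps the orbit can spend in any one parabolic neighbourhood or in any one repelling petal before having to return toward the core is controlled: in Fatou-type coordinates each excursion of length $m$ near a parabolic point or to infinity in a petal contributes an expansion factor $\gtrsim \prod_{k}(1+c/k)\asymp m$ (or at worst stays $\geq$ a constant bounded below, with the cumulative product over the whole excursion bounded below by a positive constant times $m^{\alpha}$), while each visit to the core region contributes a fixed factor $\lambda>1$; combining these and using that $f^n(z)$ is forced into the fixed compact set $K$ forces the total product $|(f^n)'(z)|_\varsigma$ to exceed $1/b_n$ for a sequence $b_n$ with $\sum b_n<\infty$, e.g.\ $b_n$ geometric off a sparse set of ``long-excursion'' indices where $b_n\asymp 1/n^2$. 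The main obstacle, and where I would spend the most care, is precisely the repelling-petal estimate near $\infty$: one must verify from Definition~\ref{defn:petal_at_inf} that the petal metric is genuinely non-contracting (not merely asymptotically so) all the way in to where it meets the bounded region, control the distortion of $f$ on the petal despite its possibly spiralling shape, and check that the gluing with the hyperbolic/orbifold metric on $\partial A$ and on the boundary of the petal does not destroy expansion — this is the step with no precedent in the rational or entire literature and is the technical heart of the construction.
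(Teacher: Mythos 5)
Your overall architecture is the same as the paper's (orbifold/hyperbolic metric on a compact core, a singular metric near parabolic boundary points, a special metric on the repelling petals at infinity, then a block-decomposition bookkeeping that converts ``definite expansion on the core plus controlled loss on excursions'' into a summable sequence $b_n$), so this is not a different route; but there is a genuine gap in the quantitative heart of the construction, namely the choice of the local model metrics and the summability they deliver. Near a parabolic point you propose $|dz|/|z-p|$ ``(in Fatou coordinates, $|d\zeta|$)'' with expansion factor $1+O(1/|\zeta|)$; note first that these are not the same metric ($|d\zeta|\asymp|dz|/|z-p|^{d+1}$, and it is invariant, not expanding, under the translation model), and second that with exponent exactly $1$ the cumulative expansion along an excursion of length $m$ in the repelling petal is only $\asymp\prod_k(1+c/k)\asymp m$. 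The same is true of your ``$1+O(1/|\zeta|)$'' model at infinity. Since the theorem allows orbits that stay in $U\setminus\overline A$ (hence possibly in a single petal) for all of the first $n$ steps before landing in $K$, a single excursion of length comparable to $n$ then yields only $|(f^n)'(z)|_\varsigma\gtrsim n$, i.e.\ $b_n\gtrsim 1/n$, and $\sum b_n<\infty$ fails; your claim ``$b_n\asymp 1/n^2$ off a sparse set'' is not justified by the metrics you chose. The paper avoids this precisely by taking $|dz|/|z-p|^{\alpha_{par}}$ with $\alpha_{par}$ \emph{strictly} less than $1$ (so the excursion gain is $\asymp m^{(d+1-\alpha_{par})/d}$ with exponent $>1$, Proposition~\ref{prop:der-par}) and $|dz|/|z|^{\alpha_\infty}$ with $\alpha_\infty>1$ at infinity (Theorem~\ref{thm:der-inf}), and the exponents cannot be arbitrary: they must also satisfy a lower/upper bound determined by the local degrees of $f$ (and the orbifold weights) at the preimages of the parabolic points and poles, so that the metric jump at those preimages does not destroy expansion when the orbit passes from the core into a petal; your sketch never addresses this compatibility.

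Two further points you flag but do not resolve are exactly where the paper has to work. First, the metric $|dz|/|z|^{\alpha_\infty}$ is \emph{not} locally expanding on the whole petal at infinity down to where it meets the bounded region; the paper corrects it by a radial factor $h_i$ and a careful choice of radii, and only uses the summable block estimate (with an almost-multiplicativity property $\beta_{p+q}\ge\beta_p\beta_q$ of the loss sequence, needed to concatenate several excursions) in the outer region. Assuming the petal metric is ``genuinely non-contracting all the way in'' is assuming the hardest part. Second, Definition~\ref{defn:petal_at_inf} allows much more general behaviour than a translation model (a general non-increasing logarithmically convex $g$, possibly spiralling petals), and the expansion estimate there is proved via Cowen's theorem rather than by an explicit coordinate; treating the petal as ``conjugate to $\zeta\mapsto\zeta-1$ in a half-plane'' assumes what Theorem~\ref{thm:der-inf} establishes. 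Fixing the exponents, proving the petal estimates with summable rates, and verifying the gluing inequalities at the preimages of $p_i$ and of $\infty$ are the missing steps between your outline and a proof.
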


The domain $A$ in Theorem~B is defined as a sufficiently large absorbing domain in $U$, such that $\overline{f(A)} \subset A$. Like in the references mentioned above, the metric $d\varsigma$ is constructed by `patching up' the orbifold metric on a suitable part of $U$ with a `parabolic' metric $\frac{|dz|}{|z-p|^{\alpha_{par}}}$ for some $\alpha_{par} \in [0,1)$ on repelling petals of parabolic points $p \in \bd U$. In our setting, however, we need to add a new element to this puzzle, namely a {\em petal metric} which we use in the unbounded parts of $U$. To this aim, we  prove that the map on a repelling petal at infinity has suitable expanding properties with respect to a metric $\frac{|dz|}{|z|^{\alpha_\infty}}$ for some $\alpha_\infty > 1$. This result (Theorem~\ref{thm:der-inf}) can be of independent interest and we hope it may be used in a much wider setting. The metric $d\varsigma$ on the unbounded parts of $U$ is defined as a suitable modification of the metric $\frac{|dz|}{|z|^{\alpha_\infty}}$. 

Theorem~B follows from a much more general result, Theorem~\ref{thm:metric}, formulated in a more abstract setting. The reason for this generality, which undoubtedly increases the technical difficulties of the proof, is the goal of further applications to other types of Fatou components (parabolic basins and Baker domains). 

Theorem~A has already a number of applications, mostly among transcendental Newton maps (for which all the Fatou components are simply connected, as proved in \cite{bfjk}). For example, most of Newton's methods for trigonometric polynomials studied in \cite{bfjkaccesses} have infinitely many unbounded attracting basins satisfying the hypotheses of Theorem~A.

We believe that in the setting of meromorphic maps, where infinity is no longer an omitted value, the local connectivity of Julia sets is a much more common phenomenon, even in the presence of unbounded Fatou components, as long as they satisfy the hypotheses of Theorem~A. To show some evidence for this statement, we present an example of a transcendental meromorphic map with infinitely many unbounded basins of attraction, whose Julia set is locally connected (see Figure~\ref{dynplanesine}). 
\begin{figure}[ht!]
\centerline{
\includegraphics[width=0.45\textwidth]{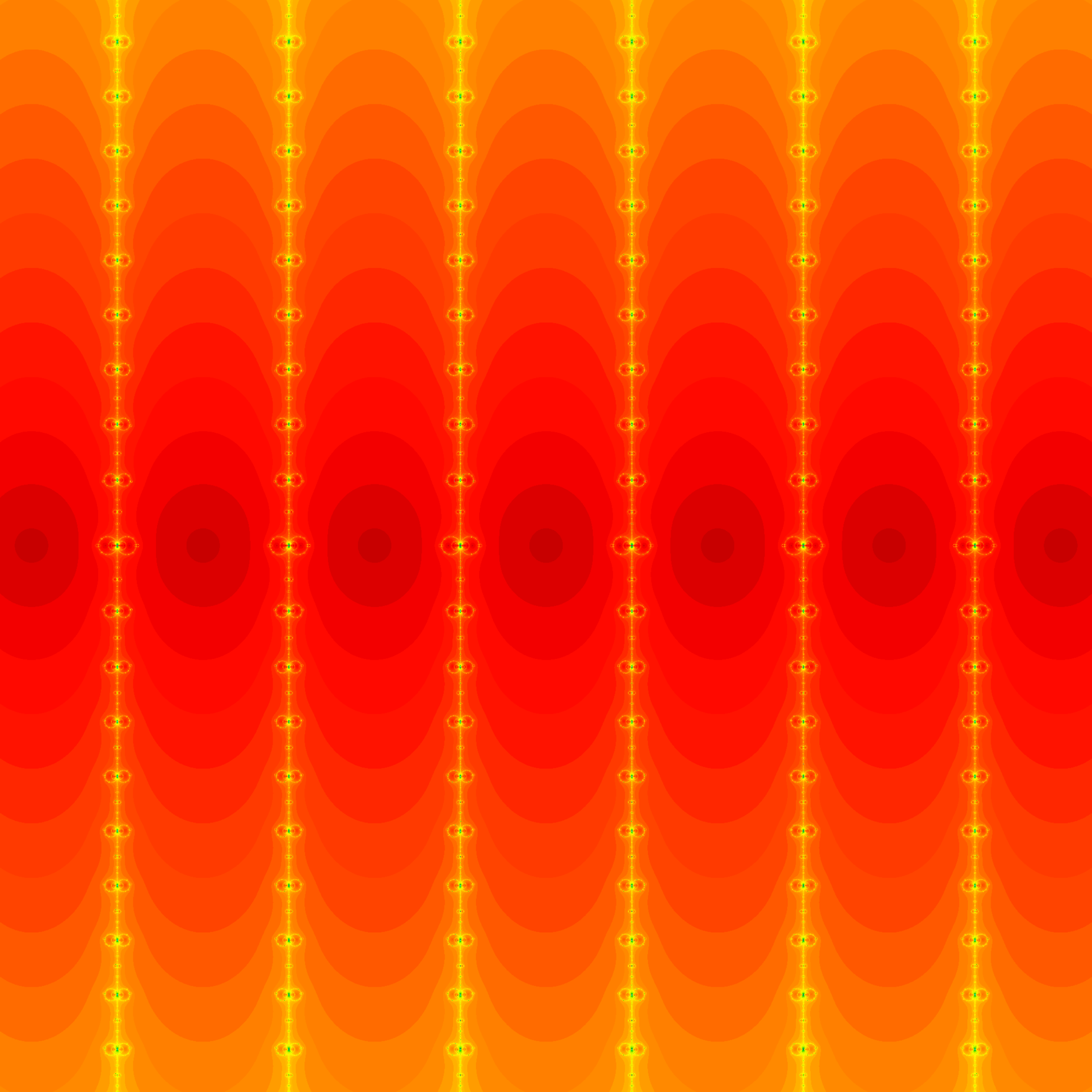}
\hfil
\includegraphics[width=0.45\textwidth]{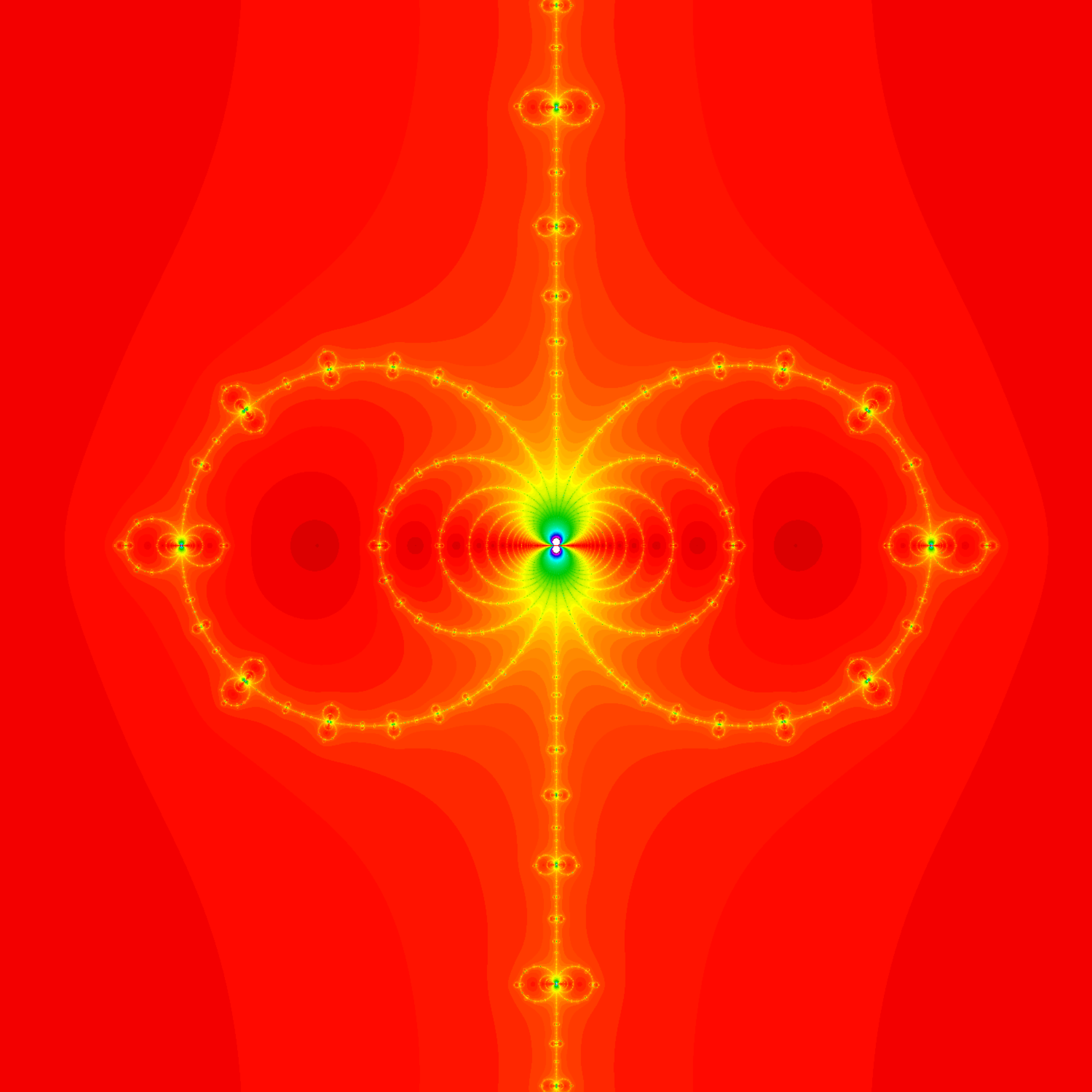}
}
\caption{\label{dynplanesine} \small Left: The dynamical plane of the map $f(z)=z-\tan z$, showing the invariant attracting basins $U_k$, $k \in \Z$. Right: A zoom of the dynamical plane near a pole $p_k$.}
\end{figure}

\begin{ThmC}
\label{thm:sin}
Let $f(z)=z-\tan z$, Newton's method for $g(z)=\sin z$. Then $J(f)$ is locally connected. 
\end{ThmC}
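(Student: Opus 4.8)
The plan is to deduce Theorem~C from Theorem~A applied to each immediate basin of $f=z-\tan z$, and then to upgrade local connectivity of the basin boundaries to that of all of $J(f)$. Write $p_k=\pi/2+k\pi$ for the poles of $f$. The fixed points of $f$ are the points $k\pi$, $k\in\Z$; since $z-\tan z-k\pi\sim-\tfrac13(z-k\pi)^3$ near $k\pi$, each $k\pi$ is superattracting of local degree~$3$, and since $f'(z)=-\tan^2 z$ vanishes exactly at the $k\pi$, we have $\Crit(f)=\{k\pi:k\in\Z\}$, all fixed, so $\PP_{crit}(f)=\{k\pi:k\in\Z\}$ and $\Acc(\PP_{crit}(f))=\emptyset$. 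Moreover $f$ has no \emph{finite} asymptotic value: if $\gamma(t)\to\infty$ with $f(\gamma(t))\to a\in\C$, then $\tan(\gamma(t))=\gamma(t)-f(\gamma(t))\to\infty$, so $\dist(\gamma(t),\{p_k\})\to 0$; being continuous and unbounded, $\gamma$ must cross the gaps between consecutive poles infinitely often and hence lie at distance $\ge\pi/4$ from $\{p_k\}$ for an unbounded set of parameters, where $\tan$ is bounded, forcing $f(\gamma(t))\to\infty$ along a subsequence, a contradiction. Thus $\PP_{asym}(f)\subseteq\{\infty\}$ and $\overline{\PP_{asym}(f)}\cap\C=\emptyset$. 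Finally, $f$ being a transcendental Newton map, all its Fatou components are simply connected~\cite{bfjk}; and by the symmetries $f(z+\pi)=f(z)+\pi$, $f(-z)=-f(z)$, $f(\bar z)=\overline{f(z)}$, the immediate basin $U_k$ of $k\pi$ equals $U_0+k\pi$, so it suffices to treat $U_0$, a simply connected invariant attracting basin with $\deg(f|_{U_0})=3$ and a single critical point, $0$.

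Hypotheses~(a) and~(b) of Theorem~A now hold for $U_0$ almost trivially: the set $\big(\overline{\PP_{asym}(f)}\cup\Acc(\PP_{crit}(f))\big)\cap\overline{U_0}$ is empty, while $\PP_{crit}(f)\cap\overline{U_0}=\{0\}$ --- for $k\ne 0$, $k\pi$ lies in the interior of $U_k\subset\FF(f)$, hence not in $\partial U_0\subset J(f)$ --- so taking $L$ a small closed disc about $0$ inside $U_0$ makes the condition in~(b) vacuous. The content of the proof is hypothesis~(c), that $U_0$ is tame at infinity. The key point is that $U_0$ lies in a vertical strip $\{\,|\Re z|\le C'\,\}$ of bounded width: off a fixed neighbourhood of the poles $f$ is a uniformly bounded perturbation of the identity, so no orbit converging to $0$ can persist at large $|\Re z|$, and combined with the description of Newton's method for $\sin z$ near the real axis (cf.~\cite{bfjkaccesses}) this forces $U_0$ into such a strip. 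Consequently $\overline{U_0}$ reaches infinity only through the two half-strips $\{\,|\Re z|\le C',\ \pm\Im z>C\,\}$. From $\tan(x+iy)=\pm i+O(e^{-2|y|})$ as $y\to\pm\infty$, $f$ is univalent on each half-plane $\{\pm\Im z>C\}$ and there $f(z)=z\mp i+O(e^{-2|\Im z|})$; conjugating $\{\Im z>C\}$ by $z\mapsto-iz$ turns $f$ into $w\mapsto w-\tanh w=(w-1)+O(e^{-2\Re w})$ on a right half-plane, precisely the translation-type model of a repelling petal at infinity. Hence, for $C$ large, $P^{+}:=\{\Im z>C\}$ and $P^{-}:=\{\Im z<-C\}$ are repelling petals at infinity in the sense of Definition~\ref{defn:petal_at_inf}. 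Taking $D=\{\,|z|\le R\,\}$ with $R$ large we get $\overline{U_0}\setminus D\subseteq P^{+}\cup P^{-}$, and $U_0\cap f(P^{+})\cap P^{-}=\emptyset$ because $f(P^{+})\subseteq\{\Im z>C-2\}$ (and symmetrically for $f(P^{-})$). Thus $U_0$, and by symmetry every $U_k$, is tame at infinity, and Theorem~A gives that $\partial U_k$ is locally connected in~$\clC$ for all $k$.

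To pass from the $\partial U_k$ to $J(f)$ I would use the criterion that a continuum $X\subset S^2$ is locally connected if and only if the boundary of each complementary component is locally connected and, for every $\varepsilon>0$, only finitely many complementary components have spherical diameter $>\varepsilon$ (Torhorst's theorem together with its converse, cf.~\cite{whyburn}); here $X=J(f)\cup\{\infty\}$ is a continuum since all Fatou components are simply connected~\cite{bfjk}. By the structure of the Fatou set of $f$ (no Baker domains, no wandering domains, no periodic cycles other than the $k\pi$; cf.~\cite{bfjkaccesses}), every Fatou component is eventually mapped onto some $U_k$; since the forward orbit of a non-periodic component $V$ avoids $\Crit(f)$, each map $f\colon f^{j}(V)\to f^{j+1}(V)$ in the finite chain from $V$ to $U_k$ is univalent, so the expanding metric of Theorem~B pulls back through this chain to an expanding metric near $\partial V$ and $\partial V$ is locally connected by the same argument as for $\partial U_k$. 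For the diameter bound: the periodic components satisfy $\overline{U_k}\subseteq\{\,|\Re z-k\pi|\le C'\,\}\cup\{\infty\}$, a set whose spherical diameter tends to $0$ as $|k|\to\infty$; and the non-periodic components, being univalent preimages of the $U_k$, shrink because the same expanding estimates contract inverse branches, so that for each $\varepsilon>0$ only finitely many Fatou components exceed spherical diameter $\varepsilon$. Both conditions of the criterion then hold, and $J(f)$ is locally connected.

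\textbf{Main obstacle.} The heart of the argument is hypothesis~(c) --- the tameness of $U_0$ at infinity --- and within it the rigorous confinement of $\overline{U_0}$ to a bounded vertical strip (so that its only ends are the two vertical ones), together with the verification of the full list of axioms of Definition~\ref{defn:petal_at_inf} for $P^{\pm}$; this is where the combinatorics specific to Newton's method for $\sin z$ enters. A secondary difficulty, needing care in the meromorphic setting because prepoles are dense in $J(f)$, is the uniform control of inverse branches near $J(f)$ that underlies both the local connectivity of the non-periodic Fatou components and their diameter-shrinking.
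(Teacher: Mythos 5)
Your verification of the first Whyburn condition follows essentially the paper's route: hypotheses (a)--(b) of Theorem~A are immediate from $\Crit(f)=\PP(f)=\{k\pi\}_{k\in\Z}$, tameness comes from the half-planes $P_\pm=\{\pm\Im z>M\}$ being repelling petals at infinity (the paper gets the confinement of $U_k$ to a vertical strip from the known fact, cited from \cite{bfjkaccesses,barfagjarkar2020}, that the vertical lines through the poles lie in $J(f)$, which is the clean way to justify your strip claim), and the strictly preperiodic components inherit local connectivity since they are eventually mapped univalently onto some $U_k$.

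The genuine gap is in the second Whyburn condition. Your argument that the non-periodic components shrink ``because the same expanding estimates contract inverse branches'' is not available as stated: the metric of Theorem~B is constructed only on $U\setminus\overline{A}$ inside the attracting basin, not on a neighbourhood of $J(f)$, so it gives no control of inverse branches taken along $J(f)$ (in particular near the prepoles, which are dense in $J(f)$, and near $\infty$). Moreover, even granting some uniform contraction, the counting problem remains: already at the first preimage level there are infinitely many components $U_{k,l}$ (two-parameter family, accumulating at every pole $p_k$ and at $\infty$), and one must show that over \emph{all} levels and \emph{all} of these families only finitely many components exceed a given spherical diameter; pointwise shrinking along each individual backward chain does not yield this uniformity. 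The paper closes exactly this gap by a distortion--area argument: since $\PP(f)=\{k\pi\}_{k\in\Z}$ is uniformly far from the rectangles $R_k$ containing the components of $\FF_1$, all branches of $f^{-n}$ on $R_k$ have spherical distortion bounded by a constant independent of $k$, $n$ and the branch (chains of spherical discs plus the spherical Koebe distortion theorem); combined with explicit diameter and area estimates for the $U_{k,l}$ (which are univalent preimages of $U_l$ near the simple pole $p_k$), this gives $(\diam_{sph}V)^2\le C\,\area_{sph}V$ up to summable weights, and summing over the pairwise disjoint Fatou components against the finite total spherical area yields $\sum_V(\diam_{sph}V)^2<\infty$, hence the finiteness claim. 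Some quantitative input of this kind (uniform distortion away from the postsingular set together with an area or summability bound) is needed; your proposal names the difficulty but does not supply the argument.
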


This provides the first non-trivial example of a locally connected Julia set of a transcendental meromorphic map $f$ outside class $\mathcal B$, with an infinite number of unbounded Fatou components.

The structure of the paper is as follows. After preliminaries in Section~\ref{sec:prelim} and a description of the dynamics of $f$ on attracting and repelling petals of parabolic periodic points, presented in Section~\ref{sec:petals_at_par}, in the subsequent Section~\ref{sec:petals_at_inf} we define attracting/repelling petals of $f$ at infinity (Definition~\ref{defn:petal_at_inf}) and prove their contracting/expanding properties (Theorem~\ref{thm:der-inf}). The metric $d\varsigma$ is constructed in Section~\ref{sec:metric} (Theorem~\ref{thm:metric}). Since the construction is quite involved, we split it into several parts, presented in Subsections~\ref{subsec:metric_petal_dyn}--\ref{subsec:metric_metric}, providing a short summary of the proof at the beginning of the section. Proposition~\ref{prop:extrabonus} and Theorem~B are proved in Section~\ref{sec:proofB}, while the proof of Theorem~A is presented in Section~\ref{sec:proofA}. Finally, in Section~\ref{sec:proofC} we prove Theorem~C.

\section{Preliminaries}\label{sec:prelim} 

\subsection{Notation} \label{subsec:notat}

By $\overline A$, $\inter A$ and $\bd A$ we denote, respectively, the closure, interior and boundary in $\C$ of a set $A \subset \C$. By $\conv A$ we denote the convex hull of a set $A$. For $A \subset \C$ and $z \in \C$ we write $A+z = \{a+z : a \in A\}$. We write $\Acc (A)$ for the set of accumulation points of $A$ in $\C$. 
By $\clC = \C \cup \{\infty\}$ we denote the Riemann sphere with the standard topology. We write $\D(z,r)$ for the Euclidean disc in $\C$ of radius $r$ and center $z$, while $\D$ is the open unit disc in $\C$. The Euclidean diameter of a set $A \subset \C$ is denoted by $\diam A$, and area of a measurable set $A \subset \C$ by $\area A$. We set $\N = \{1, 2, \ldots\}$. 

Let $F \colon V' \to V$ be a meromorphic map on a domain $V' \subset \C$ into a set $V \subset \clC$. For $z \in V'$ we set $\Orb(z) = \{F^n(z): n \ge 0\}$, neglecting the cases when $F^n$ is not defined.  By $\Crit(F)$ we denote the set of critical points of $F$ (we do not treat multiple poles as critical points). The points $F(z)$ for $z \in \Crit(F)$ are \emph{critical values} of $F$. A point $v \in V$ is an \emph{asymptotic value} of $F$ if 
there exists a curve $\gamma\colon [0, +\infty) \to V'$ such that $ \gamma(t)\xrightarrow[t \to +\infty]{} \bd V'$ and $F(\gamma(t))\xrightarrow[t \to +\infty]{} v$.

We denote by $\Sing(F)$ the \emph{singular set} of $F$, i.e.~the set of finite singularities of the inverse function $F^{-1}$ (the critical and asymptotic values of $F$ and their accumulation points in $V$). The {\em post-singular set} of $F$ is defined as
\[
\PP(F) = \bigcup_{n=0}^\infty F^n(\Sing(F)),
\]
neglecting the cases when $F^n$ is not defined. We also define the {\em post-critical} and {\em post-asymptotic set} of $F$ as 
\begin{align*}
\PP_{crit}(F) &= \{F^n(v) : v \text{ is a critical value of } F, \, n \ge 0\},\\
\PP_{asym}(F) &= \{F^n(v) : v \text{ is an asymptotic value of } F, \, n \ge 0\},
\end{align*}
again neglecting the cases when $F^n$ is not defined.

For $z_0 \in V'$ we denote by $\deg_{z_0} F$ the local degree of $F$ at $z_0$, i.e.~the positive integer $d$ such that $F(z) = F(z_0) + a(z-z_0)^d + \cdots$ for $a \neq 0$ if $F(z_0) \in \C$, and $F(z) = a(z-z_0)^{-d} + \cdots$ for $a \neq 0$ if $F(z_0) = \infty$. 

\subsection{Conformal metrics}\label{subsec:conf_metric}

By a \emph{conformal metric} on an open set $V \subset \C$ we mean a Riemannian metric of the form $d\rho(z) = \rho(z)|dz|$ for a positive continuous function $\rho$ on $V$, where $|dz|$ denotes the standard (Euclidean) metric in $\C$. On each component $\tilde V$ of $V$, the distance between points $z_1, z_2$ with respect to this metric, denoted $\dist_\rho(z_1, z_2)$, is defined as the infimum of the lengths of piecewise $C^1$-curves $\gamma$ joining $z_1$ and $z_2$ within this component, counted with respect to the metric $d\rho$ and denoted by $\length_\rho (\gamma)$, where
\[
\length_\rho \gamma = \int_\gamma \rho(z)|dz|.
\]
The diameter of a set $A \subset \tilde V$ with respect to $d\rho$ is defined as
\[
\diam_\rho A = \sup\{\dist_\rho(z_1, z_2): z_1, z_2 \in A\},
\]
while $\DD_\rho(z,r)$ denotes the disc of center $z\in \tilde V$ and radius $r > 0$ with respect to $d\rho$. The area of a measurable set $A \subset V$ with respect to $d\rho$ is denoted by
\[
\area_\rho A = \int_A (\rho(z))^2|dz|^2.
\]

If $V \subset \C$ is a \emph{hyperbolic domain} (i.e.~a domain such that $\C \setminus V$ contains at least two points), then we denote by $d\varrho_V$ the \emph{hyperbolic metric} in $V$ (see e.g.~\cite{carlesongamelin}).

The standard \emph{spherical metric} is defined as
\[
d\sigma_{sph} = \sigma_{sph}(z) |dz| = \frac{2}{1 + |z|^2}|dz|.
\]
for $z \in \C$. Note that $d\sigma_{sph}$ extends to a Riemannian metric on the Riemann sphere $\clC$ by the use of the coordinates $z \mapsto 1/z$ near infinity. For simplicity, we write $\dist_{sph}$, $\diam_{sph}$, $\area_{sph}$ and $\DD_{sph}(z,r)$ for the spherical distance, diameter, area and disc, respectively.

The derivative of a holomorphic map $F$ with respect to the metric $d\rho$ on $V$ is equal to
\[
|F'(z)|_\rho = \frac{\rho(F(z))}{\rho(z)}|F'(z)|,
\]
provided $F$ is defined in a neighbourhood of a point $z \in V \cap \rp^{-1}(V)$. For the spherical metric we use the symbol $|F'(z)|_{sph}$ instead of $|F'(z)|_{\sigma_{sph}}$.

We say that a holomorphic map $F$ is \emph{locally contracting} (resp.~\emph{locally expanding}) with respect to the metric $d\rho$ on a set $V' \subset V \cap \rp^{-1}(V)$ if $|F'(z)|_\rho < 1$ (resp.~$|F'(z)|_\rho > 1$) for every $z \in V'$. We also say that in this case the metric $d\rho$ is locally contracting/expanding with respect to $F$.

We will use the following version of the Koebe distortion theorem for the spherical metric (for the proof see \cite[p.~1170]{bowen}). 

\begin{thm}[\bf{Spherical Koebe distortion theorem}{}]\label{thm:sph-Koebe}
Let $0 < r_1, r_2 < \diam_{sph} \clC$. Then there exists $c>0$ depending only on $r_1, r_2$, such that for every spherical disc $D = \DD_{sph}(z, r)$ and every univalent holomorphic map $F\colon D \to \clC$ with $z \in \clC$, $r > 0$, $\diam_{sph} D < r_1$ and $\diam_{sph}(\clC\setminus F(D))>r_2$, if $z_1, z_2 \in \DD_{sph}(z, \lambda r)$ for some $0 < \lambda < 1$, then 
\[
\frac{|F'(z_1)|_{sph}}{|F'(z_2)|_{sph}} \leq\frac{c}{(1 - \lambda)^4}.
\]
\end{thm}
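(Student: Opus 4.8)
The plan is to transfer the statement to the standard (planar) Koebe distortion theorem in $\D$ by composing with Möbius transformations on both the source and target sides, and then control the ``conversion factors'' between the Euclidean and spherical metrics uniformly, using the hypotheses $\diam_{sph} D < r_1$ and $\diam_{sph}(\clC \setminus F(D)) > r_2$. First I would use a rotation $M_1$ of the Riemann sphere (an isometry of $\sigma_{sph}$) carrying the spherical center $z$ of $D$ to the origin; since spherical rotations preserve $|\cdot|_{sph}$, this reduces us to the case where $D = \DD_{sph}(0,r)$ is a Euclidean disc $\D(0,R)$ centered at $0$, with $R = \tan(r/2) \le \tan(r_1/2) =: R_1$ bounded away from $0$ only in the sense $R \le R_1$ (we will not need a lower bound on $R$). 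Rescaling by $R$, we may further assume $D = \D$, at the cost of tracking where the points $z_1, z_2 \in \DD_{sph}(0,\lambda r)$ land: they lie in $\D(0,\rho_0)$ for $\rho_0 = \tan(\lambda r/2)/\tan(r/2) \le \lambda$, so in particular $z_1, z_2 \in \D(0,\lambda)$.

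Next I would normalize the target side. The map $F \colon \D \to \clC$ is univalent and omits a spherical disc of radius $> r_2$; pick a point $a \in \clC$ with $\DD_{sph}(a, r_2/2) \cap F(\D) = \emptyset$ and let $M_2$ be a spherical rotation sending $a$ to $\infty$, so that $G := M_2 \circ F \colon \D \to \C$ is univalent and its image omits the Euclidean disc $\D(M_2(a) \text{-image stuff})$—more precisely, $\C \setminus G(\D)$ contains a Euclidean disc of radius $\ge c_2 > 0$ with $c_2$ depending only on $r_2$ (the spherical disc of radius $r_2/2$ around $\infty$ contains, and is contained in, a Euclidean disc around $\infty$, i.e.\ the complement of a Euclidean ball of controlled radius). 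After an affine normalization of $G$ we land in the hypotheses of the classical Koebe distortion theorem on $\D$, giving
\[
\frac{|G'(z_1)|}{|G'(z_2)|} \le \frac{C}{(1-\lambda)^4}
\]
for $z_1, z_2 \in \D(0,\lambda)$, with $C$ absolute (the exponent $4$ being the standard one from the ratio of the two one-sided Koebe bounds $\frac{1+|z|}{(1-|z|)^3}$ and $\frac{1-|z|}{(1+|z|)^3}$).

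Finally I would convert back. Since $M_1, M_2$ are spherical isometries, $|F'(z_i)|_{sph} = |(M_2 \circ F)'(M_1^{-1}\text{-adjusted } z_i)|_{sph} = |G'(z_i)|_{sph}$ up to the rescaling Jacobian; writing $|G'(z_i)|_{sph} = \dfrac{\sigma_{sph}(G(z_i))}{\sigma_{sph}(z_i)}\,|G'(z_i)|$ and similarly absorbing the rescaling factor $R$, the claimed spherical ratio equals $\dfrac{|G'(z_1)|}{|G'(z_2)|}$ times a correction factor $\dfrac{\sigma_{sph}(G(z_1))\,\sigma_{sph}(z_2)}{\sigma_{sph}(G(z_2))\,\sigma_{sph}(z_1)}$. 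The factor $\sigma_{sph}(z_2)/\sigma_{sph}(z_1)$ is bounded above and below by constants depending only on $r_1$, because $z_1, z_2$ lie in the fixed Euclidean disc $\D(0, R_1)$ where $\sigma_{sph} = 2/(1+|z|^2)$ is bounded between positive constants. The factor $\sigma_{sph}(G(z_1))/\sigma_{sph}(G(z_2))$ is the genuine obstacle: a priori $G(\D)$ is unbounded, so $\sigma_{sph}$ need not be comparable to a constant there. I would handle this by noting that $G(\D)$ omits a Euclidean ball of radius $\ge c_2$, hence by the Koebe one-quarter theorem $\diam G(\D)$ (Euclidean) is comparable to $|G'(0)|$ up to absolute constants, and more usefully that for $z_1, z_2 \in \D(0,\lambda)$ the Euclidean distance $|G(z_1) - G(z_2)|$ is at most a constant (depending on $\lambda$, but we only need $\lambda < 1$, so uniformly at most $\mathrm{const}\cdot|G'(0)|$) times $\max(|G(z_1)|, |G(z_2)|) + c_2$—combined with the omitted ball this forces $1 + |G(z_1)|^2$ and $1 + |G(z_2)|^2$ to be comparable with a ratio controlled by $1/(1-\lambda)^{O(1)}$, which can be folded into the constant and the $(1-\lambda)^{-4}$ (enlarging the exponent if necessary, or absorbing into $c$ since only an upper bound is asserted). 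The main work, then, is this last comparison of spherical densities at the two image points; everything else is bookkeeping with isometries and the classical Koebe theorem. I expect the cleanest route is actually to cite the classical Koebe distortion theorem in the stronger form controlling $\mathrm{dist}(G(z), \bd G(\D))$ and the derivative simultaneously, which directly yields comparability of $|G(z_1)|$ and $|G(z_2)|$ with the omitted ball, thereby bounding the spherical correction factor by a constant times $(1-\lambda)^{-k}$ for a fixed $k$, and then increase the exponent to $4$ in the final estimate (or leave the proof to the cited reference \cite[p.~1170]{bowen} for the delicate constant-tracking).
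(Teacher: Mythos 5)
The paper does not actually prove this statement (it is quoted with a pointer to p.~1170 of the cited reference), so the comparison is with the standard argument; your overall plan --- pre- and post-compose with spherical rotations and quote the planar Koebe distortion theorem --- is indeed that standard route, but two steps fail as written. First, the step ``pick $a$ with $\DD_{sph}(a,r_2/2)\cap F(D)=\emptyset$'' is not justified: the hypothesis is only that $\clC\setminus F(D)$ has spherical \emph{diameter} $>r_2$, and this complement (a continuum, since $F(D)$ is simply connected) need not contain any spherical disc at all --- e.g.\ $F$ may map $D$ onto the sphere minus an arc. What the hypothesis does give is two omitted points at spherical distance $>r_2$, hence, by the triangle inequality, for any prescribed point of $\clC$ an omitted point at distance $\ge r_2/2$ from it; the argument must be run with that weaker input.

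Second, and more seriously, your treatment of the image-side density factor $\bigl(1+|G(z_2)|^2\bigr)/\bigl(1+|G(z_1)|^2\bigr)$ is inconclusive, and the proposed remedy (``enlarging the exponent if necessary, or absorbing into $c$'') is not available: the exponent $4$ is part of the statement and $c$ must be independent of $\lambda$, while the planar distortion theorem already spends the entire budget $(1+\lambda)^4/(1-\lambda)^4$. With your fixed choice of $a$ the density factor really can be of size $(1-\lambda)^{-4}$ (scaled Koebe-type examples), so your route proves only a bound $c/(1-\lambda)^{4+k}$ with $k>0$, which is weaker than asserted; also the intermediate claim that $\diam G(\D)$ is comparable to $|G'(0)|$ is false in general, since $G(\D)$ may be unbounded. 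The missing idea that repairs both points at once is to normalize the image-side rotation \emph{relative to $F(z_2)$}: among the two omitted points at distance $>r_2$, pick $a$ with $\dist_{sph}(a,F(z_2))\ge r_2/2$ and let $M_2$ send this $a$ to $\infty$. Then $G=M_2\circ F$ is finite and univalent on the Euclidean disc $\D(0,R)$ with $R\le\tan(r_1/4)$, and $|G(z_2)|\le\cot(r_2/4)$, so $\bigl(1+|G(z_2)|^2\bigr)/\bigl(1+|G(z_1)|^2\bigr)\le 1+\cot^2(r_2/4)$ is a constant depending only on $r_2$; combined with $(1+|z_1|^2)/(1+|z_2|^2)\le 1+\tan^2(r_1/4)$ and the planar ratio bound $\le 16/(1-\lambda)^4$, this yields exactly the stated inequality with $c=c(r_1,r_2)$. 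The rest of your reduction (domain-side rotation, $\tan(\lambda r/2)\le\lambda\tan(r/2)$, invariance of the spherical derivative under rotations) is fine.
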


\subsection{Orbifolds}\label{subsec:orbifold}

We recall some facts about hyperbolic orbifolds (for details, see \cite[\S 19]{milnor}). Let $V \subset \C$ be a hyperbolic domain. A \emph{hyperbolic orbifold} over $V$ is a pair $(V, \nu)$, where $\nu\colon V \to \N$ is a function such that the set of points $z \in V$ with $\nu(z) > 1$ has no accumulation points in $V$. Every orbifold $(V, \nu)$ has a universal branched covering, i.e.~a holomorphic map $\pi\colon \D \to V$ onto $V$, such that 
\begin{equation}\label{eq:multiple-orbifold}
\deg_u \pi = \nu(\pi(u)) \qquad\text{for } u \in \D.
\end{equation}
(see e.g.~\cite[Theorem~E.1]{milnor}). This implies that the Riemannian metric 
\[
d\rho = \rho |dz| = \pi_*(d\varrho_{\D}),
\]
which is the push-forward under $\pi$ of the hyperbolic metric $d\varrho_{\D}$ on $\D$,
is well-defined on $V \setminus \{z : \nu(z) > 1\}$ (if $\nu(z) = 1$, then different points in $\pi^{-1}(z)$ are related via automorphisms of $\D$, so the metric is independent of the choice of a point in the fiber). The metric $d\rho$ is called the {\em orbifold metric}. 
Note that if $\nu(z) = 1$ for every $z \in V$, then the map $\pi$ is a universal covering of $V$ and $d\rho$ is equal to the hyperbolic metric $d\varrho_V$ on $V$. 

If $\nu(z_0) > 1$ for some $z_0 \in V$, then $d\rho$ has a singularity at $z_0$. More precisely, there exist constants $c_1, c_2 > 0$ such that 
\begin{equation}\label{eq:orb_metric}
\frac{c_1}{|z-z_0|^{1 - 1/\nu(z_0)}} < \rho(z) < \frac{c_2}{|z-z_0|^{1 - 1/\nu(z_0)}} 
\end{equation}
for $z$ in some punctured neighbourhood of $z_0$ (see \cite[p.183, Appendix~A]{mcmullen-book}). Note also that we have
\begin{equation}\label{eq:orb>hyp}
\rho \ge \varrho_V \qquad \text{on} \quad V \setminus \{z : \nu(z) > 1\}.
\end{equation}
This holds due to the fact that the identity map $V \to V$ defines a holomorphic orbifold map\footnote{A \emph{holomorphic orbifold map} $\phi$ between orbifolds $(V, \nu)$ and $(\tilde V, \tilde\nu)$ is a holomorphic map $\phi\colon V \to \tilde V$, for which $\nu(z)\deg_z \phi$ is divisible by $\tilde{\nu}(\phi(z))$ for $z \in V$.} from $(V, \nu)$ to the orbifold $(V, \tilde \nu)$ with $\tilde\nu \equiv 1$ and the orbifold metric equal to $d\varrho_V$, so \eqref{eq:orb>hyp} follows from the Schwarz--Pick orbifold lemma (see e.g.~\cite[Theorem~A.3]{mcmullen-book}).

\subsection{Logarithmically convex functions}\label{subsec:log_conv}

Recall that a function $g\colon (a,b) \to \R_+$, for $a, b \in \R\cup\{\pm \infty\}$, is \emph{logarithmically convex}, if $\ln g$ is convex. We will use the following facts on non-increasing logarithmically convex functions.

\begin{lem}\label{lem:log_conv}
Let $g\colon (t_0, +\infty) \to \R_+$ for some $t_0 \in \R$ be a non-increasing logarithmically convex function. Define inductively a sequence $t_n \in (t_0, +\infty)$, $n \in \N$, by choosing some $t_1 \in (t_0, +\infty)$ and setting 
\[
t_{n+1} = t_n + g(t_n)
\]
for $n \in \N$. Then:
\begin{enumerate}[$($a$)$]
\item the sequence $(t_n)_{n=1}^\infty$ is increasing and converges to $+\infty$ as $n \to \infty$, 
\item the sequence $(\frac{t_{n+1}}{t_n})_{n=1}^\infty$ is decreasing for sufficiently large $n$ and converges to $1$ as $n \to \infty$, 
\item the sequence $\big(\frac{g(t_{n+1})}{g(t_n)}\big)_{n=1}^\infty$ is non-decreasing and converges to $1$ as $n \to \infty$.
\end{enumerate}
\end{lem}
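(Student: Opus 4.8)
## Proof proposal for Lemma 1.3

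The plan is to exploit the two structural consequences of logarithmic convexity: the derivative $(\ln g)' = g'/g$ is non-decreasing, and $g$ is convex in the ordinary sense (since $g = e^{\ln g}$ is a composition of a convex increasing function with a convex function). Since $g > 0$, the increments $g(t_n)$ are positive, so $(t_n)$ is strictly increasing; this gives monotonicity in (a) immediately. For divergence, I would argue by contradiction: if $t_n \to T < +\infty$, then $g(t_n) = t_{n+1} - t_n \to 0$, but $g$ is non-increasing and positive on $(t_0, +\infty)$, so $g(t) \ge g(T) > 0$ for all $t < T$ (here using $T$ finite and $g$ defined, hence continuous, at $T$), contradicting $g(t_n) \to 0$ since all $t_n < T$. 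Hence $t_n \to +\infty$.

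For part (c), write $r_n = g(t_{n+1})/g(t_n)$. Since $\ln g$ is convex and $t_{n+1} - t_n = g(t_n)$, while $t_{n+2} - t_{n+1} = g(t_{n+1}) \le g(t_n)$ (as $g$ is non-increasing and $t_{n+1} > t_n$), the slope of $\ln g$ over $[t_{n+1}, t_{n+2}]$ is at least its slope over $[t_n, t_{n+1}]$; translating this into the ratios $g(t_{n+2})/g(t_{n+1})$ versus $g(t_{n+1})/g(t_n)$ — carefully, because the two intervals have different lengths — shows $(r_n)$ is non-decreasing. Alternatively, and more robustly, one can use that $\phi(s) := \ln g(s)$ satisfies $\phi(t_{n+1}) - \phi(t_n) = \phi'(\xi_n) g(t_n)$ for some $\xi_n \in (t_n, t_{n+1})$ with $\phi' \le 0$ non-decreasing; since the $\xi_n$ are increasing and $g(t_n)$ is non-increasing, one gets $\ln r_{n+1} \ge \ln r_n$, hence $r_n \nearrow$. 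The sequence $(r_n)$ is non-decreasing and bounded above by $1$ (because $g$ is non-increasing and $t_{n+1} > t_n$), so it converges to some $L \le 1$. To see $L = 1$: if $L < 1$, then $g(t_{n+1}) \le L\, g(t_n)$ eventually, so $g(t_n) \le C L^n$, whence $\sum_n g(t_n) < \infty$, i.e. $t_n = t_1 + \sum_{k<n} g(t_k)$ stays bounded — contradicting (a). Therefore $L = 1$, proving (c).

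For part (b), note $\frac{t_{n+1}}{t_n} = 1 + \frac{g(t_n)}{t_n}$, so it suffices to show $g(t_n)/t_n \to 0$ and that it is eventually decreasing. Convergence to $0$ follows from a Stolz–Cesàro / subadditivity argument: since $g$ is non-increasing, for any fixed $m$ and all large $n$ we have $g(t_n) \le g(t_m)$, and because $t_n \to \infty$ with increments $g(t_{n-1}) \to 0$ (from (c), $g(t_n)/g(t_{n-1}) \to 1$ together with... actually directly: $g(t_n) = t_{n+1} - t_n$ and if this did not tend to $0$ then $g$ would be bounded below by a positive constant on a sequence going to infinity, contradicting that a non-increasing positive function with $\sum g(t_n) = \infty$... hmm) — cleanly: $g(t_n) = t_{n+1}-t_n$, and since $t_n\to\infty$, $\liminf g(t_n) = 0$; since $g$ is non-increasing and $t_n$ increasing, $g(t_n)$ is itself non-increasing, so $g(t_n) \to 0$, giving $g(t_n)/t_n \to 0$. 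For eventual monotonicity of $g(t_n)/t_n$: $g(t_n)$ is non-increasing and $t_n$ is increasing, so the quotient is non-increasing for all $n$ — this is immediate and in fact holds for every $n$, slightly stronger than "eventually", which is fine. I expect the main obstacle to be the precise bookkeeping in part (c): converting the statement "the chord-slopes of the convex function $\ln g$ increase" into monotonicity of the ratios $g(t_{n+1})/g(t_n)$ requires handling the unequal interval lengths $g(t_n) \ge g(t_{n+1})$, and the mean-value-theorem formulation with the intermediate points $\xi_n$ is the safest way to keep this rigorous.
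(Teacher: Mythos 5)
Your parts (a) and (c) are essentially correct. In (a) your contradiction argument is even slightly more economical than the paper's (you use only monotonicity of $g$ at the finite limit $T\in(t_0,+\infty)$, while the paper invokes continuity of the convex function $g$ at that point). In (c) you argue via monotonicity of the chord slopes of the convex function $\ln g$ (or a mean-value formulation), whereas the paper uses a midpoint trick: from $g(t_n)\le g(t_{n-1})$ one gets $\frac{t_{n-1}+t_{n+1}}{2}\le t_n$, and then convexity plus monotonicity of $h=\ln g$ give $\frac{h(t_{n-1})+h(t_{n+1})}{2}\ge h\big(\frac{t_{n-1}+t_{n+1}}{2}\big)\ge h(t_n)$, i.e.\ $h(t_{n+1})-h(t_n)\ge h(t_n)-h(t_{n-1})$. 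Both routes work; note only that $\ln g$ need not be differentiable, so the rigorous form of your argument is the chord-slope one: the slope of $\ln g$ over $[t_n,t_{n+1}]$ is at most its slope over $[t_{n+1},t_{n+2}]$, both slopes are $\le 0$, and $g(t_{n+1})\le g(t_n)$, whence $\ln r_{n+1}\ge \ln r_n$. Your "limit equals $1$" step (if the limit were $<1$ then $\sum g(t_n)<\infty$, contradicting $t_n\to\infty$) is identical to the paper's.

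Part (b), however, contains a false step. From $t_n\to+\infty$ you deduce $\liminf_n g(t_n)=\liminf_n(t_{n+1}-t_n)=0$ and then, by monotonicity, $g(t_n)\to 0$. This is wrong: for $g\equiv 1$ (one of the paper's own model examples of an admissible $g$) one has $t_{n+1}-t_n=1$ for every $n$ while $t_n\to+\infty$. Fortunately the conclusion you actually need, $g(t_n)/t_n\to 0$, does not require $g(t_n)\to 0$ at all: $g(t_n)\le g(t_1)$ is bounded because $g$ is positive and non-increasing, and $t_n\to+\infty$ by (a); this is exactly the paper's argument, so the repair is one line. A second, minor point: your claim that $g(t_n)/t_n$ is non-increasing \emph{for all} $n$ tacitly assumes $t_n>0$; since $t_0\in\R$ (and hence $t_1$) may be negative, the quotient argument is only valid once $t_n>0$, i.e.\ for sufficiently large $n$ --- which is all the lemma asserts.
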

\begin{proof} Since $g$ is positive, we have $t_{n+1} > t_n$. Note also that $g$ is convex and hence continuous on $(t_0, +\infty)$. This implies $t_n \to +\infty$ as $n \to \infty$, because otherwise $t_n \to \overline t$ for some $\overline t \in \R$ and $g(\overline t) = \lim_{n\to\infty} g(t_n) = \lim_{n\to\infty} (t_{n+1} - t_n) = 0$, which contradicts the fact that $g$ is positive. This shows~(a). 

As $t_n$ increases to $+\infty$, we have $t_n >0$ for sufficiently large $n$. For such $n$, since $g$ is positive and non-increasing, the sequence $\frac{g(t_n)}{t_n}$ is decreasing, so the sequence
\[
\frac{t_{n+1}}{t_n} = 1 + \frac{g(t_n)}{t_n}
\]
is decreasing. Moreover, the sequence $g(t_n)$ is bounded, since $g$ is positive and non-increasing, while $t_n \to +\infty$ by~(a). Hence, $\frac{g(t_n)}{t_n} \to 0$, so $\frac{t_{n+1}}{t_n} \to 1$, which ends the proof of~(b). 

To show (c), note that as $g$ is non-increasing, we have
\[
t_{n+1} - t_n = g(t_n) \le g(t_{n-1}) = t_n - t_{n-1},
\]
which gives $\frac{t_{n-1} + t_{n+1}}{2} \le t_n$. Consequently, setting $h = \ln g$, we obtain
\[
\frac{h(t_{n-1}) + h(t_{n+1})}{2} \ge h\Big(\frac{t_{n-1} + t_{n+1}}{2}\Big) \ge h(t_n),
\]
since $h$ is convex and non-increasing. This implies 
\[
h(t_{n+1}) - h(t_n) \ge h(t_n) - h(t_{n-1}),
\]
so the sequence
\[
\frac{g(t_{n+1})}{g(t_n)} = e^{h(t_{n+1}) - h(t_n)}
\]
is non-decreasing. As $\frac{g(t_{n+1})}{g(t_n)} \le 1$, as remarked above, it follows that $\frac{g(t_{n+1})}{g(t_n)} \to q$ for some $0 < q \le 1$. If $q < 1$, then for large $n$ we have $\frac{g(t_{n+1})}{g(t_n)} < q'$ for some constant $q' < 1$, so $\sum_{n=1}^\infty(t_{n+1} - t_n) = \sum_{n=1}^\infty g(t_n) < \infty$ and, consequently, $t_n \to \overline t$ for some $\overline t \in \R$, which is impossible by (a). Hence, 
\[
\frac{g(t_{n+1})}{g(t_n)} \to 1.
\]
This ends the proof of (c). 
\end{proof}


\section{Attracting and repelling petals at parabolic periodic points} \label{sec:petals_at_par}

\begin{defn}[{\bf Attracting/repelling petal at a parabolic fixed point}{}]\label{defn:par_petal}
Let $p \in \C$ be a parabolic fixed point of multiplier $1$ and order $d \in \N$ of a holomorphic map $\ap$ defined near $p$. Then $\ap$ has the form 
\begin{equation*}\label{eq:parabolic_map}
\ap(z) = z + a (z-p)^{d+1} +\cdots
\end{equation*}
for $z$ near $p$, where $a \in \C \setminus \{0\}$. By an \emph{attracting petal} of $\ap$ at $p$ we mean a simply connected domain $P$ contained in a small neighbourhood of $p$, such that $p \in \bd P$, $\overline{\ap(P)} \subset P \cup \{p\}$ and $\bigcap_{n=0}^\infty \ap^n(P) = \emptyset$. 

A simply connected domain $P \subset \C$ is a \emph{repelling petal} of a holomorphic map $\rp$ at its parabolic fixed point $p$ of multiplier $1$, if $\rp(P)$ is an attracting petal at $p$ of a branch $\ap$ of $\rp^{-1}$ with $\ap(p) = p$ (which is well-defined near $p$).\footnote{ There are several variants of definitions of attracting and repelling petals at parabolic fixed points, which differ in details (see e.g.~\cite[Definition 10.6]{milnor} and the discussion afterwards).}
\end{defn}
 
For $p \in \C$, $d \in \N$, $a \in \C \setminus \{0\}$, $\varepsilon, \delta > 0$ and $j \in \{0, \ldots, 2d-1\}$ let
\[
U_j (\varepsilon, \delta) =\left\{z \in \C \setminus \{p\}: \Arg(z - p) \in \left(\theta_j - \delta, \theta_j + \delta\right),\, |z- p| < \varepsilon\right\},
\]
where
\[
\theta_j = \frac{-\Arg(a)}{d} + \frac{\pi j}{d} \qquad \text{for odd (resp.~even) }j \in \Z.
\]
The facts described in the following proposition are well-known, see e.g.~\cite[Chapter~II.5]{carlesongamelin}, \cite[\S 10]{milnor}.

\begin{prop}\label{prop:par_petal}
Let $p \in \C$ be a parabolic fixed point of multiplier $1$ and order $d \in \N$ of a holomorphic map $\f$ defined near $p$.
\begin{enumerate}[$($a$)$]
\item For every $\varepsilon, \delta > 0$ and a compact set $K$ contained in an attracting $($resp.~repelling$)$ petal of $\f$ at $p$, there exist an odd $($resp.~even$)$ integer $j \in \{0, \ldots, 2d-1\}$ and $n_0 \in \N$ such that $\f^n(K) \subset U_j (\varepsilon, \delta)$ $($resp.~$(\f^{-1})^n(K) \subset U_j (\varepsilon, \delta))$ for every $n \ge n_0$.
\item There exist $d$ attracting $($resp.~repelling$)$ petals $P_j$ with Jordan boundaries, of the map $\f$ at $p$, for odd $($resp.~even$)$ integers $j \in \{0, \ldots, 2d-1\}$, such that $P_j$ $($resp.~$\f(P_j))$ are pairwise disjoint, and for every $\delta \in (0, \frac{\pi}{d})$ one can find $\varepsilon > 0$ with $U_j (\varepsilon, \delta) \subset f(P_j) \subset P_j \subset U_j (\varepsilon, \frac{\pi}{d})$ $($resp.~$U_j (\varepsilon, \delta) \subset P_j \subset  f(P_j) \subset U_j (\varepsilon, \frac{\pi}{d}))$.
\end{enumerate}
\end{prop}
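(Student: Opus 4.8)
The plan is to recover the statement as a form of the Leau--Fatou flower theorem (see \cite{carlesongamelin,milnor}), organized as follows. First I would reduce the repelling case to the attracting one: if $P$ is a repelling petal of $f$ at $p$, then $f(P)$ is by definition an attracting petal at $p$ of the local branch $G$ of $f^{-1}$ fixing $p$, and since $G(z)=z-a(z-p)^{d+1}+\cdots$ has the same order $d$ with $a$ replaced by $-a$, the attracting directions of $G$ are exactly the repelling directions of $f$ (even $j$); so both repelling statements follow by applying the attracting ones to $G$ ($G$ is conformal near $p$ since $G'(p)=1$, and $G^n$ is what $(f^{-1})^n$ denotes in (a)). Assuming from now on that $P$, resp.\ the $P_j$, are attracting and that $p=0$, so $f(z)=z+az^{d+1}+\cdots$, the key tool is the Fatou coordinate: on the sector $V_j=\{0<|z|<r_0:|\Arg z-\theta_j|<\pi/d\}$ (for odd $j$), the map $w=\Phi_j(z)=-1/(d a z^d)$, with the branch of $z^d$ fixed by $\theta_j$, is a biholomorphism onto a region of the form $\{|w|>R_0:\Arg w\in(-\pi,\pi)\}$, carrying the attracting direction $\theta_j$ to $w\to+\infty$ and the boundary directions $\theta_j\pm\pi/d$ to $\Arg w=\pm\pi$; a short computation gives $F_j:=\Phi_j\circ f\circ\Phi_j^{-1}=w+1+O(|w|^{-1/d})$ with $F_j'\to 1$. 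I would then record the standard consequences: for $|w|$ large, $\Re F_j^n(w)\to+\infty$, $F_j^n(w)=w+n+o(n)$, and, starting from bounded $w$, the imaginary drift $|\Im F_j^n(w)-\Im w|=o(n)$ (of order $n^{1-1/d}$, resp.\ $\log n$ for $d=1$); likewise for the backward branch $F_j^{-1}$.

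For (b), I would take $P_j:=\Phi_j^{-1}(\Omega_j)$ with $\Omega_j=\{u+iv:u>C-\rho(|v|)\}$ for a large constant $C$ and a convex increasing $\rho$ with $\rho(0)=0$, choosing $\rho$ to grow fast enough that $\bd\Omega_j$ is asymptotic to the rays $\Arg w=\pm\pi$ (so that $P_j$ has tangent cone of full half-angle $\pi/d$ at $0$) but slow enough --- below the power permitted by the $o(n)$ drift above, say $\rho(t)\asymp t^{d/(d-1)}$ for $d\ge 2$ and $\rho(t)=e^{ct}$ for $d=1$ --- that backward orbits under $F_j$ eventually exit $\Omega_j$, i.e.\ $\bigcap_n F_j^n(\Omega_j)=\emptyset$. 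Then I would check (all of it routine) that $\Omega_j$ is a Jordan domain of $\clC$ contained in $\Phi_j(V_j)$, that $F_j$ being a near-translation to the right gives $\overline{F_j(\Omega_j)}\subset\Omega_j\cup\{\infty\}$, and hence that each $P_j$ ($j$ odd) is an attracting petal with Jordan boundary through $0$, the $P_j$ being pairwise disjoint since they lie in the pairwise disjoint sectors $V_j$. Lastly, in the $w$-coordinate $U_j(\varepsilon,\delta)$ is the truncated sector $\{|w|>M(\varepsilon),\ |\Arg w|<d\delta\}$ with $M(\varepsilon)\to\infty$ as $\varepsilon\to 0$, so the chain $U_j(\varepsilon,\delta)\subset f(P_j)\subset P_j\subset U_j(\varepsilon,\pi/d)$ reduces to elementary inclusions among $\Omega_j$, $F_j(\Omega_j)$ and such sectors (for $\varepsilon$ small depending on $\delta$), using that $\bd\Omega_j$ and $\bd F_j(\Omega_j)$ approach $\Arg w=\pm\pi$.

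For (a), given $K$ compact in an attracting petal $P$ (Definition~\ref{defn:par_petal}), I would first observe that $f(\overline P)=\overline{f(P)}\subset P\cup\{p\}\subset\overline P$, so the compacta $f^n(\overline P)$ decrease and $\bigcap_n f^n(\overline P)\subset\big(\bigcap_n f^n(P)\big)\cup\{p\}=\{p\}$; hence $f^n\to p$ uniformly on $\overline P$ and $f^n(P)$ is a connected set shrinking to $\{p\}$. By (b), a punctured neighbourhood of $p$ is covered by the attracting petals together with the repelling petals, and a point of a repelling petal has forward $f$-orbit leaving any fixed small neighbourhood of $p$ (in the Fatou coordinate of $G$, $f$ translates by $\approx-1$); since $f^m(\overline P)\to\{p\}$, for $n$ large $f^n(P)$ meets no repelling petal, so, being connected, it lies in a single attracting petal $P_{j_0}$. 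Finally $\Phi_{j_0}(K)$ is bounded, so $F_{j_0}^n(\Phi_{j_0}(K))$ has real part $\approx n\to+\infty$ and imaginary part $o(n)$, hence lies in $\{|w|>M(\varepsilon),\ |\Arg w|<d\delta\}$ for $n$ large, which transfers back to $f^n(K)\subset U_{j_0}(\varepsilon,\delta)$ for all $n\ge n_0$.

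I expect the one genuinely quantitative ingredient --- and the main obstacle --- to be the tension in (b): requiring $f(P_j)\supset U_j(\varepsilon,\delta)$ for \emph{every} $\delta<\pi/d$ forces the petal to open up to the full angle $\pi/d$ at $p$, whereas the petal axiom $\bigcap_n f^n(P_j)=\emptyset$ caps how wide a forward-invariant region can be; reconciling these is exactly the job of matching the growth of the cut-off $\rho$ to the precise size of the error $F_j(w)-w-1$, and hence to the order $d$. Everything else is soft.
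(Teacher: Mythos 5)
The paper itself offers no proof of this proposition: it is quoted as a classical fact (the Leau--Fatou flower theorem) with references to Carleson--Gamelin, Ch.~II.5 and Milnor, \S 10, and your Fatou-coordinate reconstruction is precisely the route taken in those references, so your overall strategy is the right one. One remark on the statement you are proving: read literally, with the same $\varepsilon$ in $U_j(\varepsilon,\delta)\subset f(P_j)$ and $P_j\subset U_j(\varepsilon,\pi/d)$, part (b) would force $\varepsilon\ge\sup_{z\in P_j}|z-p|$ and hence $P_j=U_j(r,\pi/d)=f(P_j)$, contradicting Definition~\ref{defn:par_petal}; so the outer inclusion must be understood with a radius not tied to $\delta$, which is effectively the reading you adopt ($P_j$ inside the sector of half-angle $\pi/d$, and for each $\delta$ a small $\varepsilon(\delta)$ with $U_j(\varepsilon(\delta),\delta)\subset f(P_j)$), and it is the version the paper actually uses later.

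There is, however, a genuine gap in your argument for (a). The assertion that a point of a repelling petal has forward orbit leaving any fixed small neighbourhood of $p$ is false: the petals of (b) around adjacent attracting and repelling directions necessarily overlap (their angular openings each tend to $\pi/d$, which is exactly why their union covers a punctured neighbourhood of $p$), so every repelling petal contains points of the parabolic basin, and those points converge to $p$ without ever leaving a small neighbourhood. Consequently the deduction ``for $n$ large $f^n(P)$ meets no repelling petal'' also fails; in fact it is false whenever $P$ is itself one of the maximal petals of (b): in the Fatou coordinate $F_j^n(\Omega_j)$ still contains, for every $n$, points with $\Arg w$ arbitrarily close to $\pm\pi$ (take $|v|$ huge on the far part of $\Omega_j+$shift), so $f^n(P)$ contains points in directions arbitrarily close to the adjacent repelling directions and meets the adjacent repelling petals for all $n$. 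The standard repair is dynamical rather than geometric: once an orbit is trapped in the union of the $2d$ petals, if it never enters $\bigcup_{j\,\mathrm{odd}}P_j$ it must stay in the union of the repelling petals; since $f(P_k)\subset U_k(\cdot,\pi/d)$ for even $k$ and these sectors are pairwise disjoint, the orbit cannot pass from one repelling petal to another, so it remains in a single $P_k$ for all time, contradicting $\bigcap_m G_k^m\big(f(P_k)\big)=\emptyset$. Hence every $z$ in the given petal $P$ has an iterate in some attracting petal $P_{j(z)}$; $j(z)$ is locally constant (forward invariance of the $P_j$ plus continuity), hence constant $=j_0$ on the connected domain $P$; compactness of $K$ and forward invariance of $P_{j_0}$ then give one $N$ with $f^N(K)\subset P_{j_0}$, after which your Fatou-coordinate estimate ($\Re F_{j_0}^n\sim n$, imaginary drift $o(n)$) finishes the proof. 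Note also that $K$ itself need not be connected, so the single $j_0$ really must come from the connectedness of $P$, as above.

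A smaller point in (b): $\rho(t)\asymp t^{d/(d-1)}$ is exactly the critical growth, at which both $\overline{F_j(\Omega_j)}\subset\Omega_j$ and $\bigcap_n F_j^n(\Omega_j)=\emptyset$ hinge on the constants (near the far ends of $\partial\Omega_j$ one has $|w|\asymp\rho(|v|)$, and invariance requires $\rho'(t)\,\rho(t)^{-1/d}$ small). Any exponent strictly between $1$ and $d/(d-1)$ (for instance $\rho(t)=t^2$ when $d=1$) already makes $\partial\Omega_j$ asymptotic to $\Arg w=\pm\pi$, hence gives the full opening, while rendering both the invariance check and the emptiness of the intersection immediate; so the ``tension'' you flag is best resolved by moving strictly below the critical power, and the binding upper constraint is forward invariance, not the petal axiom $\bigcap_n f^n(P_j)=\emptyset$, which then comes for free.
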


See Figure~\ref{fig:petals}. 
\begin{figure}[ht!]
\includegraphics[width=0.4\textwidth]{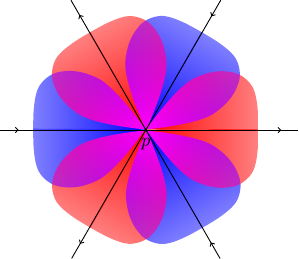}
\label{fig:petals}
\caption{\small Attracting and repelling petals of a holomorphic map at a parabolic fixed point $p$, with $d=3$ and $a=1$.}
\end{figure}

\begin{prop}\label{prop:par_petal2}
Let $P$ be an attracting petal of a holomorphic map $\ap$ at a parabolic fixed point $p$ of multiplier $1$ and order $d$. Then the following hold.
\begin{enumerate}[$($a$)$]
\item $\ap$ is univalent on $P$.
\item $\ap^n(z) \to p$ as $n \to \infty$ for $z \in P$.
\item 
For every compact set $K \subset P$ and $\delta > 0$ there exist $c_1, c_2 > 0$ and $n_0 \ge 0$, such that for every $z \in \bigcup_{n=n_0}^\infty \ap^n(K)$,
\[
c_1 |z-p|^{d+1} < |\ap(z) - z| < c_2|z-p|^{d+1}, \quad 
|\Arg(\ap(z) - z) - \Arg(p - z)| < \delta.
\]
\end{enumerate}

\end{prop}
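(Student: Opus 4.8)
The plan is to reduce everything to the standard Fatou coordinate (Leau--Fatou flower) picture and then transport the estimates back. First, part~(a): since $P$ is an attracting petal, by Definition~\ref{defn:par_petal} we have $\overline{\ap(P)} \subset P \cup \{p\}$ and $\bigcap_n \ap^n(P) = \emptyset$; univalence of $\ap$ on $P$ is part of the classical petal construction (Proposition~\ref{prop:par_petal} guarantees the existence of petals on which $\ap$ is univalent, and any attracting petal in the sense of our definition is contained in such a standard petal after finitely many iterates, so one argues that $\ap$ is injective on $P$ by pulling back). Concretely, I would invoke that after a change of coordinate $w = \phi(z) = c/(z-p)^d$ (with an appropriate branch and constant $c$), the map $\ap$ is conjugate to a map of the form $w \mapsto w + 1 + o(1)$ on a right half-plane $\{\re w > M\}$, on which it is visibly univalent; since a sufficiently small standard attracting petal corresponds to such a half-plane and $P$ is obtained from it by the dynamics, univalence on $P$ follows. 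For part~(b): on $\{\re w > M\}$ the conjugated map satisfies $\re \ap^n(w) \to +\infty$, hence $\ap^n(z) \to p$; for a general $z \in P$, some iterate $\ap^{n_0}(z)$ lands in a standard petal by Proposition~\ref{prop:par_petal}(a), so the conclusion propagates.

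The substance is part~(c). The idea is: in the $w$-coordinate, $\ap$ becomes $w \mapsto w + 1 + \eta(w)$ with $\eta(w) \to 0$ as $\re w \to +\infty$ (in fact $\eta(w) = O(w^{-1/d})$ or similar, but we only need $\eta \to 0$), and $w - p$-type quantities translate into powers of $w$. Explicitly, from $w = c(z-p)^{-d}$ we get $z - p = (c/w)^{1/d}$, so $|z-p| \asymp |w|^{-1/d}$ and $|z-p|^{d+1} \asymp |w|^{-(d+1)/d}$. On the other hand
\[
\ap(z) - z = \frac{dz}{dw}\big|_{w}\,(\ap\text{-increment in }w) + (\text{higher order}),
\]
and since $\frac{dz}{dw} = -\frac{1}{d}\,c^{1/d} w^{-(d+1)/d}$ while the increment in $w$ is $1 + \eta(w) \to 1$, we obtain $|\ap(z)-z| \asymp |w|^{-(d+1)/d} \asymp |z-p|^{d+1}$, with constants that become sharp as one goes deep into the petal. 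This gives the two-sided bound $c_1|z-p|^{d+1} < |\ap(z)-z| < c_2|z-p|^{d+1}$ once $n_0$ is large enough that $\ap^{n_0}(K)$ lies deep enough in the petal. For the argument estimate, note $\ap(z) - z \approx \frac{dz}{dw}\cdot 1 = -\frac{1}{d}c^{1/d}w^{-(d+1)/d}$ and $p - z = -(c/w)^{1/d} = -c^{1/d}w^{-1/d}$; hence $\frac{\ap(z)-z}{p-z} \to \frac{1}{d}\,w^{-1}\,\frac{1}{?}$—more carefully, the ratio is $\frac{1}{d}w^{-1}$ up to a unimodular-in-the-limit factor, and since $w$ ranges over a region asymptotic to a right half-plane, $\Arg(w^{-1}) \to 0$ is \emph{not} automatic; rather one uses that the relevant directions are controlled by Proposition~\ref{prop:par_petal}(b), which pins $\Arg(z-p)$ near $\theta_j$ and correspondingly $\Arg(w)$ near $0$. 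Thus $\Arg(\ap(z)-z) - \Arg(p-z) \to \Arg(w^{-1}) \to 0$, giving the required $<\delta$ for $n_0$ large.

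The main obstacle I anticipate is bookkeeping the two coordinate changes cleanly: the conjugating map $\phi$ is only a local biholomorphism near $p$ with a branch-point structure ($d$-to-$1$ before restricting to a single petal), and the error term $\eta$ in $w \mapsto w+1+\eta(w)$ must be controlled (boundedness and decay of $\eta$, and of $dz/dw$ relative to its leading term) uniformly on $\bigcup_{n \ge n_0}\ap^n(K)$. The cleanest route is probably to avoid the explicit Fatou coordinate and instead work directly: write $\ap(z) = z + a(z-p)^{d+1} + O(|z-p|^{d+2})$ from the very definition, which \emph{immediately} gives $|\ap(z)-z| = |a||z-p|^{d+1}(1+O(|z-p|))$ and $\Arg(\ap(z)-z) = \Arg(a) + (d+1)\Arg(z-p) + O(|z-p|)$. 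Then combine with $\Arg(p-z) = \Arg(z-p) + \pi$ and the fact (Proposition~\ref{prop:par_petal}(a),(b)) that on $\ap^n(K)$ for $n \ge n_0$ we have $\Arg(z-p) \to \theta_j = \frac{-\Arg a}{d} + \frac{\pi j}{d}$ with $j$ odd, so that $\Arg(a) + (d+1)\theta_j = \theta_j + \pi \pmod{2\pi}$ (using $\Arg a + d\theta_j = \pi j \equiv \pi$), hence $\Arg(\ap(z)-z) - \Arg(p-z) \to 0$. This bypasses the Fatou-coordinate machinery entirely and reduces~(c) to the Taylor expansion plus the petal-localization already recorded in Proposition~\ref{prop:par_petal}; that is the approach I would write up.
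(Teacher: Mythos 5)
Your final write-up plan for part (c) is exactly the paper's argument: the paper likewise uses Proposition~\ref{prop:par_petal} to place $\ap^n(K)$, $n\ge n_0$, inside $U_j(\varepsilon,\delta)$ with $j$ odd and $\varepsilon,\delta$ arbitrarily small, and then reads both estimates off the expansion $\ap(z)=z+a(z-p)^{d+1}+\cdots$; your computation that $\Arg a+d\theta_j\equiv\pi \pmod{2\pi}$ for odd $j$, whence $\Arg(\ap(z)-z)-\Arg(p-z)\to 0$ and $|\ap(z)-z|=|a|\,|z-p|^{d+1}(1+O(|z-p|))$, is precisely the content the paper compresses into ``follows directly from the properties of $G$ and $U_j(\varepsilon,\delta)$''. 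So you were right to abandon the Fatou-coordinate route for (c); the direct Taylor-plus-localization argument is what the authors intend.

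Two remarks on (a)--(b), which the paper dispatches in one line. For (a), your ``pull back injectivity'' step is not a valid deduction: knowing that $\ap$ (or some iterate) is injective on $\ap^{n_0}(P)$ or on a standard petal containing the deep part of the orbit says nothing about injectivity of $\ap$ on all of $P$. Likewise, the coordinate $w=c/(z-p)^d$ is $d$-to-$1$ near $p$, so it conjugates $\ap$ to $w\mapsto w+1+o(1)$ only after restricting to a sector of opening at most $2\pi/d$, and a petal in the sense of Definition~\ref{defn:par_petal} is only required to lie in a small neighbourhood of $p$, not in such a sector. The intended argument is much simpler: $\ap'(p)=1\neq 0$, so $\ap$ is univalent on a sufficiently small neighbourhood of $p$, and by Definition~\ref{defn:par_petal} the petal $P$ is contained in such a neighbourhood. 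For (b), your propagation via Proposition~\ref{prop:par_petal}(a) is fine (the paper instead invokes the Schwarz--Pick lemma together with $\overline{\ap(P)}\subset P\cup\{p\}$ and $\bigcap_{n}\ap^n(P)=\emptyset$); either way this part is routine.
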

\begin{proof} The assertions (a)--(b) follow directly from the definition of an attracting petal at a parabolic fixed point and the Schwarz--Pick lemma. To show (c), note that by Proposition~\ref{prop:par_petal}(c), for sufficiently large $n_0$ we have $\ap^n(K) \subset U_j (\varepsilon, \delta)$ for every $n \ge n_0$, where $j \in \{0, \ldots, 2d-1\}$ is an odd integer and $\varepsilon,\delta > 0$ can be chosen to be arbitrarily small. Hence, the assertion follows directly from the properties of $G$ and $U_j (\varepsilon, \delta)$. 
\end{proof}

\begin{rem}\label{rem:par}
Note that by elementary geometry, Proposition~\ref{prop:par_petal2} implies that for every compact set $K \subset P$ there exist $c > 0$ and $n_0 \ge 0$ such that 
\[
|\ap(z) - p| < |z - p|, \qquad |z - \ap(z)| \le c (|z-p| - |\ap(z)-p|)
\]
for every $z \in \bigcup_{n=n_0}^\infty \ap^n(K)$.
\end{rem}

Near a parabolic fixed (or periodic) point $p \in \C$ we consider a family of conformal metrics in $\C \setminus \{p\}$ given by
\[
d\sigma_{p,\alpha} = \sigma_{p,\alpha}(z)|dz| = \frac{|dz|}{|z-p|^\alpha}, \qquad \alpha \in [0, 1).
\]
Note that for $\alpha=0$ the metric coincides with the Euclidean one. 
Now we show that these metrics are locally contracting (resp.~expanding) in attracting (resp.~repelling) petals at a parabolic fixed point of multiplier $1$.

\begin{prop}[{\bf\boldmath Contraction properties in attracting petals at parabolic fixed points}{}] \label{prop:der-par}
Let $P$ be an attracting petal of a holomorphic map $\ap$ at a parabolic fixed point $p$ of multiplier $1$ and order $d \in \N$, let $K \subset P$ be a compact set and let $\alpha \in [0,1)$, $\varepsilon > 0$. Then there exists $n_0 \in \N$ such that for every $m \ge n_0$ one can find a sequence $(a_{m,n})_{n=0}^\infty$ of positive numbers, such that $\sum_{n=0}^\infty a_{m,n} < \infty$, $a_{m,0} = 1$, $\frac{a_{m, n + 1}}{a_{m,n}} = \frac{a_{m+1,n}}{a_{m+1, n-1}}$ for $n \in\N$, $1 > \frac{a_{m, n + 1}}{a_{m,n}} > \frac{a_{m,n}}{a_{m, n-1}} > 1 - \varepsilon$ for $n \in\N$ and
\[
|(\ap^n)'(z)|_{\sigma_{p,\alpha}} < a_{m,n} \quad \text{for every } z \in \ap^m(K), \; n \in \N.
\]

\end{prop}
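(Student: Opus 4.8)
The plan is to split the statement into a purely combinatorial reduction and a single sharp estimate for the local derivative of $\ap$ in the metric $d\sigma_{p,\alpha}$ along orbits in $P$. First I would observe that the relation $\frac{a_{m,n+1}}{a_{m,n}}=\frac{a_{m+1,n}}{a_{m+1,n-1}}$ together with $a_{m,0}=1$ forces the ratio $a_{m,n+1}/a_{m,n}$ to depend only on $m+n$; writing $\rho_k:=a_{k,1}$ one then has $a_{m,n}=\prod_{j=m}^{m+n-1}\rho_j$, with empty products equal to $1$. Under this substitution the requirements become: $(\rho_k)_{k\ge n_0}$ is increasing with values in $(1-\varepsilon,1)$, $\sum_{n\ge 0}\prod_{j=m}^{m+n-1}\rho_j<\infty$ for each $m\ge n_0$, and $\prod_{k=0}^{n-1}|\ap'(\ap^{m+k}(w))|_{\sigma_{p,\alpha}}<\prod_{j=m}^{m+n-1}\rho_j$ for $w\in K$ — the last line using the chain rule $|(\ap^n)'(z)|_{\sigma_{p,\alpha}}=\prod_{k=0}^{n-1}|\ap'(\ap^k(z))|_{\sigma_{p,\alpha}}$ and $z=\ap^m(w)$. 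So everything reduces to dominating $\sup_{w\in K}|\ap'(\ap^k(w))|_{\sigma_{p,\alpha}}$ by a conveniently chosen increasing sequence $\rho_k$ tending to $1$.

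\emph{Pointwise estimate and its rate.} From $\ap(z)=z+a(z-p)^{d+1}+\cdots$ (Definition~\ref{defn:par_petal}) one gets $\ap'(z)=1+(d+1)a(z-p)^d+o(|z-p|^d)$ and $|z-p|^\alpha/|\ap(z)-p|^\alpha=1-\alpha\,\Re(a(z-p)^d)+o(|z-p|^d)$, hence
\[
|\ap'(z)|_{\sigma_{p,\alpha}}=\frac{|z-p|^\alpha}{|\ap(z)-p|^\alpha}\,|\ap'(z)|=1+(d+1-\alpha)\,\Re(a(z-p)^d)+o(|z-p|^d).
\]
By Proposition~\ref{prop:par_petal}(a), all points of $\ap^n(K)$ with $n$ large lie in a single sector $U_j(\varepsilon,\delta)$ with $j$ odd, where $\Arg(a(z-p)^d)$ is within $d\delta$ of $\Arg a+d\theta_j=\pi j$, an odd multiple of $\pi$; so the main term is negative there, $|\ap'(z)|_{\sigma_{p,\alpha}}<1$ along the orbit, and it tends to $1$ as $z\to p$. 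For the precise rate I set $u_k=|\ap^k(w)-p|$: combining the size and argument estimates of Proposition~\ref{prop:par_petal2}(c) with Remark~\ref{rem:par} (letting $\delta\to0$, $c_1,c_2\to|a|$) gives $u_{k+1}=u_k-|a|u_k^{d+1}(1+o(1))$, whence $u_{k+1}^{-d}-u_k^{-d}\to d|a|$ and $u_k^d=\frac{1+o(1)}{d|a|k}$, all uniformly in $w\in K$; since also $\Arg(\ap^k(w)-p)\to\theta_j$ uniformly, $\Re(a(\ap^k(w)-p)^d)=-|a|u_k^d(1+o(1))=-\frac{1+o(1)}{dk}$, and substituting,
\[
\sup_{w\in K}\bigl|\ap'(\ap^k(w))\bigr|_{\sigma_{p,\alpha}}=1-\frac{\beta}{k}(1+o(1)),\qquad \beta:=\frac{d+1-\alpha}{d}=1+\frac{1-\alpha}{d}>1,
\]
the inequality $\beta>1$ using $\alpha<1$. (Alternatively, $u_k^d\sim 1/(d|a|k)$ and the recursion can be packaged through Lemma~\ref{lem:log_conv}, or one may derive it from a Fatou coordinate at $p$.)

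\emph{Construction.} Fix $c$ with $1<c<\beta$ and, using the last display, choose $n_0\in\N$ so large that $\sup_{w\in K}|\ap'(\ap^k(w))|_{\sigma_{p,\alpha}}<1-c/k$ and $1-c/k>1-\varepsilon$ for all $k\ge n_0$. Set $\rho_k:=1-c/k$ for $k\ge n_0$ and $a_{m,n}:=\prod_{j=m}^{m+n-1}\rho_j$ for $m\ge n_0$, $n\ge0$. Then $a_{m,0}=1$; the ratios $a_{m,n+1}/a_{m,n}=\rho_{m+n}$ satisfy the compatibility relation and the chain $1>\rho_{m+n}>\rho_{m+n-1}>1-\varepsilon$, since $(\rho_k)_{k\ge n_0}$ is increasing with values in $(1-\varepsilon,1)$; from $\ln\prod_{j=m}^{m+n-1}(1-c/j)\le-c\ln\frac{m+n}{m}$ we get $a_{m,n}\le(m/(m+n))^c$, so $\sum_n a_{m,n}<\infty$ because $c>1$; and for $z=\ap^m(w)\in\ap^m(K)$ the chain rule gives $|(\ap^n)'(z)|_{\sigma_{p,\alpha}}=\prod_{k=0}^{n-1}|\ap'(\ap^{m+k}(w))|_{\sigma_{p,\alpha}}<\prod_{j=m}^{m+n-1}\rho_j=a_{m,n}$. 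I expect the only real obstacle to be the rate computation in the second step: a crude treatment of the recursion for $u_k$ yields only $|\ap'|_{\sigma_{p,\alpha}}=1-O(1/k)$ with an uncontrolled constant, which need not be $>1$ and would then make $\sum_n a_{m,n}$ diverge — it is the identification of the exact exponent $\beta=(d+1-\alpha)/d$, together with $\beta>1$, that makes the geometric-type sum bound go through.
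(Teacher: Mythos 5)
Your proposal is correct and follows essentially the same route as the paper: estimate the one-step contraction $|\ap'(\ap^k(w))|_{\sigma_{p,\alpha}}<1-c/k$ with $c>1$ via the expansion of $\ap$ and the classical asymptotics $k|\ap^k(w)-p|^d\to\frac{1}{d|a|}$ (which the paper simply cites from Milnor's Lemma~10.1 rather than rederiving), and then take $a_{m,n}=\prod_{j=m}^{m+n-1}(1-c/j)$, whose ratios depend only on $m+n$ and whose sum converges because $c>1$.
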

\begin{proof} Suppose $\ap(z) = z + a (z-p)^{d+1} +\cdots$ for $z$ near $p$, where $a \in \C \setminus \{0\}$ and let $P$ be an attracting petal of $\ap$ at $p$. 
Take a compact set $K \subset P$. By Proposition~\ref{prop:par_petal}(c), for sufficiently large $n_0$ we have $\ap^k(K) \subset U_j (\varepsilon, \frac{\delta}{d})$ for every $k \ge n_0$, where $j \in \{0, \ldots, 2d-1\}$ is an odd integer and $\varepsilon,\delta > 0$ can be chosen to be arbitrarily small. Then for $w \in K$ and $k \ge n_0$, we have $\Arg(a(\ap^k(w)-p)^d) \in (\pi - \delta, \pi + \delta)$, so 
\begin{align*}
|\ap'(\ap^k(w))|_{\sigma_{p,\alpha}} &= \frac{|\ap^k(w)-p|^\alpha |1 + (d+1)a (\ap^k(w)-p)^d + \cdots|}{|\ap^k(w) - p + a(\ap^k(w)-p)^{d+1} + \cdots |^\alpha}\\
&= |1 + (d+1-\alpha)a(\ap^k(w)-p)^d + \cdots|\\
&< 1 - \beta |a| |\ap^k(w)-p|^d,
\end{align*}
where $\beta \in (d, d+1 - \alpha)$ is a fixed number, $n_0$ is chosen sufficiently large and $\varepsilon,\delta > 0$ are chosen sufficiently small. Fix a number $b \in (1, \frac{\beta}{d})$. By \cite[Lemma~10.1]{milnor}, the sequence $k|\ap^k-p|^d$ converges uniformly on $K$ to $\frac{1}{|a|d}$ as $k \to \infty$, so
\[
|\ap^k(w)-p|^d > \frac{b}{\beta |a|k},
\]
if $n_0$ is chosen sufficiently large. Hence,
\[
|\ap'(\ap^k(w))|_{\sigma_{p,\alpha}} < 1 - \frac{b}{k}
\]
for $w \in K$ and $k \ge n_0$. For $m \ge n_0$ define $a_{m,0} = 1$ and
\[
a_{m,n} = \prod_{k= m}^{n + m -1} \Big(1 - \frac{b}{k}\Big)
\]
for $n \in \N$. Then for $z \in \ap^m(K)$ and $n \in \N$, taking $w \in \ap^{-m}(z) \cap K$ we have
\[
|(\ap^n)'(z)|_{\sigma_{p,\alpha}} = \prod_{k= m}^{n+m-1} |(\ap'(\ap^k(w))|_{\sigma_{p,\alpha}} < a_{m,n}.
\]
By definition, $a_{m,n} >0$ and, for $n \in \N$,
\[
a_{m,n} < e^{-b \sum_{k=m}^{n+m-1}1/k} < \frac{2m^b}{(n+m)^b}
\]
if $n_0$ is chosen sufficiently large, so the series $\sum_{n=0}^\infty a_{m,n}$ is convergent, as $b > 1$. Furthermore,
\[
\frac{a_{m, n + 1}}{a_{m,n}} = 1 - \frac{b}{n+m}
\]
for $n \ge 0$, which implies
\[
\frac{a_{m, n + 1}}{a_{m,n}} = \frac{a_{m+1,n}}{a_{m+1, n-1}}, \qquad 1 > \frac{a_{m, n + 1}}{a_{m,n}}  > \frac{a_{m,n}}{a_{m, n-1}} > 1 - \varepsilon
\]
for $n \in \N$, if $n_0$ is chosen sufficiently large.
\end{proof}

Finally, we describe attracting and repelling petals at arbitrary parabolic periodic points. 

\begin{defn}[{\bf Cycle of attracting/repelling petals at a parabolic periodic point}{}]
Let $p$ be a parabolic fixed point of multiplier $1$ of $\f^\ell$, for a holomorphic map $f$ and $\ell \in \N$, and let $P$ be an attracting (resp.~repelling) petal of $\f^\ell$ at $p$.  
By a \emph{cycle} of length $\ell \in \N$ \emph{of attracting} (resp.~\emph{repelling$)$ petals} of $\f$ at $p$ \emph{generated by} $P$ we mean a pairwise disjoint union $P\cup f(P) \cup \ldots \cup\f^{\ell-1}(P)$ (resp.~$\f(P)\cup \ldots \cup\f^{\ell}(P)$).
\end{defn}

Suppose $p \in \C$ is a parabolic periodic point of $($minimal$)$ period $q \in \N$ of a $($nonlinear$)$ holomorphic map $\f$. Then $\f^q$ near $p$ has the form 
\[
\f^q(z) = p + e^{2\pi i \frac{k}{m}} (z-p) + \cdots
\]
for some $k \in \Z$ and $($minimal$)$ $m \in \N$, and $p$ is a parabolic fixed point of the map $\f^\ell$, where $\ell = qm$, of multiplier $1$ and order $d$ for some $d \in \N$. In this case Proposition~\ref{prop:par_petal}(b) implies the following. 

\begin{prop}[{\bf Attracting/repelling petals at a parabolic periodic point}{}]\label{prop:periodic_par_petal}
There exist $d$ attracting $($resp.~repelling$)$ petals $P_j$ with Jordan boundaries, of the map $\f^\ell$ at $p$, where $\ell = qm$, for odd $($resp.~even$)$ integers $j \in \{0, \ldots, 2d-1\}$, such that $P_j$ $($resp.~$\f^\ell(P_j))$ are pairwise disjoint, and for every $\delta \in (0, \frac{\pi}{d})$ one can find $\varepsilon > 0$ with $U_j (\varepsilon, \delta) \subset f^\ell(P_j) \subset P_j \subset U_j (\varepsilon, \frac{\pi}{d})$ $($resp.~$U_j (\varepsilon, \delta) \subset P_j \subset  f^\ell(P_j) \subset U_j (\varepsilon, \frac{\pi}{d}))$. Furthermore, $d$ is a multiple of $m$ and the set $\bigcup_j (P_j\cup f(P_i) \cup \ldots \cup\f^{q-1}(P_j))$ is contained in a disjoint union of $\frac{d}{m}$ cycles of length $\ell$ of attracting $($resp.~repelling$)$ petals of $\f$ at $p$, generated, respectively, by $\frac{d}{m}$ petals $P_j$.
 \end{prop}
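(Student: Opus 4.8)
The statement is essentially a bookkeeping consequence of Proposition~\ref{prop:par_petal}(b) applied to the high iterate $\f^\ell$, combined with the observation that the action of $\f$ permutes the petals in a way governed by the rotation number $k/m$. First I would set $\ell = qm$ and note that $p$ is a parabolic fixed point of $\f^\ell$ of multiplier $1$ and some order $d \in \N$ (this is exactly the normal form recalled just before the statement), so Proposition~\ref{prop:par_petal}(b) applies verbatim to $G := \f^\ell$ (in the attracting case) or to $F := \f^\ell$ (in the repelling case). This immediately produces the $d$ petals $P_j$ with Jordan boundaries, for the odd (resp.\ even) indices $j \in \{0,\dots,2d-1\}$, the disjointness of $P_j$ (resp.\ $\f^\ell(P_j)$), and the sandwich inclusions $U_j(\varepsilon,\delta) \subset f^\ell(P_j) \subset P_j \subset U_j(\varepsilon,\tfrac{\pi}{d})$ (resp.\ the reversed chain), where $\theta_j = -\tfrac{\Arg a}{d} + \tfrac{\pi j}{d}$ for the leading coefficient $a$ of $\f^\ell$ at $p$. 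So the first half of the statement is a direct quotation of the earlier proposition with $\f$ replaced by $\f^\ell$; nothing new is needed there.

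The substantive part is the last sentence: that $m \mid d$ and that the $\f$-orbits of the petals $P_j$ organise into $d/m$ disjoint cycles of length $\ell$. The key point is that $\f$ maps $\f^q$-attracting (resp.\ repelling) petals to $\f^q$-attracting (resp.\ repelling) petals, and that $\f$ acts near $p$ as a map with linear part $e^{2\pi i k/m}$, hence rotates the ``directions'' $\theta_j$ by $\tfrac{2\pi k}{m}$. I would argue as follows. Since $\f^q(z) = p + e^{2\pi i k/m}(z-p) + \cdots$ is a local biholomorphism fixing $p$ and conjugating $\f^\ell = (\f^q)^m$ to itself, $\f^q$ must permute the collection of attracting petals of $\f^\ell$ at $p$ described in Proposition~\ref{prop:par_petal}(b); more precisely, using the asymptotic description of the petals via the sectors $U_j(\varepsilon,\cdot)$ and the fact that $\f^q$ is asymptotically a rotation by $2\pi k/m$, the petal in direction $\theta_j$ is carried (after shrinking $\varepsilon$) into the petal in direction $\theta_j + \tfrac{2\pi k}{m}$, i.e.\ $\f^q$ sends $P_j$ essentially to $P_{j'}$ where $\theta_{j'} \equiv \theta_j + \tfrac{2\pi k}{m} \pmod{2\pi}$. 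Reading the relation $\theta_{j'} - \theta_j = \tfrac{\pi(j'-j)}{d} = \tfrac{2\pi k}{m}$ modulo $2\pi$ shows $j' - j \equiv \tfrac{2dk}{m} \pmod{2d}$; for this to define an honest permutation of the odd (resp.\ even) residues mod $2d$ we need $\tfrac{2dk}{m} \in \Z$, and since $\gcd(k,m)=1$ this forces $m \mid d$. The orbit of a single index $j$ under adding $\tfrac{2dk}{m}$ modulo $2d$ then has length $m$ (again because $\gcd(k,m)=1$), so the $d$ petals split into $d/m$ orbits of size $m$ under $\f^q$, i.e.\ under the $\ell = qm$-step return map each such orbit closes up; unwinding, the union $\bigcup_j\big(P_j \cup \f(P_j) \cup \cdots \cup \f^{q-1}(P_j)\big)$ breaks into $d/m$ sets, each of which is a cycle of length $\ell$ of attracting (resp.\ repelling) petals of $\f$ generated by a representative $P_j$, and disjointness of the pieces within a cycle follows from the pairwise disjointness of the $P_j$ (resp.\ $\f^\ell(P_j)$) in Proposition~\ref{prop:par_petal}(b) together with the definition of a cycle.

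**Main obstacle.** The only genuinely delicate step is making precise the claim that ``$\f^q$ permutes the petals by rotating their directions by $2\pi k/m$''. The petals $P_j$ are not literally sectors, only sandwiched between sectors $U_j(\varepsilon,\delta)$ and $U_j(\varepsilon,\tfrac{\pi}{d})$, and $\f^q$ is only asymptotically a rotation, so one must check that, after passing to a sufficiently small $\varepsilon$ and to a high enough forward iterate of a compact exhausting piece, $\f^q(P_j)$ lands inside the sector $U_{j'}(\varepsilon',\tfrac{\pi}{d})$ with the correct index and contains $U_{j'}(\varepsilon',\delta')$ — which is exactly the content of Proposition~\ref{prop:par_petal}(a) applied to the compact pieces of the petal, combined with the observation that $\f^q$ moves the sector $U_j$ to (approximately) $U_{j'}$. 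Once this identification of indices is in hand, the divisibility $m\mid d$ and the cycle structure are pure elementary number theory about the map $r \mapsto r + \tfrac{2dk}{m}$ on $\Z/2d\Z$. Everything else is a transcription of Proposition~\ref{prop:par_petal} for the iterate $\f^\ell$.
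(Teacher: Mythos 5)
Your proposal is correct and takes essentially the same route as the paper: the paper also obtains the petals by applying Proposition~\ref{prop:par_petal}(b) to the iterate $\f^\ell$ and leaves the divisibility $m\mid d$ and the cycle structure (coming from the rotational action of $\f^q$, whose linear part is $e^{2\pi i k/m}$) to the classical references, so you are in effect filling in the details the paper cites. One small repair in your number-theoretic step: integrality of $2dk/m$ together with $\gcd(k,m)=1$ only yields $m\mid 2d$; to get $m\mid d$ you must use that the shift $j\mapsto j+2dk/m$ maps odd (attracting) indices to odd indices, i.e.\ that $2dk/m$ is an \emph{even} integer --- which is exactly what your requirement of an honest permutation of the odd residues provides (equivalently, one can read $m\mid d$ off the conjugation invariance of the leading coefficient, $a e^{-2\pi i kd/m}=a$), so the conclusion stands.
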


For details, see e.g.~\cite{carlesongamelin}.


\section{Attracting and repelling petals at infinity} \label{sec:petals_at_inf}

In analogy to the properties of attracting/repelling petals at parabolic fixed points described in Proposition~\ref{prop:par_petal2}, we introduce a notion of attracting/repelling petals at infinity of holomorphic maps defined on unbounded domains. 

\begin{defn}[{\bf Attracting/repelling petal at infinity}{}]\label{defn:petal_at_inf} 
Let $P \subset \C$ be an unbounded simply connected domain and let $\ap\colon P \to \C$ be a holomorphic map extending to a continuous map from $\overline{P}$ into $\C$. We call the domain $P$ an \emph{attracting petal of $\ap$ at infinity}, if 
\begin{enumerate}[$($a$)$]
\item $\overline{\ap(P)} \subset P$,
\item $\bigcap_{n=0}^\infty \ap^n(P) = \emptyset$,
\item for every compact set $K \subset P$ there exist $c_1, c_2 > 0$, $0< \delta < \frac{\pi}{2}$, $n_0 \in \N$ and a non-increasing logarithmically convex function $g\colon (t_0, +\infty) \to \R_+$, $t_0 > 0$, with $\{|z| : z\in \bigcup_{n=n_0}^\infty \ap^n(K)\} \subset (t_0, +\infty)$, such that
\[
c_1 g(|z|) < |\ap(z) - z| < c_2 g(|z|), \qquad 
|\Arg(\ap(z) - z) -\Arg(z)| < \delta
\]
for $z \in \bigcup_{n=n_0}^\infty \ap^n(K)$.
\end{enumerate}
See Figure~\ref{fig:attr_petal_at_inf}.
\begin{figure}[ht!]
\includegraphics[width=0.4\textwidth]{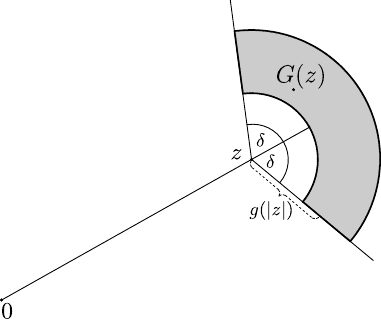}
\caption{\small Location of $\ap(z)$ with respect to $z$ in an attracting petal of $\ap$ at infinity.}\label{fig:attr_petal_at_inf}
\end{figure}

An unbounded simply connected domain $P \subset \C$ is a \emph{repelling petal at infinity} of a holomorphic map $\rp\colon P \to \C$, if $\rp$ is univalent and $\rp(P)$ is an attracting petal at infinity of the map $\ap = \rp^{-1}$.

We also say that $P$ is an attracting/repelling petal at the point $p = \infty$. 
\end{defn}

\begin{rem} Typical examples of logarithmically convex functions $g$ which can be used in Definition~\ref{defn:petal_at_inf}(c) are: 
\begin{alignat*}{2}
g(t) &= 1,\\
g(t) &= \frac{1}{t^a}, &&a > 0, \\
g(t) &= e^{a / t^b}, &\quad &a, b > 0,\\
g(t) &= e^{-a t^b}, &&a > 0, \ 0 < b \le 1.
\end{alignat*}
\end{rem}

Analogously to the properties described in Remark~\ref{rem:par}, the following hold.

\begin{prop}\label{prop:attr_petal} Let $P \subset \C$ be an attracting petal at infinity of a map $\ap$. Then the following hold.
\begin{enumerate}[$($a$)$]
\item $\ap^n(z) \to \infty$ as $n \to \infty$ for $z \in P$.
\item 
For every compact set $K \subset P$ there exist $c > 0$ and $n_0 \ge 0$ such that
\[
|z| < |\ap(z)| \le |z| + c, \qquad |\ap(z) - z| \le c (|\ap(z)| - |z|).
\]
for every $z \in \bigcup_{n=n_0}^\infty \ap^n(K)$. 
\end{enumerate}
\end{prop}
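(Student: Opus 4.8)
The plan is to derive both assertions of Proposition~\ref{prop:attr_petal} from the structural information in Definition~\ref{defn:petal_at_inf}, essentially by the same elementary-geometry argument that produces Remark~\ref{rem:par} from Proposition~\ref{prop:par_petal2}. Fix a compact set $K \subset P$ and let $c_1, c_2, \delta, n_0, g$ be as in Definition~\ref{defn:petal_at_inf}(c), so that for every $z$ in the forward orbit tail $O_K := \bigcup_{n=n_0}^\infty \ap^n(K)$ the displacement vector $\ap(z)-z$ has length comparable to $g(|z|)$ and argument within $\delta < \pi/2$ of $\Arg(z)$. The point is that a vector pointing roughly in the same direction as $z$ itself moves $z$ essentially radially outward: writing $\ap(z) = z + (\ap(z)-z)$ and projecting onto the direction of $z$, the condition $|\Arg(\ap(z)-z)-\Arg(z)| < \delta$ gives $\Re\big(\overline{z}(\ap(z)-z)\big) = |z|\,|\ap(z)-z|\cos(\text{angle}) \ge |z|\,|\ap(z)-z|\cos\delta > 0$, hence $|\ap(z)|^2 = |z|^2 + 2\Re(\overline z(\ap(z)-z)) + |\ap(z)-z|^2 > |z|^2$, which already yields $|\ap(z)| > |z|$.

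For the quantitative bound $|\ap(z)| \le |z| + c$, I would use that $g$ is non-increasing, so $g(|z|) \le g(|w|)$ for any fixed $w \in \ap^{n_0}(K)$ with minimal modulus on the compact set $\ap^{n_0}(K)$ — more precisely, since $O_K \subset \{|z| > t_0\}$ and $g$ is non-increasing, $g(|z|) \le \sup_{|w| = \inf_{O_K}|\cdot|} g$, a finite constant; therefore $|\ap(z) - z| < c_2 g(|z|)$ is uniformly bounded above by some constant, call it $c$, on $O_K$, and the triangle inequality gives $|\ap(z)| \le |z| + |\ap(z)-z| \le |z| + c$. For the comparison $|\ap(z)-z| \le c\,(|\ap(z)|-|z|)$, I would combine the two estimates obtained above: from $|\ap(z)|^2 - |z|^2 \ge 2|z|\,|\ap(z)-z|\cos\delta$ and $|\ap(z)| + |z| \le 2|z| + c \le (2 + c/t_0)|z| \le C|z|$ (using $|z| > t_0$), we get $|\ap(z)| - |z| = \frac{|\ap(z)|^2-|z|^2}{|\ap(z)|+|z|} \ge \frac{2|z|\cos\delta}{C|z|}|\ap(z)-z| = \frac{2\cos\delta}{C}|\ap(z)-z|$, which is the desired inequality after renaming constants (enlarging $c$ if necessary to absorb all the constant factors). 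Part (b) then follows, and part~(a) is immediate: by (b) the sequence $|\ap^n(z)|$ is strictly increasing, and if it were bounded it would converge to some finite limit $r$; but then along that orbit $|\ap^{n+1}(z)| - |\ap^n(z)| \to 0$, while by (b) this difference is $\ge \frac{1}{c}|\ap^{n+1}(z)-\ap^n(z)| \ge \frac{c_1}{c} g(|\ap^n(z)|) \to \frac{c_1}{c} g(r) > 0$, a contradiction; hence $\ap^n(z) \to \infty$.

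One subtlety to handle carefully is that the hypotheses of Definition~\ref{defn:petal_at_inf}(c) are stated only for $z \in O_K = \bigcup_{n=n_0}^\infty \ap^n(K)$, whereas the conclusion of Proposition~\ref{prop:attr_petal} is also stated for $z \in \bigcup_{n=n_0}^\infty \ap^n(K)$ with possibly a different $n_0$; so I should be careful to take the $n_0$ in the conclusion to be (at least) the one furnished by (c), and observe that $O_K$ is forward-invariant under $\ap$ in the sense that $\ap(O_K) \subset O_K$, so that when I apply the displacement estimate to a point $\ap^n(z)$ in an orbit it indeed lies in the set where the estimate is valid. I expect the main (very mild) obstacle to be purely bookkeeping: organizing the constants so that a single $c$ and a single $n_0$ work for both inequalities in (b) simultaneously, and making sure the monotonicity/boundedness argument for (a) only uses points in $O_K$. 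There is no deep difficulty here — it is the ``at infinity'' analogue of Remark~\ref{rem:par}, with $|z-p|^{d+1}$ replaced by $g(|z|)$ and ``$\to p$'' replaced by ``$\to \infty$'', and the role of the angular condition $|\Arg(\ap(z)-z) - \Arg(p-z)| < \delta$ played by $|\Arg(\ap(z)-z) - \Arg(z)| < \delta$.
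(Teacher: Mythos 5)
Your proposal is correct and follows essentially the same route as the paper, which simply asserts that (a) and the estimate $|\ap(z)-z|\le c(|\ap(z)|-|z|)$ follow from Definition~\ref{defn:petal_at_inf} and elementary geometry, and that the bound $|z|<|\ap(z)|\le|z|+c$ then follows from $g$ being non-increasing; your projection/angle computation, the uniform bound on $g$ along forward orbits, and the contradiction argument for (a) are exactly the details being left implicit there. The only cosmetic difference is that you derive the comparison $|\ap(z)-z|\le c(|\ap(z)|-|z|)$ via $|\ap(z)|^2-|z|^2$ (which needs the upper bound $|\ap(z)|+|z|\le C|z|$ first), whereas the more direct projection inequality $|\ap(z)|-|z|\ge\cos\delta\,|\ap(z)-z|$ gives it immediately; both are fine.
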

\begin{proof}The statement (a) and the second estimate in (b) follow directly from Definition~\ref{defn:petal_at_inf} and elementary geometry. Together with the fact that $g$ is non-increasing, this implies $|z| < |\ap(z)| \le |z| + c$ for a suitable $c > 0$. 
\end{proof}

Examples of attracting and repelling petals at infinity are presented in the following proposition.

\begin{prop}\label{prop:example_petal_inf} Let
\[
V_j(r, \delta,d,a) = \left\{z \in \C: \Arg(z) \in (\theta_j - \delta, \theta_j + \delta),\, |z| > r\right\},
\]
where $r, \delta > 0$, $j \in \{0, \ldots, 2d-1\}$, $d \in \N$, and
\[
\theta_j = \frac{\Arg(a)}{d} + \frac{\pi j}{d}
\]
for $a \in \C \setminus \{0\}$. 
Suppose $P \subset \C$ is an unbounded simply connected domain and $\ap\colon P \to P$ is a holomorphic map extending continuously to $\overline{P}$, such that $\overline{\ap(P)} \subset P \subset V_j(r, \delta,d,a)$ for some $0 < \delta < \frac{\pi}{d}$, a large number $r > 0$ and an even integer $j \in \{0, \ldots, 2d-1\}$, where 
\[
\ap(z) = z + \frac{a}{z^{d-1}} + o\left(\frac{1}{|z|^{d-1}}\right) \qquad \text{for }\; z \in P \quad \text{as } |z| \to \infty. 
\]
Then $P$ is an attracting petal of $\ap$ at infinity.

Analogously, if $\rp\colon P \to \C$ is a univalent map such that $F^{-1}$ extends continuously to $\overline{F(P)}$ and $\overline{P} \subset \rp(P) \subset V_j(r, \delta,d,a)$ for some $0 < \delta < \frac{\pi}{d}$, a large number $r > 0$ and an odd integer $j \in \{0, \ldots, 2d-1\}$, where 
\[
\rp(z) = z + \frac{a}{z^{d-1}} + o\left(\frac{1}{|z|^{d-1}}\right)\qquad \text{for }\; z \in P \quad \text{as } |z| \to \infty,
\]
then $P$ is a repelling petal of $\rp$ at infinity.
\end{prop}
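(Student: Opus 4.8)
The plan is to verify directly the three conditions (a)--(c) of Definition~\ref{defn:petal_at_inf} for the attracting case; the repelling case then follows by applying the attracting case to $\ap = \rp^{-1}$ on $\rp(P)$, once we check that $\rp^{-1}$ has an expansion of the same type (this uses that the inverse of $z \mapsto z + a z^{-(d-1)} + o(|z|^{-(d-1)})$ is $w \mapsto w - a w^{-(d-1)} + o(|w|^{-(d-1)})$, with the sector $V_j$ for an even index replaced by the ``opposite'' sector for an odd index, exactly matching the parity bookkeeping in Proposition~\ref{prop:par_petal}). Conditions (a) and (b) are essentially assumed: (a) is the hypothesis $\overline{\ap(P)} \subset P$, and for (b) one notes that $\ap(z) - z = a z^{-(d-1)} + o(|z|^{-(d-1)}) \to 0$ uniformly on compact subsets of $\overline{P} \cup\{\infty\}$ that avoid... — more precisely, by Proposition~\ref{prop:attr_petal}(a) (whose proof only uses (c)) the forward orbit of any point escapes to infinity, so $\bigcap_n \ap^n(P)$ can contain no finite point; that it is empty then follows since $\infty \notin P$. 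So the real content is condition (c).

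For (c), fix a compact $K \subset P$. The key geometric input is that $P$ lies inside the sector $V_j(r,\delta,d,a)$ with $0 < \delta < \pi/d$ and $j$ \emph{even}, so that $\Arg(z) \in (\theta_j - \delta, \theta_j + \delta)$ with $\theta_j = \Arg(a)/d + \pi j/d$; hence $\Arg(a z^{-(d-1)}) = \Arg(a) - (d-1)\Arg(z)$ stays within $\delta'$ of $\Arg(a) - (d-1)\theta_j = \Arg(z) - \pi j$, and since $j$ is even this is within $\delta'$ of $\Arg(z)$ itself (mod $2\pi$). Consequently the dominant term $a z^{-(d-1)}$ of $\ap(z) - z$ points, up to a small angular error, in the same direction as $z$, and the $o(|z|^{-(d-1)})$ remainder does not spoil this for $|z|$ large; this gives $|\Arg(\ap(z) - z) - \Arg(z)| < \delta$ on the part of $P$ far enough out, in particular on $\bigcup_{n \ge n_0} \ap^n(K)$ for suitable $n_0$ (using Proposition~\ref{prop:attr_petal}(a), or rather re-deriving escape directly here). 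Simultaneously $|a z^{-(d-1)}| \le |\ap(z)-z| + o(|z|^{-(d-1)})$ and the reverse give $c_1 |z|^{-(d-1)} < |\ap(z)-z| < c_2|z|^{-(d-1)}$, so we may take $g(t) = t^{-(d-1)}$ if $d \ge 2$, and $g(t) \equiv 1$ (with the constant $a$ absorbed into $c_1, c_2$) if $d = 1$. In either case $g$ is positive, non-increasing on $(t_0,+\infty)$ for $t_0 > 0$, and logarithmically convex since $\ln g(t) = -(d-1)\ln t$ is convex (or constant), which is exactly what the definition requires.

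The main obstacle — though it is more bookkeeping than difficulty — is making the angular estimate uniform on the \emph{whole} infinite union $\bigcup_{n \ge n_0}\ap^n(K)$ rather than merely ``for $|z|$ large enough''. This is handled by first enlarging $r$ if necessary so that all the asymptotic estimates $\ap(z) = z + a z^{-(d-1)} + o(|z|^{-(d-1)})$ hold with the desired precision throughout $P \cap \{|z| > r\}$, and then observing that $\ap(P) \subset P$ together with the escape property forces $\ap^n(K)$ to eventually lie in $\{|z| > r\}$: indeed $\overline{\ap(P)}\subset P$ is compactly contained, and combined with condition (b) (no common finite limit point) one gets $\ap^n(K) \to \infty$ uniformly, so there is $n_0$ with $\ap^n(K) \subset P \cap \{|z|>r\}$ for all $n \ge n_0$. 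On that set all estimates apply simultaneously, yielding the constants $c_1, c_2, \delta, n_0$ and the function $g$ required by Definition~\ref{defn:petal_at_inf}(c). Finally, for the repelling statement, one records that $\rp^{-1}$ is well-defined and univalent on $\rp(P)$ (by hypothesis $\rp$ is univalent), that $\overline{P} \subset \rp(P)$ gives $\overline{\rp^{-1}(\rp(P))} = \overline{P} \subset \rp(P)$, i.e. condition (a) for $\ap = \rp^{-1}$, and that inverting the series shows $\rp^{-1}$ satisfies an expansion of the same shape over the odd-index sector, which the attracting case then converts into an even-index sector for $\rp^{-1}$; applying the attracting part of the proposition to $\rp^{-1}$ on $\rp(P)$ completes the argument.
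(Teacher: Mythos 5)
Your plan for condition (c) of Definition~\ref{defn:petal_at_inf} has a genuine gap at its central step. Writing $\Arg(z)=\theta_j+t$ with $|t|<\delta$, the identity you invoke gives $\Arg\bigl(a z^{-(d-1)}\bigr)\equiv \theta_j-(d-1)t \pmod{2\pi}$ (using that $j$ is even), so the angle between the drift $\ap(z)-z$ and the radial direction $\Arg(z)$ is about $d\,|t|$ plus a small error; this can be arbitrarily close to $d\delta$, which is only bounded by $\pi$, not by $\pi/2$ as Definition~\ref{defn:petal_at_inf}(c) requires. Your assertion that the dominant term ``points, up to a small angular error, in the same direction as $z$'' silently replaces $\Arg(z)$ by the central direction $\theta_j$; near the edges of the sector it is false (for $d=1$ with $\delta$ close to $\pi$, a point near the edge drifts in a direction almost opposite to $z$, and $|\ap(z)|<|z|$ there). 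Enlarging $r$ and taking $|z|$ large only shrinks the $o(|z|^{-(d-1)})$ term; it does nothing to shrink $|t|$, so the estimate does not hold ``on the part of $P$ far enough out'', and your restriction to $\bigcup_{n\ge n_0}\ap^n(K)$ is not justified by anything in the plan. The missing idea is exactly the dynamical content of the paper's proof: reduce to the model case $d=a=1$ by $w=z/a$, resp.\ $w=bz^d$, where $\ap$ becomes $w\mapsto w+1+o(1)$; exhibit an invariant region $V'$ on which $\Re$ increases by at least $\tfrac12$ per iterate; conclude that $\ap^n(K)$ is eventually contained in the narrow sector $V_0(r,\tfrac{\pi}{4})$, and only there obtain the angle bound (of size about $\tfrac{\pi}{3}<\tfrac{\pi}{2}$). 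Some absorption-into-a-narrower-subsector argument of this kind is indispensable; without it the verification of (c) collapses.

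A secondary flaw: your justification of condition (b), $\bigcap_{n\ge0}\ap^n(P)=\emptyset$, is a non sequitur. That intersection consists of points admitting $n$-th preimages in $P$ for every $n$; escape to infinity of all forward orbits says nothing about it (for instance $\ap(z)=2z$ on a half-plane has every forward orbit escaping while the intersection of the forward images is the whole half-plane). In the present setting (b) is true, but it must come from the drift and the sector geometry (in model coordinates the $n$-th preimage of a putative point of the intersection lies near that point translated by $-n$, hence eventually outside the sector since $\delta<\pi$), not from Proposition~\ref{prop:attr_petal}(a). The remaining ingredients of your proposal --- the choice $g(t)=t^{-(d-1)}$ (or $g\equiv1$ for $d=1$), the two-sided bound $c_1g(|z|)<|\ap(z)-z|<c_2g(|z|)$, and the parity/sector bookkeeping reducing the repelling case to the attracting one for $\ap=\rp^{-1}$ with parameter $-a$ --- do agree with the paper's argument.
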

\begin{proof} 
We proceed as in the case of parabolic fixed points (see e.g.~\cite[Chapter~II.5]{carlesongamelin}). Consider first the case $d= a=1$. Then, given a compact set $K \subset P$, we have $K \subset V_0(r, \delta,d,a)$, where $0 < \delta < \pi$ and 
\[
G(z) = z + 1 + o(1) \qquad \text{for } z \in P \quad \text{as } |z| \to \infty. 
\]
Assuming $r$ sufficiently large, we see that $K \subset V'$, where
\[
V' = \{z \in \C \setminus \{r'\}: \Arg(z-r') \in (-\delta', \delta')\}
\]
for a large $r' > 0$ and $\delta < \delta' < \pi$. Then $\Re(\ap(z)) > \Re(z) + \frac 1 2$ and $|\Arg(\ap(z) - z)| < \min(\delta', \frac{\pi}{5})$ for $z \in P \cap V'$. This implies that $\ap(P \cap V') \subset P \cap V'$ and, consequently,  there exists $n_0 \in \N$ such that $\ap^n(K) \subset V_0(r, \frac{\pi}{4})$ for every $n \ge n_0$. Consequently,
\[
|\Arg(\ap(z) - z) - \Arg(z)| < \frac{\pi}{3} \qquad \text{for } z \in \ap^n(K)
\]
for every $n \ge n_0$ by the definition of $V_0(r, \frac{\pi}{4})$. This proves the second condition from Definition~\ref{defn:petal_at_inf}(c). Taking $g(t) = \frac{1}{t^{d-1}}$, we immediately obtain the first condition.

The case $a \ne 1$, $d = 1$ can be reduced to the previous one by a linear change of coordinates $w = \frac{z}{a}$. In the case $d > 1$, a change of coordinates $w = b z^d$ on $V_j(r, \delta,d,a)$ for a suitable $b \in \C$ reduces it to the case $d = 1$. 

To deal with the case of a repelling petal, it is sufficient to note that if $F$ is univalent with
\[
\rp(z) = z + \frac{a}{z^{d-1}} + o\left(\frac{1}{|z|^{d-1}}\right)
\]
on $P \subset \overline{P} \subset \rp(P) \subset V_j(r, \delta,d,a)$ for an odd integer $j \in \{0, \ldots, 2d-1\}$, then
\[
\ap(z) = \rp^{-1}(z) = z - \frac{a}{z^{d-1}} + o\left(\frac{1}{|z|^{d-1}}\right)
\]
on $F(P) \subset V_j(r, \delta,d, a) = V_{j'}(r, \delta,d,-a)$, where $j' = j \pm 1$ is an even integer in $\{0, \ldots, 2d-1\}$.
\end{proof}

In particular, Proposition~\ref{prop:example_petal_inf} provides the following example, which will be considered in detail in Section~\ref{sec:proofC}.

\begin{ex}
Let $f(z)=z-\tan z $, Newton's method for $\sin z$. Then the half-planes $P_\pm = \{z \in \C: \pm \Im(z) > M\}$ for sufficiently large $M > 0$ are repelling petals of $f$ at infinity. 
\end{ex}

In the further considerations, we will use the following lemma.

\begin{lem}\label{lem:new} Let $P \subset \C$ be an attracting petal at infinity of a map $\ap$. Consider a function $g$ from Definition~{\rm\ref{defn:petal_at_inf}} for a compact set $K \subset P$ and let $(t_n)_{n=1}^\infty$ be the sequence defined in Lemma~{\rm\ref{lem:log_conv}}. Then there exists $M \in \N$ such that for every $z \in K$ and $n \in \N$,
\begin{enumerate}[$($a$)$]
\item the interval $[t_n, t_{n+1}]$ contains less than $M$ numbers $|\ap^k(z)|$, $k \ge 0$,
\item the interval $\conv\{|\ap^n(z)|, |\ap^{n+1}(z)|\}$ contains less than $M$ numbers $t_k$, $k \ge 1$.
\end{enumerate}
\end{lem}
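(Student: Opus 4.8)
The plan is to compare the two (possibly very non-uniform) scales $\{|\ap^k(z)|\}$ and $\{t_k\}$ by using the growth estimates from Definition~\ref{defn:petal_at_inf}(c) together with the structure of the sequence $(t_n)$ coming from Lemma~\ref{lem:log_conv}. Fix a compact set $K \subset P$, and choose $n_0$, $c_1$, $c_2$, $\delta$ and $g$ as in Definition~\ref{defn:petal_at_inf}(c). First I would recall from Proposition~\ref{prop:attr_petal} that $|\ap^{k+1}(z)| > |\ap^k(z)|$ for all $k$, and that, using $|\Arg(\ap(z)-z)-\Arg(z)|<\delta<\pi/2$ together with $c_1 g(|z|) < |\ap(z)-z| < c_2 g(|z|)$, elementary geometry gives two-sided comparisons
\[
c_1' \, g(|\ap^k(z)|) \;<\; |\ap^{k+1}(z)| - |\ap^k(z)| \;<\; c_2' \, g(|\ap^k(z)|)
\]
for $k \ge n_0$, with constants $c_1', c_2' > 0$ depending only on $\delta, c_1, c_2$. (Here I use that $\cos\delta$ is bounded below and that the component of $\ap(z)-z$ in the $\Arg(z)$ direction dominates.) Since $g$ is non-increasing and, by Lemma~\ref{lem:log_conv}(c), the ratios $g(t_{k+1})/g(t_k)$ stay in a fixed interval $[q,1]$ with $q>0$ once $k$ is large, the function $g$ is essentially multiplicatively constant over any bounded number of consecutive $t$-steps; this is the key tool that lets a step of one scale be estimated by a bounded number of steps of the other.

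For part (a): suppose the interval $[t_n,t_{n+1}]$ contains the points $|\ap^{k}(z)|, |\ap^{k+1}(z)|, \ldots, |\ap^{k+j}(z)|$. For the indices at least $n_0$, each increment $|\ap^{i+1}(z)|-|\ap^i(z)|$ is bounded below by $c_1' g(|\ap^i(z)|) \ge c_1' g(t_{n+1})$, since $g$ is non-increasing and $|\ap^i(z)| \le t_{n+1}$. Summing the increments over $i$ from $k$ to $k+j-1$ gives $t_{n+1}-t_n \ge j\, c_1' g(t_{n+1})$. On the other hand $t_{n+1}-t_n = g(t_n)$, and $g(t_n)/g(t_{n+1})$ is bounded above by a constant (by Lemma~\ref{lem:log_conv}(c), for $n$ large this ratio tends to $1$; for the finitely many small $n$ it is some finite number, and $t_n$ stays in the fixed compact range $[t_1,\,\cdot\,]$ only finitely often). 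Hence $j$ is bounded by a constant independent of $z$ and $n$. The finitely many indices $i < n_0$ contribute at most $n_0$ extra points $|\ap^i(z)|$ to any interval, so enlarging the bound by $n_0$ handles those. This yields the uniform bound $M$ in (a).

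For part (b): suppose $\conv\{|\ap^n(z)|, |\ap^{n+1}(z)|\} = [|\ap^n(z)|, |\ap^{n+1}(z)|]$ contains $t_m, t_{m+1}, \ldots, t_{m+j}$. If $n \ge n_0$, the length of this interval is $|\ap^{n+1}(z)|-|\ap^n(z)| < c_2' g(|\ap^n(z)|) \le c_2' g(t_m)$, while each sub-step $t_{i+1}-t_i = g(t_i) \ge g(t_{m+j}) $, and the ratio $g(t_m)/g(t_{m+j})$ is bounded above by a constant for a bounded number $j$ of steps (again by Lemma~\ref{lem:log_conv}(c): the ratios $g(t_{i+1})/g(t_i)$ converge to $1$, hence lie in a fixed $[q_0,1]$, $q_0>0$, for $i$ large, so $g(t_m)/g(t_{m+j}) \le q_0^{-j}$, which combined with $j \cdot g(t_{m+j}) < c_2' g(t_m) \le c_2' q_0^{-j} g(t_{m+j})$ forces $j$ to be at most the largest integer with $j \le c_2' q_0^{-j}$, a finite bound). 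Summing $t_{m+j}-t_m = \sum_{i=m}^{m+j-1} g(t_i) \le j\, g(t_m)$ together with $t_{m+j}-t_m \le |\ap^{n+1}(z)|-|\ap^n(z)| < c_2' g(t_m)$ confirms the bound on $j$ directly. For the finitely many $n < n_0$, the values $|\ap^n(z)|$ range over a compact subset of $(t_0,\infty)$ as $z$ ranges over $K$, hence the intervals $[\,|\ap^n(z)|,|\ap^{n+1}(z)|\,]$ are contained in a fixed compact interval and contain a bounded number of $t_k$'s (each gap $t_{k+1}-t_k$ is bounded below on compacts since $g$ is continuous and positive). Taking the maximum of all these bounds gives the uniform $M$ in (b).

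The main obstacle is the non-uniformity: both $g$ and the sequence $(t_n)$ depend on the compact set $K$ in a way that is only controlled asymptotically, so the argument has to be split into the ``large $n$'' regime, where Lemma~\ref{lem:log_conv}(b)--(c) provides the needed quasi-invariance of the ratios, and the ``small $n$'' regime, where one falls back on compactness and continuity of $g$. Keeping the constants genuinely independent of $z \in K$ and of $n$ — rather than of $|\ap^n(z)|$ — is the delicate bookkeeping point; it works precisely because $K$ is compact, $\ap$ is continuous on $\overline P$, and the estimates in Definition~\ref{defn:petal_at_inf}(c) are uniform over the forward orbit of $K$.
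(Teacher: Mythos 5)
Your part (a) is correct and is essentially the paper's own argument: each increment $|\ap^{i+1}(z)|-|\ap^i(z)|$ is bounded below by a constant times $g(|\ap^i(z)|)\ge g(t_{n+1})$ (Definition~\ref{defn:petal_at_inf}(c) plus Proposition~\ref{prop:attr_petal}(b) and monotonicity of $g$), summing gives $g(t_n)=t_{n+1}-t_n\ge j\,c_1'\,g(t_{n+1})$, and the ratio $g(t_n)/g(t_{n+1})$ is uniformly bounded because $g(t_{n+1})/g(t_n)$ is non-decreasing by Lemma~\ref{lem:log_conv}(c); adding $n_0$ for the initial indices is fine.

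Part (b), however, has a genuine gap in the main regime $n\ge n_0$. A minor point first: since $t_m\ge|\ap^n(z)|$ and $g$ is non-increasing, you have $g(|\ap^n(z)|)\ge g(t_m)$, not $\le$; this is repairable by comparing with $t_{m-1}<|\ap^n(z)|$ at the cost of one extra ratio. The real problem is the concluding step. From $j\,g(t_{m+j})< C\,g(t_m)$ and $g(t_m)/g(t_{m+j})\le q_0^{-j}$ with $q_0\in(0,1)$ you deduce $j\le C\,q_0^{-j}$ and claim this bounds $j$; but $q_0^{-j}=(1/q_0)^j$ grows exponentially in $j$, so this inequality is satisfied by all sufficiently large $j$ and yields no bound whatsoever. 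Similarly, the sentence combining $t_{m+j}-t_m\le j\,g(t_m)$ with $t_{m+j}-t_m< C\,g(t_m)$ pairs two upper bounds on the same quantity and confirms nothing. The paper's proof closes precisely this loophole: instead of bounding every gap $g(t_i)$ by the smallest one, it fixes $q$ in terms of the constant $c$ from Proposition~\ref{prop:attr_petal} (namely $q=1/(1+c/2)$, so that the full geometric series $q+q^2+\cdots=q/(1-q)=2/c$ strictly exceeds the available bound $1/c$), uses that $g(t_{k+1})/g(t_k)\to 1$ to guarantee $g(t_{k+1})/g(t_k)\ge q$ for $k\ge k_0$, and then compares the sum of ratios: the telescoping identity together with $g(|\ap^n(z)|)\ge c\,(|\ap^{n+1}(z)|-|\ap^n(z)|)$ gives $\sum_{i=1}^{N-1} g(t_{k_0+i})/g(t_{k_0})\le 1/c$, while the ratio bound gives $\sum_{i=1}^{N-1} g(t_{k_0+i})/g(t_{k_0})\ge q+\cdots+q^{N-1}$. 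This forces $q^{N-1}\ge 1/2$, hence $N\le 1-\ln 2/\ln q$. Your argument can be repaired along the same lines (choose $q_0$ close enough to $1$ that $q_0/(1-q_0)$ exceeds your $c_2'$, and sum the geometric series of ratios rather than using the worst gap), but as written the bound on $j$ in (b) does not follow.
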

\begin{proof} Take $n_0$ satisfying the conditions of Definition~\ref{defn:petal_at_inf} and Proposition~\ref{prop:attr_petal}, chosen for the set $K$. To show (a), suppose $[t_n, t_{n+1}]$ contains $N$ numbers $|\ap^k(z)|$, $k \ge 0$, for some $z \in K$ and $N > n_0+1$. Then $[t_n, t_{n+1}]$ contains at least $N - n_0$ numbers $|\ap^k(z)|$, $k \ge n_0$, so by Definition~\ref{defn:petal_at_inf} and Proposition~\ref{prop:attr_petal}, there exist $k_0 \ge n_0$ and $c > 0$ such that
\[
t_n \le |\ap^{k_0}(z)| \le \cdots \le |\ap^{k_0+N-n_0-1}(z)| \le t_{n+1}
\]
and
\begin{align*}
g(t_n) &= t_{n+1} - t_n \ge |\ap^{k_0+N-n_0-1}(z)| - |\ap^{k_0}(z)|\\
&= |\ap^{k_0+N-n_0-1}(z)| - |\ap^{k_0+N-n_0-2}(z)| + \cdots + |\ap^{k_0+1}(z)| - |\ap^{k_0}(z)|\\ 
&\ge c(g(|\ap^{k_0+N-n_0-2}(z)|)+ \cdots + g(|\ap^{k_0}(z)|)) \ge c (N-n_0-1) g(t_{n+1}),
\end{align*}
so $\frac{g(t_{n+1})}{g(t_n)} \le \frac{1}{c (N-n_0-1)}$. By Lemma~\ref{lem:log_conv}, the sequence $\frac{g(t_{n+1})}{g(t_n)}$ is bounded from below by a positive constant, which implies that $N$ is bounded from above by some $M >0$ independent of $n \in \N$. 

To show (b), note that by Definition~\ref{defn:petal_at_inf} and Proposition~\ref{prop:attr_petal}, there exists $c > 0$ such that $g(|\ap^n(z)|) \ge c(|\ap^{n+1}(z)| - |\ap^n(z)|)$ for every $z \in K$ and $n \ge n_0$. Let 
\[
q = \frac{1}{1 + c/2}.
\]
Since $q < 1$ and the sequence $\frac{g(t_{k+1})}{g(t_k)}$ converges to $1$ by Lemma~\ref{lem:log_conv}, we can find $k_0 \in \N$ such 
\[
\frac{g(t_{k+1})}{g(t_k)} \ge q \qquad \text{for } k \ge k_0.
\]

Suppose $\conv\{|\ap^n(z)|, |\ap^{n+1}(z)|\}$ contains $N$ numbers $t_k$, $k \ge 1$, for some $z \in K$ and a large $N > n_0$. Since $\ap^n(K)$ is bounded and $t_k \to \infty$ as $k \to \infty$, we can assume $n \ge n_0$ and $|\ap^n(z)| \ge t_{k_0}$. Then by Definition~\ref{defn:petal_at_inf}, Proposition~\ref{prop:attr_petal} and the fact that the sequence $\frac{g(t_{n+1})}{g(t_n)}$ is non-decreasing by Lemma~\ref{lem:log_conv},
\[
t_{k_0}\le |\ap^n(z)| \le t_{k_0+1} \le \ldots \le t_{k_0+N} \le |\ap^{n+1}(z)|
\]
and
\begin{align*}
g(t_{k_0})&\ge g(|\ap^n(z)|)\\
&\ge c(|\ap^{n+1}(z)| - |\ap^n(z)|) \ge c(t_{k_0+N} - t_{k_0+1})\\
&= c(t_{k_0+N} - t_{k_0+N-1} + \cdots + t_{k_0+2} - t_{k_0+1} )\\ 
&= c(g(t_{k_0+1}) + \cdots + g(t_{k_0+N-1}))
\end{align*}
Consequently, we have
\begin{align*}
\frac{q}{2(1-q)} = \frac{1}{c} &\ge \frac{g(t_{k_0+1})}{g(t_{k_0})} + \cdots + \frac{g(t_{k_0 + N -1})}{g(t_{k_0})}\\
&= \frac{g(t_{k_0+1})}{g(t_{k_0})} + \cdots + \frac{g(t_{k_0+1})}{g(t_{k_0})} \cdots \frac{g(t_{k_0 + N -1})}{g(t_{k_0+N -2})}\\
&\ge \frac{g(t_{k_0+1})}{g(t_{k_0})} + \cdots + \Big(\frac{g(t_{k_0+1})}{g(t_{k_0})}\Big)^{N-1}\\
&\ge q + \cdots + q^{N-1} = q \frac{1-q^{N-1}}{1-q}.
\end{align*}
Hence,
\[
q^{N-1} \ge \frac{1}{2},
\]
so
\[
N \le -\frac{\ln 2}{\ln q} + 1.
\]
\end{proof}

Analogously to the case of petals at parabolic fixed points, we consider a family of conformal metrics given by
\[
d\sigma_\alpha = \sigma_\alpha(z) |dz| = \frac{|dz|}{|z|^\alpha}, \qquad \alpha > 1,
\]
for $z \in \C$ near infinity, which have some contracting (resp.~expanding) property inside attracting (resp.~repelling) petals at infinity.

The following theorem, showing the contracting/expanding properties of the metric $\sigma_\alpha$ on attracting/repelling petals at infinity, is one of the main tools used to prove the local connectivity of the boundaries of the Fatou components considered in this paper.

\begin{thm}[{\bf Contraction properties in attracting petals at infinity}{}]\label{thm:der-inf}
Let $P$ be an attracting petal at infinity of a map $\ap$, let $K \subset P$ be a compact set and let $\alpha > 1$, $\varepsilon > 0$. Then there exist $A > 0$ and $n_0 \in \N$ such that for every $m \ge n_0$ there is a sequence $(a_{m,n})_{n=1}^\infty$ such that $0 < a_{m,n} \le A$, $\sum_{n=1}^\infty a_{m,n} < \infty$, with $1 > \frac{a_{m,n + 1}}{a_{m,n}} \ge \frac{a_{m,n}}{a_{m,n-1}}> 1 - \varepsilon$ for $n > 1$, and
\[
|(\ap^n)'(z)|_{\sigma_\alpha} < a_{m,n} \quad \text{for every } z \in \ap^m(K), \; n \in \N.
\]

\end{thm}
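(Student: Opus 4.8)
The plan is to compute the derivative $|\ap'(w)|_{\sigma_\alpha}$ at points $w = \ap^k(z)$ along an orbit, show it has the form $1 - (\text{something comparable to } g(|w|)/|w|)$, and then take the product over $k$ to get $|(\ap^n)'(z)|_{\sigma_\alpha}$. First I would fix a compact set $K \subset P$, take $n_0$ large enough that the estimates of Definition~\ref{defn:petal_at_inf}(c) and Proposition~\ref{prop:attr_petal} hold on $\bigcup_{n \ge n_0}\ap^n(K)$, and set $t_n$ to be the sequence from Lemma~\ref{lem:log_conv} associated to the logarithmically convex function $g$. The key pointwise estimate: for $w = \ap^k(z)$ with $k \ge n_0$, writing $\ap(w) = w + \eta$ with $c_1 g(|w|) < |\eta| < c_2 g(|w|)$ and $|\Arg \eta - \Arg w| < \delta < \pi/2$, one has
\[
|\ap'(w)|_{\sigma_\alpha} = \frac{|w|^\alpha}{|\ap(w)|^\alpha}|\ap'(w)|,
\]
and since $|\ap(w)| = |w + \eta| \ge |w| + \re(\eta \bar w)/|w| + \cdots$, the ratio $|w|^\alpha/|\ap(w)|^\alpha \le 1 - \alpha\cos\delta\, \frac{|\eta|}{|w|}(1 + o(1)) < 1 - c\,\frac{g(|w|)}{|w|}$ for a suitable $c>0$ (using $\alpha > 1$ and $\delta < \pi/2$, enlarging $n_0$). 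The factor $|\ap'(w)|$ is controlled by a Koebe/Cauchy-type argument: since $\ap$ is defined on all of $P$ (hence on a fixed-size neighbourhood of $w$ relative to scale $|\eta|$) and $\ap(w)-w = \eta$ with $|\eta| \asymp g(|w|)$, Cauchy estimates on a disc of radius comparable to $|w|$ give $|\ap'(w) - 1| = O(g(|w|)/|w|)$, which is absorbed into the same bound. Thus $|\ap'(w)|_{\sigma_\alpha} < 1 - c\, g(|w|)/|w|$ with $c>0$ fixed.

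Next I would convert this into a telescoping product. Writing $w_k = \ap^k(z)$ for $z \in K$, we get
\[
|(\ap^n)'(\ap^m(z))|_{\sigma_\alpha} = \prod_{k=m}^{m+n-1} |\ap'(w_k)|_{\sigma_\alpha} < \prod_{k=m}^{m+n-1}\Big(1 - c\,\frac{g(|w_k|)}{|w_k|}\Big),
\]
so $\ln |(\ap^n)'(\ap^m(z))|_{\sigma_\alpha} < -c \sum_{k=m}^{m+n-1} g(|w_k|)/|w_k|$. Now I estimate $\sum_k g(|w_k|)/|w_k|$ from below using the sequence $t_n$: by Proposition~\ref{prop:attr_petal}, $|w_{k+1}| - |w_k| \le c' g(|w_k|)$, and by Lemma~\ref{lem:new}(a) each interval $[t_j, t_{j+1}]$ contains at most $M$ of the $|w_k|$; combining, one shows $\sum_{k \ge m} g(|w_k|)/|w_k|$ dominates a constant multiple of $\sum_{j \ge j(m)} (t_{j+1}-t_j)/t_j \cdot (\text{stuff})$ — more precisely, since $g$ is non-increasing and $t_{j+1}-t_j = g(t_j)$, the tail of $\sum g(|w_k|)/|w_k|$ is comparable to $\sum_j g(t_j)^2/t_j^2 \cdot (\ldots)$; the cleanest route is to show directly that $\sum_{k=m}^{m+n-1} g(|w_k|)/|w_k| \ge \frac{1}{M c'} \sum_{j} g(t_j)/t_j$ over the range of $j$ with $t_j \in [|w_m|, |w_{m+n}|]$, and that $\sum_j g(t_j)/t_j \to \infty$ as the range grows (because $\sum g(t_j)/t_j = \sum (t_{j+1}-t_j)/t_j \ge \int dt/t \to \infty$ since $t_j \to \infty$). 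This forces $|(\ap^n)'(\ap^m(z))|_{\sigma_\alpha} \to 0$ as $n \to \infty$, uniformly in $z \in K$; to get a summable bound I would fix the concrete form: defining $a_{m,n} = \prod_{k=m}^{m+n-1}(1 - c\, g(t_{?})/t_{?})$ — but a more robust definition is $a_{m,n} := \sup_{z \in K} |(\ap^n)'(\ap^m(z))|_{\sigma_\alpha}$ and then verify its properties.

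Finally I would extract the structural properties of $(a_{m,n})_n$. Using the product formula above and the monotonicity statements of Lemma~\ref{lem:log_conv} — namely that $g(t_{n+1})/g(t_n) \nearrow 1$ and $t_{n+1}/t_n \searrow 1$ — one sees that the per-step factors $1 - c\, g(|w_k|)/|w_k|$ increase towards $1$ along the orbit (since $|w_k|$ increases and $g(|w_k|)/|w_k|$ decreases), which gives the inequality $\frac{a_{m,n+1}}{a_{m,n}} \ge \frac{a_{m,n}}{a_{m,n-1}}$; bounding $g(|w_k|)/|w_k|$ below for $k$ near $m$ (choosing $n_0$ large so that $c\, g(|w_m|)/|w_m| < \varepsilon$) gives $\frac{a_{m,n}}{a_{m,n-1}} > 1-\varepsilon$; and the divergence of $\sum g(t_j)/t_j$ gives $\sum_n a_{m,n} < \infty$ together with the uniform upper bound $a_{m,n} \le A$. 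I expect the main obstacle to be the summability/divergence bookkeeping in the middle paragraph: carefully relating $\sum_k g(|w_k|)/|w_k|$ along the $\ap$-orbit to $\sum_j g(t_j)/t_j$ along the reference sequence, using Lemma~\ref{lem:new} to control how the two partitions of $(t_0,\infty)$ interleave, and checking that the resulting tail bound is both divergent (for $\sum a_{m,n} < \infty$ we actually need $\prod(1-\cdot) $ summable, hence the tail sum to grow at least logarithmically, which it does) and compatible with the required monotonicity of ratios. The factor-by-factor control of $|\ap'(w)|$ via Cauchy estimates, while routine, also needs care since $\ap$ is only assumed holomorphic on $P$, so the disc on which one applies Cauchy's estimate must be shown to stay inside $P$ — this follows from $\overline{\ap(P)} \subset P$ and the fact that $w$ lies deep inside $P$ for $k \ge n_0$.
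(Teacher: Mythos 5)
Your argument hinges on the per-step estimate $|\ap'(w)|_{\sigma_\alpha} < 1 - c\,g(|w|)/|w|$ at orbit points $w=\ap^k(z)$, and the weak link is the claim that $|\ap'(w)-1| = O(g(|w|)/|w|)$ ``by Cauchy estimates on a disc of radius comparable to $|w|$''. Definition~\ref{defn:petal_at_inf} only controls the displacement $\ap(\zeta)-\zeta$ (modulus and argument) at the points of $\bigcup_{n\ge n_0}\ap^n(K)$; it gives no bound for $\ap-\mathrm{id}$ on any disc around $w$, and a disc of radius comparable to $|w|$ need not even be contained in $P$ (petals may be sectors, strips or spiralling regions). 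So Cauchy's estimate cannot be applied, and in fact no pointwise bound on $\ap'(w)$ follows from the hypotheses: the displacement condition constrains the values of $\ap$ along the orbit, not its derivative, and the per-step contraction in $\sigma_\alpha$ can simply fail at individual steps. This is precisely the reason the paper does not argue factor by factor: it invokes Cowen's linearization theorem to get $\psi$ with $\psi\circ\ap=\psi+1$ and then uses the (spherical/Euclidean) Koebe distortion theorem on $\psi^{-1}$ to control the whole product at once, obtaining the two-sided comparison
\[
|(\ap^n)'(z)| \asymp \frac{|\ap^{n+1}(z)-\ap^n(z)|}{|\ap(z)-z|} \asymp \frac{g(|\ap^n(z)|)}{g(|z|)},
\]
with constants independent of $m,n$.

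There is a second, quantitative problem even if one granted you some per-step bound with an unspecified constant: your telescoping gives $\ln a_{m,n} \lesssim -c\sum_k g(|w_k|)/|w_k| \approx -c'\ln(|w_{m+n}|/|w_m|)$, where $c'$ is degraded by the constants $c_1,c_2$, by $\cos\delta$, and by whatever constant appears in the derivative estimate. Summability of $\sum_n a_{m,n}$ (already in the model case $g\equiv 1$, where $|w_{m+n}|\asymp |w_m|+n$) requires the effective exponent to exceed $1$, and $\alpha>1$ alone does not guarantee $c'>1$ after these losses; note also that divergence of $\sum_j g(t_j)/t_j$ by itself only gives $a_{m,n}\to 0$, not summability. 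The paper's route preserves the exponent exactly: the bound is $|(\ap^n)'(z)|_{\sigma_\alpha} \lesssim \frac{|z|^\alpha}{|\ap^n(z)|^\alpha}\,\frac{g(|\ap^n(z)|)}{g(|z|)}$, which, rewritten along the reference sequence $(t_j)$ via Lemma~\ref{lem:new}, is summable because $\sum_j g(t_j)/t_j^\alpha = \sum_j (t_{j+1}-t_j)/t_j^\alpha \le \int^\infty dt/t^\alpha<\infty$ precisely when $\alpha>1$. Finally, the ratio conditions $1>\frac{a_{m,n+1}}{a_{m,n}}\ge\frac{a_{m,n}}{a_{m,n-1}}>1-\varepsilon$ do not come for free from defining $a_{m,n}$ as a supremum of derivatives: in the paper they are arranged by writing the majorant explicitly in terms of $t_{j_m+\lfloor\cdot\rfloor}$ and then interpolating geometrically within blocks of length $M$, a step your sketch would still need to supply.
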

\begin{proof} Take a compact set $K \subset P$.
By Cowen's Theorem \cite[Theorem~3.2]{cowen} (see also \cite[Theorem~2.7]{absorb}), there exist a holomorphic map $\psi \colon P \to \Omega$ (where $\Omega \subset \C$ is an open horizontal strip, an open upper half-plane or the plane) and a domain $V \subset P$, such that:
\begin{enumerate}[(i)]
\item $\psi(\ap(z)) = \psi(z) + 1$ for $z \in P$,
\item $\psi$ is univalent on $V$,
\item for every compact set $K_1 \subset P$ there exists $n_1 \in \N$ such that $\ap^n(K_1) \subset V$ for $n \ge n_1$,
\item for every compact set $K_2 \subset \Omega$ there exists $n_2 \in \N$ such that $K_2 +n \subset \psi(V)$ for $n \ge n_2$.
\end{enumerate}
By (iii), we can choose $n_1 \in \N$ such that $\ap^m(K) \subset V$ for $m \ge n_1$. Note that by (i) and the definition of $\Omega$,
\[
L=\conv(\psi(\ap^{n_1}(K) \cup \ap^{n_1+1}(K))) = \conv(\psi(\ap^{n_1}(K)) \cup (\psi(\ap^{n_1}(K)) +1))
\]
is a compact subset of $\Omega$, and so is the set $K_2 = \overline{\bigcup_{w \in L} \D(w, \delta)}$ for a small $\delta > 0$. Therefore, by (i) and (iv), there exists $n_0 > n_1$ such that 
\begin{equation}\label{eq:conv}
\bigcup\{\D(w, \varepsilon): w \in \conv(\psi(\ap^{m + n}(K) \cup \ap^{m + n + 1}(K)))\} = K_2 + m + n - n_1 \subset \psi(V)
\end{equation}
for $m \ge n_0$ and $n \ge 0$. Enlarging $n_0$, we can assume that the properties listed in Definition~\ref{defn:petal_at_inf} and Proposition~\ref{prop:attr_petal} hold for every $z \in \bigcup_{m=n_0}^\infty \ap^m(K)$. 

Choose a point $z_0 \in K$ and let
\[
z_l = \ap^l(z_0) \in G^l(K)
\]
for $l \ge n_0$. Consider
\[
z \in G^m(K) \qquad \text{for } m \ge n_0. 
\]
This notation will be used throughout the subsequent part of the proof.

\subsection*{Convention} By $c_0, c_1, \ldots$ we denote constants independent of $z \in \ap^m(K)$, $m \ge n_0$ and $n \ge 0$. Moreover, we write $g_{m,n}(z) \asymp h_{m,n}(z)$ if 
\[
\frac{1}{c} < \frac{g_{m,n}(z)}{h_{m,n}(z)} < c
\]
for a constant $c > 0$ independent of $z \in \ap^m(K)$, $m \ge n_0$ and $n \ge 0$.

\bigskip
By \eqref{eq:conv} and the Koebe Distortion Theorem, $\psi^{-1}$ is defined on $\conv(\psi(\ap^{m+n}(K) \cup \ap^{m+n+1}(K)))$ with distortion bounded by a constant independent of $m \ge n_0, n \ge 0$. In particular,
\begin{equation} \label{eq:dist0}
|\psi'(\ap^n(z))| \asymp |\psi'(z_{m+n})|
\end{equation}
for $n \ge 0$. Moreover, by \eqref{eq:conv}, the bounded distortion of $\psi^{-1}$ and the fact $\psi(\ap^{n+1}(z)) - \psi(\ap^n(z)) = 1$, we obtain
\begin{equation} \label{eq:dist1}
|\ap^{n+1}(z) - \ap^n(z)| \asymp |z_{m+n+1} - z_{m+n}| \asymp \frac{1}{|\psi'(z_{m+n})|}.
\end{equation}
Let $w \in G^{n_0}(K)$ be such that $z = G^{m-n_0}(w)$. By \eqref{eq:dist1}, Proposition~\ref{prop:attr_petal} and the compactness of $G^{n_0}(K)$,
\begin{align*}
|z_{m+n}| &\leq |z_{m+n}- z_{m+n-1}| + \cdots + |z_{n_0+1} - z_{n_0}| +|z_{n_0}| \\
&\le c_1(|\ap^{m-n_0+n}(w) - \ap^{m-n_0+n-1}(w)| + \cdots + |\ap(w) - w| +|w|)\\
&\le c_2(|\ap^{m-n_0+n}(w)| - |\ap^{m-n_0+n-1}(w)| + \cdots + |\ap(w)| - |w| +|w|)\\ 
&= c_2 |\ap^{m-n_0+n}(w)| = c_3 |\ap^n(z)|
\end{align*}
for constants $c_1, c_2 > 0$. Analogously,
\begin{align*}
|\ap^n(z)| &= |\ap^{m-n_0+n}(w)|\\
&\leq |\ap^{m-n_0+n}(w) - \ap^{m-n_0+n-1}(w)| + \cdots + |\ap(w) - w| +|w|\\
&\le c_3(|z_{m+n}- z_{m+n-1}| + \cdots + |z_{n_0+1} - z_{n_0}| +|z_{n_0}|)\\
&\le c_4(|z_{m+n}| - |z_{m+n-1}| + \cdots + |z_{n_0+1}| - |z_{n_0}| +|z_{n_0}|) = c_4 |z_{m+n}|
\end{align*}
for constants $c_3, c_4 > 0$. We conclude
\begin{equation}\label{eq:dist3}
|\ap^n(z)| \asymp |z_{m+n}|.
\end{equation}
Furthermore, by (i),
\begin{align*}
|(\ap^n)'(z)| &= |\psi'(z)||(\psi^{-1})'(\psi(z) + n)| = \frac{|\psi'(z)|}{|\psi'(\ap^n(z))|},\\
|(\ap^n)'(z_m)| &= |\psi'(z_m)||(\psi^{-1})'(\psi(z_m) + n)| = \frac{|\psi'(z_m)|}{|\psi'(z_{m+n}))|},
\end{align*}
which together with \eqref{eq:dist0} and \eqref{eq:dist1} gives
\begin{equation} \label{eq:dist2}
|(\ap^n)'(z)| \asymp |(\ap^n)'(z_m)| \asymp \frac{|\ap^{n+1}(z) - \ap^n(z)|}{|\ap(z) - z|} \asymp \frac{|z_{m+n+1} - z_{m+n}|}{|z_{m+1} - z_m|}.
\end{equation}

Fix $\alpha > 1$. Using \eqref{eq:dist3} and \eqref{eq:dist2} we obtain
\begin{equation}\label{eq:F^k'<}
|(\ap^n)'(z)|_{\sigma_\alpha} = \frac{|z|^\alpha |(\ap^n)'(z)|}{|\ap^n(z)|^\alpha} \leq c_5 \frac{|z_m|^\alpha |z_{m+n+1} - z_{m+n}|}{|z_{m+n}|^\alpha|z_{m+1} - z_m| } 
\end{equation}
for $n \in \N$ and some constant $c_5 > 0$ (note that the metric $\sigma_\alpha$ is well-defined at $z$ and $G^n(z)$ if $n_0$ is chosen sufficiently large).
Consider now the function $g$ from Definition~\ref{defn:petal_at_inf} for the set $K$, and the sequence $(t_j)_{j=1}^\infty$ defined in Lemma~\ref{lem:log_conv}. Enlarging $n_0$ if necessary, we can assume $t_1 \le |z_{n_0}|$. Note that the sequence $|z_l|$, $l \ge n_0$, is increasing by Proposition~\ref{prop:attr_petal}. 

Let
\[
j_l = \max\{j \in \N: t_j \le |z_l|\}, \qquad l \ge n_0.
\]
By definition,
\begin{equation}\label{eq:j_l}
t_{j_l} \le |z_l| < t_{j_l + 1}
\end{equation}
and the sequence $(j_l)_{l=n_0}^\infty$ (and also $(t_{j_l})_{l=n_0}^\infty$) is non-decreasing. Choosing $n_0$ sufficiently large, we can assume $t_{j_{n_0}} > 0$ and, by Lemma~\ref{lem:log_conv},
\begin{equation}\label{eq:tj}
1 > \frac{t_j}{t_{j+1}} > \frac{t_{j-1}}{t_j} > q, \qquad 1 \ge \frac{g(t_{j+1})}{g(t_j)} \ge \frac{g(t_j)}{g(t_{j-1})} > q
\end{equation}
for $j \ge j_{n_0} -1$ and a constant $q \in (0,1)$ arbitrarily close to $1$.

Let $m \ge n_0$, $n \in \N$. Note that Lemma~\ref{lem:new} implies that there exists an integer $M > 1$, such that
\begin{equation}\label{eq:j>}
j_{m+n} \ge j_m + \frac{n+1}{M} - 1 \ge j_m + \left\lfloor\frac{n-2}{M}\right\rfloor - 1.
\end{equation}
Using consecutively \eqref{eq:F^k'<}, Definition~\ref{defn:petal_at_inf}, \eqref{eq:j_l}, \eqref{eq:tj} and \eqref{eq:j>}, we obtain
\begin{equation*}\label{eq:<g}
\begin{aligned}
|(\ap^n)'(z)|_{\sigma_\alpha} &\le c_6 \frac{|z_m|^\alpha g(|z_{m+n}|)}{|z_{m+n}|^\alpha g(|z_m|)} < c_6 \frac{t_{j_m+1}^\alpha g(t_{j_{m+n}})}{t_{j_{m+n}}^\alpha g(t_{j_m+1} )}\\
&< c_7 \frac{t_{j_m}^\alpha g(t_{j_{m+n}})}{t_{j_{m+n}}^\alpha g(t_{j_m})}\le c_7 \frac{t_{j_m}^\alpha g\big(t_{j_m + \lfloor \frac{n-2}{M} \rfloor - 1}\big)}{t_{j_m + \lfloor \frac{n-2}{M} \rfloor - 1}^\alpha g(t_{j_m})}
\end{aligned}
\end{equation*}
for some constants $c_6, c_7 > 0$, so that
\begin{equation}\label{eq:<tildea}
|(\ap^n)'(z)|_{\sigma_\alpha} < \tilde a_{m,n}
\end{equation}
for $n \in \N$, where
\[
\tilde a_{m,n} = c_7 \frac{t_{j_m}^\alpha g\big(t_{j_m + \lfloor \frac{n-2}{M} \rfloor - 1}\big)}{t_{j_m + \lfloor \frac{n-2}{M} \rfloor - 1}^\alpha g(t_{j_m})}.
\]
Note that by definition,
\begin{equation}\label{eq:tildea}
\begin{aligned}
\tilde a_{m,1} &= c_7 \frac{t_{j_m}^\alpha g(t_{j_m -2})}{t_{j_m -2}^\alpha g(t_{j_m})},\\
\tilde a_{m,kM+r} = \tilde a_{m,(k+1)M+1} &= c_7 \frac{t_{j_m}^\alpha g(t_{j_m + k-1})}{t_{j_m + k-1}^\alpha g(t_{j_m})} \qquad \text{for } k \ge 0,\; r\in\{2, \ldots, M+1\},
\end{aligned}
\end{equation}
so by \eqref{eq:tj},
\[
\tilde a_{m,1} > \tilde a_{m,2} = \cdots = \tilde a_{m,M+1} > \tilde a_{m,M+2} = \cdots = \tilde a_{m,2M+1} > \cdots
\]
and
\[
0 < \tilde a_{m,n} \le \tilde a_{m,1} \le A
\]
for
\[
A = \frac{c_7}{q^{2(\alpha + 1)}}.
\]
Moreover, \eqref{eq:tildea} and \eqref{eq:tj} imply
\begin{equation}\label{eq:a/a}
1 > \frac{\tilde a_{m, (k+1)M+1}}{\tilde a_{m,kM+1}} \ge \frac{\tilde a_{m,kM+1}}{\tilde a_{m, (k-1)M+1}} \ge q^{\alpha + 1}
\end{equation}
for $k \in \N$ and
\begin{align*}
\tilde a_{m,1} + \cdots + \tilde a_{m,(k+1)M +1} &= c_7 + c_7 M \frac{t_{j_m}^\alpha }{g(t_{j_m})} \left(\frac{g(t_{j_m})}{t_{j_m}^\alpha} + \cdots + \frac{g(t_{j_m + k-1})}{t_{j_m + k-1}^\alpha}\right)\\
&< c_7 + \frac{c_7 M}{q^\alpha} \frac{t_{j_m}^\alpha }{g(t_{j_m})} \left(\frac{g(t_{j_m})}{t_{j_m+1}^\alpha} + \cdots + \frac{g(t_{j_m+k-1})}{t_{j_m+ k}^\alpha}\right)\\
&= c_7 + \frac{c_7 M}{q^\alpha} \frac{t_{j_m}^\alpha }{g(t_{j_m})} \left(\frac{t_{j_m+1} - t_{j_m}}{t_{j_m+1}^\alpha} + \cdots + \frac{t_{j_m+k} - t_{j_m+k-1}}{t_{j_m+k}^\alpha}\right)\\
&< c_7 + \frac{c_7 M}{q^\alpha} \frac{t_{j_m}^\alpha }{g(t_{j_m})} \left(\int_{t_{j_m}}^{t_{j_m+1}} \frac{dt}{t^\alpha} + \cdots + \int_{t_{j_m + k-1}}^{t_{j_m+k}} \frac{dt}{ t^\alpha}\right)\\
&= c_7 + \frac{c_7 M}{q^\alpha} \frac{t_{j_m}^\alpha }{g(t_{j_m})} \int_{t_{j_m}}^{t_{j_m+k}} \frac{dt}{ t^\alpha} < c_7 + \frac{c_7 M}{q^\alpha} \frac{t_{j_m}^\alpha }{g(t_{j_m})}\int_{t_{j_m}}^\infty \frac{dt}{t^\alpha} < \infty
\end{align*}
for $k \in \N$. Hence, 
\[
\sum_{n=1}^\infty \tilde a_{m,n} < \infty.
\]

For $m \ge n_0$ define a sequence $(a_{m,n})_{n=1}^\infty$ setting
\[
a_{m, kM+s} = \tilde a_{m, kM+1} \left(\frac{\tilde a_{m, (k+1)M+1}}{\tilde a_{m, kM+1}}\right)^{\frac{s - 1} M}
\]
for $k \ge 0$, $s \in \{1, \ldots, M\}$. By \eqref{eq:a/a},
\[ 
\tilde a_{m, (k+1)M +1} = a_{m, (k+1)M+1} < a_{m, kM+s} \le a_{m, kM+1} = \tilde a_{m, kM+1},
\]
which implies
\[
\sum_{n=1}^\infty a_{m,n} = \sum_{k=0}^\infty\sum_{s=1}^M a_{m, kM+s} \le M \sum_{k=0}^\infty \tilde a_{m, kM+1} < \infty.
\]
and (together with \eqref{eq:tildea})
\[
0 < \tilde a_{m,n} \le a_{m,n} \le A.
\]
Note that this and \eqref{eq:<tildea} imply
\[
|(\ap^n)'(z)|_{\sigma_\alpha} < a_{m,n}
\]
for $z \in \ap^m(K)$, $m \ge n_0$, $n \in \N$. Furthermore,
\[
\frac{a_{m, kM+s+1}}{a_{m, kM+s}} = \left(\frac{\tilde a_{m, (k+1)M+1}}{\tilde a_{m, kM+1}}\right)^{\frac{1}{M}},
\]
so by \eqref{eq:a/a},
\[
1 > \frac{a_{m,n+1}}{a_{m,n}} \ge \frac{a_{m,n}}{a_{m,n-1}} \ge q^{\frac{\alpha + 1}{M}}
\]
for $n > 1$, where $q$ is arbitrarily close to $1$. This ends the proof.

\end{proof}


\section{Construction of an expanding metric} \label{sec:metric}

The goal of this section is to prove the following result.

\begin{thm}[{\bf Existence of an expanding metric -- general version}{}]\label{thm:metric}
Let $\rp \colon V' \to V$ be a holomorphic map onto $V$, where $V \subset \C$ is a hyperbolic domain and $V'$ is a domain such that $V' \subset V$, $V'\ne V$. Let $W \subset V$ be an open set such that $\bigcap_{n=0}^\infty \rp^{-n}(W) = \emptyset$. Assume that the following hold.
\begin{enumerate}[$($a$)$]

\item \label{item:asympt_values} $\rp$ has no asymptotic values.

\item \label{item:deg} For every $z \in V$, we have $\sup\{\deg_w \rp^n: w \in \rp^{-n}(z),\, n \in \N\} < \infty$.

\item \label{item:crit} $\PP_{crit}(\rp)$ has no accumulation points in $V$.

\item \label{item:extend} $\rp$ extends meromorphically to a neighbourhood $($in $\C)$ of $\overline{W \cap \rp^{-1}(W)}$.

\item \label{item:petals} 
There exist finite collections $\II_{par}$, $\II_\infty$, and sets $P_i$, $i \in \II_{par} \cup \II_\infty$, such that:
\begin{itemize}[$\circ$]
\item for $i \in \II_{par}$, the set $P_i$ is a repelling petal of $\rp^{\ell_i}$, for some $\ell_i \in \N$, at a parabolic periodic point $p_i \in \bd W$ of $\rp$, such that $P_i$ generates a cycle of length $\ell_i$ of repelling petals of $\rp$ at $p_i$, and these cycles are pairwise disjoint for $i \in \II_{par}$,
\item for $i \in \II_\infty$, the set $P_i$ is a repelling petal of $\rp$ at infinity, such that $W \cap \rp(P_i)$ is disjoint from the union of the cycles of repelling petals generated by $P_{i'}$, $i' \in\II_{par}$, and from $\bigcup_{i' \in \II_\infty, i' \ne i} P_{i'}$,
\item $\overline{W} \subset V \cup \bigcup_{i\in\II_{par}} \Orb(p_i)$,
\item $\overline{W} \setminus \Big(\bigcup_{i \in \II_{par}}\bigcup_{s=0}^{\ell_i-1} \rp^s(P_i \cup \{p_i\}) \cup \bigcup_{i \in \II_\infty} P_i\Big)$ is compact,
\item $\rp^{\ell_i}(W \cap P_i) \subset W$ for $i \in \II_{par}$ and $\rp(W \cap P_i) \subset W$ for $i \in \II_\infty$.
\end{itemize}
\end{enumerate}
Then one can find $N \in \N$ such that for every compact set $K \subset \overline{W} \setminus \bigcup_{i\in\II_{par}} \Orb(p_i)$ there exist a conformal metric $d\varsigma = \varsigma|dz|$ on $W \cap \rp^{-1}(W) \cap \ldots \cap \rp^{-N}(W)$ and a decreasing sequence $(b_n)_{n=1}^\infty$ of numbers $b_n \in (0,1)$ with $\sum_{n=1}^\infty b_n < \infty$, satisfying
\[
|(\rp^n)'(z)|_\varsigma > \frac{1}{b_n} 
\]
for every $z \in W \cap \rp^{-1}(W) \cap \ldots \cap \rp^{-(n+N)}(W) \cap \rp^{-n}(K)$, $n \in \N$. Furthermore, if $W \cap \PP_{crit}(\rp) = \emptyset$, then one can choose $N=0$.
\end{thm}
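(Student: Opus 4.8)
The plan is to obtain $d\varsigma$ as a \emph{patchwork} of three kinds of model metrics, each with expansion properties already established: an orbifold metric $d\rho$ on the ``central'' part of $W$, weighted modifications of the parabolic metrics $d\sigma_{p_i,\alpha_{par}}$ (with $\alpha_{par}\in[0,1)$) on the repelling petals $P_i$, $i\in\II_{par}$, and weighted modifications of the metrics $d\sigma_{\alpha_\infty}$ (with $\alpha_\infty>1$) on the repelling petals at infinity $P_i$, $i\in\II_\infty$. The expansion inputs are Proposition~\ref{prop:der-par} and Theorem~\ref{thm:der-inf}, applied to the inverse branches of $\rp^{\ell_i}$ resp.\ $\rp$ on the attracting petals $\rp^{\ell_i}(P_i)$ resp.\ $\rp(P_i)$: they produce, for each ``depth'' $m$, summable and uniformly bounded sequences $(a_{m,n})_n$ dominating the contractions of these inverse branches, with the crucial ratio monotonicity $a_{m,n+1}/a_{m,n}\ge a_{m,n}/a_{m,n-1}$. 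These sequences play a double role — they dictate how to weight the petal metrics (so that a single step of $\rp$ inside a petal has a controlled, barely-above-$1$ derivative, while a block of length $L$ expands by a factor growing to $\infty$ with $L$), and the conclusion's sequence $(b_n)$ will be built from their values at comparable indices.

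First I would construct the orbifold. By hypotheses (b) and (c), the weight $\nu$ on $V$ — defined as in the subhyperbolic/geometrically finite constructions recalled in Subsection~\ref{subsec:orbifold}, so that $\rp$ becomes an orbifold map and $\nu>1$ only on $\PP_{crit}(\rp)$ — is finite (capped by the uniform degree bound of (b)) and its support has no accumulation point in $V$ by (c), so $(V,\nu)$ is a hyperbolic orbifold, after, if necessary, shrinking $V$ to a hyperbolic domain still containing the compact part of $\overline W$ lying off the petals (compact by (e)). Then $d\rho$ is non-contracting for $\rp$, and — using hypothesis (a), the Schwarz--Pick orbifold lemma, and $\bigcap_n\rp^{-n}(W)=\emptyset$ — it is \emph{strictly} expanding, by some factor $\lambda>1$, on any prescribed compact subset of $V$. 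Hypothesis (d) makes $\rp$, and hence all the metrics below, well-defined near $\overline{W\cap\rp^{-1}(W)}$, in particular near the poles of $\rp$ on $\bd W$. The constant $N$ enters precisely because $d\rho$ is singular at the points of $\PP_{crit}(\rp)\cap W$: one must cap off those singularities by a bounded metric, thereby losing expansion over blocks of length at most $N$, with $N$ depending only on the degree bound of (b); when $W\cap\PP_{crit}(\rp)=\emptyset$ there are no such singularities in $W$, the metric may be taken to be the orbifold metric itself there, and $N=0$.

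Next I would glue the pieces: on each petal (or each cycle of petals, using $\rp^{\ell_i}(W\cap P_i)\subset W$ resp.\ $\rp(W\cap P_i)\subset W$) I put the corresponding weighted petal metric, the weights chosen via the sequences $(a_{m,n})$; by the disjointness clauses in (e) these petal metrics live on pairwise essentially disjoint subsets of $W$, so the only matching to be done is in the collars where a petal abuts the central region, where I would interpolate by a smooth cutoff between the petal metric and a large fixed multiple of $d\rho$, rescaling the multiplicative constants so that the ``larger'' of the two metrics always dominates. This produces a positive continuous $\varsigma$ on $W\cap\rp^{-1}(W)\cap\cdots\cap\rp^{-N}(W)$. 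The core is then the orbit estimate: given $z$ with $z,\rp(z),\dots,\rp^{n-1}(z)$ in the domain of $\varsigma$ and $\rp^n(z)\in K$, decompose this segment into maximal blocks contained in a single petal (or cycle of petals), separated by excursions through the central region, with collar transitions in between. On a petal block the collected $\varsigma$-derivative is bounded below, after telescoping the weights and the sequences $(a_{m,n})$, by a fixed constant times the value of the relevant sequence at an index comparable to the block length; each central excursion contributes a factor $\ge\lambda>1$; each collar transition costs at most a fixed constant. Because $\overline{\rp(P_i)}\subset P_i$ (resp.\ the parabolic analogue) and $\bigcap_n\rp^{-n}(W)=\emptyset$, an orbit entering a petal from the central region does so at bounded depth, hence traverses it in boundedly many steps, and cannot stall in a collar (a compact part of the central region); multiplying out all contributions yields $|(\rp^n)'(z)|_\varsigma>1/b_n$ with $b_n$ assembled from the tails of the summable sequences $(a_{m,n})$ and a geometric factor $\lambda^{-N_0}$, $N_0$ being the number of central excursions, and $b_n$ is made decreasing and independent of $z\in K$ using the ratio monotonicity and the uniform bound $a_{m,n}\le A$.

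The main obstacle I anticipate is exactly this last step: organizing the orbit combinatorics uniformly across all finitely many petals at once, choosing the petal weights and the patching constants so that the single-step derivatives in the collars stay bounded below (even though they may dip below $1$ there) while the block estimates remain summable, and extracting one decreasing summable sequence $(b_n)$ valid for every $z\in K$ simultaneously. The abstract formulation — arbitrary $V$, $V'$, $W$, designed for later reuse with parabolic basins and Baker domains — forces all the estimates above to be made uniform, and this is where most of the technical weight lies.
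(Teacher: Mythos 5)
Your overall architecture (orbifold metric on the compact core, weighted petal metrics at the parabolic points and at infinity glued to it, a block decomposition of orbits, and summable sequences with the ratio monotonicity coming from Proposition~\ref{prop:der-par} and Theorem~\ref{thm:der-inf}) is the same as the paper's, but the orbit-combinatorics step rests on a claim that is false and that, as stated, removes exactly the difficulty the theorem is about. You assert that ``an orbit entering a petal from the central region does so at bounded depth, hence traverses it in boundedly many steps.'' This is not so: $\rp$ has preimages of the parabolic points $p_i$ and of $\infty$ (the points $z_{i,j}$ of Lemma~\ref{lem:z_ij}; in the case of petals at infinity these include poles of $\rp$) lying in the compact core, and points of $W$ arbitrarily close to such a $z_{i,j}$ are sent in a single step arbitrarily deep into the petal, after which the orbit needs an unbounded number of iterates before it can land in $\widehat K$ again (for $f(z)=z-\tan z$, think of an orbit passing near a pole and then spending a long time high up in a half-plane petal). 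Incidentally, the inclusion you invoke is backwards: for a repelling petal one has $\overline{P_i}\subset\rp(P_i)$, not $\overline{\rp(P_i)}\subset P_i$, and neither this nor $\bigcap_{n}\rp^{-n}(W)=\emptyset$ gives bounded depth. If petal blocks really had bounded length, the whole apparatus of summable sequences would be unnecessary, so this cannot be a harmless simplification.

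Because of these unbounded-depth entries, the step from the core into a petal is \emph{not} a bounded-cost ``collar transition'': its $\varsigma$-derivative compares the orbifold density near $z_{i,j}$ with the petal density $\sigma_{p_i,\alpha_{par}}$ or $\sigma_{\alpha_\infty}$ at a point close to $p_i$ or of huge modulus, and whether it is even bounded below depends on how $\alpha_{par}$ and $\alpha_\infty$ are chosen relative to the local degrees $\deg_{z_{i,j}}\rp$ and the orbifold weights $\nu(z_{i,j})$. The paper handles this by the explicit constraint \eqref{eq:alpha} together with Lemma~\ref{lem:alpha}, which make this transition strongly expanding; your proposal leaves $\alpha_{par}\in[0,1)$ and $\alpha_\infty>1$ unconstrained and never examines this step, so with a bad choice the product of your block contributions genuinely fails to give $1/b_n$. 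A secondary inaccuracy: the integer $N$ is not obtained by ``capping the orbifold singularities by a bounded metric, with $N$ depending only on the degree bound in (b)''; in the paper $N$ is chosen so that $W\cap\rp^{-1}(W)\cap\ldots\cap\rp^{-N}(W)$ meets no post-critical point in the compact core (possible because the finitely many such points eventually leave $W$), while near $\PP_{crit}(\rp)$ inside the petals the metric is simply the petal metric; capping $\rho$ by a bounded metric would instead destroy the compensation the orbifold singularity provides for the vanishing derivative at critical points mapping to those points, and the resulting loss is not confined to blocks of length $N$.
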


The construction of the metric $d\varsigma$ follows the ideas of \cite{orsay2,carlesongamelin,milnor,tanlei-loc_con} in the case of polynomials and rational maps and \cite{helena,leticia-orbifold,ARS} in a transcendental context. However, due to the lack of compactness on unbounded parts of the set $W$, to estimate $|(\rp^n)'|_\varsigma$ we must take a different approach than the ones used in the cited references. Since the proof is rather involved, first we present its general description. A main idea is to construct a metric $\varsigma$ which is uniformly expanding on some compact set in $V'$ and `glue' it with suitable metrics in the union of the cycles of repelling petals at parabolic points and repelling petals at infinity, such that the derivative of the iterations of $\rp$ along a block of length $n$ within this union with respect to this metric is larger than $1/\beta_n$, where $\beta_n$ is a term of a converging series and $\frac{\beta_{n + 1}}{\beta_n} \ge \frac{\beta_n}{\beta_{n-1}}$. This is described precisely in the following proposition. For simplicity, here and in the sequel we write
\[
W_n = W \cap \rp^{-1}(W) \cap \ldots \cap \rp^{-n}(W), \qquad n \ge 0.
\]

\begin{prop}\label{prop:expanding_metric} Under the assumptions of Theorem~{\rm\ref{thm:metric}}, one can find $N \in \N$ such that for every compact set $K \subset \overline{W} \setminus\bigcup_{i\in\II_{par}} \Orb(p_i)$ there exist a conformal metric $d\varsigma = \varsigma|dz|$ on $W_N$, a compact set $\widehat K \subset V \setminus \bigcup_{i\in\II_{par}} \Orb(p_i)$ containing $K$, a number $Q > 1$ and a decreasing sequence $(\beta_n)_{n=0}^\infty$ of positive numbers $\beta_n$, satisfying the following properties:
\begin{enumerate}[$($a$)$]
\item $\sum_{n=0}^\infty \beta_n < \infty$,
\item $\beta_0 = 1$ and $\frac{\beta_{n + 1}}{\beta_n} \ge \frac{\beta_n}{\beta_{n-1}}$ for every $n \in\N$,
\item[$($c$)$] $|\rp'|_\varsigma > Q$ on $W_N \cap \widehat K$,
\item[$($d$)$] $|(\rp^n)'(z)|_\varsigma > \frac{1}{\beta_n}$ for every $z \in (W_N \setminus \widehat K) \cap\rp^{-1} (W_N \setminus \widehat K) \cap \ldots \cap \rp^{-(n-1)}(W_N \setminus \widehat K) \cap \rp^{-n}(W_N \cap \widehat K)$, $n \in \N$.
\end{enumerate} Furthermore, if $W \cap \PP_{crit}(\rp) = \emptyset$, then one can choose $N=0$.
\end{prop}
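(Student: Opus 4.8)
The plan is to follow the classical ``patchwork of metrics'' scheme (as in \cite{orsay2,carlesongamelin,milnor,tanlei-loc_con}), but with a third type of patch — the petal metric at infinity from Section~\ref{sec:petals_at_inf} — and, crucially, a choice of the compact set $\widehat K$ that cleanly separates the ``core'' of $W$ from its unbounded (and parabolic) ``tails''. First, using \ref{item:asympt_values}, \ref{item:deg}, \ref{item:crit} I would form the canonical orbifold $(V,\nu)$ of $\rp$, with $\nu(z)=\operatorname{lcm}\{\deg_w\rp^n:\rp^n(w)=z\}$; then \ref{item:deg} makes $\nu$ finite and, since $\{\nu>1\}\subset\PP_{crit}(\rp)$, assumption \ref{item:crit} makes $\{\nu>1\}$ discrete in $V$, so the orbifold metric $d\rho=\rho|dz|$ is well defined, with singularities as in \eqref{eq:orb_metric}. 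Because $\rp$ has no asymptotic values, $\rp\colon V'\to V$ is a genuine branched covering, hence (with the compatible orbifold structure $\tilde\nu(z)=\nu(\rp(z))/\deg_z\rp$ on $V'$, which is integral by the $\operatorname{lcm}$ definition) an orbifold covering, so a local orbifold isometry: $|\rp'(z)|_\rho=\varrho_{(V',\tilde\nu)}(z)/\rho(z)$. Since $V'\subsetneq V$ the inclusion $(V',\tilde\nu)\hookrightarrow(V,\nu)$ is not a covering isomorphism, so by the Schwarz--Pick orbifold lemma $\varrho_{(V',\tilde\nu)}>\rho$ on $V'$, strictly and uniformly on compact subsets of $V'$; hence $|\rp'|_\rho>Q$ for some $Q>1$ on any prescribed compact subset of $V'$. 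The constant $N$ enters precisely here: when $W$ meets $\PP_{crit}(\rp)$, $\rho$ has singular points in the region of interest, and one works on $W_N$ (with $N$ bounded in terms of \ref{item:deg}) so that every relevant orbit has enough good iterates ahead; if $W\cap\PP_{crit}(\rp)=\emptyset$, then $\nu\equiv1$ near the relevant set and $N=0$ works.

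On the petals I would use the metrics introduced in Sections~\ref{sec:petals_at_par} and~\ref{sec:petals_at_inf}. For $i\in\II_{par}$, take $d\sigma_{p_i,\alpha_i}$ with a fixed $\alpha_i\in[0,1)$ and apply Proposition~\ref{prop:der-par} to the attracting petal $\rp^{\ell_i}(P_i)$ of the inverse branch of $\rp^{\ell_i}$; this yields, for the compact landing sets, numbers $a_{m,n}$ with $\sum_n a_{m,n}<\infty$, $a_{m,0}=1$, $1>\frac{a_{m,n+1}}{a_{m,n}}>\frac{a_{m,n}}{a_{m,n-1}}$, and expansion of $\rp^{\ell_i n}$ along blocks inside the cycle; interpolating the intermediate steps $\rp^s$, $0\le s<\ell_i$, by bounded distortion on compact pieces of the bounded sets $\rp^s(P_i)$ gives a log-convex, decreasing, summable sequence $(\beta^{(i)}_n)$ governing $\rp^n$ along blocks inside the $i$-th cycle. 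For $i\in\II_\infty$, take $d\sigma_\alpha$ with a single fixed $\alpha>1$ and apply Theorem~\ref{thm:der-inf} to the attracting petal $\rp(P_i)$ of $\rp^{-1}$; since an orbit enters the core through the \emph{inner} end of $P_i$, I would write $(\rp^{-1})^n=(\rp^{-1})^{n-n_0}\circ(\rp^{-1})^{n_0}$, absorb the first $n_0$ (bounded) iterates into a constant on a compact set, and apply Theorem~\ref{thm:der-inf} with $m=n_0$ to the remaining block, again producing a log-convex, decreasing, summable sequence $(\beta^{(i)}_n)$.

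Next I would choose the compact set $\widehat K\subset V\setminus\bigcup_i\Orb(p_i)$, containing $K$ and the compact remainder $\overline W\setminus\big(\bigcup_{i\in\II_{par}}\bigcup_s\rp^s(P_i\cup\{p_i\})\cup\bigcup_{i\in\II_\infty}P_i\big)$ of \ref{item:petals}, so that: (i) $\overline W\setminus\widehat K$ lies in a disjoint union of ``inner'' sub-cycles $R_i\subset\bigcup_s\rp^s(P_i)$ (built from slightly smaller repelling petals for $i\in\II_{par}$) and ``deep'' parts $P_i\setminus\widehat K$ of the petals at infinity; (ii) by the disjointness and forward-invariance clauses of \ref{item:petals} plus elementary petal geometry, every orbit $z,\rp(z),\dots$ that stays in $\overline W\setminus\widehat K$ remains in one such piece until it first re-enters $\widehat K$, and it does so at a point of a fixed compact subset of the corresponding attracting petal; and (iii) a neighbourhood of $\widehat K$ inside $V$ (avoiding $\Orb(p_i)$) still carries $\rho$, so that short excursions out of $\widehat K$ are governed by Step~1. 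Then I would define $\varsigma$ on $W_N$ by smoothly interpolating, on the compact collars between the pieces, between $\rho$ near $\widehat K$ and the petal metrics $\lambda_i\sigma_{p_i,\alpha_i}$, $\lambda_i\sigma_\alpha$ on the deep parts, the constants $\lambda_i$ chosen to fit the pieces together — multiplicative constants do not change $|\rp'|_\varsigma$, and the finitely many transition distortions over compact collars are bounded, hence absorbed into $Q$ and into the constants of the $\beta^{(i)}_n$. Property (c) then follows from Step~1 (after arranging $\rp(W_N\cap\widehat K)$ to land in the $\rho$-region, using \ref{item:extend} where $\overline W$ pokes past $V'$), and (d) follows from Steps~2--3, the orbit in question being, up to bounded collar factors, a block inside a single petal piece. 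Finally I would set $\beta_n=\sum_i c_i\beta^{(i)}_n+Q^{-n}$: a finite sum of log-convex, decreasing, summable sequences is again log-convex (by Cauchy--Schwarz, $\beta_n^2\le\beta_{n-1}\beta_{n+1}$ is preserved), decreasing and summable, and after adjusting the first few terms so that $\beta_0=1$ — harmless, since the $n=1$ instance of (d) only involves short excursions still in the $\rho$-region where $|\rp'|_\varsigma>Q>1$ — properties (a), (b), (d) all hold, and $N=0$ in the critical-point-free case.

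The main obstacle is Step~3. In the rational or entire settings one simply takes a uniform infimum over a compact neighbourhood of the boundary; here the petals at infinity are non-compact, so no such infimum exists, and this is exactly why Theorem~\ref{thm:der-inf} replaces compactness by the summable, log-convexly decaying bounds $a_{m,n}$. The delicate point is to organise $\widehat K$ and the collars so that every orbit block leaving the core is captured by exactly one parabolic or infinity-petal estimate, that the weak, constant-loaded bounds for infinity petals only affect \emph{large} $n$ while short excursions remain in the $\rho$-region, and that the resulting family of bounds merges into a single sequence $(\beta_n)$ with the monotonicity $\beta_{n+1}/\beta_n\ge\beta_n/\beta_{n-1}$ required for the later assembly of the metric of Theorem~\ref{thm:metric}.
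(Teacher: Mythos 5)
Your overall scheme is the paper's: orbifold metric on a core compact part, the metrics $\sigma_{p_i,\alpha_{par}}$ and $\sigma_{\alpha_\infty}$ on the parabolic petals and the petals at infinity, gluing, and assembly of a log-convex summable sequence $\beta_n$. However, two ingredients that the paper cannot do without are missing, and they sit exactly at the points you wave away. First, the transition from the core into the petals. Points of $\widehat K$ arbitrarily close to the finitely many preimages $z_{i,j}\in\overline{W_1}\cap\tilde K$ of the $p_i$ (poles of $\rp$ for $i\in\II_\infty$, preimages of the parabolic points for $i\in\II_{par}$) are mapped arbitrarily deep into the petals, where $\varsigma$ is the petal metric, not $\rho$; so one cannot ``arrange $\rp(W_N\cap\widehat K)$ to land in the $\rho$-region'', and this transition is not ``over a compact collar'' with bounded distortion. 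Whether $|\rp'|_\varsigma$ blows up or collapses near $z_{i,j}$ is decided by the sign of $d_{i,j}(\alpha_\infty-1)-1/\nu(z_{i,j})$ (resp.\ $d_{i,j}(\alpha_{par}-1)+1/\nu(z_{i,j})$), where $d_{i,j}=\deg_{z_{i,j}}\rp$. Hence $\alpha_{par}$ and $\alpha_\infty$ cannot be ``fixed'' arbitrarily: they must satisfy the constraints of \eqref{eq:alpha} tied to the local degrees and orbifold weights at the $z_{i,j}$ (this is the content of Lemmas~\ref{lem:z_ij} and~\ref{lem:alpha}, used in case~(ii) of Lemma~\ref{lem:g-sigma}). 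With a badly chosen $\alpha_\infty$ the jump through a high-order pole into a petal at infinity is arbitrarily strongly contracting, and no constant absorbs it; this breaks both assertion~(c) and the first step of the blocks in~(d).

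Second, short blocks inside petals at infinity. Theorem~\ref{thm:der-inf} only gives $|(\rp^n)'|_{\sigma_{\alpha_\infty}}>1/a_{m,n}$ with $a_{m,n}\le A$ where $a_{m,1}$ may be much larger than $1$, i.e.\ a single step may fail to expand in $\sigma_{\alpha_\infty}$. Assertion~(b) forces $\beta_0=1$ and $(\beta_n)$ decreasing, so assertion~(d) with $n=1$ demands genuine uniform expansion for one step from just outside $\widehat K$ (the deep end of $P_i$, $i\in\II_\infty$) into $\widehat K$ --- and that step lies entirely inside the petal at infinity, not ``in the $\rho$-region'' as you claim when you adjust the first terms of $\beta_n$; absorbing the constant $A$ into the $\beta^{(i)}_n$ is incompatible with the normalization $\beta_0=1$ and monotonicity. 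The paper removes this defect by replacing $\sigma_{\alpha_\infty}$ with $h_i(|z|)\sigma_{\alpha_\infty}$, where $h_i$ is a non-increasing radial factor dropping from $A_\infty^{C_1/c_0}$ to $1$ across the annulus $R^-\le|z|\le R_i^+$, and by Lemma~\ref{lem:R_i}, which guarantees that the last step of such a block crosses this annulus, so the gain from $h_i$ compensates the loss $A$; the choice of $\widehat K$ (through the radii $R_i$) is coordinated with this. Some device of this kind is unavoidable and is absent from your proposal. (A minor further point: $N$ is not ``bounded in terms of assumption~(b)''; it is chosen, via compactness of $\tilde K$ and $\bigcap_n\rp^{-n}(W)=\emptyset$, so that $W_N\cap\tilde K$ avoids $\PP_{crit}(\rp)$, which is what lets $\varsigma=\rho$ be nonsingular on the core.)
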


First, we show how to prove Theorem~\ref{thm:metric} using this proposition.

\begin{proof}[Proof of Theorem~{\rm\ref{thm:metric}} assuming Proposition~{\rm\ref{prop:expanding_metric}}]

Set
\[
b_n = \max \Big(\frac{1}{Q^{n /2}}, \beta_{\lceil n/2\rceil}\Big).
\]
Note that $b_n \in (0,1)$, the sequence $(b_n)_{n=1}^\infty$ is decreasing, and $\sum_{n=1}^\infty\beta_{\lceil n/2\rceil} \le 2 \sum_{n=1}^\infty\beta_n < \infty$, so $\sum_{n=1}^\infty b_n < \infty$.

Take a point $z \in W_{n+N}$ for some $n \in \N$ such that $\rp^n(z) \in K$ (and hence $\rp^n(z) \in\widehat K$). We can divide the set $[0, \ldots, n-1]$ into consecutive disjoint blocks $A_1, B_1, \ldots, A_l, B_l$ of (maximal) lengths (i.e.~numbers of elements), respectively, $k_1, m_1, \ldots, k_l, m_l$ for some $l \in \N$, such that if $s \in A_j$ for some $j$, then $\rp^s(z) \in W_N \setminus \widehat K$ and if $s \in B_j$ for some $j$, then $\rp^s(z) \in W_N \cap \widehat K$. We have $k_1 + \cdots + k_l + m_1 + \cdots + m_l = n$. We have $k_j, m_j > 0$ except for $k_1$ and $m_l$, which can be equal to $0$. 

Let
\begin{align*}
\Delta(A_j) &= |(\rp^{k_j})'(\rp^{k_1 + m_1 + \cdots + k_{j-1} + m_{j-1}}(z))|_\varsigma,\\ 
\Delta(B_j) &= |(\rp^{m_j})'(\rp^{k_1 + m_1 + \cdots + k_{j-1} + m_{j-1} + k_j}(z))|_\varsigma,
\end{align*}
where we set $\Delta(\emptyset) = 1$ for an empty block. Then 
\[
|(\rp^n)'(z)|_\varsigma = \prod_{j=1}^l \Delta(A_j)\Delta(B_j).
\]
By Proposition~\ref{prop:expanding_metric},
\[
\Delta(A_j) > \frac{1}{\beta_{k_j}}, \qquad \Delta(B_j) > Q^{m_j}.
\]
Hence,
\[
|(\rp^n)'(z)|_\varsigma > \frac{Q^{m_1 + \cdots + m_l}}{\beta_{k_1} \cdots \beta_{k_l}}.
\]
If $m_1 + \cdots + m_l > \frac n 2$, then, since $\beta_{k_j} < 1$, we have
\[
|(\rp^n)'(z)|_\varsigma > Q^{m_1 + \cdots + m_l} = Q^{n /2} \ge \frac{1}{b_n}.
\]
On the other hand, if $m_1 + \cdots + m_l \le \frac n 2$, then $k_1 + \cdots + k_l > \frac n 2$ and
\[
|(\rp^n)'(z)|_\varsigma > \frac{1}{\beta_{k_1} \cdots \beta_{k_l}}.
\]
By Proposition~\ref{prop:expanding_metric}, for every $p, q \in \N$,
\[
\beta_{p+q} = \frac{\beta_{p+q}}{\beta_{p+q-1}} \frac{\beta_{p+q-1}}{\beta_{p+q-2}} \cdots \frac{\beta_{q+1}}{\beta_q} \beta_q \ge
\frac{\beta_p}{\beta_{p-1}} \frac{\beta_{p-1}}{\beta_{p-2}} \cdots \frac{\beta_1}{\beta_0}\beta_q = \beta_p \beta_q.
\]
Applying this inductively, we obtain
\[
|(\rp^n)'(z)|_\varsigma > \frac{1}{\beta_{k_1} \cdots \beta_{k_l}} \ge \frac{1}{\beta_{k_1 + \cdots + k_l}} \ge \frac{1}{\beta_{\lceil n/2\rceil}}\ge \frac{1}{b_n}.
\]
We conclude that in both cases $|(\rp^n)'(z)|_\varsigma > \frac{1}{b_n}$, which ends the proof.
\end{proof}

The plan to prove Proposition~{\rm\ref{prop:expanding_metric} is as follows.
Since dealing with petals at parabolic periodic points (instead of fixed ones with multipliers $1$) makes the construction significantly more complicated in notation, without introducing new ideas into the proof, we first assume that all the parabolic periodic points $p_i$, $i \in \II_{par}$, are fixed under $\rp$, of multipliers $1$, and present in Subsections~\ref{subsec:metric_petal_dyn}--\ref{subsec:metric_metric} all the details of the proof in this case. 

In Subsection~\ref{subsec:metric_petal_dyn} we introduce notation and describe the dynamics of the map $\rp$ within the repelling petals $P_i$. Then, in Subsection~\ref{subsec:metric_orbifold}, we show that $V$ has an orbifold structure (in the sense of Subsection~\ref{subsec:orbifold}) and the orbifold metric $d\rho$ is strictly expanding on compact sets in $V'$ outside the singularities of the metric $d\rho$  (Lemma~\ref{lem:g-expand}). This part follows a classical reasoning described e.g.~in \cite{milnor}. 

In Subsection~\ref{subsec:metric_petal} we construct a suitable metric within the repelling petals $P_i$. For a petal at a parabolic fixed point $p_i$ we use the locally expanding metric $d\sigma_{p_i, \alpha_{par}}$, for a suitable $\alpha_{par} \in (0,1)$, defined in Section~\ref{sec:petals_at_par}, and estimates provided by Proposition~\ref{prop:der-par}. In the case of a petal $P_i$ at infinity we use the metric $d\sigma_{\alpha_\infty}$, for a suitable $\alpha_\infty > 1$, introduced in Section~\ref{sec:petals_at_inf}, and estimates given by Theorem~\ref{thm:der-inf}. Note that in this case, to obtain local expansion of $d\sigma_{\alpha_\infty}$ on suitable parts of $P_i$ (Lemma~\ref{lem:sigma-expand}), we must first correct the metric by multiplying it by an appropriate real function $h_i$. In Subsection~\ref{subsec:metric_metric}, we define a metric $\varsigma$ by gluing previously constructed metrics and show that it has required properties (Lemmas~\ref{lem:g-sigma} and~\ref{lem:der_i}). 

Finally, in Subsection~\ref{subsec:periodic}, we explain how the construction is modified in the presence of parabolic periodic points $p_i$, $i \in \II_{par}$, in $\bd W$.

Now we provide a detailed proof of Proposition~\ref{prop:expanding_metric} along the lines presented above.

\subsection{Petal dynamics}\label{subsec:metric_petal_dyn}

As explained above, in Subsections~\ref{subsec:metric_petal_dyn}--\ref{subsec:metric_metric}  we assume that all the parabolic periodic points $p_i$, $i \in \II_{par}$, are fixed under the map $\rp$, of multipliers $1$. 
Note that in this case the assumption~\ref{item:petals} of Theorem~\ref{thm:metric} has the form
\emph{\begin{enumerate}[$($e\rm{'}$)$]
\item \label{item:petals'} There exist finite collections $\II_{par}$, $\II_\infty$, and sets $P_i$, $i \in \II_{par} \cup \II_\infty$, such that:
\begin{itemize}[$\circ$] 
\item for $i \in \II_{par}$, the set $P_i$ is a repelling petal of $\rp$ at a parabolic fixed point $p_i \in \bd W$ of multiplier $1$, such that $\rp(P_i)$ are pairwise disjoint for $i \in \II_{par}$,
\item for $i \in \II_\infty$, the set $P_i$ is a repelling petal of $\rp$ at infinity, such that $W \cap \rp(P_i)$ is disjoint from $\bigcup_{i' \in \II_{par}} P_{i'} \cup \bigcup_{i' \in \II_\infty, i' \ne i} P_{i'}$,
\item $\overline{W} \subset V \cup \{p_i\}_{i\in\II_{par}}$,
\item $\overline{W} \setminus \Big(\bigcup_{i \in \II_{par}}(P_i \cup \{p_i\}) \cup \bigcup_{i \in \II_\infty} P_i\Big)$ is compact,
\item $\rp(W \cap P_i) \subset W$ for $i \in \II_{par} \cup \II_\infty$.
\end{itemize}
\end{enumerate}
}
For convenience, setting 
\[
\II =  \II_{par} \cup  \II_\infty
\]
and
\[
p_i = \infty \qquad \text{for } \; i \in \II_\infty,
\]
we describe simultaneously the dynamics in repelling petals of $\rp$ at parabolic fixed points and at infinity. 

By the assumption~\ref{item:petals'}, the set
\[
Y = \overline{W} \setminus \bigcup_{i \in \II} (P_i \cup \{p_i\})
\] 
is a compact subset of $V \setminus \{p_i\}_{i \in \II}$.
Fix $i \in \II$. 
By the definition of repelling petals, denoting a possible holomorphic extension of $\rp$ to $P_i$ by the same symbol, we have $p_i \notin P_i$, $\rp$ is univalent on $P_i$ and $\rp(P_i)$ is an attracting petal at $p_i$ of the map
\[
\ap_i \colon \rp(P_i) \to P_i, \qquad \ap_i = (\rp|_{P_i})^{-1},
\]
where $\ap_i^n \to p_i$ as $n \to \infty$, $\bigcap_{n=0}^\infty \ap_i^n(P_i) = \emptyset$ and $\ap_i$ extends continuously to $\overline{\rp(P_i)}$ (see Section~\ref{sec:petals_at_par} and Proposition~\ref{prop:attr_petal}). Note that in the case $i \in \II_{par}$ we have $p_i \in \overline {P_i}$ and $\ap_i$ extends holomorphically to a neighbourhood of $p_i$. 

Let
\[
K_i = \ap_i(Y \cap \overline{\rp(P_i)}).
\]
See Figure~\ref{fig:K}.
\begin{figure}[ht!]
\includegraphics[width=0.6\textwidth]{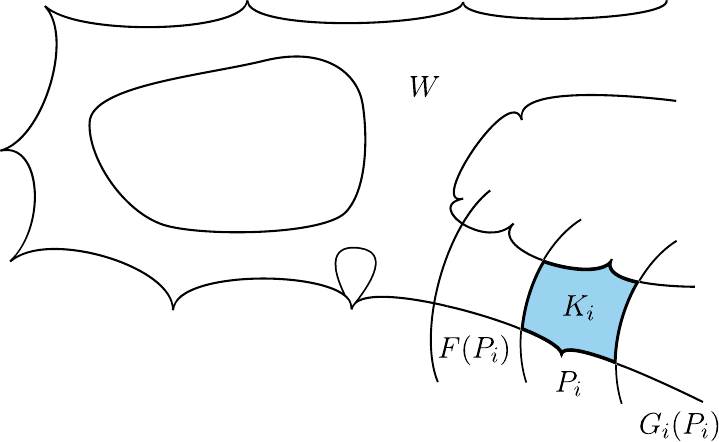}
\caption{\small The set $K_i$.}
\label{fig:K}
\end{figure}
By Definition~\ref{defn:petal_at_inf}, the set $K_i$ is a compact subset of $\rp(P_i)$. Furthermore,
\[
K_i \subset \overline{P_i} \setminus (\ap_i(P_i) \cup \{p_i\}),
\]
so $\ap_i(K_i) \subset \overline{\ap_i(P_i}) \setminus (G^2_i(P_i) \cup \{p_i\}) \subset P_i \setminus G^2_i(P_i)$, which implies 
\begin{equation}\label{eq:K_i_disjoint}
\ap_i^{n_1}(K_i) \cap \ap_i^{n_2}(K_i) = \emptyset \quad \text{for } \;n_1, n_2 \ge 0, \;|n_1 - n_2| > 1. 
\end{equation}

We will use frequently the following fact.

\begin{lem}\label{lem:WinKi} For $i \in \II$,
\[
\overline{W} \cap \overline{P_i} \setminus \{p_i\} \subset \bigcup_{n=0}^\infty \ap_i^n(K_i).
\]
\end{lem}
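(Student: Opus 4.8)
The plan is to show that any point $z \in \overline{W} \cap \overline{P_i} \setminus \{p_i\}$ eventually enters $Y$ under forward iteration of $\rp$, and then to trace this back through the inverse branch $\ap_i$ to land in one of the sets $\ap_i^n(K_i)$. First I would observe that since $z \in \overline{P_i}$ and $z \ne p_i$, the point $z$ lies in the closure of the attracting petal $\rp(P_i)$ of $\ap_i$ (more precisely, $z$ is either in $\rp(P_i)$ or on its boundary but not equal to $p_i$; recall $\rp$ is univalent on $P_i$ with $\ap_i$ extending continuously to $\overline{\rp(P_i)}$). Since $\bigcap_{n=0}^\infty \ap_i^n(P_i) = \emptyset$, the orbit $\ap_i^n(z)$ cannot stay in $\ap_i(P_i)$ forever (for $n$ large the iterates would have to leave the petal's "deep" part), so there is a smallest $n \ge 0$ with $\ap_i^n(z) \notin \ap_i(P_i)$, i.e.\ $w := \ap_i^n(z) \in \overline{P_i} \setminus \ap_i(P_i)$.

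Next I would use the hypothesis $\rp(W \cap P_i) \subset W$ (from assumption~\ref{item:petals'}), which by taking the inverse branch gives $\ap_i(W \cap \rp(P_i)) \subset W \cap P_i$, hence $\ap_i$ maps $\overline{W} \cap \overline{\rp(P_i)}$ into $\overline{W} \cap \overline{P_i}$; iterating, $w = \ap_i^n(z) \in \overline{W}$ as well. Now $w \in \overline{W} \cap \overline{P_i}$, and I claim $w \in \overline{W} \cap \overline{\rp(P_i)}$: indeed $w = \ap_i(\ap_i^{n-1}(z)) \in \ap_i(\overline{P_i}) \subset \overline{\ap_i(P_i)} \subset \overline{\rp(P_i)}$ when $n \ge 1$, while for $n = 0$ we have $w = z \in \overline{\rp(P_i)}$ by the opening remark. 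Then $\rp(w) = \ap_i^{n-1}(z)$ (for $n \ge 1$), and since $w \notin \ap_i(P_i) = G_i(\rp(P_i))$... — more directly, I want to conclude $w \in Y$. Here $Y = \overline{W} \setminus \bigcup_{j \in \II}(P_j \cup \{p_j\})$, so I must check $w \notin P_j \cup \{p_j\}$ for all $j$. Since $w \in \overline{P_i}$ but $w \notin \ap_i(P_i)$, and $\rp(P_i) \subset$ the interior structure... the point is that $\overline{P_i} \setminus \ap_i(P_i)$ is disjoint from $\rp(P_i)$, hence $w \notin \rp(P_i)$; combined with $\rp(W \cap P_i) \subset W$ and the disjointness of the petals from assumption~\ref{item:petals'} (the petals $P_j$, $j \in \II_{par}$, are disjoint and $W \cap \rp(P_i)$ is disjoint from the other petals for $i \in \II_\infty$), one deduces $w \notin P_j$ for $j \ne i$ as well, and $w \ne p_j$ since $w \in \overline{W} \setminus \{p_i\}$ is bounded away from the relevant singular points. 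So $w \in Y$.

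Finally, from $w \in Y \cap \overline{\rp(P_i)}$ we get $w \in Y \cap \overline{\rp(P_i)}$, so $\ap_i(w) \in \ap_i(Y \cap \overline{\rp(P_i)}) = K_i$ by definition of $K_i$. But $w = \ap_i^n(z)$, so $\ap_i^{n+1}(z) \in K_i$, which does not immediately give what I want — I actually want $z \in \ap_i^m(K_i)$ for some $m$, i.e.\ $z = \rp^m(\kappa)$ for some $\kappa \in K_i$. Reindexing: I should instead run the argument so that $z$ itself (not a forward $\ap_i$-iterate) is obtained by applying $\rp$ finitely many times to a point of $K_i$. Concretely, having found the smallest $n$ with $w = \ap_i^n(z) \notin \ap_i(P_i)$ and shown $\ap_i(w) \in K_i$, observe $z = \rp^n(w) = \rp^{n+1}(\ap_i(w))$, and since $\ap_i(w) \in K_i$ this yields $z \in \rp^{n+1}(K_i) = \ap_i^{-(n+1)}(K_i)$ — wrong direction again. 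The correct bookkeeping: $\ap_i$ is the inverse of $\rp$ on the petal, so $\ap_i^k(K_i) = (\rp|_{P_i})^{-k}(K_i)$ are the backward images, and $\bigcup_k \ap_i^k(K_i)$ is exactly the set of points whose $\ap_i$-orbit meets $K_i$; so I must show the \emph{forward} $\ap_i$-orbit of $z$ meets $K_i$, which is precisely $\ap_i^{n+1}(z) \in K_i$ — hence $z \in \ap_i^{-(n+1)}(K_i)$? No: $\ap_i^{n+1}(z) \in K_i$ means $z$ maps \emph{into} $K_i$ after $n+1$ steps of $\ap_i$, so $z \in \ap_i^{-(n+1)}(K_i) \cap (\text{domain})$, and since on the petal $\ap_i^{-1} = \rp$, this reads $z \in \rp^{n+1}(K_i)$... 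The cleanest fix is to define things so that \eqref{eq:K_i_disjoint} and the stated inclusion $\overline{W}\cap\overline{P_i}\setminus\{p_i\}\subset\bigcup_n \ap_i^n(K_i)$ match: a point $\zeta \in \ap_i^n(K_i)$ means $\zeta = \ap_i^n(\kappa)$, $\kappa \in K_i$, i.e.\ $\rp^n(\zeta) = \kappa \in K_i$. So I need: \emph{some forward $\rp$-iterate} $\rp^n(z)$ lands in $K_i$ — but $K_i \subset P_i \setminus \ap_i(P_i)$, whereas forward iteration under $\rp$ moves \emph{out} of $P_i$ towards $Y$.

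The resolution, and the step I expect to be the genuine crux: one should iterate $\ap_i$ (not $\rp$) starting from $z$. Because $\bigcap_n \ap_i^n(P_i) = \emptyset$, the $\ap_i$-orbit of $z$ leaves $\ap_i^k(P_i)$ for each $k$; combined with $\ap_i$ preserving $\overline W \cap \overline{P_i}$ and with $z \in \overline{W}$, one shows each $\ap_i^n(z)$ (while still in the relevant domain) either lies in $Y$ or in $\rp(P_i)$, and the first time it is in $\overline{\rp(P_i)} \setminus \ap_i(P_i)$ it lies in $Y \cap \overline{\rp(P_i)}$, so its next image is in $K_i$; but then $z$ itself is an $\ap_i$-iterate away from that, and conversely — the map $\ap_i$ being invertible on the petal — $z$ is a \emph{forward $\rp$-image} of that $K_i$-point only if $z$ is "deeper", which it is. So the final statement follows by taking $n$ to be the number of times one applies $\ap_i$ to $z$ to reach $K_i$ (equivalently, the number of times $\rp$ sends that $K_i$-point forward to reach $z$), using \eqref{eq:K_i_disjoint} to see $n$ is well-defined. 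The main obstacle is precisely this bookkeeping of which direction ($\rp$ vs.\ $\ap_i$) the finitely many iterates go, together with verifying the "escape" from the petal uses $\bigcap_n \ap_i^n(P_i) = \emptyset$ in the right form and that the landing point genuinely avoids all the other petals and the $p_j$'s — the latter using the disjointness clauses and compactness of $Y$ in assumption~\ref{item:petals'}.
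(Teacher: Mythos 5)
There is a genuine gap --- in fact two concrete errors that the later ``resolution'' never repairs. First, you misuse the hypothesis $\bigcap_{n=0}^\infty \ap_i^n(P_i)=\emptyset$: you claim it forces the forward $\ap_i$-orbit of $z$ to leave $\ap_i(P_i)$, so that some $\ap_i^n(z)$ lies in $\overline{P_i}\setminus\ap_i(P_i)$. This is backwards: $\ap_i$ is the \emph{attracting} direction, so $\ap_i^n(z)$ moves deeper into the petal (towards $p_i$) and lies in $\ap_i(P_i)$ for all $n\ge 2$; it never exits. What the empty intersection actually gives is that $z$ itself fails to lie in $\ap_i^n(P_i)$ for large $n$, i.e.\ there is a \emph{largest} $n$ with $z\in\ap_i^n(P_i)$ --- equivalently, the forward $\rp$-orbit of $z$ eventually leaves the petal. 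Second, from $\rp(W\cap P_i)\subset W$ you deduce $\ap_i(W\cap\rp(P_i))\subset W\cap P_i$; the implication goes the other way. The hypothesis yields only $W\cap P_i\subset\ap_i(W\cap\rp(P_i))$, whereas your inclusion asserts that $W$ is \emph{backward} invariant under the petal branch, which is not assumed and is exactly what this lemma cannot take for granted (the petal contains Julia points and points outside $W$, and an inverse branch of a point of $W$ need not lie in $W$). Consequently your key assertion ``$w=\ap_i^n(z)\in\overline{W}$'' is unjustified, and everything downstream (that $w\in Y$, that $\ap_i(w)\in K_i$) collapses; the aside that $\overline{P_i}\setminus\ap_i(P_i)$ is disjoint from $\rp(P_i)$ is also false, since $\overline{P_i}\subset\rp(P_i)\cup\{p_i\}$.

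Your final paragraph correctly senses that the bookkeeping must show a forward $\rp$-iterate of $z$ lands in $K_i$ before the orbit exits the petal, but it ends in an acknowledged muddle rather than a proof, and it never addresses points of $\overline{W}\cap\overline{P_i}$ that are not in $W\cap P_i$ (nor approximable within it), for which a pointwise orbit argument needs extra care. The paper avoids both pitfalls by working with set inclusions in the correct direction: it proves by induction that $\overline{W}\cap\overline{P_i}\subset\bigcup_{k=0}^{n-1}\ap_i^k(K_i)\cup\overline{\ap_i^{n+1}(W\cap\rp(P_i))}$, the inductive step resting on $W\cap\rp(P_i)\subset\rp(K_i)\cup\ap_i(W\cap\rp(P_i))$ (which \emph{is} what $\rp(W\cap P_i)\subset W$, the definition of $Y$, and the petal disjointness give), and then intersects over $n$, using $\overline{\ap_i(P_i)}\subset P_i\cup\{p_i\}$ together with $\bigcap_{n}\ap_i^n(P_i)=\emptyset$ to see that the residual set is at most $\{p_i\}$. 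To salvage your approach you would need to (i) replace your ``smallest $n$ with $\ap_i^n(z)\notin\ap_i(P_i)$'' by the largest $n$ with $z\in\ap_i^n(P_i)$ (i.e.\ track the forward $\rp$-orbit), (ii) prove that the exit point lies in $Y\cap\overline{\rp(P_i)}$ using only forward invariance $\rp(W\cap P_i)\subset W$ and the disjointness clauses, and (iii) handle closure points, none of which is done in the proposal.
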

\begin{proof}
We show inductively
\begin{equation}\label{eq:WinKi-ind}
\overline{W} \cap \overline{P_i} \subset \bigcup_{k=0}^{n-1} \ap_i^k(K_i) \cup \overline{\ap_i^{n+1}(W \cap \rp(P_i))}
\end{equation}
for $n \ge 0$ (with the convention that a union over an empty set is empty). 
To do it, note first that the last part of the assumption~\ref{item:petals'} gives $\overline{W} \cap \overline{P_i} \subset \overline{\ap_i(W \cap \rp(P_i))}$, which shows \eqref{eq:WinKi-ind} for $n=0$. Suppose now \eqref{eq:WinKi-ind} holds for some $n \ge 0$. To prove \eqref{eq:WinKi-ind} for $n+1$, it is enough to verify
\begin{equation}\label{eq:WinKi-ind1}
\overline{\ap_i^{n+1}(W \cap \rp(P_i))} \subset \ap_i^n(K_i) \cup \overline{\ap_i^{n+2}(W \cap \rp(P_i))}.
\end{equation}
To show \eqref{eq:WinKi-ind1}, note that by the definition of $Y$, we have
\[
W \setminus Y \subset \bigcup_{i' \in \II} P_{i'},
\]
so, as $W \cap \rp(P_i) \cap P_{i'} = \emptyset$ for $i, i' \in \II$, $i\neq i'$ by the assumption~\ref{item:petals'},
\[
(W \setminus Y) \cap \rp(P_i) \subset W \cap P_i.
\]
This together with the last part of the assumption~\ref{item:petals'} implies
\[
(W \setminus Y) \cap \rp(P_i) \subset \ap_i(W \cap \rp(P_i)),
\]
which gives
\[
W \cap \rp(P_i) \subset (W \cap Y \cap \rp(P_i)) \cup \ap_i(W \cap \rp(P_i)) \subset \rp(K_i) \cup \ap_i(W \cap \rp(P_i))
\]
and, consequently,
\[
\ap_i^{n+1}(W \cap \rp(P_i)) \subset \ap_i^n(K_i) \cup \ap_i^{n+2}(W \cap \rp(P_i)).
\]
This shows \eqref{eq:WinKi-ind1}, completing the inductive proof of \eqref{eq:WinKi-ind}.

Using \eqref{eq:WinKi-ind}, we obtain
\begin{equation}\label{eq:WinKi2}
\overline{W} \cap \overline{P_i} \subset \bigcap_{n=0}^\infty \Big( \bigcup_{k=0}^{n-1} \ap_i^k(K_i) \cup \overline{\ap_i^{n+1}(W \cap \rp(P_i))}\Big) \subset \bigcup_{n=0}^\infty \ap_i^k(K_i) \cup \bigcap_{n=0}^\infty\overline{\ap_i^{n+1}(W \cap \rp(P_i))}.
\end{equation}

By the definition of attracting petals, we have $\overline{\ap_i(P_i)} \subset P_i \cup \{p_i\}$ and $\bigcap_{n=0}^\infty \ap_i^n(P_i) = \emptyset$, which implies 
\[
\bigcap_{n=0}^\infty\overline{\ap_i^{n+1}(W \cap \rp(P_i))} = \overline{\bigcap_{n=0}^\infty\ap_i^{n+1}(W \cap \rp(P_i))} \subset \overline{\bigcap_{n=0}^\infty \ap_i^n(P_i)} \subset \{p_i\}.
\]
This together with \eqref{eq:WinKi2} proves the lemma.
\end{proof}

Fix a number $n_0 \in \N$, which is larger than all the numbers $n_0$ appearing in Proposition~\ref{prop:par_petal2}, Definition~\ref{defn:petal_at_inf} and Proposition~\ref{prop:attr_petal} suited for the attracting petals $\rp(P_i)$ of the map $\ap_i$ at $p_i$ and the compact sets $K_i$, $i \in \II$ (some other requirements for $n_0$ will be specified later). 
Let
\[
\tilde K_0 = Y \cup \bigcup_{i\in \II} \bigcup_{n=0}^{n_0-1}(\overline{W} \cap \ap_i^n(K_i) ).
\]
The set $\tilde K_0$ is a compact subset of $\overline{W} \subset V \setminus \{p_i\}_{i \in \II_{par}}$, and so is $\bd \tilde K_0$. Hence, by the assumption~\ref{item:crit} of Theorem~\ref{thm:metric}, the set $\bd \tilde K_0 \cap \PP_{crit}(\rp)$ consists of a finite number of points, which are isolated in $\PP_{crit}(\rp)$. Consequently, for a sufficiently small $\varepsilon_0 > 0$, the set
\[
\tilde K = \tilde K_0 \cup \bigcup_{z \in\bd \tilde K_0 \cap \PP_{crit}(\rp)}\overline{\D(z, \varepsilon_0)} 
\]
is a compact subset of $V \setminus \{p_i\}_{i \in \II_{par}}$ satisfying
\begin{equation}\label{eq:inV}
\bd \tilde K \cap \PP_{crit}(\rp) = \emptyset.
\end{equation}
By Lemma~\ref{lem:WinKi},
\begin{equation}\label{eq:WinKi}
(\overline{W} \setminus \tilde K) \setminus \{p_i\}_{i \in \II_{par}} \subset \bigcup_{i \in \II} \bigcup_{n=n_0}^\infty \ap_i^n(K_i) \subset \bigcup_{i \in \II} P_i.
\end{equation}

Another useful property of $\tilde K$ is described in the following lemma.

\begin{lem}\label{lem:z_ij} For $i \in \II$, there exist a finite number of points $z_{i,j} \in \overline{W_1} \cap \tilde K$, $j \in \JJ_i$, such that $\rp(z_{i,j}) = p_i$ and
\[
W \cap \tilde K \cap \rp^{-1}(W \setminus \tilde K) \subset \bigcup_{i \in \II}\bigcup_{j\in \JJ_i} \D(z_{i, j}, r)
\]
for some $r > 0$, where $\D(z_{i, j}, r)$ are pairwise disjoint for distinct $(i,j)$, and $r$ can be assumed to be arbitrarily small if $n_0$ is chosen large enough.
\end{lem}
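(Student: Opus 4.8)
The plan is to show that any $z$ in the set $S := W \cap \tilde K \cap \rp^{-1}(W \setminus \tilde K)$ to be covered is mapped by $\rp$ deep into one of the petals $P_i$, hence arbitrarily close (spherically) to $p_i$ once $n_0$ is large, while $z$ itself stays in the compact set $\tilde K$; consequently $z$ must lie near the fibre $\rp^{-1}(p_i) \cap \tilde K$, which is finite, and these finitely many preimages will be the points $z_{i,j}$. Concretely: fix $z \in S$. Since $W$ is open and $p_i \in \bd W$ for $i \in \II_{par}$, we have $\rp(z) \in W \setminus \tilde K \subset (\overline{W} \setminus \tilde K) \setminus \{p_i\}_{i \in \II_{par}}$, so by \eqref{eq:WinKi} there are $i = i(z) \in \II$ and $n \ge n_0$ with $\rp(z) \in \ap_i^n(K_i)$. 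For $i \in \II_{par}$, Proposition~\ref{prop:par_petal}(a) (applied to $\ap_i$, which extends holomorphically near its parabolic fixed point $p_i$, with attracting petal $\rp(P_i)$ and the compact set $K_i \subset \rp(P_i)$) shows $\ap_i^n(K_i)$ lies in an arbitrarily small neighbourhood of $p_i$ for $n$ large; for $i \in \II_\infty$, Proposition~\ref{prop:attr_petal}(a)--(b) together with a compactness argument over $K_i$ gives the same with $p_i = \infty$. Enlarging $n_0$, we may therefore assume $\ap_i^n(K_i) \subset \DD_{sph}(p_i, \eta)$ for all $i \in \II$ and $n \ge n_0$, where $\eta = \eta(n_0) \to 0$ as $n_0 \to \infty$; in particular $\dist_{sph}(\rp(z), p_{i(z)}) < \eta$ for every $z \in S$.

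Next, note $S \subset W_1$ (as $\rp(z) \in W$ for $z \in S$), so $\overline{S} \subset \overline{W \cap \rp^{-1}(W)}$, and by assumption~\ref{item:extend} the map $\rp$ is meromorphic on a neighbourhood of the compact set $\overline{S}$. Since a non-constant meromorphic map has discrete fibres, $\rp^{-1}(\{p_i : i \in \II\}) \cap \overline{S}$ is finite; let its elements be $z_{i,j}$, $j \in \JJ_i$, indexed so that $\rp(z_{i,j}) = p_i$. Then $z_{i,j} \in \overline{W_1} \cap \tilde K$, because $\overline{S} \subset \overline{W_1}$ and $\overline{S} \subset \tilde K$. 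It remains to check that every $z \in S$ lies within $r$ of $\{z_{i,j}\}$ for an $r = r(n_0, \varepsilon_0)$ tending to $0$; granting this, one shrinks $r$ below $\frac{1}{2}\min_{(i,j)\ne(i',j')} |z_{i,j} - z_{i',j'}|$ to make the discs $\D(z_{i,j}, r)$ pairwise disjoint while still covering $S$. If the covering failed, there would be $r_0 > 0$ and, for arbitrarily large $n_0$, a point $z \in S$ with $\dist(z, \{z_{i,j}\}) \ge r_0$; passing to a subsequence with $i(z) = i_\ast$ fixed and using the above, we would obtain points of $\tilde K$ whose $\rp$-images converge to $p_{i_\ast}$ yet stay $r_0$-far from every preimage of $p_{i_\ast}$ in $\tilde K$, and a limiting (normal-families/continuity) argument would produce a point $z_\ast \in \tilde K \cap \overline{W_1}$ with $\rp(z_\ast) = p_{i_\ast}$ — that is, one of the $z_{i_\ast, j}$ — a contradiction.

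The main obstacle is making this last step uniform: $\tilde K$, and hence the list $\{z_{i,j}\}$ and the admissible radius, depends on $n_0$, and for $i \in \II_\infty$ the set $\tilde K$ grows towards $\infty$ as $n_0$ increases. The way around it is that the part of $\tilde K$ lying outside any fixed spherical neighbourhood of $\{p_i : i \in \II\}$ is contained in a compact set independent of $n_0$: indeed $\tilde K_0 \subset Y \cup \bigcup_{i \in \II} \overline{P_i}$, and for each $i$ only finitely many of the sets $\ap_i^n(K_i)$ meet the complement of a prescribed neighbourhood of $p_i$ (a bound absorbed into the choice of $n_0$, using also \eqref{eq:K_i_disjoint}), while the $\varepsilon_0$-discs around the postcritical points on $\bd \tilde K_0$ appearing in the definition of $\tilde K$ are controlled by taking $\varepsilon_0$ small relative to $r$, with \eqref{eq:inV} ensuring these discs stay away from critical values of $\rp$ so that $\rp$ has bounded distortion there. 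On this $n_0$-independent compact set the limiting argument runs with fixed constants, giving the conclusion with $r$ as small as desired.
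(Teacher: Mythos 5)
Your overall strategy (observe that every $z$ in $S := W \cap \tilde K \cap \rp^{-1}(W\setminus\tilde K)$ has $\rp(z)$ spherically close to some $p_i$ once $n_0$ is large, take the finitely many preimages of $p_i$ in a compact set as the $z_{i,j}$, and conclude by a compactness/continuity argument using assumption (d)) is the same as the paper's, but you are missing the one step that makes the argument uniform in $n_0$, and your attempted workaround in the last paragraph does not supply it. The paper first proves that $S$ is disjoint from \emph{all} the petals, i.e.\ $W \cap \tilde K \cap \rp^{-1}(W\setminus\tilde K) \cap \bigcup_{i\in\II} P_i = \emptyset$. This is a genuine (if short) argument using the level structure of $\tilde K$ inside the petals: if $z \in S \cap P_i$, then $\rp(z)\in W\setminus\tilde K$ lies in $\bigcup_{n\ge n_0}\ap_i^n(K_i)$ by Lemma~\ref{lem:WinKi} and the disjointness of $W\cap\rp(P_i)$ from the other petals, hence $z \in \bigcup_{n\ge n_0+1}\ap_i^n(K_i)$; on the other hand $z\in\tilde K$ lies within $\varepsilon_0$ of $\tilde K_0$, whose intersection with $P_i$ sits in the levels $\ap_i^n(K_i)$ with $n\le n_0-1$, and with $\varepsilon_0$ chosen small \emph{according to} $n_0$ these two locations are incompatible. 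The payoff is that $S \subset W\setminus\bigcup_{i\in\II} P_i \subset Y$, a compact set independent of $n_0$ and at positive spherical distance from every $p_i$ (including $\infty$); only then are the finitely many preimages $z_{i,j}$ and the radius $r$ controllable uniformly as $n_0\to\infty$.

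Your third paragraph does not achieve this: knowing that the part of $\tilde K$ outside fixed neighbourhoods of the $p_i$ is $n_0$-independent says nothing about whether points of $S$ can lie in the petal part of $\tilde K$ — for instance inside the $\varepsilon_0$-discs around post-critical points, which a priori may reach level $n_0$ or deeper of a petal unless $\varepsilon_0$ is tied to $n_0$, or, in your varying-$n_0$ limiting argument, at petal levels tending to infinity. If such points existed, your contradiction argument would produce a limit point $z_*$ with $\rp(z_*)=p_{i_*}$ that is $p_{i_*}$ itself (for $i_*\in\II_{par}$) or $\infty$ (for $i_*\in\II_\infty$); neither belongs to $\tilde K$, so neither is one of your $z_{i,j}$, and no contradiction arises — indeed the covering by Euclidean discs of radius $r$ around the finite points $z_{i,j}$ would simply fail for such $z$. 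The appeal to \eqref{eq:inV} and bounded distortion is not relevant here (\eqref{eq:inV} only says $\bd\tilde K$ avoids $\PP_{crit}(\rp)$, and no distortion estimate is needed in this lemma). So the missing ingredient is precisely the disjointness of $S$ from $\bigcup_{i\in\II} P_i$, proved via the level structure of $\tilde K$ together with the choice of $\varepsilon_0$ small depending on $n_0$; once it is in place, your compactness argument runs on the fixed set $Y$ and gives the statement.
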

\begin{proof}
First, we show
\begin{equation}\label{eq:F-1disjoint}
W \cap \tilde K \cap \rp^{-1}(W \setminus \tilde K) \cap \bigcup_{i \in \II} P_i = \emptyset.
\end{equation}
To prove \eqref{eq:F-1disjoint}, suppose there exists $z \in W \cap \tilde K \cap \rp^{-1}(W \setminus \tilde K) \cap P_i$ for some $i \in \II$. By Lemma~\ref{lem:WinKi} and the assumption~\ref{item:petals'}, we have $z \in \bigcup_{n= n_0+1}^\infty G^n_i(K_i) \subset \overline{G^{n_0+1}_i(P_i)}$
By the definition of $\tilde K$, there is a point $z_0 \in \tilde K_0$ with $|z-z_0| < \varepsilon_0$. Note that since $z$ is in $\bigcup_{n= n_0+1}^\infty G^n_i(K_i) \cap \rp(\tilde K)$, which is a compact subset of $\ap_i^{n_0}(P_i)$, we have $z_0 \in \ap_i^{n_0}(P_i)$, provided $\varepsilon_0$ is chosen sufficiently small according to $n_0$. On the other hand, by Lemma~\ref{lem:WinKi} and the definition of $K$, $z_0 \in \bigcup_{n= 0}^{n_0-1} G^n_i(K_i) \subset \overline{P_i} \setminus G^{n_0}_i(P_i)$, which makes a contradiction. 

Now the lemma easily follows from \eqref{eq:\rp-1disjoint}, the compactness of $K$, the assumption~\ref{item:extend} of Theorem~\ref{thm:metric} and the fact that $W \setminus \tilde K \cap P_i$ is arbitrarily close (in the spherical metric) to $p_i$ if $n_0$ is chosen large enough.
\end{proof}

\subsection{Orbifold metric}\label{subsec:metric_orbifold}

For $z \in V$ let $\nu(z)$ be equal to the least common multiple of the elements of the set $\{\deg_w \rp^n: w \in \rp^{-n}(z), \, n \in \N\}$. 
By the assumption~\ref{item:deg} of Theorem~\ref{thm:metric}, the function $\nu$ is well-defined. Note that
\begin{equation}\label{eq:nu-P}
\nu(z) > 1 \iff z \in \PP_{crit}(\rp)
\end{equation}
and
\begin{equation}\label{eq:multiple}
\nu(\rp(z)) \text{ is a multiple of }\nu(z)\deg_z \rp \qquad \text{for } z \in V'.
\end{equation}
By the assumption~\ref{item:crit} of Theorem~\ref{thm:metric}, the pair $(V, \nu)$ is a hyperbolic orbifold as defined in Subsection~\ref{subsec:orbifold}. 
Let $d\rho = \rho|dz|$ be the corresponding orbifold metric on $V$. A standard estimate of the density $\varrho_V$ of the hyperbolic metric $d\varrho_V$ on $V$ (see e.g.~\cite[Lemmas~2.1 and~2.3]{bfjk}) shows that for $z \in V$ we have 
\[
\varrho_V(z) \ge \frac{c}{|z - p_i| \ln |z - p_i|}
\]
for $i \in \II_{par}$, if $|z - p_i|$ is sufficiently small, and 
\[
\varrho_V(z) \ge \frac{c}{|z| \ln |z|}
\]
if $|z|$ is sufficiently large, for a constant $c > 0$. Hence, \eqref{eq:orb>hyp} and \eqref{eq:nu-P} imply that for every $0 < \delta < 1$ and $z \in V \setminus \PP_{crit}(\rp)$,
\begin{equation}\label{eq:orb>hyp2}
\lim_{z \to p_i} \frac{\rho(z)}{\sigma_{p_i, 1 - \delta}(z)} = \infty \quad \text{for } i \in \II_{par}, \qquad\lim_{|z| \to \infty} \frac{\rho(z)}{\sigma_{1 + \delta}(z)} = \infty \quad \text{for } i \in \II_\infty,
\end{equation}
where the metrics $d\sigma_{p_i, 1-\delta} = \sigma_{p_i, 1-\delta}|dz|$, $d\sigma_{1+\delta} = \sigma_{1+\delta}|dz|$ are defined, respectively, in Sections~\ref{sec:petals_at_par} and~\ref{sec:petals_at_inf}.

Now, we describe the expanding properties of the orbifold metric $d\rho$ with respect to $\rp$. 

\begin{lem}\label{lem:g-expand} The map $\rp$ is locally expanding on $V' \setminus \rp^{-1}(\PP_{crit}(\rp))$ with respect to $d\rho$. Moreover, for every compact subset $L$ of $V'$ there exists $Q>1$ such that $|\rp'|_\rho > Q$ on $L \setminus \rp^{-1}(\PP_{crit}(\rp))$.
\end{lem}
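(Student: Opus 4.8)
The plan is to use the standard fact that the push-forward orbifold metric $d\rho = \pi_*(d\varrho_\D)$ is strictly contracted by holomorphic orbifold self-maps which are not covering isometries, and to show that $\rp$ lifts to such a map. First I would verify that $\rp\colon (V',\nu|_{V'}) \to (V,\nu)$ is a holomorphic orbifold map in the sense of the footnote, i.e.\ that $\nu(\rp(z))$ divides $\nu(z)\deg_z\rp$ for $z \in V'$; but this is exactly \eqref{eq:multiple}, so the orbifold structure on $V'$ inherited from $V$ via $\rp$ is refined by the one coming from $V$ itself. Concretely, one considers the orbifold $(V', \mu)$ where $\mu(z) = \nu(\rp(z))/\deg_z\rp$ (an integer by \eqref{eq:multiple}, and $\ge \nu(z)$ since $\nu(z)$ divides it); then $\rp$ is an orbifold covering map from $(V',\mu)$ to $(V,\nu)$, so it is a local isometry from the orbifold metric $d\rho_{V',\mu}$ to $d\rho = d\rho_{V,\nu}$, i.e.\ $|\rp'(z)|_{\rho_{V',\mu} \to \rho} = 1$ away from the metric singularities.

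Next I would invoke the Schwarz--Pick orbifold lemma (\cite[Theorem~A.3]{mcmullen-book}, as already used for \eqref{eq:orb>hyp}): the identity map $(V',\mu) \to (V',\nu|_{V'})$ is a holomorphic orbifold map into a ``coarser'' orbifold (since $\nu(z)\mid\mu(z)$, the divisibility $\nu(z)\cdot\deg_z(\mathrm{id})$ divisible by... wait --- one needs $\mu(z)$ divisible by... let me be careful: for the identity to be an orbifold map $(V',\mu)\to(V',\nu|_{V'})$ we need $\mu(z)$ divisible by $\nu(z)$, which holds). Hence $\rho_{V',\mu} \ge \rho|_{V'}$ pointwise on $V'$ outside the singular set, and moreover this inequality is \emph{strict} whenever $\mu(z) > \nu(z)$ for some nearby point, which happens precisely on $V' \setminus \rp^{-1}(\PP_{crit}(\rp))$ near points mapping to $\PP_{crit}(\rp)$; more robustly, strictness follows because $V' \ne V$, so the covering $(V',\mu)\to(V,\nu)$ is non-trivial and the identity inclusion is a non-surjective orbifold map, forcing strict contraction by the strong form of Schwarz--Pick. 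Combining, $|\rp'(z)|_\rho = |\rp'(z)|_{\rho_{V',\mu}\to\rho}\cdot\frac{\rho_{V',\mu}(z)}{\rho(z)} = 1 \cdot \frac{\rho_{V',\mu}(z)}{\rho|_{V'}(z)} > 1$ on $V' \setminus \rp^{-1}(\PP_{crit}(\rp))$, which is local expansion. For the uniform bound, given a compact set $L \subset V'$, the function $z \mapsto |\rp'(z)|_\rho$ is continuous and $> 1$ on the compact set $L \setminus \rp^{-1}(\PP_{crit}(\rp))$; but this set need not be compact (the bad points are removed), so one instead notes that near each point of $L \cap \rp^{-1}(\PP_{crit}(\rp))$ the density $\rho$ blows up like $|z - \rp(z_0)|^{-(1-1/\nu)}$ pulled back, while $\rho|_{V'}$ has at worst a milder singularity (governed by $\mu(z_0) > \nu(\rp(z))/\deg$), so the ratio stays bounded below by a constant $>1$ there as well; taking the infimum over a finite cover of $L$ gives the desired $Q > 1$.

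The main obstacle I anticipate is the uniform expansion constant $Q$ near points $w \in L$ with $\rp(w) \in \PP_{crit}(\rp)$ (equivalently $\nu(\rp(w)) > 1$), where $|\rp'|_\rho = 1$ exactly if $w \in \rp^{-1}(\PP_{crit}(\rp))$ but we have excised those points; one must check that the expansion factor does not degenerate to $1$ as we approach such $w$ along $V'\setminus\rp^{-1}(\PP_{crit}(\rp))$. Here the key is \eqref{eq:multiple}: since $\nu(\rp(w))$ is a multiple of $\nu(w)\deg_w\rp$ and $\rp(w) \in \PP_{crit}(\rp)$ with $w \notin \PP_{crit}(\rp)$ being the generic case, actually the relevant comparison is between the orbifold density singularity exponent $1 - 1/\nu(\rp(w))$ at $\rp(w)$ and $1 - 1/\mu(w)$ at $w$ in the source orbifold $(V',\mu)$; since $\mu(w) = \nu(\rp(w))/\deg_w\rp$ and the local degree contributes, the pull-back $\rp^*d\rho$ near $w$ behaves like $|z-w|^{-(1-1/\nu(\rp(w)))}\cdot|z-w|^{\deg_w\rp - 1}\,|dz|$ which is comparable to $|z-w|^{-(1-1/\mu(w))}|dz|$ --- i.e.\ exactly the $(V',\mu)$-orbifold metric, confirming the isometry statement even at singularities --- while $\rho|_{V'}$ near $w$ is the $(V',\nu|_{V'})$-orbifold metric with the milder exponent $1 - 1/\nu(w)$ (or $=0$ if $\nu(w)=1$). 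The ratio $\rho_{V',\mu}/\rho|_{V'}$ then tends to $\infty$ at such $w$, so in fact $|\rp'|_\rho \to \infty$ there, which is more than enough. Away from these finitely many points (finite by \eqref{item:crit} applied to the compact set $L$, or rather $\overline{\rp(L)}$), continuity and positivity on a compact remainder give the uniform $Q > 1$; patching the two cases via a finite cover finishes the proof.
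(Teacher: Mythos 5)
Your overall strategy (pull the orbifold metric back, identify the pull-back with the metric of the orbifold $(V',\mu)$, $\mu=\nu\circ\rp/\deg\rp$, and get strictness from $V'\ne V$ via the strong orbifold Schwarz--Pick lemma) is the same idea as the paper's, but the step you simply assert --- that $\rp\colon(V',\mu)\to(V,\nu)$ is an orbifold \emph{covering}, so that $\rp^*d\rho$ equals the $(V',\mu)$-orbifold metric and $\rp$ is a local isometry --- is exactly the non-trivial content of the lemma, and you give no argument for it. The divisibility relation \eqref{eq:multiple} only makes the local degrees compatible; to have a covering one also needs that inverse branches of $\rp$ can be continued (equivalently, properness of $\rp$ over small discs in $V$), and for a transcendental map this is precisely where the hypothesis that $\rp$ has no asymptotic values (assumption~\ref{item:asympt_values} of Theorem~\ref{thm:metric}) must enter --- you never invoke it. The paper's proof is essentially a proof of this point: branches $\tilde H$ of lifts of $\rp^{-1}$ through the universal branched covering $\pi\colon\D\to V$ are continued over all of $\D$ using \eqref{eq:deg_w} together with the absence of asymptotic values, and then strict contraction comes from $\tilde H(\D)\subset\pi^{-1}(V')\ne\D$ and the classical Schwarz--Pick lemma. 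So your route can be completed, but as written the key claim is assumed rather than proved.

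There is also a concrete error in your uniformity argument near points $w\in L\cap\rp^{-1}(\PP_{crit}(\rp))$: you claim $|\rp'|_\rho\to\infty$ at such $w$ because the pulled-back exponent $1-1/\mu(w)$ strictly exceeds $1-1/\nu(w)$. But \eqref{eq:multiple} only gives that $\nu(w)\deg_w\rp$ divides $\nu(\rp(w))$, and equality is possible (for instance $\nu(w)=1$ and $\deg_w\rp=\nu(\rp(w))$, or $w$ non-critical with $\nu(w)=\nu(\rp(w))$); in that case $\mu(w)=\nu(w)$, the two singular exponents coincide, and the ratio $\rho_{V',\mu}/\rho$ stays bounded near $w$ --- it does not blow up. The conclusion $Q_w>1$ is still true, but it needs a separate argument in this borderline case, e.g.\ continuity of the ratio in a local branched chart combined with the pointwise strict inequality, or, as in the paper, covering a punctured neighbourhood of $w$ by finitely many inverse branches $H_{w,j}$ and taking the worst of the contraction constants $q_j<1$ obtained upstairs in $\D$. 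A smaller inaccuracy of the same kind: $\rho|_{V'}$ is the restriction to $V'$ of the $(V,\nu)$-orbifold metric, not the orbifold metric of $(V',\nu|_{V'})$; the inequality you need still holds (the $(V',\nu|_{V'})$-metric dominates $\rho|_{V'}$ by Schwarz--Pick applied to the inclusion), but the two objects should not be identified when you compare singularity exponents.
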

\begin{proof} 
Let $\pi\colon \D \to V$ be a universal branch covering of the orbifold $(V, \nu)$ (see Subsection~\ref{subsec:orbifold}). By \eqref{eq:multiple-orbifold} and \eqref{eq:multiple}, 
\begin{equation}\label{eq:deg_w}
\deg_u \pi \text{ is a multiple of } \deg_v (\rp\circ \pi) \; \text{ for } z \in V, \, w \in \rp^{-1}(z), \, u \in \pi^{-1}(z), \, v \in \pi^{-1}(w).
\end{equation}
Consider a branch $H$ of $\rp^{-1}$ defined on some simply connected domain $U \subset V \setminus \PP_{crit}(\rp)$. By \eqref{eq:nu-P}, $\nu(z) = \nu(H(z)) = 1$ for every $z \in U$. Hence, \eqref{eq:deg_w} implies that $H$ lifts to a holomorphic map $\tilde H \colon \tilde U \to \D$ for some simply connected domain $\tilde U \subset \D$. In fact, $\tilde H$ extends to a holomorphic map $\tilde H \colon \D \to \D$. To see it, we extend $\tilde H$ holomorphically as a branch of $(\rp \circ \pi)^{-1} \circ \pi$ along any curve $\gamma$ in $\D$ starting from a given point of $\tilde U$. Such extension exists by \eqref{eq:deg_w} and the fact that $\rp$ has no asymptotic values. Then by the simple connectedness of $\D$ we conclude that $\tilde H \colon \D \to \D$ is well-defined as a holomorphic map. Since $V' \ne V$, we have $\tilde H(\D) \subset \pi^{-1}(V') \neq \pi^{-1}(V) = \D$, so $\tilde H$ cannot be an isometry with respect to $d\varrho_{\D}$. Hence, by the Pick--Schwarz Lemma, $\tilde H$ is locally contracting on $\D$ with respect to $d\varrho_{\D}$, so 
\begin{equation}\label{eq:tildeH}
|\tilde H'(u)|_{\varrho_\D} < 1 \qquad \text{for} \quad u \in \D.
\end{equation}

Let $z \in V$. By the assumption~\ref{item:crit} of Theorem~\ref{thm:metric}, the set $\PP_{crit}(\rp)$ is discrete in $V$, so we can take a small open disc $U_z \subset V$ around $z$, such that $(U_z \setminus \{z\}) \cap \PP_{crit}(\rp) = \emptyset$. Let $\tilde U_z$ be a component of $\pi^{-1}(U_z)$ and let $w \in \rp^{-1}(z)$. 

Suppose first $z \notin \PP_{crit}(\rp)$. Then $U_z \cap \PP_{crit}(\rp) = \emptyset$, so there exists a branch $H_w$ of $\rp^{-1}$ defined on $U_z$, such that $H_w(z) = w$. As explained above, there exists a holomorphic extension $\tilde H_w\colon \D \to \D$ of the lift of $H_w$ to $\tilde U_z$, which is locally contracting on $\D$ with respect to $d\varrho_{\D}$. Thus, \eqref{eq:tildeH} gives
\[
\sup_{\tilde U_z}|\tilde H_w'|_{\varrho_\D} < q 
\]
for some $q \in (0, 1)$. Since $U_z \cap \PP_{crit}(\rp) = \emptyset$, the metric $\rho$ has no singularities on $U_z \cup H_w(U_z) = \pi(\tilde U_z) \cup \pi(\tilde H_w(\tilde U_z))$, so
\[
\inf_{H_w(U_z)}|\rp'|_\rho = \frac{1}{\sup_{U_z}|H_w'|_\rho} = \frac{1}{\sup_{\tilde U_z} |\tilde H'_w|_{\varrho_\D}} > \frac{1}{q}. 
\]
Hence, $|\rp'|_\rho > \frac{1}{q} > 1$ in a neighbourhood of $w$. This shows that $\rp$ is locally expanding on $V' \setminus \rp^{-1}(\PP_{crit}(\rp))$ with respect to $d\rho$.

Suppose now $z \in \PP_{crit}(\rp)$. Then there are a finite number of branches $H_{w,j}$ of $\rp^{-1}$, defined on some simply connected domains $U_{z,j} \subset U_z\setminus \{z\}$, such that $\bigcup_j H_{w,j}(U_{z,j})$ contains a punctured neighbourhood of $w$. Repeating the previous arguments and applying \eqref{eq:tildeH} to extensions $\tilde H_{w,j}$ of lifts of $H_{w,j}$ to some domains $\tilde U_{z,j} \subset \tilde U_z$, we obtain
\[
\sup_{\tilde U_{z,j}}|\tilde H_{w,j}'|_{\varrho_\D} < q_j
\]
for some $q_j \in (0, 1)$. As $(U_z \setminus \{z\}) \cap \PP_{crit}(\rp) = \emptyset$, the metric $\rho$ has no singularities on $U_{z,j} \cup H_{w,j}(U_{z,j})$, so
\[
\inf_{H_{w,j}(U_{z,j})}|\rp'|_\rho = \frac{1}{\sup_{U_{z,j}}|H_{w,j}'|_\rho} > \frac{1}{q_j}. 
\]
Hence, $|\rp'|_\rho > \min_j\frac{1}{q_j} > 1$ in a punctured neighbourhood of $w$.

We conclude that for every $w \in V'$ there exists a neighbourhood $U(w)$ of $w$ and a number $Q_w > 1$ such that $|\rp'|_\rho > Q_w$ on $U(w) \setminus \rp^{-1}(\PP_{crit}(\rp))$. This provides both assertions of the lemma.
\end{proof}

\subsection{Petal metric}\label{subsec:metric_petal}

Fix numbers $\alpha_{par} \in (0, 1)$, $\alpha_\infty \in (1, \infty)$ such that
\begin{equation}\label{eq:alpha}
\begin{aligned}
\alpha_{par} &> 1 - \frac{1}{\max_{i \in \II_{par}} \max_{j \in \JJ_i}\{\nu(z_{i,j}) \deg_{z_{i,j}} \rp\}},\\
\alpha_\infty &< 1 + \frac{1}{\max_{i \in \II_\infty} \max_{j \in \JJ_i}\{\nu(z_{i,j}) \deg_{z_{i,j}} \rp\}},
\end{aligned}
\end{equation}
with the convention that the maximum over an empty set is $1$. We will use the following estimate (recall that we denote $W_1 = W \cap \rp^{-1}(W)$).

\begin{lem}\label{lem:alpha}
\begin{alignat*}{3}
|\rp'(z)| \frac{\sigma_{p_i, \alpha_{par}}(\rp(z))}{\rho(z)} &\to \infty &\quad&\text{as } z \to z_{i, j}, \; z \in W_1 &\qquad&\text{for } \; i \in \II_{par}, \; j \in \JJ_i,\\
|\rp'(z)| \frac{\sigma_{\alpha_\infty}(\rp(z))}{\rho(z)} &\to \infty &\quad&\text{as } z \to z_{i, j}, \; z \in W_1 &\qquad&\text{for } \; i \in \II_\infty, \; j \in \JJ_i.
\end{alignat*}
\end{lem}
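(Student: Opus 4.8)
The plan is to prove this by combining three kinds of estimates near the point $z_{i,j}$: a lower bound on $|\rp'(z)|$ coming from the local degree of $\rp$ at $z_{i,j}$, a lower bound on the behaviour of $\sigma_{p_i,\alpha_{par}}(\rp(z))$ (resp.\ $\sigma_{\alpha_\infty}(\rp(z))$) as $\rp(z) \to p_i$, and an upper bound on the orbifold density $\rho(z)$ near $z_{i,j}$ coming from \eqref{eq:orb_metric}. First I would fix $i \in \II_{par}$ and $j \in \JJ_i$ and write $w = z_{i,j}$, $d = \deg_w \rp$, $\nu = \nu(w)$, recalling from Lemma~\ref{lem:z_ij} that $\rp(w) = p_i$ and $w \in \overline{W_1} \cap \tilde K$. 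Since $\rp(w) = p_i \in \C$ is a fixed point and $\rp$ has a zero of order $d$ of $\rp - p_i$ at $w$ (here one must note $\rp$ extends meromorphically near $w$ by assumption~\ref{item:extend}, and $p_i \in \C$), we have $\rp(z) - p_i = a(z-w)^d + \cdots$ with $a \neq 0$, hence $|\rp(z) - p_i| \asymp |z-w|^d$ and $|\rp'(z)| \asymp |z-w|^{d-1}$ as $z \to w$.

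Next I would plug these into the quantity to be estimated. We get
\[
|\rp'(z)|\,\sigma_{p_i,\alpha_{par}}(\rp(z)) = \frac{|\rp'(z)|}{|\rp(z) - p_i|^{\alpha_{par}}} \asymp \frac{|z-w|^{d-1}}{|z-w|^{d\alpha_{par}}} = |z-w|^{d-1-d\alpha_{par}}.
\]
On the other hand, by \eqref{eq:orb_metric} (applied with $z_0 = w$, using $\nu(w) = \nu$), $\rho(z) \asymp |z-w|^{-(1-1/\nu)}$ if $\nu > 1$, and if $\nu = 1$ then $\rho = \varrho_V$ is bounded near $w$ (as $w \in V$, which follows since $w \in \tilde K \subset V \setminus \{p_i\}_{i\in\II_{par}}$); in either case $\rho(z) \asymp |z-w|^{-(1-1/\nu)}$ holds with the convention $1/\nu \le 1$. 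Therefore
\[
|\rp'(z)|\,\frac{\sigma_{p_i,\alpha_{par}}(\rp(z))}{\rho(z)} \asymp |z-w|^{\,d - 1 - d\alpha_{par} + 1 - 1/\nu} = |z-w|^{\,d(1 - \alpha_{par}) - 1/\nu}.
\]
This tends to $\infty$ as $z \to w$ precisely when the exponent is negative, i.e.\ when $d(1-\alpha_{par}) < 1/\nu$, equivalently $\alpha_{par} > 1 - \frac{1}{\nu d} = 1 - \frac{1}{\nu(w)\deg_w \rp}$, which is exactly what \eqref{eq:alpha} guarantees (the right-hand side there being the maximum over all such $(i,j)$). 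The infinity case is handled symmetrically: for $i \in \II_\infty$ we have $p_i = \infty$, so $\rp(z) \to \infty$ as $z \to w$ and the local degree is the order of the pole, $\rp(z) = a(z-w)^{-d} + \cdots$; then $\sigma_{\alpha_\infty}(\rp(z)) = |\rp(z)|^{-\alpha_\infty} \asymp |z-w|^{d\alpha_\infty}$ and $|\rp'(z)| \asymp |z-w|^{-d-1}$, giving $|\rp'(z)|\sigma_{\alpha_\infty}(\rp(z))/\rho(z) \asymp |z-w|^{\,d\alpha_\infty - d - 1 + 1 - 1/\nu} = |z-w|^{\,d(\alpha_\infty - 1) - 1/\nu}$, which blows up iff $\alpha_\infty < 1 + \frac{1}{\nu(w)\deg_w \rp}$, again exactly the condition in \eqref{eq:alpha}.

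The main point requiring care — and the step I expect to be the only real obstacle — is justifying that $z_{i,j} \in V$ so that the orbifold estimate \eqref{eq:orb_metric} genuinely applies there with $\nu = \nu(z_{i,j})$, and that $\rho$ really behaves like $|z-w|^{-(1-1/\nu)}$ in the relevant one-sided approach $z \to w$ with $z \in W_1$ (not just in a full punctured neighbourhood); this is fine since \eqref{eq:orb_metric} is a two-sided bound on a punctured neighbourhood. One must also be slightly careful that the asymptotics $|\rp(z)-p_i| \asymp |z-w|^d$ are valid as $z$ ranges over $W_1$ near $w$ and not merely along the zero set — but this is immediate from the local power series expansion of the meromorphic extension of $\rp$. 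Everything else is the routine bookkeeping of exponents shown above, so I would present the parabolic case in detail and indicate that the case $i \in \II_\infty$ follows by the same computation with the roles of zero and pole interchanged and $|z|$ replacing $|z-p_i|$.
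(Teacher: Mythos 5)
Your proposal is correct and follows essentially the same argument as the paper: the two-sided orbifold estimate \eqref{eq:orb_metric} at $z_{i,j}$, the local expansions $|\rp(z)-p_i|\asymp|z-z_{i,j}|^{d}$ (resp.\ $|\rp(z)|\asymp|z-z_{i,j}|^{-d}$) and $|\rp'(z)|\asymp|z-z_{i,j}|^{d-1}$ (resp.\ $|z-z_{i,j}|^{-d-1}$), and the exponent bookkeeping showing that \eqref{eq:alpha} makes the total exponent negative. Your extra remarks (that $z_{i,j}\in\tilde K\subset V$, the $\nu=1$ case, and the meromorphic extension from assumption \ref{item:extend}) are points the paper leaves implicit, but they do not change the route.
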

\begin{proof}
Take $i \in \II$, $j \in \JJ_i$ and $z \in W_1$ close to $z_{i,j}$. Let $d_{i,j} = \deg_{z_{i,j}} \rp$. We write $a(z) \asymp b(z)$ if $c_1 < \frac{a(z)}{b(z)} < c_2$ for some constants $c_1, c_2 > 0$. By \eqref{eq:orb_metric},
\[
\rho(z) \asymp \frac{1}{|z - z_{i,j}|^{1 - 1/\nu(z_{i,j})}}.
\]

Suppose first $i \in \II_{par}$. Then
\[
|\rp(z) - p_i| \asymp |z - z_{i,j}|^{d_{i,j}}, \qquad |\rp'(z)| \asymp |z-z_{i,j}|^{d_{i,j}-1},
\]
so
\[
\sigma_{p_i, \alpha_{par}}(\rp(z)) \asymp \frac{1}{|z - z_{i,j}|^{d_{i,j}\alpha_{par}}},
\]
and by \eqref{eq:alpha},
\[
|\rp'(z)| \frac{\sigma_{p_i, \alpha_{par}}(\rp(z))}{\rho(z)} \ge \frac{c}{|z - z_{i,j}|^{d_{i,j}(\alpha_{par} - 1) + 1/\nu(z_{i,j})}} \ge \frac{c}{|z - z_{i,j}|^\delta}
\]
for some $c, \delta > 0$. This shows the first assertion of the lemma. 

Suppose now $i \in \II_\infty$. Then
\[
|\rp(z)| \asymp \frac{1}{|z - z_{i,j}|^{d_{i,j}}}, \qquad |\rp'(z)| \asymp \frac{1}{|z-z_{i,j}|^{d_{i,j}+1}},
\]
so
\[
\sigma_{p_i, \alpha_\infty}(\rp(z)) \asymp |z - z_{i,j}|^{d_{i,j}\alpha_\infty}, 
\]
and \eqref{eq:alpha} gives
\[
|\rp'(z)| \frac{\sigma_{p_i, \alpha_\infty}(\rp(z))}{\rho(z)} \ge \frac{c}{|z - z_{i,j}|^{d_{i,j}(1 - \alpha_\infty) + 1/\nu(z_{i,j})}} \ge \frac{c}{|z - z_{i,j}|^\delta}
\]
for some $c, \delta > 0$. This gives the second assertion of the lemma. 
\end{proof}

Assume that the number $n_0$ is larger than all the numbers $n_0$ appearing in Proposition~\ref{prop:der-par} and Theorem~\ref{thm:der-inf}, suited for the attracting petals $\rp(P_i)$ of $\ap_i$ at $p_i$ and the compact sets $K_i$, $i \in \II$, with $\alpha = \alpha_{par}$ for $i \in \II_{par}$ and $\alpha = \alpha_\infty$ for $i \in \II_\infty$. For $i \in \II_\infty$ let $g_i$ be the function $g$ from Definition~\ref{defn:petal_at_inf}, suited for the attracting petal $\rp(P_i)$ of $\ap_i$ at $p_i$ and the compact set $K_i$. Let also
\begin{equation}\label{eq:A_infty}
A_\infty = 2 \max_{i \in \II_\infty} A_i, 
\end{equation}
where $A_i$ is the constant $A$ appearing in Theorem~\ref{thm:der-inf}, suited for the attracting petal $\rp(P_i)$ of $\ap_i$ at $p_i$, the compact set $K_i$ and $\alpha = \alpha_\infty$.

By Definition~\ref{defn:petal_at_inf} and Proposition~\ref{prop:attr_petal}, there exists $c_0 > 0$ such that 
\begin{equation}\label{eq:F-z}
|z| - |\rp(z)| \ge c_0 g_i(|\rp(z)|) \quad \text{for }z \in  \bigcup_{n=n_0}^\infty \ap_i^n(K_i), \quad i \in \II_\infty.
\end{equation}

Fix a large constant $C_1 > 0$. By \eqref{eq:orb>hyp2}, we can find an arbitrarily large $R^- > 0$ such that 
\begin{equation}\label{eq:R-}
\tilde K \subset \D(0, R^-)
\end{equation}
and
\begin{equation}\label{eq:R1}
\rho (z) > A_\infty^\frac{C_1}{c_0} \sigma_{\alpha_\infty}(z) \qquad \text{for} \quad z \in V \setminus (\D(0, R^-) \cup \PP_{crit}(\rp)).
\end{equation}

For $i \in \II_\infty$ consider the function $t \mapsto t - R^- - C_1 g_i(t)$, $t \in [R^-, +\infty)$. It is negative for $t = R^-$ and tends to $+\infty$ as $t \to +\infty$ since $g_i$ are bounded by definition. Hence, it attains zero at some point $t = R^+_i > R^-$, so that
\[
R^+_i = R^- + C_1 g_i(R^+_i).
\]
Define
\[
h_i(t) = 
\begin{cases}
A_\infty^\frac{C_1}{c_0} + C_2(R^- - t)&\text{for }t \in [0,R^-)\\
A_\infty^{\frac{R^+_i - t}{c_0 g_i(R^+_i)}} &\text{for }t \in [R^-, R^+_i)\\
1 &\text{for } t \in [R^+_i, +\infty)\\
\end{cases}
\]
for a large constant $C_2 > 0$. Then $h_i \colon [0, +\infty) \to \R$ is a positive continuous non-increasing function, such that
\begin{equation}\label{eq:max_h}
\min_{[0, R^-]}h_i = \max_{[R^-, +\infty)}h_i = h_i(R^-) = A_\infty^\frac{C_1}{c_0}. 
\end{equation}
By \eqref{eq:F-z}, if $z \in\bigcup_{n=n_0}^\infty \ap_i^n(K_i)$ and $|z| \le R^+_i$, $|\rp(z)| \ge R^-$, then 
\[
|z| - |\rp(z)| \ge c_0 g_i(|\rp(z)|) \ge c_0 g_i(R^+_i),
\]
and by the definition of $h_i$,
\[
\frac{h_i(|\rp(z)|)}{h_i(|z|)} \geq A_\infty.
\]
Therefore, choosing $C_2$ sufficiently large, by compactness we can assume
\begin{equation}\label{eq:>A}
\frac{h_i(|\rp(z)|)}{h_i(|z|)} \geq A_\infty\qquad \text{for } \; z \in \overline{W \setminus \tilde K} \cap  P_i \cap \overline{\D(0, R^+_i)}, \quad i \in \II_\infty.
\end{equation}

For $i \in \II_\infty$ let
\[
R_i= R^+_i - \ln C_1 \;g_i(R^+_i).
\]
Obviously, 
\[
R^- < R_i < R^+_i.
\]
Moreover, the following holds.

\begin{lem}\label{lem:R_i}
If $z \in \bigcup_{n=n_0}^\infty \ap_i^n(K_i)$, $i \in \II_\infty$, and $|\rp(z)| \le R_i < |z|$, then
\[
R^- \le |\rp(z)| < |z| \le R^+_i.
\]
\end{lem}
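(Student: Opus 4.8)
The plan is to reduce the claim to a single backward step of $\ap_i$ and then push through the petal-at-infinity estimates. Write $z=\ap_i^n(w)$ with $w\in K_i$ and $n\ge n_0$; since $z\in P_i$ and $\rp|_{P_i}=\ap_i^{-1}$ we have $\rp(z)=\ap_i^{n-1}(w)$, so $z=\ap_i(\rp(z))$, and by Proposition~\ref{prop:attr_petal}(b) moduli increase along the orbit $(\ap_i^k(w))_k$, which in particular re-proves $|\rp(z)|<|z|$. First I would discard the boundary index $n=n_0$: there $|z|=|\ap_i^{n_0}(w)|$ runs over a compact set, hence is bounded by a constant $M_i$, and since $R^-$ (hence $R_i>R^-$) is taken arbitrarily large we may assume $R_i>M_i$ for all $i\in\II_\infty$; then $R_i<|z|$ fails and there is nothing to prove. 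So assume $n\ge n_0+1$, which places $\rp(z)=\ap_i^{n-1}(w)$ in the range $\bigcup_{k\ge n_0}\ap_i^k(K_i)$ on which Definition~\ref{defn:petal_at_inf}(c) and Proposition~\ref{prop:attr_petal}(b) apply.

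The only analytic input is the one-step estimate: by Definition~\ref{defn:petal_at_inf}(c) for the attracting petal $\rp(P_i)$ and $K_i$, together with $|\ap_i(x)|-|x|\le|\ap_i(x)-x|$,
\[
|z|-|\rp(z)|\le c_i\,g_i(|\rp(z)|),
\]
for a constant $c_i>0$, plus a slow-variation property of $g_i$: being positive, non-increasing and logarithmically convex, $\ln g_i$ has a bounded, non-increasing derivative, so near infinity $g_i$ changes by at most a fixed factor $\kappa$ (independent of $C_1$) over any interval of bounded length; since $R^+_i-R^-=C_1g_i(R^+_i)$ stays bounded and $g_i(R^-)\to g_i(+\infty)$, for $R^-$ large this yields $g_i(s)\le\kappa\,g_i(R^+_i)$ for $s$ in $[R^-,R^+_i]$ and in a fixed bounded enlargement of it. Recall finally $R_i-R^-=(C_1-\ln C_1)g_i(R^+_i)$ and $R^+_i-R_i=\ln C_1\,g_i(R^+_i)$.

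Now $|\rp(z)|\ge R^-$ follows by contradiction: if $|\rp(z)|<R^-$, then (as $n-1\ge n_0$) $|\rp(z)|\ge\mu_i:=\min_{w\in K_i}|\ap_i^{n_0}(w)|$, so $g_i(|\rp(z)|)\le g_i(\mu_i)=:G_i$; if $|\rp(z)|<R^--c_iG_i$ then $|z|\le|\rp(z)|+c_iG_i<R^-<R_i$, contradicting the hypothesis, while if $R^--c_iG_i\le|\rp(z)|<R^-$ then $g_i(|\rp(z)|)\le\kappa g_i(R^+_i)$, so $|z|<R^-+\kappa c_i g_i(R^+_i)<R^-+(C_1-\ln C_1)g_i(R^+_i)=R_i$ once $C_1$ is large enough that $C_1-\ln C_1>\kappa c_i$, again a contradiction. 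Knowing then $|\rp(z)|\in[R^-,R_i]$, I get $g_i(|\rp(z)|)\le g_i(R^-)\le\kappa g_i(R^+_i)$, hence
\[
|z|\le|\rp(z)|+c_i g_i(|\rp(z)|)\le R_i+\kappa c_i g_i(R^+_i)\le R_i+\ln C_1\,g_i(R^+_i)=R^+_i
\]
provided $C_1\ge e^{\kappa c_i}$; together with $|\rp(z)|<|z|$ this is the assertion.

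The main obstacle I anticipate is the slow-variation control of $g_i$ on intervals of length comparable to $C_1g_i(R^+_i)$, and the bookkeeping that the factor $\kappa$ can be chosen before $C_1$ while $R^-$ is chosen after both --- the order in which these constants are introduced in the construction makes this consistent. This is the only step where logarithmic convexity of $g_i$ is used in an essential way; the rest (uniformity of $c_i,G_i,\kappa,M_i$ over the finite collection $\II_\infty$ and the harmlessness of the edge index $n=n_0$) is routine.
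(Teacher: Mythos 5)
Your proposal keeps the paper's two-case contradiction scheme (using $R^+_i-R_i=\ln C_1\,g_i(R^+_i)$, $R_i-R^-=(C_1-\ln C_1)\,g_i(R^+_i)$ and the one-step bound $|z|-|\rp(z)|\le c_i\,g_i(|\rp(z)|)$), but replaces the paper's key mechanism by a different one. The paper subdivides the range by the sequence $(t_k)$ of Lemma~\ref{lem:log_conv}, uses Lemma~\ref{lem:new} to see that at most $2+\ln C_1$ of the $t_k$ lie in $[R_i,R^+_i]$, and therefore only needs the bound $g_i(R_i)/g_i(R^+_i)\le e\,C_1^{1/2}$, which is beaten by $C_1-\ln C_1$; it never needs a $C_1$-independent variation factor for $g_i$ on $[R^-,R^+_i]$. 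You instead assert such a factor $\kappa$ directly, paying for it by choosing $R^-$ large after $C_1$. That ordering is indeed consistent with the construction ($C_1$ is fixed first and $R^-$ already depends on it through \eqref{eq:R1}), and your treatment of the edge index $n=n_0$ and of the one-step estimate at the point $\rp(z)$ is fine.

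The gap is exactly at the step you flag. ``Bounded logarithmic derivative near infinity plus an interval of bounded length'' does not give a factor independent of $C_1$: the interval has length $R^+_i-R^-=C_1\,g_i(R^+_i)$, which is of order $C_1$ whenever $g_i$ is bounded away from zero (e.g.\ $g_i\equiv 1$ or $g_i(t)=1+e^{-t}$, both admissible in Definition~\ref{defn:petal_at_inf}), so the bound obtained this way is $e^{B\,C_1 g_i(R^+_i)}$, i.e.\ $\kappa$ depends on $C_1$, which would wreck the final requirements $\ln C_1\ge\kappa c_i$ and $C_1-\ln C_1>\kappa c_i$. The claim itself is true, but it needs one more observation, which your sketch gestures at without carrying out. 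Since $g_i$ is positive, non-increasing and convex (log-convexity of a positive function implies convexity), its one-sided derivative increases to $0$, and $|(\ln g_i)'|$ is non-increasing; hence, writing $x_0=R^--c_i\,g_i(\mu_i)$ for the left end of your enlarged interval,
\[
\ln\frac{g_i(x_0)}{g_i(R^+_i)}\;\le\;|(\ln g_i)'(x_0)|\bigl(C_1 g_i(R^+_i)+c_i g_i(\mu_i)\bigr)\;\le\;C_1\,|g_i'(x_0)|+c_i g_i(\mu_i)\,|(\ln g_i)'(x_0)|,
\]
where the second term is bounded by a constant independent of $C_1$ and the first tends to $0$ as $R^-\to\infty$ for fixed $C_1$ because $|g_i'|\to 0$. (Alternatively, split into the cases $g_i(+\infty)>0$, where one compares directly with the limit, and $g_i(+\infty)=0$, where the interval length $\le C_1 g_i(R^-)$ tends to $0$.) With this inserted, your argument closes and gives a correct, somewhat more direct alternative to the paper's $(t_k)$-counting proof.
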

\begin{proof} Take $z \in \ap_i^n(K_i)$ for some $i \in \II_\infty$, $n\ge n_0$.
By Lemma~\ref{lem:new}, considering the sequence $(t_n)_{n=1}^\infty$ for the function $g_i$, we find $j \in \N$ such that $t_j \le |\rp(z)| < |z| \le t_{j+M}$ for a constant $M \in \N$. 

Suppose $|z| > R^+_i$. Then by Definition~\ref{defn:petal_at_inf}, Proposition~\ref{prop:attr_petal} and Lemma~\ref{lem:log_conv},
\[
\ln C_1 \: g_i(R^+_i) = R^+_i - R_i < |z| - |\rp(z)| \le c_2 g_i(|\rp(z)|) \le c_2 g_i(t_j) \le c_3 g_i(t_{j+ M}) \le c_3 g_i(R^+_i)
\]
for some constants $c_2, c_3 >0$ (independent of $C_1$), which is impossible if $C_1$ was chosen sufficiently large. Therefore, $|z| \leq R^+_i$. 

Suppose now $|\rp(z)| < R^-$. Then, analogously as previously, we obtain 
\begin{equation}\label{eq:lnD}
\begin{aligned}
(C_1 - \ln C_1) g_i(R^+_i) &= R_i - R^- < |z| - |\rp(z)| \le c_2 g_i(|\rp(z)|)\\
&\le c_2 g_i(t_j) \le c_3 g_i(t_{j+ M}) \le c_3 g_i(R_i).
\end{aligned}
\end{equation}
Take the maximal $j_0 \in\N$ and the minimal $N \ge 0$ such that
\[
t_{j_0} \le R_i < R^+_i \le t_{j_0 + N}.
\]
If $N > 2$, then $t_{j_0} \le R_i < t_{j_0+1} < \cdots < t_{j_0+N-1} < R^+_i \le t_{j_0 + N}$, so by the definition of the sequence $(t_n)_{n=1}^\infty$,
\begin{align*}
\ln C_1 \: g_i(R^+_i) &= R^+_i - R_i \ge t_{j_0+N-1} - t_{j_0+1} \ge t_{j_0 + N-1} - t_{j_0 + N - 2} + \cdots + t_{j_0 + 2} - t_{j_0+1}\\
&= g_i(t_{j_0 + N-2}) + \cdots + g_i(t_{j_0+1}) \ge (N-2) g_i(R^+_i).
\end{align*}
This shows $N \le 2+\ln C_1$. By Lemma~\ref{lem:log_conv}, assuming that $n_0$ is chosen sufficiently large, we have $\frac{g_i(t_n)}{g_i(t_{n+1})} < e^{\frac 1 2}$ for $n \ge j_0$, so 
\[
\frac{g_i(R_i)}{g_i(R^+_i)} \le \frac{g_i(t_{j_0})}{g_i(t_{j_0+N})} = \frac{g_i(t_{j_0})}{g_i(t_{j_0+1})} \cdots \frac{g_i(t_{j_0+N-1})}{g_i(t_{j_0+N})} \le e^{\frac N 2} \le e^{1+\frac{\ln C_1}{2}} = e \, C_1^{\frac 1 2}.
\]
This together with \eqref{eq:lnD} gives
\[
(C_1 - \ln C_1) g_i(R^+_i) \le ec_3 C_1^{\frac 1 2} g_i(R^+_i),
\]
which is impossible if $C_1$ was chosen sufficiently large. Therefore, $|\rp(z)| \ge R^-$.
\end{proof}

Let 
\begin{alignat*}{2}
\varsigma_i(z) &= C\sigma_{p_i, \alpha_{par}}(z) &\quad&\text{for } \;z \in  \overline{W \setminus \tilde K} \cap P_i, \quad i\in \II_{par},\\
\varsigma_\infty(z) &= h_i(|z|)\sigma_{\alpha_\infty}(z) &\quad&\text{for }\; z \in \overline{W \setminus \tilde K} \cap P_i,\quad i\in \II_\infty,
\end{alignat*}
where $C >0$ is a large constant. 

\begin{lem}\label{lem:sigma-expand}
The map $\rp$ is locally expanding with respect to $d\varsigma_i$ on $\overline{W_1 \setminus \tilde K} \cap P_i$, $i\in \II_{par}$, and $|\rp'|_{\varsigma_\infty} > 2$ on $\bigcup_{i\in \II_\infty} \big( \overline{W_1 \setminus \tilde K} \cap P_i \cap \overline{\D(0, R^+_i)}\big)$.
\end{lem}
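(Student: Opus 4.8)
The plan is to handle the parabolic petals ($i\in\II_{par}$) and the petals at infinity ($i\in\II_\infty$) in parallel, reducing in each case the expansion of $\rp$ on the repelling petal $P_i$ to the contraction of the inverse branch $\ap_i=(\rp|_{P_i})^{-1}$ on the attracting petal $\rp(P_i)$, where Proposition~\ref{prop:der-par} (for $i\in\II_{par}$) and Theorem~\ref{thm:der-inf} (for $i\in\II_\infty$), applied with iterate exponent $1$, provide exactly the bounds on $|\ap_i'|$ we need. In both cases the derivative of $\rp$ is computed at $z$ via $|\rp'(z)|=1/|\ap_i'(\rp(z))|$, once we have checked that $\rp(z)$ lies in the attracting petal $\rp(P_i)$.

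First I would prove a locating claim: for each $i\in\II$,
\[
\overline{W_1\setminus\tilde K}\cap P_i\ \subset\ \bigcup_{n\ge n_0}\ap_i^n(K_i).
\]
To see this, take $z$ in the left-hand side and write $z=\lim_k z_k$ with $z_k\in W_1\setminus\tilde K\subset W\setminus\tilde K$; since the parabolic points $p_{i'}$ lie on $\bd W$, none of them is in $W$, and for $k$ large $z_k$ lies in the open set $P_i$. By \eqref{eq:WinKi} each such $z_k$ belongs to $\bigcup_{i'\in\II}\bigcup_{n\ge n_0}\ap_{i'}^n(K_{i'})$, and the disjointness clauses of assumption~\ref{item:petals'} together with the inclusions $P_{i'}\subset\rp(P_{i'})$ (valid for all indices) force the index $i'$ to be $i$ (otherwise $z_k\in W\cap P_i$ would also lie in $W\cap\rp(P_{i'})$ or in $W\cap\rp(P_i)$, contradicting one of these clauses). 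Each $\ap_i^n(K_i)$ is compact, and the sets $\ap_i^n(K_i)$ shrink to the single point $p_i$ (in $\chat$) as $n\to\infty$, so the closure of $\bigcup_{n\ge n_0}\ap_i^n(K_i)$ adds only $p_i\notin P_i$; hence $z$ itself lies in $\bigcup_{n\ge n_0}\ap_i^n(K_i)$. Writing $z\in\ap_i^n(K_i)$ with $n\ge n_0$, we then have $\rp(z)\in\ap_i^{n-1}(K_i)\subset\rp(P_i)$ and $\ap_i(\rp(z))=z$; and since $n_0$ was chosen larger than the thresholds of Proposition~\ref{prop:der-par} and Theorem~\ref{thm:der-inf} suited for $\rp(P_i)$ and $K_i$, the index $m:=n-1$ is admissible in those statements.

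For $i\in\II_{par}$ the constant $C$ cancels in the derivative, so
\[
|\rp'(z)|_{\varsigma_i}=|\rp'(z)|_{\sigma_{p_i,\alpha_{par}}}=\frac{1}{|\ap_i'(\rp(z))|_{\sigma_{p_i,\alpha_{par}}}},
\]
and Proposition~\ref{prop:der-par} (for $\ap_i$, $K_i$, $\alpha_{par}$, iterate exponent $1$, parameter $m$) gives $|\ap_i'(\rp(z))|_{\sigma_{p_i,\alpha_{par}}}\le a_{m,1}<1$, whence $|\rp'(z)|_{\varsigma_i}>1/a_{m,1}>1$, which is the asserted local expansion. For $i\in\II_\infty$,
\[
|\rp'(z)|_{\varsigma_\infty}=\frac{h_i(|\rp(z)|)}{h_i(|z|)}\cdot|\rp'(z)|_{\sigma_{\alpha_\infty}}=\frac{h_i(|\rp(z)|)}{h_i(|z|)}\cdot\frac{1}{|\ap_i'(\rp(z))|_{\sigma_{\alpha_\infty}}},
\]
and Theorem~\ref{thm:der-inf} (for $\ap_i$, $K_i$, $\alpha_\infty$, iterate exponent $1$, parameter $m$) gives $|\ap_i'(\rp(z))|_{\sigma_{\alpha_\infty}}<a_{m,1}\le A_i$, so the last factor is $>1/A_i$. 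If in addition $|z|\le R^+_i$, then $z\in\overline{W\setminus\tilde K}\cap P_i\cap\overline{\D(0,R^+_i)}$ (each $z_k$ lies in $W\setminus\tilde K$), so \eqref{eq:>A} bounds the first factor below by $A_\infty=2\max_{i'\in\II_\infty}A_{i'}\ge 2A_i$; multiplying, $|\rp'(z)|_{\varsigma_\infty}>A_\infty/A_i\ge 2$. (As usual, $\sigma_{\alpha_\infty}$ is well-defined at $z$ and $\rp(z)$ because $n_0$ is large, so that $P_i$ sits far out near infinity.)

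The main obstacle is the locating claim: one must carefully combine \eqref{eq:WinKi} (ultimately Lemma~\ref{lem:WinKi}) with the disjointness of the petals built into assumption~\ref{item:petals'}, and use that $n_0$ was taken sufficiently large, to be sure that $\rp(z)$ lands in some $\ap_i^{n-1}(K_i)$ with index in the range where Proposition~\ref{prop:der-par} and Theorem~\ref{thm:der-inf} apply. Everything else is the short computation displayed above.
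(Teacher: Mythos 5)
Your proof is correct and follows essentially the same route as the paper: locate points of $\overline{W_1\setminus\tilde K}\cap P_i$ inside $\bigcup_{n\ge n_0}\ap_i^n(K_i)$ via \eqref{eq:WinKi} and the disjointness in assumption (e'), then convert the first-iterate contraction bounds of Proposition~\ref{prop:der-par} and Theorem~\ref{thm:der-inf} for $\ap_i$ into expansion of $\rp$, finishing the infinity case with \eqref{eq:>A} and $A_\infty=2\max_i A_i$. The only (harmless) deviation is that you invoke those results with parameter $m=n-1$ while the paper applies them with $m=n_0$; your write-up just spells out the locating step that the paper leaves to the citation of \eqref{eq:WinKi}.
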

\begin{proof}
Consider $i \in \II_{par}$. By \eqref{eq:WinKi} and Proposition~\ref{prop:der-par}, suited for the attracting petal $\rp(P_i)$ of the map $\ap_i$ at $p_i$, the compact set $K_i$, $\alpha = \alpha_{par}$ and $m = n_0$, we have $|\rp'(z)|_{\sigma_{p_i, \alpha_{par}}} > 1$ for $z \in \overline{W_1 \setminus \tilde K} \cap P_i$. By the definition of $\varsigma_i$ and \eqref{eq:WinKi}, we conclude that $\rp$ is locally expanding with respect to $d\varsigma_i$ on $\overline{W_1 \setminus \tilde K} \cap P_i$.

Assume now $i \in \II_\infty$. By Theorem~\ref{thm:der-inf} suited for the attracting petal $\rp(P_i)$ of the map $\ap_i$ at infinity, the compact set $K_i$, $\alpha = \alpha_\infty$ and $m = n_0$,
we obtain $|\rp'(z)|_{\sigma_{\alpha_\infty}} > 2/A_\infty$ for $z \in \overline{W_1 \setminus \tilde K} \cap P_i$. Using \eqref{eq:>A}, the definition of $\varsigma_\infty$ and \eqref{eq:WinKi}, we see $|\rp'|_{\varsigma_\infty} > 2$ on $\overline{W_1 \setminus \tilde K} \cap P_i \cap \overline{\D(0, R^+_i)}$.
\end{proof}

\subsection{Construction of expanding metric}\label{subsec:metric_metric}

Now we construct a suitable conformal metric $d\varsigma$ on $W_N$ for a large $N$. By the assumption~\ref{item:crit} of Theorem~\ref{thm:metric}, the set $\tilde K$ contains a finite number of points from $\PP_{crit}(\rp)$. Moreover, by the assumptions of Theorem~\ref{thm:metric}, we have $W_{n+1} \cap \tilde K \subset W_n \cap \tilde K$ for $n \ge 0$ and $\bigcap_{n=0}^\infty W_n \cap \tilde K = \emptyset$, so we can find a number $N \in \N$ such that
\begin{equation}\label{eq:KcapP}
W_N \cap \tilde K \cap \PP_{crit}(\rp) = \emptyset.
\end{equation}
Note that if $W \cap \PP_{crit}(\rp)  = \emptyset$ then we can set $N = 0$, which proves the last assertion of Proposition~\ref{prop:expanding_metric}.

Let
\[
\widehat K = \tilde K \cup \bigcup_{i \in \II_{par}}\bigcup_{n=n_0}^{m_0} (\overline{W} \cap \ap_i^n(K_i)) \cup \bigcup_{i \in \II_\infty} \big(\overline{W} \cap P_i \cap \overline{\D(0, R_i)}\big)
\]
for a large $m_0 > n_0$.
Then $\widehat K$ is a compact subset of $V \setminus \{p_i\}_{i \in \II}$. Note that choosing $m_0$ and $R_i$ sufficiently large and using \eqref{eq:WinKi}, we can assume $\widehat K \supset K$ for an arbitrary given compact set $K \subset \overline{W} \setminus \{p_i\}_{i \in \II_{par}}$. 

We define a conformal metric $d\varsigma = \varsigma|dz|$ on $W_N$, setting
\[
\varsigma = 
\begin{cases}
\rho &\text{on } W_N \cap \tilde K\\
\min(\rho, \varsigma_i) &\text{on } (W_N \setminus \tilde K) \cap P_i  \setminus \PP_{crit}(\rp), \quad i \in \II_{par}\\
\varsigma_i &\text{on } (W_N \setminus \tilde K) \cap P_i  \cap \PP_{crit}(\rp), \quad i \in \II_{par}\\
\min(\rho, \varsigma_\infty) &\text{on } \bigcup_{i \in \II_\infty} \big( (W_N \setminus \tilde K) \cap P_i \big) \setminus \PP_{crit}(\rp)\\
\varsigma_\infty &\text{on } \bigcup_{i \in \II_\infty} \big( (W_N \setminus \tilde K) \cap P_i \big) \cap \PP_{crit}(\rp)
\end{cases}.
\]
To show that $d\varsigma$ is a conformal metric on $W_N$, note first that by \eqref{eq:KcapP}, \eqref{eq:nu-P} and \eqref{eq:WinKi}, the function $\rho$ has no singularities in $W_N \cap \tilde K$ and $\varsigma$ is well-defined and positive on $W_N$. It is obvious that $\varsigma$ is continuous on $W_N \setminus \big(\bigcup_{i \in \II} (\bd\tilde K \cap P_i) \cup \PP_{crit}(\rp)\big)$. Observe also that for $i \in \II$, by \eqref{eq:inV} and \eqref{eq:nu-P}, we have $\nu(z) = 1$ on the compact subset $\overline{W_N} \cap \bd\tilde K \cap \overline{P_i}$ of $V$ and hence $\rho$ is well-defined and bounded on this set. Consequently, for $i \in \II_{par}$ we have $\rho < \varsigma_i$ on $W_N \cap \bd\tilde K \cap P_i$ provided $C$ is chosen sufficiently large, which implies that
$\varsigma = \rho$ on $W_N \cap \bd\tilde K \cap P_i$ and hence
$\varsigma$ is continuous on $W_N \cap \bd\tilde K \cap P_i$. Similarly, for $i \in \II_\infty$, \eqref{eq:R-} and \eqref{eq:max_h} imply that 
$\rho < \varsigma_\infty$ on $W_N \cap \bd\tilde K \cap P_i$ provided $C_1$ is sufficiently large, so $\varsigma = \rho$ on $W_N \cap \bd\tilde K \cap P_i$ and $\varsigma$ is continuous on $W_N \cap \bd\tilde K \cap P_i$. Furthermore, if $z_0 \in W_N \cap \PP_{crit}(\rp)$, then $z_0 \in (W_N \setminus \tilde K) \cap P_i$ for some $i \in \II$, so by \eqref{eq:nu-P} and \eqref{eq:orb_metric}, we have $\rho(z) \to +\infty$ as $z \to z_0$, which implies that in a punctured neighbourhood of $z_0$ there holds $\varsigma = \varsigma_i$ if $i \in \II_{par}$ and $\varsigma = \varsigma_\infty$ if $i \in \II_\infty$. This shows that $\varsigma$ is continuous at $z_0$. We conclude that $\varsigma$ is well-defined, positive and continuous on $W_N$, so $d\varsigma$ is a conformal metric on $W_N$. 

Notice that by \eqref{eq:orb>hyp2},
\begin{equation}\label{eq:rho>sigma-par}
\rho > \varsigma_i \quad \text{on }  \bigcup_{n=m_0}^\infty G_i(K_i) \setminus \PP_{crit}(\rp) \supset  \big( \overline{W \setminus \widehat K} \cap P_i \big) \setminus \PP_{crit}(\rp), \quad i \in \II_{par},
\end{equation}
if $m_0$ was chosen sufficiently large. Similarly, by \eqref{eq:R1} and \eqref{eq:max_h}, we have
\begin{equation}\label{eq:rho>sigma-inf}
\rho > \varsigma_\infty \quad \text{on } \bigcup_{i \in \II_\infty} \big( \overline{W \setminus \tilde K} \cap P_i \big) \setminus (\D(0, R^-) \cup \PP_{crit}(\rp)) \supset \bigcup_{i \in \II_\infty} \big( \overline{W \setminus \widehat K} \cap P_i \big) \setminus \PP_{crit}(\rp).
\end{equation}

In the further considerations, we will need the following lemma.

\begin{lem}\label{lem:varsigma-limit}
If $z_n \in W_N$ and $\varsigma(z_n) \to 0$, then $z_n \to \infty$.
\end{lem}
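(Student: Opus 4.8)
The statement to prove is: if $z_n \in W_N$ and $\varsigma(z_n) \to 0$, then $z_n \to \infty$ (in $\clC$). The idea is to argue by contraposition: suppose $(z_n)$ does not converge to $\infty$, so after passing to a subsequence we may assume $z_n \to z_*$ for some $z_* \in \C$. I want to show that then $\varsigma(z_n)$ stays bounded away from $0$. The metric $\varsigma$ is defined piecewise: it equals $\rho$ on $W_N \cap \tilde K$, and on $(W_N \setminus \tilde K) \cap P_i$ it is either $\min(\rho, \varsigma_i)$ / $\min(\rho,\varsigma_\infty)$ or $\varsigma_i$ / $\varsigma_\infty$ depending on whether we are at a post-critical point. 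In all cases $\varsigma \ge \min(\rho, \varsigma_i)$ or $\ge \min(\rho,\varsigma_\infty)$ pointwise on the petal pieces, and $\varsigma = \rho$ elsewhere, so it suffices to bound $\rho$, $\varsigma_i$ ($i \in \II_{par}$) and $\varsigma_\infty$ ($i\in\II_\infty$) from below near $z_*$.

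The key point is the location of $z_*$. Since $\overline{W} \subset V \cup \{p_i\}_{i \in \II_{par}}$ by assumption~\ref{item:petals'}, and $z_n \in W_N \subset \overline{W}$, the limit $z_*$ lies in $\overline{W} \subset V \cup \{p_i\}_{i\in\II_{par}}$. I split into two cases.

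\emph{Case 1: $z_* \in V \setminus \{p_i\}_{i\in\II_{par}}$.} Then $\rho$ is a continuous positive function near $z_*$ except possibly with a singularity \emph{blowing up} (not to $0$) at a point of $\PP_{crit}(\rp)$; in either event $\rho$ is bounded below by a positive constant on a neighbourhood of $z_*$ (using \eqref{eq:orb_metric}, which gives $\rho \to +\infty$ at post-critical points). For the petal metrics: $\varsigma_i = C\sigma_{p_i,\alpha_{par}} = C/|z-p_i|^{\alpha_{par}}$ on $P_i$, $i\in\II_{par}$, and since $z_* \ne p_i$, this is bounded below near $z_*$; similarly $\varsigma_\infty = h_i(|z|)\sigma_{\alpha_\infty}(z) = h_i(|z|)/|z|^{\alpha_\infty}$ on $P_i$, $i\in\II_\infty$, where $h_i$ is positive continuous and $|z_*| < \infty$, so this too is bounded below near $z_*$. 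Hence $\varsigma(z_n)$ is bounded away from $0$, a contradiction.

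\emph{Case 2: $z_* = p_{i_0}$ for some $i_0 \in \II_{par}$.} Since $p_{i_0} \notin W_N \cap \tilde K$ (as $\tilde K$ is a compact subset of $V\setminus\{p_i\}_{i\in\II_{par}}$ by its construction in Subsection~\ref{subsec:metric_petal_dyn}) and $p_{i_0}\notin \PP_{crit}(\rp)\cap W_N$ eventually, for large $n$ the point $z_n$ lies in $(W_N\setminus\tilde K)\cap P_{i_0}$ — here I use that the $P_i$ for $i\ne i_0$ together with $V$ stay away from $p_{i_0}$, via the pairwise-disjointness conditions and the fact that $p_{i_0}\in\overline{P_{i_0}}$ only. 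On this set $\varsigma \ge \min(\rho,\varsigma_{i_0})$. Now $\varsigma_{i_0}(z) = C/|z-p_{i_0}|^{\alpha_{par}} \to +\infty$ as $z\to p_{i_0}$ since $\alpha_{par} > 0$, and by \eqref{eq:orb>hyp2} $\rho/\sigma_{p_{i_0},1-\delta}\to\infty$, so $\rho\to+\infty$ as well near $p_{i_0}$. Therefore $\varsigma(z_n)\to+\infty$, again contradicting $\varsigma(z_n)\to 0$.

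\textbf{Main obstacle.} The delicate step is bookkeeping in Case 2: verifying that a sequence $z_n\to p_{i_0}$ with $z_n\in W_N$ must eventually land in $(W_N\setminus\tilde K)\cap P_{i_0}$ and not in some other petal piece, a neighbouring repelling petal at infinity, or the orbifold part $\tilde K$. This uses that $\tilde K$ is compactly contained in $V\setminus\{p_i\}_{i\in\II_{par}}$ (so it stays a definite distance from $p_{i_0}$), that $\overline{W}\subset V\cup\{p_i\}_{i\in\II_{par}}$ forces $z_*\in\{p_i\}$ precisely when we leave every $P_i$-and-$V$ neighbourhood, and the separation of the petals from each other built into assumption~\ref{item:petals'}. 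Once the geometry is pinned down, the metric lower bounds are immediate from the explicit formulas for $\varsigma_i$, $\varsigma_\infty$ and the growth estimates \eqref{eq:orb_metric}, \eqref{eq:orb>hyp2}.
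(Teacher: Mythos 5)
Your proof is correct and is essentially the paper's own argument: the paper passes to a subsequence $z_n\to z\in\overline{W_N}$ and pigeonholes on which branch of the definition $\varsigma(z_n)$ takes ($\rho$, $\varsigma_i$ or $\varsigma_\infty$), whereas you pigeonhole on the location of the finite limit $z_*$; in both versions the contradiction comes from the same lower bounds — $\rho$ bounded below (indeed tending to $+\infty$ at points of $\PP_{crit}(\rp)$ and at the $p_i$, by \eqref{eq:orb_metric} and \eqref{eq:orb>hyp2}), and the explicit densities $\varsigma_i=C|z-p_i|^{-\alpha_{par}}$, $\varsigma_\infty=h_i(|z|)|z|^{-\alpha_\infty}\ge|z|^{-\alpha_\infty}$ bounded below on bounded sets. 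One small caveat: in your Case 2, the claim that $z_n$ eventually lies in $(W_N\setminus\tilde K)\cap P_{i_0}$ is not justified by the stated hypotheses — distinct indices $i\ne i_0$ in $\II_{par}$ may satisfy $p_i=p_{i_0}$ (the paper explicitly allows a parabolic point to carry several petal cycles), and nothing in assumption (e') prevents a repelling petal at infinity from meeting a bounded neighbourhood of $p_{i_0}$. But this localization is also unnecessary: whichever piece $z_n$ lies in, $\varsigma(z_n)$ dominates the minimum of $\rho$, $\varsigma_i$, $\varsigma_\infty$ over the relevant indices, and each of these is bounded below on a bounded neighbourhood of $p_{i_0}$ by exactly the estimates you already used in Case 1 (with $\rho\to\infty$ near $p_{i_0}$ by \eqref{eq:orb>hyp2}). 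So the proof stands once the unjustified localization is dropped.
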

\begin{proof}
Suppose $z_n \in W_N$, $\varsigma(z_n) \to 0$ and $z_n \not\to \infty$. Passing to a subsequence, we can assume $z_n \to z \in \overline{W_N}$ and one of the three cases appears:
\begin{enumerate}[(i)]
\item $\varsigma (z_n) = \rho(z_n)$ for all $n$,
\item $\varsigma (z_n) = \varsigma_i(z_n)$ for all $n$ and some fixed $i \in \II_{par}$,
\item $\varsigma (z_n) = \varsigma_\infty(z_n)$ for all $n$.
\end{enumerate}
In the case (i), we have $z_n \in W_N \cap \widehat K$ by \eqref{eq:rho>sigma-par} and \eqref{eq:rho>sigma-inf}, so $z \in \overline{W_N} \cap \widehat K \subset V$. If $z \notin \PP_{crit}(\rp)$, then $\varsigma(z_n) = \rho(z_n)\to \rho(z) > 0$, and if $z \in \PP_{crit}(\rp)$, then $\varsigma(z_n) = \rho(z_n)\to \infty$ by \eqref{eq:nu-P} and \eqref{eq:orb_metric}. Both possibilities lead to a contradiction.

In the case~(ii), there holds $\varsigma (z_n) = \varsigma_i(z_n)> c$ for some constant $c >0$ by the definition of $\varsigma_i$ and the fact that $z_n \in (W_N \setminus \tilde K) \cap P_i$, so $z_n$ lies in a small neighbourhood of $p_i$. Again, this is impossible. Finally, in the case~(iii), $z_n$ is in a small neighbourhood of infinity and $\varsigma (z_n) = \varsigma_\infty(z_n)\to \varsigma_\infty(z) > 0$. This makes a contradiction.
\end{proof}

Now we show expanding properties of the metric $d\varsigma$. 

\begin{lem} \label{lem:g-sigma}
The map $\rp$ is locally expanding on 
\[
W_{N+1} \setminus \bigcup_{i \in \II_\infty} \big(P_i \setminus \overline{\D(0, R_i^+)}\big)
\]
with respect to $d\varsigma$. Moreover, there exists $Q > 1$ such that $|\rp'|_\varsigma > Q$ on $W_{N+1} \cap \widehat K$.
\end{lem}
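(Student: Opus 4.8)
The plan is to estimate $|\rp'(z)|_\varsigma = (\varsigma(\rp(z))/\varsigma(z))\,|\rp'(z)|$ pointwise, splitting according to the positions of $z$ and $\rp(z)$ relative to $\tilde K$ and the petals $P_i$. First I would record the elementary facts that make the gluing work: by the definition of $\varsigma$ together with \eqref{eq:orb_metric} — which forces $\min(\rho,\varsigma_i)=\varsigma_i$ near the (discrete) points of $\PP_{crit}(\rp)$ — one has $\varsigma\le\rho$ and $\varsigma\le\varsigma_i$ on $(W_N\setminus\tilde K)\cap P_i$ for $i\in\II_{par}$, and $\varsigma\le\rho$, $\varsigma\le\varsigma_\infty$ for $i\in\II_\infty$, while $\varsigma=\rho$ on $W_N\cap\tilde K$, where $\rho$ is non-singular by \eqref{eq:KcapP}. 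Also $W_{N+1}\subset\rp^{-1}(W)\subset V'$, and for $z\in W_{N+1}$ the point $\rp(z)$ lies in $W_N$, hence, since $\rp(z)\in W$, by \eqref{eq:WinKi} it lies either in $\tilde K$ or in $P_i\setminus\tilde K$ for a single $i\in\II$.

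Now fix $z$ in $W_{N+1}\setminus\bigcup_{i\in\II_\infty}(P_i\setminus\overline{\D(0,R_i^+)})$. If $\rp(z)\in\tilde K$, then $\rp(z)\notin\PP_{crit}(\rp)$ by \eqref{eq:KcapP}, so $z\notin\rp^{-1}(\PP_{crit}(\rp))$, and $|\rp'(z)|_\varsigma=(\rho(\rp(z))/\varsigma(z))\,|\rp'(z)|\ge|\rp'(z)|_\rho>1$ by Lemma~\ref{lem:g-expand}. If $z\in\tilde K$ but $\rp(z)\notin\tilde K$, i.e. $z\in W\cap\tilde K\cap\rp^{-1}(W\setminus\tilde K)$, then by Lemma~\ref{lem:z_ij} (with $n_0$ taken so large that the radius $r$ there is as small as needed below) the point $z$ is close to some $z_{i,j}$ and $\rp(z)$ is close to $p_i$; choosing via \eqref{eq:orb>hyp2} a $\delta$ with $1-\delta>\alpha_{par}$ (resp. $1+\delta<\alpha_\infty$) — possible since $\alpha_{par}<1<\alpha_\infty$ — gives $\rho>\varsigma_i$ (resp. $\rho>\varsigma_\infty$, using $h_i(|\rp(z)|)=1$ as $|\rp(z)|$ is large) near $p_i$, so $\varsigma(\rp(z))$ equals $C\sigma_{p_i,\alpha_{par}}(\rp(z))$ (resp. $\sigma_{\alpha_\infty}(\rp(z))$); since $\varsigma(z)=\rho(z)$, Lemma~\ref{lem:alpha} gives $|\rp'(z)|_\varsigma\to\infty$ as $z\to z_{i,j}$, hence $|\rp'(z)|_\varsigma>1$ once $r$ is small. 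Finally, if $z\in P_i\setminus\tilde K$ with $\rp(z)\notin\tilde K$, then $\rp$ being univalent on $P_i$ with inverse $\ap_i$, together with \eqref{eq:WinKi}, forces $\rp(z)\in P_i\setminus\tilde K$ for the same $i$; then either $\varsigma(\rp(z))$ equals $\varsigma_i(\rp(z))$ (resp. $\varsigma_\infty(\rp(z))$) — which always happens when $\rp(z)\in\PP_{crit}(\rp)$ — and $|\rp'(z)|_\varsigma\ge|\rp'(z)|_{\varsigma_i}>1$ (resp. $\ge|\rp'(z)|_{\varsigma_\infty}>2$) by Lemma~\ref{lem:sigma-expand} (here excluding $P_i\setminus\overline{\D(0,R_i^+)}$ puts $z$ in the set where the lemma applies), or $\varsigma(\rp(z))=\rho(\rp(z))$ with $\rp(z)\notin\PP_{crit}(\rp)$, and $|\rp'(z)|_\varsigma\ge|\rp'(z)|_\rho>1$ by Lemma~\ref{lem:g-expand}. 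This exhausts the cases.

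For the uniform bound on $W_{N+1}\cap\widehat K$: since $\widehat K\cap P_i\subset\overline{\D(0,R_i)}$ and $R_i<R_i^+$, the set $W_{N+1}\cap\widehat K$ avoids all $P_i\setminus\overline{\D(0,R_i^+)}$, $i\in\II_\infty$, so it sits inside the domain of local expansion just treated. Being compact, $\widehat K$ splits into the piece $W_{N+1}\cap\widehat K\cap\tilde K$, on which $\varsigma=\rho$ and the uniform part of Lemma~\ref{lem:g-expand} (applied to a compact $L\subset V'$ containing it) gives $|\rp'|_\varsigma=|\rp'|_\rho>Q_1$, and the finitely many compact petal pieces $\overline W\cap\ap_i^n(K_i)$ (for $n_0\le n\le m_0$, $i\in\II_{par}$) and $\overline W\cap P_i\cap\overline{\D(0,R_i)}$ ($i\in\II_\infty$), on which $\rp$ is univalent (a repelling petal contains no critical point) so $|\rp'|_\varsigma$ is continuous, and by the case analysis it is $>1$ pointwise, with $|\rp'|_\varsigma\to\infty$ near the finitely many $z_{i,j}$ in $\widehat K$; hence its infimum on each piece is some $Q_j>1$. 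I then take $Q=\min_j Q_j$.

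The hard part is the transition case. There $z$ may itself be a critical point of $\rp$, so $|\rp'(z)|$ vanishes to order $\deg_{z_{i,j}}\rp-1$, while the metric switches from the orbifold metric $\rho$ at $z$ to the finite petal metric at $\rp(z)$, even though $\rho(\rp(z))\to\infty$ as $\rp(z)\to p_i$. The argument survives precisely because of the calibration of $\alpha_{par},\alpha_\infty$ in \eqref{eq:alpha}: the blow-up of $\sigma_{p_i,\alpha_{par}}$ (resp. $\sigma_{\alpha_\infty}$) at $p_i$ beats this vanishing — this is exactly the content of Lemma~\ref{lem:alpha}. The remaining care is bookkeeping: confirming in the petal case that $\rp$ cannot move $z$ from $P_i$ into any region other than $\tilde K$ or $P_i$ itself (immediate from $\rp|_{P_i}$ being invertible with inverse $\ap_i$, plus \eqref{eq:WinKi}), and ordering the choices so that $n_0$ is large enough for the radius of Lemma~\ref{lem:z_ij} to lie inside the neighbourhoods of the $z_{i,j}$ produced by Lemma~\ref{lem:alpha}.
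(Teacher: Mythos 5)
Your first half (local expansion) is essentially the paper's own argument: the same trichotomy according to whether $\rp(z)$ lies in $\tilde K$ or in a petal, handled respectively by Lemma~\ref{lem:g-expand}, by Lemma~\ref{lem:z_ij} together with Lemma~\ref{lem:alpha}, and by Lemma~\ref{lem:sigma-expand}, and your bookkeeping (forward invariance of $\PP_{crit}(\rp)$, petals being disjoint inside $W$, $\rp$ mapping $\ap_i^n(K_i)$ into $P_i$) is sound. The genuine gap is in the uniform bound on $W_{N+1}\cap\widehat K$. On the piece $W_{N+1}\cap\widehat K\cap\tilde K$ you assert $|\rp'|_\varsigma=|\rp'|_\rho$ and invoke the uniform part of Lemma~\ref{lem:g-expand} ``applied to a compact $L\subset V'$ containing it''. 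Both claims fail at the transition points $z\in\tilde K$ with $\rp(z)\in (W_N\setminus\tilde K)\cap P_i$: there $\varsigma(\rp(z))$ equals the petal metric, which near $p_i$ is strictly (and unboundedly) smaller than $\rho(\rp(z))$, so $|\rp'|_\varsigma\neq|\rp'|_\rho$; and such transition points accumulate at the points $z_{i,j}$ of Lemma~\ref{lem:z_ij}, which satisfy $\rp(z_{i,j})=p_i\notin V$ and hence lie on $\bd V'$ (for $i\in\II_\infty$ they are poles of the extension of $\rp$). Since points near $z_{i,j}$ whose images linger in the repelling petal do belong to $W_{N+1}$, no compact $L\subset V'$ contains $W_{N+1}\cap\widehat K\cap\tilde K$ in general, so the uniform statement of Lemma~\ref{lem:g-expand} cannot be applied to that whole piece.

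What is missing is precisely the two-step mechanism of the paper's proof. First, the Lemma~\ref{lem:g-expand} argument must be restricted to the points with $\rp(z)\in W_N\cap\widehat K$ and $\varsigma(\rp(z))=\rho(\rp(z))$, and for those one has to \emph{prove} that $W_{N+1}\cap\widehat K\cap\rp^{-1}(W_N\cap\widehat K)$ is contained in a compact subset of $V'$; this is not automatic and is exactly where the hypotheses \ref{item:asympt_values} and \ref{item:extend} of Theorem~\ref{thm:metric} enter (a sequence approaching $\bd V'$ with images converging into $V$ would produce an asymptotic value via the meromorphic extension) --- an ingredient absent from your proposal. Second, the transition points get their uniform constant ($>2$) not from $\rho$-expansion but from Lemma~\ref{lem:z_ij} with $r$ small together with Lemma~\ref{lem:alpha}; you have this estimate in your pointwise case analysis but never convert it into the uniform bound where it is actually needed. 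A smaller defect of the same kind occurs on your ``petal pieces'': $|\rp'|_\varsigma$ is only defined on $W_{N+1}\cap(\text{piece})$, which is not compact, so ``continuous and $>1$ pointwise on a compact set'' does not apply as stated; the correct route is to bound $|\rp'|_\varsigma$ from below by $|\rp'|_{\varsigma_i}$ (resp.\ by the constant $2$ for $i\in\II_\infty$), which is continuous and exceeds $1$ on the compact set $\overline{W_1\setminus\tilde K}\cap P_i$ (resp.\ on $\overline{W_1\setminus\tilde K}\cap P_i\cap\overline{\D(0,R_i^+)}$), as the paper does.
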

\begin{proof}
Note first that by the definition of $\varsigma$, \eqref{eq:rho>sigma-par} and \eqref{eq:rho>sigma-inf}, we have $\varsigma \le \rho$ on $W_N \cap \widehat K \setminus \PP_{crit}(\rp)$ and $\varsigma < \rho$ on $W_N \setminus (\widehat K \cup \PP_{crit}(\rp))$. In view of this and \eqref{eq:KcapP}, it is straightforward to check that for $z \in W_{N+1}$ there holds one of the three following (non-necessarily disjoint) cases.

\begin{enumerate}[$($i$)$]
\item $z \notin \rp^{-1}(\PP_{crit}(\rp))$, \; $\varsigma(z) \le \rho(z)$, \; $\rp(z) \in W_N \cap \widehat K$,\; $\varsigma(\rp(z)) = \rho(\rp(z))$, 
\item $z \in \tilde K$, \; $\rp(z) \in (W_N \setminus \tilde K) \cap P_i$, \; $\varsigma(\rp(z)) = 
\begin{cases}
\varsigma_i(\rp(z))&\text{for some } i \in \II_{par}\\
\varsigma_\infty(\rp(z))&\text{for some } i \in \II_\infty
\end{cases}$,
\item $z \notin \tilde K$,\; $\rp(z) \in (W_N \setminus \tilde K) \cap P_i$, \; $\varsigma(\rp(z)) = 
\begin{cases}
\varsigma_i(\rp(z))&\text{for some } i \in \II_{par}\\
\varsigma_\infty(\rp(z))&\text{for some } i \in \II_\infty
\end{cases}$.

\end{enumerate}

We show the first part of the lemma, considering successively the cases (i)--(iii). Take $z \in W_{N+1} \setminus \bigcup_{i \in \II_\infty} \big(P_i \setminus \overline{\D(0, R_i^+)}\big)$. 
In the case~(i) we have $|\rp'(z)|_\varsigma \ge |\rp'(z)|_\rho > 1$ by Lemma~\ref{lem:g-expand}. In the case~(ii),
\[
|\rp'(z)|_\varsigma = \left.
\begin{cases}\displaystyle
|\rp'(z)|\frac{\varsigma_i(\rp(z))}{\rho(z)} = |\rp'(z)|\frac{C\sigma_{p_i, \alpha_{par}}(\rp(z))}{\rho(z)}&\text{for some } i \in \II_{par}\\\displaystyle
|\rp'(z)|\frac{\varsigma_\infty(\rp(z))}{\rho(z)} = |\rp'(z)|\frac{h_i(|z|)\sigma_{\alpha_\infty}(\rp(z))}{\rho(z)} &\text{for some } i \in \II_\infty
\end{cases}
\right\}
> 2
\]
by Lemma~\ref{lem:z_ij} (where we can make $r$ arbitrarily small by enlarging $n_0$), Lemma~\ref{lem:alpha} and the fact $h_i \ge 1$. In the case~(iii), by \eqref{eq:WinKi} and since $W \cap \rp(P_i) \cap P_{i'} = \emptyset$ for $i, i' \in \II$, $i\neq i'$ by the assumption~\ref{item:petals'}, there holds $z, \rp(z) \in P_i$ for some $i \in \II$. If $i \in \II_{par}$, then $\varsigma(z) \le \varsigma_i(z)$ by the definition of $\varsigma$, so $|\rp'(z)|_\varsigma \ge |\rp'(z)|_{\varsigma_i} > 1$ by Lemma~\ref{lem:sigma-expand}. Similarly, if $i \in \II_\infty$, then $\varsigma(z) \le \varsigma_\infty(z)$ by the definition of $\varsigma$, so $|\rp'(z)|_\varsigma \ge |\rp'(z)|_{\varsigma_\infty} > 2$ by Lemma~\ref{lem:sigma-expand}. 

Now we prove the second part of the lemma. Again, we examine the cases (i)--(iii), using the previous considerations. In the case~(i), to use Lemma~\ref{lem:g-expand}, we show that $W_{N+1} \cap \widehat K \cap \rp^{-1}(W_N \cap \widehat K)$ is contained in a compact subset $L$ of $V'$. Suppose it is not the case. Then there exists a sequence of points $z_n \in W_{N+1}$ such that $z_n \to z \in \overline{W_{N+1}} \cap \widehat K \cap \bd V'$ and $\rp(z_n) \to w \in \overline{W_N} \cap \widehat K \subset V$. By the assumption~\ref{item:extend} of Theorem~\ref{thm:metric}, the map $\rp$ extends holomorphically to a small disc $D$ centered at $z$, such that $\rp(D) \subset V$. Then taking $z_n \in D$, we can find a curve $\gamma \colon [0,+\infty) \to V' \cap D$ with $\gamma(0) = z_n$, $\lim_{t\to +\infty} \gamma(t) \to z'$ and $\lim_{t\to +\infty} \rp(\gamma(t)) \to w'$ for some $z' \in \bd V'\cap D$ and $w' \in V$, which shows that $w'$ is an asymptotic value of $\rp$ and contradicts the assumption~\ref{item:asympt_values}. Hence, $W_{N+1} \cap \widehat K\cap \rp^{-1}(W_N \cap \widehat K)$ is contained in a compact set $L \subset V'$, so by Lemma~\ref{lem:g-expand}, there exists $Q_1>1$ such that $|\rp'|_\rho > Q_1$ on $W_{N+1} \cap \widehat K\cap \rp^{-1}(W_N \cap \widehat K)$, in particular $|\rp'(z)|_\rho > Q_1$ for $z \in W_{N+1}\cap \widehat K$ fulfilling the condition~(i). In the case~(ii), we have already showed that for $z \in W_{N+1}\cap \widehat K$ there holds $|\rp(z)'|_\rho > Q_2$ for $Q_2 = 2$. In the case~(iii) we have $|\rp'(z)|_\rho > Q_3$ for $z \in W_{N+1}\cap \widehat K$ with some $Q_3 > 1$ by Lemma~\ref{lem:sigma-expand}, since 
$\bigcup_{i \in \II_{par}} \big( \overline{W_{N+1} \cap \widehat K \setminus \tilde K} \cap P_i\big)$ is a compact subset of $\bigcup_{i \in \II_{par}} \big(\overline{W_1 \setminus \tilde K} \cap P_i\big)$ and $\bigcup_{i \in \II_\infty} \big( \overline{W_{N+1} \cap \widehat K \setminus \tilde K} \cap P_i\big)$ is a subset of $\bigcup_{i \in \II_\infty}\big(\overline{W_1 \setminus \tilde K} \cap P_i \cap \overline{\D(0, R^+_i)}\big)$.
This shows that the second assertion of the lemma holds with $Q = \min(Q_1, Q_2, Q_3)$.
\end{proof}

Now we can prove the following fact, which completes the proof of Proposition~\ref{prop:expanding_metric} in the case when all the parabolic periodic points $p_i$, $i \in \II_{par}$, are fixed under $\rp$, of multipliers $1$.

\begin{lem}\label{lem:der_i} There exists a decreasing sequence $(\beta_n)_{n=0}^\infty$ of positive numbers, such that 
\begin{enumerate}[$($a$)$]
\item $\sum_{n=0}^\infty \beta_n < \infty$,
\item $\beta_0 = 1$ and $\frac{\beta_{n + 1}}{\beta_n} \ge \frac{\beta_n}{\beta_{n-1}}$ for every $n \in \N$,
\item $|(\rp^n)'(z)|_\varsigma > \frac{1}{\beta_n}$ for every $z \in (W_N \setminus \widehat K) \cap\rp^{-1} (W_N \setminus \widehat K) \cap \ldots \cap \rp^{-(n-1)}(W_N \setminus \widehat K) \cap \rp^{-n}(W_N \cap \widehat K)$, $n \in \N$.
\end{enumerate}
\end{lem}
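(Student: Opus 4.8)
The plan is to combine the petalwise derivative estimates (Proposition~\ref{prop:der-par} for parabolic petals, Theorem~\ref{thm:der-inf} for petals at infinity) with the fact, established in Lemma~\ref{lem:g-sigma}, that $\rp$ is locally expanding with respect to $d\varsigma$ except possibly on the far parts $P_i\setminus\overline{\D(0,R_i^+)}$, $i\in\II_\infty$. The point of the hypothesis $\rp^{-n}(W_N\cap\widehat K)$ at the end of the orbit is that the final iterate lands back in $\widehat K$, which by the construction of $\widehat K$ forces the orbit, whenever it enters a petal at infinity, to stay in a part of that petal where things are controlled: each time the orbit visits $\bigcup_{i\in\II_\infty}P_i$ it does so along a block $z,\rp(z),\dots,\rp^{k-1}(z)$ lying in $\overline{W\setminus\tilde K}\cap P_i$, entering somewhere above $R_i$ (since before the block it was in $\tilde K$ or in a $\II_{par}$-petal) and leaving below $\widehat K$ eventually — so by Lemma~\ref{lem:R_i} the transitions keep us in $\overline{\D(0,R_i^+)}$ on all but boundedly many steps, and on the tail $|z|>R_i^+$ we have $\varsigma=\varsigma_\infty=h_i(|z|)\sigma_{\alpha_\infty}$ with $h_i\equiv 1$, so the derivative is governed directly by $|\rp'|_{\sigma_{\alpha_\infty}}$, i.e.\ by Theorem~\ref{thm:der-inf}.

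First I would set up the combinatorial decomposition: given $z$ with $z,\rp(z),\dots,\rp^{n-1}(z)\in W_N\setminus\widehat K$ and $\rp^n(z)\in W_N\cap\widehat K$, split $[0,\dots,n-1]$ into maximal consecutive blocks according to which set among $\bigcup_{i\in\II_{par}}P_i$, $\bigcup_{i\in\II_\infty}P_i$, or $\tilde K\setminus\widehat K$ the iterate lies in (using \eqref{eq:WinKi}, which says $W_N\setminus\widehat K$ is covered by the petals; blocks in $\tilde K\setminus\widehat K$ have length bounded by $m_0-n_0$ plus the finitely many $\II_{par}$-orbit entries, by the definition of $\tilde K$ and $\widehat K$). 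On a block contained in a single $\II_{par}$-petal $P_i$, Lemma~\ref{lem:sigma-expand} (via Proposition~\ref{prop:der-par}) gives $|(\rp^k)'|_\varsigma\ge|(\rp^k)'|_{\varsigma_i}>1/a_{m,k}$ for the summable, log-convex-ratio sequence $a_{m,k}$ of Proposition~\ref{prop:der-par}, with the ratio monotonicity $1>\frac{a_{m,k+1}}{a_{m,k}}>\frac{a_{m,k}}{a_{m,k-1}}>1-\varepsilon$; on a block in an $\II_\infty$-petal I use Theorem~\ref{thm:der-inf} similarly, with the sequence $a_{m,k}\le A$ and the same ratio monotonicity, after checking via Lemma~\ref{lem:R_i} that the correction factor $h_i$ contributes a bounded multiplicative loss (on the at most boundedly many steps where $|z|\le R_i^+$, $h_i$ ranges in a compact positive interval so costs a constant; on the tail it is $1$) — and on the short $\tilde K$-blocks and on the single-step transitions between blocks, Lemma~\ref{lem:g-sigma} (the $|\rp'|_\varsigma>Q$ on $\widehat K$, and $>1$ elsewhere on the allowed region) together with Lemma~\ref{lem:z_ij}/Lemma~\ref{lem:alpha} for the transition where $\rp(z)$ enters a petal gives a uniform lower bound. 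Multiplying along the blocks, $|(\rp^n)'(z)|_\varsigma$ is bounded below by a product of the form $\prod 1/a_{m_j,k_j}$ times a bounded constant, and because all the sequences $a_{m,\cdot}$ (from both propositions) share the property $a_{m,k+1}/a_{m,k}\ge a_{m,k}/a_{m,k-1}$ and start near $1$, a concatenation argument (as in the proof of Theorem~\ref{thm:metric} from Proposition~\ref{prop:expanding_metric}, where $\beta_{p+q}\ge\beta_p\beta_q$) packages everything into a single decreasing summable sequence $(\beta_n)$ with $\beta_0=1$, $\beta_{n+1}/\beta_n\ge\beta_n/\beta_{n-1}$, and $|(\rp^n)'(z)|_\varsigma>1/\beta_n$.

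Concretely, to build $(\beta_n)$ I would first merge the finitely many petal sequences: since $\II$ is finite and for each $i$ and each relevant starting index $m$ the sequence is dominated (Proposition~\ref{prop:der-par} and Theorem~\ref{thm:der-inf} both give explicit envelopes $\asymp (m/(n+m))^b$ resp.\ the $\tilde a_{m,n}$ of Theorem~\ref{thm:der-inf}), one can choose a single sequence $(\hat a_n)$ with $\hat a_n\ge a_{m,n}$ for all of them, still summable and still with the convexity-type ratio condition (take $\hat a_n = Ce^{-b\log n}$-type, whose log is convex, or take a maximum of finitely many such and re-convexify the ratios by a standard smoothing). Then set $\beta_n$ to be, up to the fixed constant coming from the bounded-length $\tilde K$-blocks, the convex-ratio majorant obtained from $(\hat a_n)$; the inequality $\beta_{p+q}\ge\beta_p\beta_q$ follows from $\beta_{n+1}/\beta_n\ge\beta_n/\beta_{n-1}$ exactly as in the cited computation, and it is what lets us absorb the splitting into blocks without losing summability.

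**Main obstacle.** The delicate point is not any single estimate but the bookkeeping at the junctions between blocks and, above all, handling the far region $P_i\setminus\overline{\D(0,R_i^+)}$, which Lemma~\ref{lem:g-sigma} explicitly excludes from its local-expansion statement: one must argue that the hypothesis $\rp^n(z)\in\widehat K$ together with the petal dynamics (Proposition~\ref{prop:attr_petal}: $|\rp(w)|<|w|+c$ under $\ap_i$, hence $|\rp(z)|>|z|-c$ — wait, under $\rp$ the modulus \emph{increases}? No: $\rp$ maps $P_i$ toward $\tilde K$, so moduli \emph{decrease} along forward orbits in the repelling petal) forces any excursion into $|z|>R_i^+$ to be a single maximal initial sub-block of a petal-block, during which $\varsigma=\varsigma_\infty$ with $h_i\equiv1$ so Theorem~\ref{thm:der-inf} applies verbatim, and the passage from $|z|>R_i^+$ down across $R_i^+$ to $R_i$ costs only boundedly many steps by Lemma~\ref{lem:R_i}. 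Making this "excursion structure" precise — i.e.\ that within each petal-block the orbit's moduli are monotone so it crosses each threshold $R_i^+$, $R_i$ at most once, and that $\widehat K\supset\overline{W}\cap P_i\cap\overline{\D(0,R_i)}$ therefore caps the block — is the heart of the argument and the place where the precise definitions of $R_i$, $R_i^+$, $\widehat K$ and Lemmas~\ref{lem:R_i}, \ref{lem:sigma-expand} all have to be invoked together.
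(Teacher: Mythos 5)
Your plan does not exploit what the hypothesis of the lemma actually gives, and two of the resulting complications are genuine gaps rather than technicalities. First, the multi-block decomposition is both unnecessary and uncontrolled as you set it up. Note that $\tilde K\subset\widehat K$, so your ``blocks in $\tilde K\setminus\widehat K$'' are empty; more importantly, by \eqref{eq:WinKi}, \eqref{eq:widehatKminustildeK}, \eqref{eq:K_i_disjoint} and the petal-disjointness hypothesis ($W\cap\rp(P_i)\cap P_{i'}=\emptyset$ for $i\ne i'$), the whole segment $z,\rp(z),\dots,\rp^{n-1}(z)\subset W_N\setminus\widehat K$ lies in a \emph{single} petal, descending its tower of levels, and the final entry into $\widehat K$ pins the terminal level: $z\in\ap_i^{m_0+n}(K_i)$ for some $i\in\II_{par}$, or $z\in\ap_i^{m+n}(K_i)$ with $m\in\{n_0,\dots,m_1\}$ and, by Lemma~\ref{lem:R_i}, $R^-\le|\rp^n(z)|<|\rp^{n-1}(z)|\le R_i^+$. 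This is the paper's proof: one application of Proposition~\ref{prop:der-par} or Theorem~\ref{thm:der-inf} covers all $n$ steps, and $\beta_n$ is a maximum over the \emph{finitely many} sequences indexed by $i$ and $m\le m_1$ (no ``re-convexification'' is needed: a finite maximum of sequences with non-decreasing ratios again has non-decreasing ratios). Your splitting of a petal visit at the threshold $R_i^+$ destroys this pinning: for the far sub-block you would have to invoke Theorem~\ref{thm:der-inf} with starting levels $m$ that you have not shown to lie in a finite set, and $\sup_m a_{m,n}$ over unbounded $m$ is in general bounded below by a positive constant for each fixed $n$ (since $t_{j+1}/t_j\to1$ and $g(t_{j+1})/g(t_j)\to1$), hence not summable. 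Also, the claim that descending from above $R_i^+$ to $R_i$ takes boundedly many steps ``by Lemma~\ref{lem:R_i}'' misreads that lemma, which is a one-step statement about crossing $R_i$.

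Second, you treat the correction factor $h_i$ as a bounded multiplicative nuisance, whereas the proof needs it as a quantified \emph{gain}. Along the forward orbit the moduli decrease and $h_i$ is non-increasing, so $h_i(|\rp^n(z)|)/h_i(|z|)\ge h_i(|\rp^n(z)|)/h_i(|\rp^{n-1}(z)|)\ge A_\infty$ by \eqref{eq:>A} applied to the last step before entering $\widehat K$. This factor is essential: Theorem~\ref{thm:der-inf} only gives $a_{m,1}\le A$ with $A$ possibly much larger than $1$, so without the $A_\infty$ gain your estimate for short blocks (already $n=1$) would only yield $|(\rp^n)'(z)|_\varsigma>1/A$, which cannot be packaged into a bound $>1/\beta_n$ with $(\beta_n)$ decreasing and $\beta_0=1$ as assertions (b)--(c) require. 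The paper's normalization $\widehat\beta^{(i)}_{m,n}=\beta^{(i)}_{m,n}/A_\infty$ (and the verification of the ratio condition at $n=1$ using $\beta^{(i)}_{m,1}\le A_\infty/2$) is legitimate precisely because of this gain, and your plan has no substitute for it.
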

\begin{proof} 
For $i \in \II_{par}$ and $m \ge n_0$, let $(\beta_{m,n}^{(i)})_{n=0}^\infty$ be the sequence $(a_{m,n})_{n=0}^\infty$ from Proposition~\ref{prop:der-par}, suited for the attracting petal $\rp(P_i)$ of the map $\ap_i$ at $p_i$, the compact set $K_i$ and  $\alpha = \alpha_{par}$. Then $\beta_{m,n}^{(i)}>0$, the sequence $(\beta_{m,n}^{(i)})_{n=0}^\infty$ is decreasing, $\sum_{n=0}^\infty \beta_{m,n}^{(i)} < \infty$, $\beta_{m,0}^{(i)} = 1$, $\frac{\beta_{m,n+1}^{(i)}}{\beta_{m,n}^{(i)}} \ge \frac{\beta_{m,n}^{(i)}}{\beta_{m,n-1}^{(i)}}$ for every $n \in \N$ and
\begin{equation}\label{eq:>Q1}
|(\rp^n)'(z)|_{\sigma_{p_i, \alpha_{par}}} > \frac{1}{\beta_{m,n}^{(i)}} 
\end{equation}
for $z \in \ap_i^{m+n}(K_i)$, $n \in \N$.

Note that by \eqref{eq:WinKi} and the compactness of $\widehat K$, we have 
\begin{equation}\label{eq:widehatKminustildeK}
(W_N \cap \widehat K \setminus \tilde K) \cap \bigcup_{i\in \II_\infty} P_i \subset \bigcup_{i\in \II_\infty} \bigcup_{n=n_0}^{m_1} G^n_i(K_i)
\end{equation}
for some $m_1 \in \N$. For $i \in \II_\infty$ and $n_0 \le m \le m_1$, let $(\beta^{(i)}_{m,n})_{n=1}^\infty$ be the sequence $(a_{m,n})_{n=1}^\infty$ appearing in Theorem~\ref{thm:der-inf}, suited for the attracting petal $\rp(P_i)$ of $\ap_i$ at $p_i$, the compact set $K_i$, $\alpha = \alpha_\infty$ and $\varepsilon = \frac{1}{2}$. Then $\beta^{(i)}_{m,n} > 0$, the sequence $(\beta^{(i)}_{m,n})_{n=1}^\infty$ is decreasing, $\sum_{n=1}^\infty \beta^{(i)}_{m,n} < \infty$, $\beta^{(i)}_{m,1} \le \frac{A_\infty}{2}$ (where $A_\infty$ was defined in \eqref{eq:A_infty}), $\frac{\beta^{(i)}_{m,n+1}}{\beta^{(i)}_{m,n}} \ge \frac{\beta^{(i)}_{m,n}}{\beta^{(i)}_{m,n-1}} > \frac{1}{2}$ for every $n > 1$ and 
\begin{equation}\label{eq:>Q2}
|(\rp^n)'(z)|_{\sigma_{\alpha_\infty}} > \frac{1}{\beta^{(i)}_{m,n}}
\end{equation}
for $z \in \ap_i^{m+n}(K_i)$, $n \in \N$. Let
\[
\widehat\beta^{(i)}_{m,n} = 
\begin{cases}
1 & \text{for } n = 0 \\
\frac{\beta^{(i)}_{m,n}}{A_\infty}& \text{for } n \in \N.
\end{cases}.
\]
Then $\widehat\beta^{(i)}_{m,n} > 0$, the sequence $(\widehat\beta^{(i)}_{m,n})_{n=0}^\infty$ is decreasing, $\sum_{n=0}^\infty \widehat\beta^{(i)}_{m,n} < \infty$, $\widehat\beta^{(i)}_{m,0} =1$ and $\frac{\widehat\beta^{(i)}_{m,n+1}}{\widehat\beta^{(i)}_{m,n}} \ge \frac{\widehat\beta^{(i)}_{m,n}}{\widehat\beta^{(i)}_{m,n-1}}$ for every $n \in \N$, since
\[
\frac{\widehat\beta^{(i)}_{m,2}}{\widehat\beta^{(i)}_{m,1}} = \frac{\beta^{(i)}_{m,2}}{\beta^{(i)}_{m,1}} > \frac{1}{2} \ge \frac{\beta^{(i)}_{m,1}}{A_\infty} = \widehat\beta^{(i)}_{m,1} = \frac{\widehat\beta^{(i)}_{m,1}}{\widehat\beta^{(i)}_{m,0}}. 
\]
Finally, for $n \ge 0$ let 
\[
\beta_n = \max\Big(\max_{i \in \II_{par}} \beta_{m_0,n}^{(i)}, \max_{i \in \II_\infty}\max_{m \in\{n_0, \ldots, m_1\}} \widehat\beta^{(i)}_{m,n}\Big).
\]
for $m_0$ from the definition of $\widehat K$. Obviously, $(\beta_n)_{n=0}^\infty$ is a decreasing sequence of positive numbers and $\sum_{n=0}^\infty \beta_n < \infty$. This gives the assertion~(a). The assertion~(b) follows from the analogous properties for the sequences $(\beta_{m_0,n}^{(i)})_{n=0}^\infty$ and $(\widehat\beta^{(i)}_{m,n})_{n=0}^\infty$.

To show the assertion~(c), take $z \in (W_N \setminus \widehat K) \cap \rp^{-1}(W_N \setminus \widehat K) \cap \ldots \cap \rp^{-(n-1)}(W_N \setminus \widehat K) \cap \rp^{-n}(W_N \cap \widehat K)$, $n \in \N$. Then by \eqref{eq:WinKi}, \eqref{eq:widehatKminustildeK}, \eqref{eq:K_i_disjoint} and Lemma~\ref{lem:R_i}, we have $z \in \ap_i^{m_0+n}(K_i)$ for some $i \in \II_{par}$ or $z \in \ap_i^{m+n}(K_i)$ for some $i \in \II_\infty$, $m \in \{n_0, \ldots, m_1\}$ and $R^- \le |\rp^n(z)| < |\rp^{n-1}(z)| \le R_i^+$. In the first case, \eqref{eq:rho>sigma-par} and \eqref{eq:>Q1} imply
\[
|(\rp^n)'(z)|_\varsigma = 
|(\rp^n)'(z)|_{\varsigma_i} = |(\rp^n)'(z)|_{C\sigma_{p_i, \alpha_{par}}} = |(\rp^n)'(z)|_{\sigma_{p_i, \alpha_{par}}} > \frac{1}{\beta_{m_0,n}^{(i)}} \ge \frac{1}{\beta_n}, 
\]
which gives the assertion~(c). In the second case, by \eqref{eq:rho>sigma-inf}, \eqref{eq:>A} and \eqref{eq:>Q2},
\begin{align*}
|(\rp^n)'(z)|_\varsigma &= |(\rp^n)'(z)|_{\varsigma_\infty} = 
\frac{h_i(|\rp^n(z)|)}{h_i(|z|)}|(\rp^n)'(z)|_{\sigma_{\alpha_\infty}}\\
&\ge \frac{h_i(|\rp^n(z)|)}{h_i(|\rp^{n-1}(z)|)}|(\rp^n)'(z)|_{\sigma_{\alpha_\infty}} \ge A_\infty \: |(\rp^n)'(z)|_{\sigma_{\alpha_\infty}}\\
&> \frac{A_\infty}{\beta^{(i)}_{m,n}} = \frac{1}{\widehat\beta^{(i)}_{m,n}} \ge \frac{1}{\beta_n}, 
\end{align*}
providing the assertion~(c).

\subsection{The case of parabolic periodic points} \label{subsec:periodic}

In this subsection we describe how to modify the construction described in Subsections~\ref{subsec:metric_petal_dyn}--\ref{subsec:metric_metric}, when the points $p_i$, $i \in \II_{par}$ are arbitrarily parabolic periodic points of $\rp$. Recall that in this case, apart from repelling petals $P_i$, $i \in \II_\infty$, of $\rp$ at infinity, there is a finite number of repelling petals $P_i$, $i \in \II_{par}$, of $\rp^{\ell_i}$, for some $\ell_i \in \N$, at parabolic periodic points $p_i \in \bd W$ of $\rp$, generating disjoint cycles of length $\ell_i$ of repelling petals of $\rp$ at $p_i$, such that
\[
Y = \overline{W} \setminus \Big(\bigcup_{i \in \II_{par}}\bigcup_{s=0}^{\ell_i-1} \rp^s(P_i \cup \{p_i\}) \cup \bigcup_{i \in \II_\infty} P_i\Big)
\]
is compact. Now, we shortly explain the modifications of the proof of Proposition~\ref{prop:expanding_metric} in this case. Note that the changes concern only the petals $P_i$, $i \in \II_{par}$, while the parts dealing with the compact part of $\overline{W} \setminus \bigcup_{i\in\II_{par}} \Orb(p_i)$ and the petals $P_i$, $i \in \II_\infty$, stay untouched.

We repeat the analysis of the petal dynamics contained in  Subsection~\ref{subsec:metric_petal_dyn} (including Lem\-ma~\ref{lem:WinKi}), replacing $\rp|_{P_i}$ by $\rp^{\ell_i}|_{P_i}$ for $i \in \II_{par}$ and setting 
\[
\ap_i = (\rp^{\ell_i}|_{P_i})^{-1}.
\]
The set $\tilde K_0$ is now defined as 
\[
\tilde K_0 = Y \cup \bigcup_{i\in \II_{par}} \bigcup_{n=0}^{n_0-1}\bigcup_{s=0}^{\ell_i-1} \rp^s(\overline{W} \cap \ap_i^n(K_i)) \cup \bigcup_{i\in \II_\infty} \bigcup_{n=0}^{n_0-1}(\overline{W} \cap \ap_i^n(K_i) ).
\]
for a large $n_0$ (which changes appropriately the definition of $\tilde K$). In Lemma~\ref{lem:z_ij} and \eqref{eq:alpha}, for $i \in \II_{par}$, instead of the points $z_{i,j}$ we consider points $z_{i,s,j}$, $s \in \{0, \ldots, \ell_i-1\}$, where $\rp(z_{i,s,j}) = \rp^s(p_i)$. With these modifications, the estimates in Lemma~\ref{lem:alpha} hold for $z_{i,s,j}$ and $\rp^s(p_i)$ instead of $z_{i,j}$ and $p_i$. Lemma~\ref{lem:sigma-expand} shows now that $\rp^{\ell_i}$ is locally expanding with respect to $d\varsigma_i$ on 
$\overline{W_1 \setminus \tilde K} \cap P_i$ for $i\in \II_{par}$.

Now we explain how to modify the definition of the metric $\varsigma$ in Subsection~\ref{subsec:metric_metric}. We set
\[
\widehat K = \tilde K \cup \bigcup_{i \in \II_{par}}\bigcup_{n=n_0}^{m_0}
\bigcup_{s=0}^{\ell_i-1} \rp^s(\overline{W} \cap \ap_i^n(K_i))
\cup \bigcup_{i \in \II_\infty} \big(\overline{W} \cap P_i \cap \overline{\D(0, R_i)}\big)
\]
for a large $m_0 > n_0$. For $i \in \II_{par}$ and $z \in \overline{W \setminus \tilde K} \cap \rp^s(P_i)$, $s \in \{0, \ldots, \ell_i - 1\}$, we define
\[
\tilde\varsigma_i(z) = \frac{|(\rp^{\ell_i})'(w)|^{\frac{s}{\ell_i}}}{|(\rp^s)'(w)|} \frac{(\varsigma_i (\rp^{\ell_i}(w)))^{\frac{s}{\ell_i}}}{(\varsigma_i(w))^{\frac{s}{\ell_i} - 1}} = |(\rp^{\ell_i})'(w)|_{\varsigma_i}^{\frac{s}{\ell_i}} \frac{\varsigma_i(w)}{|(\rp^s)'(w)|},
\]
where
\[
w = \ap_i(\rp^{\ell_i-s}(z)) \in \rp^{-s}(z)\cap P_i.
\]
Note that since the cycles of repelling petals generated by $P_i$ are pairwise disjoint for $i \in \II_{par}$ and within each cycle, the sets $P_i, \rp(P_i), \ldots, \rp^{\ell_i - 1}(P_i)$ are pairwise disjoint, the metric $\tilde\varsigma_i$ is well defined. A direct checking gives
\begin{align}
&\label{eq:first} \tilde\varsigma_i = \varsigma_i \text{ on } \overline{W \setminus \tilde K} \cap P_i, \text{ in particular } |(\rp^{\ell_i})'|_{\tilde\varsigma_i} = |(\rp^{\ell_i})'|_{\varsigma_i} \text{ on } \overline{W \setminus \tilde K} \cap P_i,\\
&\label{eq:second} |\rp'(z)|_{\tilde\varsigma_i} = |(\rp^{\ell_i})'(w)|_{\varsigma_i}^{\frac{1}{\ell_i}} \quad  \text{for } z \in \overline{W \setminus \tilde K} \cap \rp^s(P_i), \; w = \ap_i(\rp^{\ell_i-s}(z)).
\end{align}
Moreover, since $|(\rp^s)'| \asymp 1$ near $p_i$, we have
\begin{equation}
\label{eq:third} \tilde\varsigma_i(z) \asymp \frac{1}{|z - \rp^s(p_i)|^{\alpha_{par}}} = \sigma_{\rp^s(p_i), \alpha_{par}}(z) \quad \text{for } z \in \overline{W \setminus \tilde K} \cap \rp^s(P_i).
\end{equation}
By \eqref{eq:first}, \eqref{eq:second} and (modified) Lemma~\ref{lem:sigma-expand}, the map $\rp$ is locally expanding with respect to $\tilde\varsigma_i$ on $\overline{W_1 \setminus \tilde K} \cap (P_i \cup \rp(P_i) \cup \ldots, \rp^{\ell_i-1}(P_i))$.

Now, in the second and third item of the definition of $\varsigma$, we replace $\varsigma_i$ on $(W_N \setminus \tilde K) \cap P_i$ by $\tilde\varsigma_i$ on $(W_N \setminus \tilde K) \cap (P_i \cup \rp(P_i) \cup \ldots \cup \rp^{\ell_i - 1}(P_i))$. In the proof of Lemma~\ref{lem:g-sigma}, showing that $\rp$ is locally expanding with respect to $d\varsigma$, in the case~(ii) we use Lemma~\ref{lem:alpha} and \eqref{eq:third}, while the case~(iii) follows by the fact that $\rp$ is locally expanding with respect to $\tilde\varsigma_i$.

Finally, in the proof of Lemma~\ref{lem:der_i}, for $i \in \II_{par}$ and $m \ge n_0$ we take $(\beta_{m,k}^{(i)})_{k=0}^\infty$ to be the sequence $(a_{m,k})_{k=0}^\infty$ from Proposition~\ref{prop:der-par}, suited for the attracting petal $\rp^{\ell_i}(P_i)$ of the map $\ap_i = (\rp^{\ell_i}|_{P_i})^{-1}$ at $p_i$, the compact set $K_i$ and $\alpha = \alpha_{par}$. By Proposition~\ref{prop:der-par}, we have $\beta_{m,k}^{(i)}>0$, the sequence $(\beta_{m,k}^{(i)})_{k=0}^\infty$ is decreasing, $\sum_{k=0}^\infty \beta_{m,k}^{(i)} < \infty$, $\beta_{m,0}^{(i)} = 1$, $\frac{\beta_{m,k+1}^{(i)}}{\beta_{m,k}^{(i)}} = \frac{\beta_{m+k,1}^{(i)}}{\beta_{m+k,0}^{(i)}} = \beta_{m+k,1}^{(i)}$ for every $k \in \N$, $\frac{\beta_{m,k+1}^{(i)}}{\beta_{m,k}^{(i)}} \ge \frac{\beta_{m,k}^{(i)}}{\beta_{m,k-1}^{(i)}}$ for every $k \in \N$ and
\begin{equation}\label{eq:>Q11}
|(\rp^{k\ell_i})'(z)|_{\sigma_{p_i, \alpha_{par}}} > \frac{1}{\beta_{m,k}^{(i)}} 
\end{equation}
for $z \in \ap_i^{m+k}(K_i)$, $k \in \N$.
For $n \ge 0$ we define
\[
\widetilde\beta_{m,n}^{(i)} =
\beta_{m,k}^{(i)} \bigg(\frac{\beta_{m,k+1}^{(i)}}{\beta_{m,k}^{(i)}}\bigg)^{\frac{r}{\ell_i}} \quad\text{for } \; n = k\ell_i + r, \; k \ge 0, \; r \in \{0, \ldots, \ell_i - 1\}
\]
and
\[
\beta_n = \max\Big(\max_{i \in \II_{par}} \widetilde\beta_{m_0,n}^{(i)}, \max_{i \in \II_\infty}\max_{m \in\{n_0, \ldots, m_1\}} \widehat\beta^{(i)}_{m,n}\Big)
\]
for $m_0$ from the definition of $\widehat K$. Then $(\beta_n)_{n=0}^\infty$ is a decreasing sequence of positive numbers, satisfying the assertions (a)--(b) of Lemma~\ref{lem:der_i}. To show the assertion~(c), we note that if $z \in (W_N \setminus \widehat K) \cap \rp^{-1}(W_N \setminus \widehat K) \cap \ldots \cap \rp^{-(n-1)}(W_N \setminus \widehat K) \cap \rp^{-n}(W_N \cap \widehat K)$ and $\rp^{-n}(z) \notin \bigcup_{i \in \II_\infty} P_i$, then $z \in \rp^r(\ap_i^{m_0+k+1}(K_i))$, $\rp^r(z) \in \ap_i^{m_0+k}(K_i)$, $\rp^n(z) =  \rp^{k\ell_i}(\rp^r(z)) \in \ap_i^{m_0}(K_i)$ for some $i \in \II_{par}$, where $n = k\ell_i + r$ for $k \ge 0$ and $r \in \{0, \ldots, \ell_i - 1\}$. Consequently, \eqref{eq:first}, \eqref{eq:second} and \eqref{eq:>Q11} imply
\begin{align*}
|(\rp^n)'(z)|_\varsigma &= 
|(\rp^n)'(z)|_{\tilde\varsigma_i}= |(\rp^{k\ell_i})'(\rp^r(z))|_{\tilde\varsigma_i} |(\rp^r)'(z)|_{\tilde\varsigma_i}\\
&= |(\rp^{k\ell_i})'(\rp^r(z))|_{\varsigma_i} |(\rp^{\ell_i})'(\ap_i(\rp^r(z)))|_{\varsigma_i}^{\frac{r}{\ell_i}}\\
&= |(\rp^{k\ell_i})'(\rp^r(z))|_{\sigma_{p_i, \alpha_{par}}} |(\rp^{\ell_i})'(\ap_i(\rp^r(z)))|_{\sigma_{p_i, \alpha_{par}}}^{\frac{r}{\ell_i}}\\
&>\frac{1}{\beta_{m_0,k}^{(i)}\big(\beta_{m_0+k, 1}^{(i)}\big)^{\frac{r}{\ell_i}}} = 
\frac{1}{\beta_{m_0,k}^{(i)}\Big(\frac{\beta_{m_0,k+1}^{(i)}}{\beta_{m_0,k}^{(i)}}\Big)^{\frac{r}{\ell_i}}} = \frac{1}{\widetilde\beta_{m_0,n}^{(i)}}\ge \frac{1}{\beta_n},
\end{align*}
which gives the assertion~(c) and ends the proof of Proposition~\ref{prop:expanding_metric}.
\end{proof}


\section{Proof of Theorem~B} \label{sec:proofB}

Before proceeding with the proof of Theorem~B, we show Proposition~{\rm\ref{prop:extrabonus}}, which describes some properties of the maps and attracting basins we are dealing with. We start by proving a useful lemma.

\begin{lem}\label{lem:prop1} Let $U$ be a simply connected attracting invariant basin of a meromorphic map $\f$ satisfying the hypotheses of Theorem~A. Then there exist finite collections $\II_{par}$, $\II_\infty$ and sets $P_i$, $i \in \II_{par} \cup \II_\infty$, such that the following hold.
\begin{enumerate}[$($a$)$]

\item For $i \in \II_{par}$, the set $P_i$ is a repelling petal of $\f^{\ell_i}$, for some $\ell_i \in \N$, at a parabolic periodic point $p_i \in \bd U$ of $\f$, such that $P_i$ generates a cycle of length $\ell_i$ of repelling petals of $\f$ at $p_i$, and these cycles are pairwise disjoint for $i \in \II_{par}$. Moreover, the set $\bigcup_{i \in \II_{par}}\{p_i, f(p_i), \ldots,  \f^{\ell_i-1}(p_i)\}  =  \bigcup_{i \in \II_{par}} \Orb(p_i)$ is equal to the set of all parabolic periodic points of $\f$ in $\bd U$.

\item For $i \in \II_\infty$, the set $P_i$ is a repelling petal of $\rp$ at infinity, such that $\bigcup_{i \in \II_\infty} \f(P_i)$ is contained in an arbitrarily small neighbourhood of infinity, and $U \cap \f(P_i)$ are pairwise disjoint for $i \in \II_\infty$. In particular, $\bigcup_{i \in \II_\infty} \f(P_i)$ is disjoint with the cycles of the repelling petals generated by $P_{i}$, $i \in\II_{par}$.

\item The set $\overline{U} \setminus  \bigcup_{i \in \II_{par}}\bigcup_{s=0}^{\ell_i-1} \f^s(P_i \cup \{p_i\}) \cup \bigcup_{i \in \II_\infty} P_i$ is compact.

\end{enumerate}
\end{lem}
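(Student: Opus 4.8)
The plan is to extract the required collections $\II_{par}$, $\II_\infty$ and petals $P_i$ directly from the hypotheses of Theorem~A, using the definition of tameness at infinity (Definition~\ref{defn:tame}) for the unbounded part and a classical analysis of parabolic periodic points on $\bd U$ for the rest. First I would produce $\II_{par}$: by hypothesis~(a) of Theorem~A, the set $\big(\overline{\PP_{asym}(f)}\cup\Acc(\PP_{crit}(f))\big)\cap\overline U$ meets $\bd U$ in only finitely many points, all parabolic periodic. I would argue that these are \emph{all} the parabolic periodic points of $f$ in $\bd U$: indeed, every parabolic cycle on the boundary of an invariant attracting basin must attract a singular value (the classical Fatou–Leau/Fatou-flower argument, e.g.\ \cite[\S10]{milnor}), hence its attracting petals lie in some Fatou component, and since $U$ is the only relevant invariant attracting basin, a singular value lands near the cycle, forcing the cycle into the closure of $\PP_{crit}(f)$ or $\PP_{asym}(f)$. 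For each such parabolic cycle I would invoke Proposition~\ref{prop:periodic_par_petal} to obtain repelling petals $P_i$ of $f^{\ell_i}$ at $p_i$ generating pairwise disjoint cycles of repelling petals of $f$, choosing them small enough that the cycles for distinct $i\in\II_{par}$ are disjoint (possible since the cycles are disjoint and each petal shrinks to a point as $\varepsilon\to0$ in the notation of Proposition~\ref{prop:periodic_par_petal}). This gives~(a).

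Next I would produce $\II_\infty$ from tameness. By Definition~\ref{defn:tame} there is a disc $D$ with $\overline U\setminus D$ contained in a finite union of repelling petals $Q_i$ at infinity with $U\cap f(Q_i)\cap Q_{i'}=\emptyset$ for $i\ne i'$. Since each $Q_i$ is repelling, $f$ is univalent on $Q_i$ and $f(Q_i)$ is an attracting petal, so $f^{-n}$ applied appropriately pushes $\overline{f(Q_i)}$ toward infinity; concretely, replacing $Q_i$ by a sub-petal $P_i:=f^{k}(Q_i)\cap Q_i$ for large $k$ (using $\overline{f(Q_i)}\subset Q_i$ and $\bigcap_n f^n(Q_i)=\emptyset$ from the attracting-petal side, i.e.\ the iterates of the inverse branch escape), I can arrange that $\bigcup_{i\in\II_\infty}f(P_i)$ lies in an arbitrarily small neighbourhood of $\infty$. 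Shrinking further I keep $U\cap f(P_i)$ pairwise disjoint and, since the parabolic cycles from~(a) are compact subsets of $\C$, a small enough neighbourhood of $\infty$ is automatically disjoint from them; this yields~(b). For~(c), note $\overline U\setminus D$ is covered by $\bigcup_{i\in\II_\infty}Q_i$; after shrinking to $P_i$ one has $\overline U\setminus\bigcup_{i\in\II_\infty}P_i$ contained in a bounded set together with the part of $\overline U$ in the finitely many annular regions $Q_i\setminus P_i$, which are bounded; removing also the (bounded) parabolic petal cycles $\bigcup_{i\in\II_{par}}\bigcup_{s}f^s(P_i\cup\{p_i\})$ still leaves a bounded set, and it is closed in $\C$ because we are intersecting $\overline U$ with complements of open sets near $\bd U$ — so I must check the remaining set does not accumulate on any $p_i$ nor escape to $\infty$, which follows because near each $p_i$ the closure $\overline U$ is swept out by the attracting petals of the inverse (hence contained in $\bigcup_s f^s(P_i\cup\{p_i\})$ once the petals are chosen large enough, by the Fatou-flower structure), and near $\infty$ it is contained in $\bigcup_{i\in\II_\infty}P_i$ by tameness.

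The main obstacle I expect is the bookkeeping in~(c): one must simultaneously choose the repelling petals $P_i$, $i\in\II_{par}$, \emph{large enough} that $\overline U$ minus their forward-orbit cycles no longer accumulates on the $p_i$, and the petals $P_i$, $i\in\II_\infty$, \emph{with $f(P_i)$ near $\infty$} yet still covering all of $\overline U$ outside a compact set — and verify these choices are compatible (the parabolic petals stay in a fixed bounded region, the petals at infinity stay outside a large disc, and their union with a suitable bounded set is all of $\overline U$). Making ``large enough'' precise for the parabolic petals requires the standard fact that for a compact set $K$ in a repelling petal $P$ of $f^{\ell}$ at $p$, the forward orbit under $f$ of $\bd U\cap(\text{neighbourhood of }p)$ is eventually absorbed into $\bigcup_s f^s(P\cup\{p\})$; this is where one uses Proposition~\ref{prop:par_petal}(a) together with the fact that a full neighbourhood of a parabolic point in $\C$ is covered by its $2d$ attracting and repelling petals. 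Once these choices are in place, properties~(a)–(c) follow by unwinding the definitions, with the disjointness clauses handled by the final shrinking step.
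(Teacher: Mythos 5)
Your overall route is the same as the paper's: the petals at infinity are taken from Definition~\ref{defn:tame}, the parabolic petals from Proposition~\ref{prop:periodic_par_petal} at the finitely many parabolic periodic points on $\bd U$ forced by hypothesis~(a), and the rest of $\overline U$ is to be shown compact. However, there are two problems at the step where the paper actually has to work. First, a direction error: for a repelling petal $Q_i$ of $f$ at infinity one has $\overline{Q_i}\subset f(Q_i)$, not $\overline{f(Q_i)}\subset Q_i$; consequently $Q_i\subset f(Q_i)\subset f^2(Q_i)\subset\cdots$, so your proposed sub-petal $P_i:=f^{k}(Q_i)\cap Q_i$ is just $Q_i$ and shrinks nothing. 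The correct shrinking is by the inverse branch, $P_i:=G_i^{k}(Q_i)$ with $G_i=(f|_{Q_i})^{-1}$, whose iterates escape to infinity.

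Second, and this is the genuine gap: after shrinking you must know that $\overline U\setminus\bigcup_{i\in\II_\infty}P_i$ is still compact (this is needed both for your claim in~(b) that $f(P_i)$ can be pushed into a small neighbourhood of $\infty$ while the petals still cover $\overline U$ near $\infty$, and for~(c)). You dispose of it by asserting that the regions $Q_i\setminus P_i$ are bounded, but they are not: already in the model $f(z)=z+1$ with $Q=\{\Re z<-2\}$ one has $Q\setminus G(Q)=\{-3\le\Re z<-2\}$, an unbounded strip, and in general $Q_i\setminus G_i^k(Q_i)$ is an unbounded fundamental region. What must be proved is that $\overline U\cap\big(Q_i\setminus G_i(Q_i)\big)$ is bounded, i.e.\ the paper's claim \eqref{eq:PtoG(P)} that $\overline U\setminus\bigcup_i G_i(Q_i)$ is compact, and this is precisely where the tameness condition $U\cap f(Q_i)\cap Q_{i'}=\emptyset$ for $i\ne i'$ gets used: if $z_n\in\overline U\cap Q_i\setminus G_i(Q_i)$ with $z_n\to\infty$, then $f(z_n)\in\overline U\cap f(Q_i)\setminus Q_i$, hence $f(z_n)$ lies in the compact set $\overline U\setminus\bigcup_{i'}Q_{i'}$, so along a subsequence $f(z_n)\to w$ and $z_n=G_i(f(z_n))\to G_i(w)\in\C$, contradicting $z_n\to\infty$; only after this can the replacement of $Q_i$ by $G_i(Q_i)$ be iterated to yield~(b) and~(c). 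Without this argument your proof does not close. The parabolic part of your plan (finiteness of the parabolic points via hypothesis~(a), petals from Proposition~\ref{prop:periodic_par_petal}, and the observation that near each $p_i$ the set $\overline U$ avoids the attracting petals and hence sits in the repelling cycles) is essentially the paper's and is fine.
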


\begin{proof}[Proof of Lemma~{\rm\ref{lem:prop1}}] By the assumption~(c) of Theorem~A, there exists a finite collection of repelling petals $P_i$, $i \in \II_\infty$, of $\f$ at infinity, such that $\overline{U} \setminus \bigcup_{i \in \II_\infty} P_i$ is compact and $U \cap \f(P_i) \cap P_{i'} = \emptyset$ for $i \neq i'$. We will show that
\begin{equation}\label{eq:PtoG(P)}
\overline{U} \setminus \bigcup_{i \in \II_\infty} \ap_i(P_i) \quad \text{is compact},
\end{equation}
where $\ap_i = (f|_{P_i})^{-1}$. Suppose \eqref{eq:PtoG(P)} does not hold. Then there exists a sequence $z_n \in \overline{U} \setminus \bigcup_{i \in \II_\infty} \ap_i(P_i)$ such that $z_n \to \infty$. Since $\overline{U} \setminus \bigcup_{i \in \II_\infty} P_i$ is compact, we have $z_n \in \overline{U} \cap \big(\bigcup_{i \in \II_\infty} P_i\big) \setminus \bigcup_{i \in \II_\infty} \ap_i(P_i)$ for sufficiently large $n$. Consequently, there exists $i \in \II_\infty$ such that $z_n \in \overline{U} \cap P_i \setminus \ap_i(P_i)$ for infinitely many $n$. By Definition~\ref{defn:petal_at_inf} and the invariance of $U$, for such $n$ we have $f(z_n) \in \overline{U} \cap f(P_i) \setminus P_i$ and hence, as $U \cap \f(P_i) \cap P_{i'} = \emptyset$ for $i \neq i'$, there holds $f(z_n) \in \overline{U} \setminus \bigcup_{i' \in \II_\infty}P_{i'}$ for infinitely many $n$. Since $\overline{U} \setminus \bigcup_{i' \in \II_\infty} P_{i'}$ is compact, we have $f(z_{n_k}) \to w$ for some subsequence $n_k$ and $w\in \overline{U} \cap \overline{f(P_i)}$. Then $z_{n_k} = \ap_i(f(z_{n_k})) \to \ap_i(w) \in \overline{P_i}$, which makes a contraction. This ends the proof of \eqref{eq:PtoG(P)}. 

By \eqref{eq:PtoG(P)}, without loss of generality, for $i \in \II_\infty$ we can replace $P_i$ by $G_i(P_i)$ and, inductively, by $G_i^n(P_i)$ for any given $n \in \N$. As $\bigcap_n G_i^n(P_i) = \emptyset$ by Definition~\ref{defn:petal_at_inf}, we can thus assume that $\f(P_i)$, $i \in \II_\infty$ are contained in an arbitrarily small neighbourhood of infinity and $U \cap \f(P_i)$ are pairwise disjoint, which proves the first two parts of the assertion~(b). 

Note that by the assumption~(a) of Theorem~A, there are at most finitely many parabolic periodic points in $\bd U$. By Proposition~\ref{prop:periodic_par_petal}, a union of small punctured neighbourhoods of these points is covered by a finite union of cycles of attracting and repelling petals of $f$, where the attracting petals are contained in the basins of attraction to the orbits of these points, and the cycles of the repelling petals are pairwise disjoint. It follows that the intersection of $\overline{U}$ with these punctured neighbourhoods is contained in a disjoint union of cycles of repelling petals of $f$, which by Proposition~\ref{prop:periodic_par_petal} are generated, respectively, by $P_i$, $i \in \II_{par}$, where $\II_{par}$ is a finite set, $P_i$ is a repelling petal of $f^{\ell_i}$, for some $\ell_i \in \N$, at a parabolic periodic point $p_i \in \bd U$, the number $\ell_i$ is a multiple of the period of $p_i$ and $\bigcup_{i \in \II_{par}} \Orb(p_i)$ is equal to the set of all parabolic periodic points in $\bd U$. (Note that it is possible to have $p_i = p_{i'}$ for $i \ne i'$, as a parabolic periodic point of $f$ in $\bd U$ may correspond to several cycles of repelling petals). This shows the assertions~(a) and~(c). Since $f(P_i)$, $i \in \II_\infty$, are contained in a small neighbourhood of infinity, they are disjoint with the cycles of repelling petals generated by $P_i$, $i \in \II_{par}$. This shows the last part of the assertion~(b) and ends the proof.
\end{proof}

\begin{proof}[Proof of Proposition~{\rm\ref{prop:extrabonus}}]
Assume the hypotheses of Theorem~A and consider the repelling petals $P_i$, $i \in \II_{par} \cup \II_\infty$, which exist according to Lemma~\ref{lem:prop1}. Similarly as in Section~\ref{sec:metric}, we write
\[
\II = \II_{par} \cup \II_\infty
\]
and
\[
p_i = \infty \quad \text{for} \quad i \in \II_\infty.
\]
Under the notation of Lemma~\ref{lem:prop1}, by the definition of repelling petals, for $i \in \II$ we have $\bigcap_{n=0}^\infty \ap_i^n(P_i) = \emptyset$, where
\[
\ap_i = (f^{\ell_i}|_{P_i})^{-1} \quad  \text{for } \; i \in \II_{par}, \qquad  \ap_i = (f|_{P_i})^{-1} \quad\text{ for } \; i \in \II_\infty.
\]
This implies
\begin{equation}\label{eq:Gn-Gn+1}
P_i = \bigcup_{n=0}^\infty (\ap_i^n(P_i) \setminus \ap_i^{n+1}(P_i)) \quad\text{ for } \; i \in \II.
\end{equation}
Moreover,
\begin{equation}\label{eq:zn}
\text{if} \in \II_\infty \quad \text{and} \quad z_n \in P_i, \; z_n \xrightarrow[n\to\infty]{} \infty, \quad \text{then} \quad f(z_n) \xrightarrow[n\to\infty]{} \infty.
\end{equation}
To check \eqref{eq:zn}, suppose $z_n \in P_i$, $z_n \to \infty$, $f(z_n) \not\to \infty$. Passing to a subsequence, we can assume $f(z_n) \to w \in \overline{f(P_i)}$. Then $z_n = \ap_i(f(z_n)) \to \ap_i(w) \in \overline{P_i} \subset \C$, which is a contradiction.

Furthermore, recall by Lemma~\ref{lem:prop1} there exists a compact set $K \subset \overline{U} \setminus \bigcup_{i \in \II_{par}} \Orb(p_i)$, such that 
\[
\overline{U} \setminus K \subset \bigcup_{i \in \II_{par}}\bigcup_{s=0}^{\ell_i-1} \f^s(P_i \cup \{p_i\}) \cup \bigcup_{i \in \II_\infty} P_i.
\]

Now we proceed with the proof of the proposition. First, note that since the map $f$ is univalent in each repelling petal $P_i$, $i \in \II_\infty$ and $\overline{U} \setminus \bigcup_{i \in \II_\infty} P_i$ is compact, we obtain that $U$ is bounded or every point in $U \cap \bigcup_{i \in \II_\infty} P_i$ has a finite number of preimages in $U$. This proves the assertion~(a).

Consider the assertion~(b). Since the degree of $f$ on $U$ is finite, there are only a finite number of critical points of $f$ in $U$. Moreover, as $f$
is univalent in each cycle of repelling petals generated by $P_i$, $i \in \II_{par}$, and in each repelling petal $P_i$, $i \in \II_\infty$, all critical points of $f$ in $\bd U$ are contained in $K$. Since the set of critical points cannot have an accumulation point in $K$, there are only a finite number of critical points of $f$ in $\bd U$.

To prove the assertion~(c), suppose that $z \in \bd U$ is a post-critical point of $f$ with infinite orbit. Obviously, this implies $z \notin \bigcup_{n=0}^\infty f^{-n}\big(\bigcup_{i \in \II_{par}} \Orb(p_i) \cup \{p_i\}_{i\in \II_\infty}\big)$. Then, by the assumption~(a) of Theorem~A and Lemma~\ref{lem:prop1}(a), the set $K$ cannot contain an infinite number of elements of this orbit. Consequently, the orbit of $z$ contains a point $w$ such that 
\begin{equation}\label{eq:Orb(w)}
\Orb(w) \subset \bigcup_{i \in \II_{par}}\bigcup_{s=0}^{\ell_i-1} \f^s(P_i) \cup \bigcup_{i \in \II_\infty} P_i.
\end{equation}
Suppose first $w \in \f^s(P_i)$ for some $i \in \II_{par}$, $s \in \{0, \ldots, \ell_i-1\}$. Then $u = \f^{\ell_i - s}(w) \in \f^{\ell_i}(P_i)$, so by \eqref{eq:Orb(w)} and Lemma~\ref{lem:prop1}, in fact we have $u \in P_i$. Repeating this argument inductively, we obtain $\f^{n\ell_i}(u) \in P_i$ for every $n \in \N$. But by 
\eqref{eq:Gn-Gn+1}, $u \in \ap_i^n(P_i) \setminus \ap_i^{n+1}(P_i)$ for some $n \ge 0$, so $f^{(n+1)\ell_i}(u) \notin P_i$, which is a contradiction. If $w \in P_i$ for some $i \in \II_\infty$, then again by \eqref{eq:Orb(w)} and Lemma~\ref{lem:prop1}, we have $\f^{n\ell_i}(w) \in P_i$ for every $n \in \N$, which contradicts \eqref{eq:Gn-Gn+1}.

To prove the assertion~(d), consider an asymptotic curve $\gamma\colon [0,+\infty) \to \C$ of an asymptotic value $v \in \overline{U}$. If $\gamma$ is not eventually contained in $\C \setminus\overline{U}$, then there is a sequence of points $z_n \in \gamma \cap \overline{U}$, such that $z_n \to \infty$, $f(z_n) \to v$. Passing to a subsequence, we can assume $z_n \in P_i$ for some $i \in \II_\infty$. However, this contradicts \eqref{eq:zn}.

Finally, we show the assertion~(e). Let us consider $w\in \overline{U}\cup\{\infty\}$ and analyze several different cases. If $w\in U$, then $w$ has $d > 0$ preimages in $U$ counting multiplicities and there is nothing to prove. Hence, we can assume $w\in \partial U \cup \{\infty\}$. 

Suppose $w\in \partial U$ and consider a sequence $w_n\to w$ with $w_n\in U$. Let $z_n$ be any sequence of preimages in $U$ such that $f(z_n)=w_n$. Passing to a subsequence, we may assume $z_n \to z \in \overline{U}\cup\{\infty\}$. If $z = \infty$, then (again after passing to a subsequence), we have $z_n \in P_i$ for some $i \in \II_\infty$, which contradicts \eqref{eq:zn}. Hence, $z\in \overline{U}$. Observe that by continuity, $f(z)=w$ and $z \in\partial U$ by the maximum principle. 
 
It remains to consider the case $w=\infty$. Take a sequence $w_n\to w$ with $w_n\in U$. Since $U$ is a simply connected invariant attracting basin of finite degree, it contains a critical point and $d = \deg f|_U \ge 2$. Hence, there exist two sequences $z_n^{(1)}, z_n^{(2)} \in U$, such that $f(z_n^{(1)}) = f(z_n^{(2)}) = w_n$, $z_n^{(1)} \ne z_n^{(2)}$. Passing to subsequences, we can assume $z_n^{(1)} \to z^{(1)}$, $z_n^{(2)} \to z^{(1)}$ for some $z^{(1)}, z^{(2)} \in \overline{U}\cup\{\infty\}$. If $z^{(1)} = z^{(2)} = \infty$, then (again after passing to a subsequence), we have $w_n \in P_i$, $z_n^{(1)} \in P_{i_1}$, $z_n^{(2)} \in P_{i_2}$ for some $i, i_1, i_2 \in \II_\infty$.
By Lemma~\ref{lem:prop1}, we have $i = i_1 = i_2$, which contradicts the injectivity of $f$ in repelling petals. Hence, one of the points $z^{(1)},z^{(2)}$ is in $\bd U$. By continuity, this point is mapped by $f$ to $w = \infty$.
\end{proof}

\begin{proof}[Proof of Theorem~B]
We will show that one may define appropriate sets in the dynamical plane of $f$ satisfying the hypotheses of Theorem~{\rm\ref{thm:metric}}, which will provide a metric with suitable expanding properties defined near the boundary of $U$.

Let $U$ be an invariant simply connected attracting basin of a meromorphic map $\f\colon \C \to \clC$ satisfying the assumptions of Theorem~A. 
Let $\varphi \colon \D \to U$ be a Riemann map, such that $0 \in \D$ is the fixed point of the map $g = \varphi^{-1} \circ \f \circ \varphi \colon \D \to \D$. Since the degree of $f$ in $U$ is finite by Proposition~\ref{prop:extrabonus}, the map $g$ is a finite Blaschke product. Let 
\[
E = \varphi(\D(0, r))
\]
for $r \in (0,1)$ close to $1$. Obviously, $E$ is a Jordan domain and $\overline{E} \subset U$. By the Schwarz lemma, we have
\begin{equation}\label{eq:fAinA}
\overline{\f(E)} \subset E.
\end{equation}
Notice that $E$ is an absorbing domain for $\f$ in $U$ (i.e.~every compact set in $U$ is mapped by an iterate of $f$ into $E$), since $\D(0,r)$ is absorbing for $g$. Moreover, by the assumption~(a) of Theorem~A and since $\f$ has finite degree on $U$, we can choose $r$ so that
\begin{equation}\label{eq:SinginA}
\big(\big(\overline{\PP_{asym}(\f)} \cup \Acc(\PP_{crit}(\f)) \big) \cap U\big) \cup \overline{\Crit(\f|_U) \cup \PP_{crit}(\f|_U)} \subset E
\end{equation}
and 
\begin{equation}\label{eq:LinfE}
L \subset \f(E)
\end{equation}
for the compact set $L \subset U$ from the assumption~(b) of Theorem~A.

Consider the repelling petals $P_i$, $i \in \II_{par} \cup \II_\infty$, which exist according to Lemma~\ref{lem:prop1}. In particular, the assertion~(b) of this lemma shows that we can assume
\begin{equation}\label{eq:PcapA}
\overline{E} \cap \bigcup_{i \in \II_\infty} f(P_i) = \emptyset.
\end{equation}

Now, we want to define hyperbolic domains $V'\subsetneq V\subset \C$, an open set $W\subset V$ and a holomorphic map $\rp\colon V'\to V$ satisfying the hypotheses of Theorem~{\rm\ref{thm:metric}}. In particular, $\rp$ must map $V'$ onto $V$, and no point $z\in W$ can have its entire orbit contained in $W$. To do so, set
\[ 
W=U\setminus \overline{E}.
\]
See Figure~\ref{fig:W}.
\begin{figure}[ht!]
\includegraphics[width=0.6\textwidth]{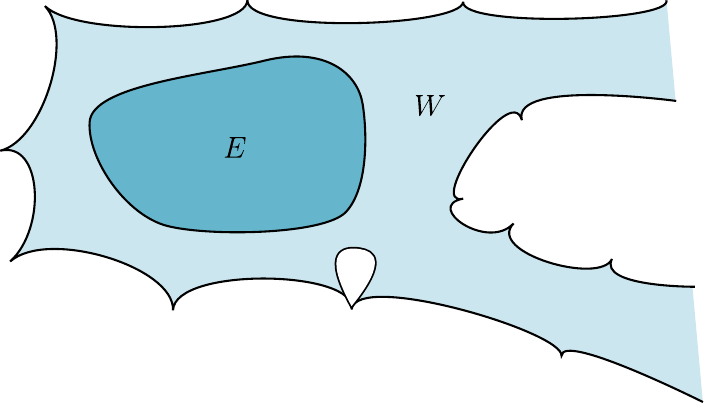}
\caption{\small The sets $E, W$.}
\label{fig:W}
\end{figure}
Since $E$ is an absorbing domain, every orbit in $W\subset U$ eventually enters $E$ and remains there, leaving $W$. 
Now, define $V$ to be the connected component of 
\[
\C \setminus \Big(\overline{\f(E)} \cup \overline{\PP_{asym}(\f)} \cup \Acc(\PP_{crit}(\f)) \cup \Big(\PP_{crit}(\f) \setminus \bigcup_{n=0}^\infty \f^{-n}(\overline{U})\Big)\Big)
\]
containing $\f(W)=U\setminus \overline{\f(E)}$, and let $V'$ be the connected component of $\f^{-1}(V)$
containing $W$. Note that neither $V$ nor $V'$ contains parabolic periodic points of $\f$ in $\partial U$, since they belong to $\overline{\PP_{asym}(\f)} \cup \Acc(\PP_{crit}(\f))$. On the other hand,
\begin{equation}\label{eq:WinV}
\overline{W} \subset V \cup \bigcup_{i \in \II_{par}} \Orb(p_i)
\end{equation}
by Lemma~\ref{lem:prop1}, \eqref{eq:fAinA}, \eqref{eq:SinginA} and the assumption~(a) of Theorem~A.

Let
\[
\rp = \f|_{V'}.
\]
Note that by definition, $V', V, W$ are domains in $\C$. Note also that $V$ is hyperbolic since it is disjoint from the domain $\f(E)$, which in turn implies that $V'$ is disjoint from $E$, so $V'$ is hyperbolic. It is straightforward to check
\begin{equation}\label{eq:WinV'}
W \subset V \cap V'.
\end{equation}
Hence, as $V' \subset \f^{-1}(V)$ and $W$ is connected, $\rp$ maps $V'$ into $V$. We have $V' \neq V$ since $V \setminus V' \supset \overline{E} \setminus \f(E)$.
Now we check $V'\subset V$. By \eqref{eq:WinV'} and the connectedness of $W$, it is sufficient to show 
\[
V'\subset \C \setminus \Big(\overline{\f(E)} \cup \overline{\PP_{asym}(\f)} \cup \Acc(\PP_{crit}(\f)) \cup \Big(\PP_{crit}(\f) \setminus \bigcup_{n=0}^\infty \f^{-n}(\overline{U})\Big)\Big).
\]
This follows since the sets $\overline{\f(E)}$, $\overline{\PP_{asym}(\f)}$, $\Acc(\PP_{crit}(\f))$ and $\PP_{crit}(\f) \setminus \bigcup_{n=0}^\infty \f^{-n}(\overline{U})$ are forward-invariant.

To see that $\rp\colon V'\to V$ is onto (i.e.~$\f(V') = V$), observe that every point in $V \setminus \f(V')$ is a locally omitted value and hence an asymptotic value of $\f$ (the argument for this fact, using Gross' theorem, is described e.g.~in \cite[proof of Theorem~2]{herring}). This shows $V \setminus f(V') = \emptyset$ since $V$ contains no asymptotic values of $f$ by definition. The condition $\bigcap_{n=0}^\infty \rp^{-n}(W) = \emptyset$ is implied by the fact that $E$ is an absorbing domain for $\f|_U$.

It remains to check the conditions~(a)--(e) in Theorem~\ref{thm:metric}, which follow in a fairly direct way from the definitions of $V, V'$ and $W$. Indeed, $V$ is defined not to contain asymptotic values of $f$ and $V'$ is a component of $f^{-1}(V)$, which gives~(a). To check~(b), take a point $z\in \PP_{crit}(\rp)$ and suppose there exist points $w_k \in \rp^{-n_k}(z)$ for some $n_k > 0$, $k \in \N$, such that $\deg_{w_k} \rp^{n_k} \to +\infty$. Note that by the definition of $V$, we have $\f^{n_0}(z) \in \overline{U}$ for some $n_0 \ge 0$. Moreover, as $V \cap L = \emptyset$ by \eqref{eq:LinfE}, we have $\f^{n_0}(z) \notin L$, where $L \subset U$ is the compact set from the assumption~(b) of Theorem~A. Then $w_k \in \f^{-(n_k+n_0)}(\f^{n_0}(z))$ and $\deg_{w_k} \f^{n_k+n_0} \ge \deg_{w_k} \rp^{n_k} \to +\infty$, which contradicts the assumption~(b) of Theorem~A, as $\f^{n_0}(z) \in \PP_{crit} \cap \overline{U} \setminus L$. This shows (b). The condition~(c) follows from the definition of $V$, while~(d) is trivial since $f$ is meromorphic on $\C$. The condition~(e) follows from Lemma~\ref{lem:prop1}, \eqref{eq:PcapA}, \eqref{eq:WinV} and the invariance of $U$. 

Having checked that $\rp$, $V$, $V'$ and $W$ satisfy the hypotheses of Theorem~{\rm\ref{thm:metric}}, we can use this theorem and \eqref{eq:fAinA} to find $N \in \N$ such that for every compact set $K_0 \subset \overline{U} \setminus \big(E \cup \bigcup_{i \in \II_{par}} \Orb(p_i)\big)$ there exist a conformal metric $d\varsigma = \varsigma|dz|$ on $U \setminus \overline{f^{-N}(E)}$ and a decreasing sequence $(b_n)_{n=1}^\infty$ of numbers $b_n \in (0,1)$ with $\sum_{n=1}^\infty b_n < \infty$, satisfying
\begin{equation}\label{eq:final-exp}
|(f^n)'(z)|_\varsigma > \frac{1}{b_n} 
\end{equation}
for every $z \in (U \setminus f^{-n}(\overline{f^{-N}(E)})) \cap f^{-n}(K_0)$, $n \in \N$. 

Now we conclude the proof of Theorem~B. Let $K$ be a compact set in $U$. Set
\[
K_0 = K \setminus E
\]
and
\[
A = f^{-N}(E) \cap U.
\]
Note that by \eqref{eq:SinginA}, the set $A$ is a Jordan domain, such that $\overline{A} \subset U$ and $U \setminus \overline{A} = U \setminus \overline{f^{-N}(E)}$. Moreover, $K_0$ is a compact subset of $\overline{U} \setminus \big(E \cup \bigcup_{i \in \II_{par}} \Orb(p_i)\big)$ and 
\[
(U \setminus f^{-n}(\overline{f^{-N}(E)})) \cap f^{-n}(K_0) = 
(U \setminus f^{-n}(\overline{A})) \cap f^{-n}(K) = f^{-n}(K \setminus \overline{A}),
\]
so by \eqref{eq:final-exp}, there exist a conformal metric $d\varsigma$ on $U \setminus \overline{A}$ and a decreasing sequence $(b_n)_{n=1}^\infty$ of numbers $b_n \in (0,1)$ with $\sum_{n=1}^\infty b_n < \infty$, such that
\[
|(f^n)'(z)|_\varsigma > \frac{1}{b_n} 
\]
for $z \in U \cap f^{-n}(K\setminus \overline{A})$, $n \in \N$. This ends the proof of Theorem~B.

\end{proof}


\section{Local connectivity of boundaries of Fatou components: proof of Theorem~A}\label{sec:proofA}

To prove the local connectivity of the boundary of a simply connected invariant attracting basin $U$ satisfying the conditions of Theorem~A, we will construct a sequence of Jordan curves $\{\gamma_n\}_{n=0}^\infty$ which approximate $\partial U \cup \{\infty\}$ and satisfying the uniform Cauchy condition with respect to the spherical metric, thus showing that its limit is also a curve in $\clC$ and hence is locally connected. For simplicity, we use the symbol $\gamma$ indistinctly for a curve $\gamma\colon [a,b]\to \chat$ and for its image in $\chat$.


\subsection{Equipotential curves and ray germs}\label{subsec:curves}

Consider a simply connected domain $A \subset U$ satisfying the properties  listed in Theorem~B. The goal of this subsection is to construct an increasing family of Jordan domains $A_n \subset U \setminus \overline{A}$, $n =0, 1, \ldots$, exhausting $U$ and such that $f$ maps $\overline{A_{n+1}} \setminus A_n$ onto $\overline{A_n}\setminus A_{n-1}$ as a degree $d$ covering, where $d=\deg f|_U$. The Jordan curves $\gamma_n$ mentioned above will be defined as the boundaries of these domains. 

\begin{prop} \label{prop:curvesattracting} Let $f\colon\C \to \chat$ be a transcendental meromorphic function with a simply connected immediate basin of attraction $U$ of an attracting fixed point $\zeta$, such that $d =\deg f|_U < \infty$, and let $A \subset U$ be a domain such that $\overline{A} \subset U$. 
Then there exists a family of Jordan domains $\{A_n\}_{n=0}^\infty$ with smooth boundaries $\gamma_n \colon [0,1]\to U$, such that
\begin{itemize}
\item[$\circ$] $A_0$ contains $\zeta$ and all the critical points of $f|_U$, 
\item[$\circ$] $\overline{A} \subset A_0$, $\overline{A_n} \subset A_{n+1}$ for $n \ge 0$ and $\bigcup_{n=0}^\infty A_n = U$,
\item[$\circ$] $f$ maps $\overline{A_{n+1}}$ onto $\overline{A_n}$ as a proper map of degree $d$ such that $\overline{A_{n+1}} \setminus A_n$ is mapped onto $\overline{A_n}\setminus A_{n-1}$ for $n \ge 1$ as an $($unbranched$)$ degree $d$ covering,
\item[$\circ$] $f(\gamma_{n+1}(\theta))=\gamma_n(d \theta \bmod 1)$ for $n \ge 0$, $\theta \in [0,1]$. 
\end{itemize}
Moreover, for every $\theta \in [0,1]$ there exists a smooth arc $\alpha_\theta \colon[0,1]\to \overline{A_1}\setminus A_0$, joining $\gamma_0(\theta)$ and $\gamma_1(\theta)$, such that 
\[
\alpha_\theta((0,1)) \subset A_1\setminus \overline{A_0} \quad \text{ and} \quad \length \alpha_\theta \leq M,
\]
where $M$ is a constant independent of $\theta$ and $\length$ denotes the Euclidean length.
\end{prop}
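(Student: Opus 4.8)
The plan is to build the family $\{A_n\}$ inductively, starting from a suitable $A_0$ and pulling back by $f|_U$. First I would fix the Riemann map $\varphi\colon\D\to U$ with $\varphi(0)=\zeta$, so that $g=\varphi^{-1}\circ f\circ\varphi$ is a finite Blaschke product of degree $d$ fixing $0$. The natural candidates for the curves are the images under $\varphi$ of the ``equipotentials'' $\{|z|=r_n\}$, but these need not be nested correctly after pulling back by $g$ because $g$ is not $z\mapsto z^d$. Instead I would proceed as follows. Choose $r_0\in(0,1)$ close enough to $1$ that $\D(0,r_0)$ contains $g^{-1}(0)$ (hence $\varphi(\overline{\D(0,r_0)})$ contains all critical points of $f|_U$) and that $\overline A\subset\varphi(\D(0,r_0))$; this is possible since $\overline A\subset U$ is compact and $\varphi(\D(0,r_0))$ exhausts $U$. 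Set $A_0=\varphi(\D(0,r_0))$, a Jordan domain with smooth boundary (as the $\varphi$-image of a round circle inside $\D$, using that $\varphi$ extends analytically across $r_0\cdot\partial\D$). By the Schwarz lemma $g(\overline{\D(0,r_0)})\subset\D(0,r_0)$, so $\overline{f(A_0)}\subset A_0$, and in particular the critical values of $f|_U$ lie in $A_0$.

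Next I would define $A_{n+1}$ so that $f\colon\overline{A_{n+1}}\to\overline{A_n}$ is proper of degree $d$. In Blaschke coordinates this means taking $B_{n+1}=g^{-1}(B_n)$ where $B_0=\D(0,r_0)$; since $g$ is a proper degree-$d$ self-map of $\D$ and $B_0$ is a simply connected domain containing all critical values of $g$ (as these lie in $g(B_0)\subset B_0$), each $B_{n+1}$ is again a simply connected domain, $g\colon B_{n+1}\to B_n$ is proper of degree $d$, and $\overline{B_n}\subset B_{n+1}$ because $g^{-1}(\overline{B_n})\supset g^{-1}(B_0)\supset\overline{B_0}$ when $n=0$ and then by monotonicity of $g^{-1}$. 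The boundary $\partial B_{n+1}=g^{-1}(\partial B_n)$ is a single smooth Jordan curve for $n\ge 0$ because all critical points of $g$ lie in $B_0\subset B_n$, so $g$ restricts to an unbranched degree-$d$ covering $\overline{B_{n+1}}\setminus B_n\to\overline{B_n}\setminus B_{n-1}$ of the annulus; an annular unbranched $d$-cover has connected boundary circles. Setting $A_n=\varphi(B_n)$ and $\gamma_n=\varphi|_{\partial B_n}$ transports all of this to $U$; the exhaustion $\bigcup A_n=U$ follows because $\bigcup B_n=\D$ (the radii $r_n\to 1$, since otherwise $g^{-1}$ would have a fixed annulus, contradicting that $0$ is the unique attracting fixed point and $g^n\to 0$). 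Finally I can choose the parametrizations $\gamma_n\colon[0,1]\to U$ so that $f(\gamma_{n+1}(\theta))=\gamma_n(d\theta\bmod 1)$: lift the covering $\overline{A_2}\setminus A_1\to\overline{A_1}\setminus A_0$ compatibly with a chosen parametrization of $\gamma_0$ on $[0,1]$ (degree-$d$ map of circles being $\theta\mapsto d\theta\bmod 1$ up to reparametrization), and propagate upward and downward; the case $n=0$ just says $f(\gamma_1(\theta))=\gamma_0(d\theta\bmod 1)$, which holds by construction of the $d$-fold cover $\partial A_1\to\partial A_0$.

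For the last assertion, the arcs $\alpha_\theta$ in $\overline{A_1}\setminus A_0$: in Blaschke coordinates, between the two smooth Jordan curves $\partial B_0$ and $\partial B_1$ bounding a closed topological annulus $\overline{B_1}\setminus B_0$, I would first fix one smooth arc crossing the annulus, transversal to both boundary circles, and then use the fact that $g\colon\overline{B_1}\setminus B_0\to\overline{B_1}\setminus B_0$ is not quite defined — rather, I would directly build a smooth foliation of the closed annulus $\overline{B_1}\setminus B_0$ by arcs joining $\partial B_0$ to $\partial B_1$, parametrized by $\theta\in[0,1]$ so that the arc through $\gamma_0(\theta)$-preimage meets $\partial B_0$ at the point with angular parameter $\theta$ and $\partial B_1$ at the point with angular parameter $\theta$; such a foliation exists by a standard collar/tubular-neighbourhood argument (the annulus is diffeomorphic to $S^1\times[0,1]$). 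Pushing forward by $\varphi$, which is a diffeomorphism on a neighbourhood of the compact set $\overline{B_1}\setminus B_0\subset\D$, gives smooth arcs $\alpha_\theta\colon[0,1]\to\overline{A_1}\setminus A_0$ with $\alpha_\theta(0)=\gamma_0(\theta)$, $\alpha_\theta(1)=\gamma_1(\theta)$ and $\alpha_\theta((0,1))\subset A_1\setminus\overline{A_0}$. Since $\theta\mapsto\alpha_\theta$ is continuous into the space of $C^1$-arcs and $[0,1]$ is compact, the Euclidean lengths $\length\alpha_\theta$ are bounded by a constant $M$ independent of $\theta$.

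The main obstacle, I expect, is verifying cleanly that each $\partial B_{n+1}=g^{-1}(\partial B_n)$ is a single smooth Jordan curve and that the covering of annuli is genuinely unbranched — i.e.\ controlling where the critical points of $g$ go. The key point making this work is that all critical points of $g$ (equivalently, of $f|_U$) lie in $B_0$, and $g(B_0)\subset B_0$, so once we are outside $B_0$ the map $g$ has no critical points and maps each ``equipotential annulus'' to the next one as an honest $d$-fold cover; this is exactly where the hypothesis $\overline A\subset A_0$ together with $A_0$ containing all critical points of $f|_U$ is used. A minor additional care is needed to ensure the boundaries can be taken smooth, which follows from analyticity of $\varphi$ across circles $r\partial\D$ with $r<1$ and the fact that $g^{-1}$ of a smooth curve avoiding critical values is smooth.
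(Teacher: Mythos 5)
Your construction is correct and essentially the paper's: pull back $A_0=\varphi(\D(0,r_0))$ under $f$ (equivalently under $g^{-1}$ for the Blaschke product), check that the annuli $\overline{A_{n+1}}\setminus A_n$ are unbranched degree-$d$ covers because all critical points and values lie in the forward-invariant $A_0$, and build the transversal arcs in the Blaschke annulus and push them forward by $\varphi$ — the paper obtains the uniform length bound from the polar-graph form of $g^{-1}(\{|z|=r_0\})$ for $r_0$ near $1$ (giving an explicit $4\pi$ bound before applying $\varphi$) rather than your foliation-plus-compactness argument, but this is only a cosmetic variation of the same step. One small slip: containing $g^{-1}(0)$ in $\D(0,r_0)$ does not by itself imply containing the critical points of $g$; just choose $r_0$ close enough to $1$ that $\D(0,r_0)$ encloses the finitely many critical points (as the paper does), which is all your argument actually uses.
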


\begin{proof}
Similarly as in the proof of Theorem~B, let $\varphi \colon \D \to U$ be a Riemann map, such that $\varphi^{-1}(\zeta) = 0$ is the fixed point of the degree $d$ Blaschke product $g = \varphi^{-1} \circ \f \circ \varphi \colon \D \to \D$. Note that $d \ge 2$ since $U$ must contain a critical point of $f$. We define the domain $A_0$ as
\[
A_0 = \varphi(\D(0, r_0))
\]
for $r_0 \in (0,1)$ so close to $1$, that
\[
\overline{\Crit(\f|_U) \cup \PP_{crit}(\f|_U)} \cup \overline{A} \subset A_0.
\]
Then $A_0$ is a Jordan domain with a smooth boundary $\gamma_0$, such that $\gamma_0 \subset U$. Let
\[
A_n=f^{-1}(A_{n-1}) \cap U, \qquad n =1, 2, \ldots.
\]
By the Schwarz lemma and the fact that $A_0$ contains all the critical points and values of $f$ in $U$, the sets $A_n$ are Jordan domains with smooth boundaries $\gamma_n \subset U$, such that
\[ 
A \Subset A_0 \Subset A_1 \Subset \cdots \Subset A_n \Subset A_{n+1} \Subset \cdots
\]
and $U= \bigcup_{n = 0}^\infty A_n$. Moreover, $f$ maps $\overline{A_{n+1}}$ onto $\overline{A_n}$ for $n \ge 0$ as a proper map of degree $d$, and $\overline{A_{n+1}} \setminus A_n$ is mapped onto $\overline{A_n}\setminus A_{n-1}$ for $n \ge 1$ as a covering of degree $d$. See Figure~\ref{fig:curvesattracting}.

\begin{figure}[ht!]
\setlength{\unitlength}{0.9\textwidth}
\includegraphics[width=0.9\textwidth]{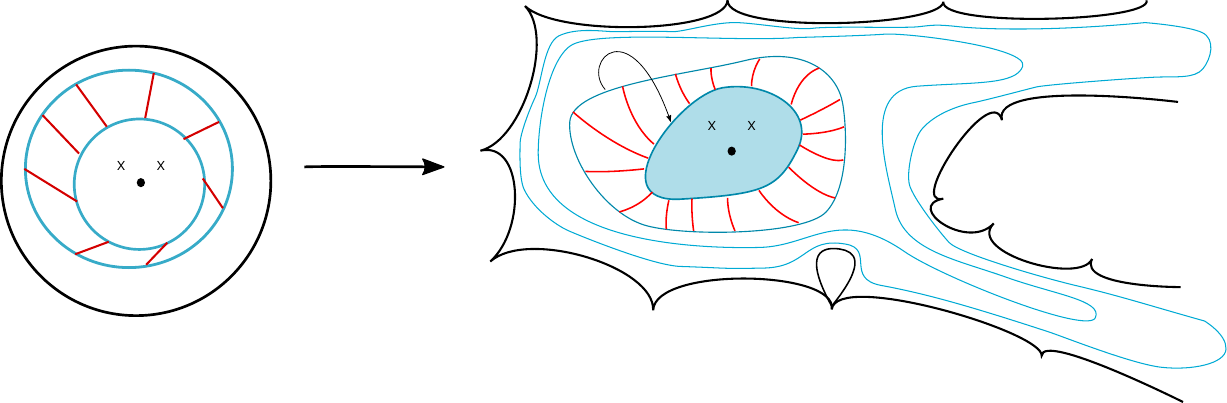}
\put(-0.89,0.16){\scriptsize $0$}
\put(-0.41,0.18){\scriptsize $\zeta$}
\put(-0.7,0.22){\small $\varphi$}
\put(-0.46,0.18){\small $A_0$}
\put(-0.51,0.17){\small $A_1$}
\put(-0.37,0.17){\scriptsize $\gamma_0$}
\put(-0.31,0.18){\scriptsize $\gamma_1$}
\put(-0.51,0.22){\scriptsize ${\r \alpha_\theta}$}
\put(-0.48,0.28){\tiny $d:1$}
\put(-0.87,0.23){\scriptsize $\tilde{\gamma}_0$}
\put(-0.84,0.25){\scriptsize $\tilde{\gamma}_1$}
\put(-0.16,0.28){\scriptsize $\gamma_2$}
\put(-0.01,0.28){\scriptsize $\gamma_3$}
\caption{ \small Sketch of the construction of the sets $A_n$, the family of Jordan curves $\{\gamma_n\}_n$ and the transversal ray germs $\alpha_\theta$.} \label{fig:curvesattracting}
\end{figure}

Let us choose a smooth parametrization of $\gamma_0 = \partial A_0$ and denote it by $\gamma_0\colon [0,1] \to \partial A_0$. For every $n>0$, parametrize the boundary of $A_n$ by $\gamma_n\colon [0,1] \to \partial A_n$ in such a way that 
\[
f(\gamma_{n}(\theta))= \gamma_{n-1}(d \theta \bmod 1).
\]
This parametrization is not unique but once we choose $\gamma_{n}(0)$ to be one of the $d$ preimages of $\gamma_{n-1}(0)$, then there is only one continuous choice for $\gamma_n(\theta)$, $\theta\in [0,1]$ satisfying the relation above. 

Now we show the existence of the transversal curves $\alpha_\theta$.
Set $\widetilde{\gamma}_0= \varphi^{-1}(\gamma_0) = \{z: |z|=r\}$. By choosing $r_0$ sufficiently close to 1, the Jordan curve $\widetilde{\gamma}_1=g^{-1} (\widetilde{\gamma}_0)$ can be written in polar coordinates as $(r(\theta), \theta)$ for a smooth function $r(\theta)$, $\theta\in [0,1]$ (see e.g. \cite[p.~208]{milnor}). In particular, this implies that any two points in the closed annulus $R \subset \D$ bounded by $\widetilde{\gamma}_1$ and $\widetilde{\gamma}_0$ can be joined by a curve of length smaller than $4\pi$. By applying the Riemann map $\varphi$, whose derivative is bounded in modulus on the compact set $R$, we deduce that any two points in the annulus $\overline{A_1}\setminus A_0 =\varphi(R) \subset U$ can be joined by a curve of length $M$ for some constant $M > 0$. In particular, for every $\theta \in [0,1]$ there exists an arc $\alpha_\theta$ of length bounded by $M$, joining $\gamma_0(\theta)$ and $\gamma_1(\theta)$. 
\end{proof}

\begin{rem*}
For given angle $\theta$, the sequence $\{\gamma_n(\theta)\}_n$ accumulates at the boundary of $U$ in $\clC$. In general, the curves $\gamma_n$ may have no limit and accumulate at a non-locally connected boundary. We will show that under the conditions of Theorem~A this cannot occur. 
\end{rem*}

\subsection{Local connectivity of the boundary of $U$}\label{subsec:loc_conn}
Let $A \subset U$ be a simply connected domain satisfying the properties  listed in Theorem~B. 
Our goal is to show that under the assumptions of Theorem~A, the curves $\gamma_n$ constructed in Proposition~\ref{prop:curvesattracting} for the domain $A$, converge uniformly on $[0,1]$ with respect to the spherical metric, which will imply that the boundary of $U$ in $\clC$ is locally connected. 

By Theorem~B for $K = \overline{A_1} \setminus A_0$, there exists a conformal metric $d\varsigma = \varsigma|dz|$ on $U\setminus \overline{A}$ and numbers $b_n \in (0,1)$, $n \in \N$, such that $\sum_{n=1}^\infty b_n < \infty$ and 
\[
|(f^{n})'(z)|_\varsigma > \frac{1}{b_n}
\]
for every $z \in f^{-n}(\overline{A_1} \setminus A_0) \cap U$, i.e. for $z \in \overline{A_{n+1}} \setminus A_n$. Proposition~\ref{prop:curvesattracting} shows that for every $\theta\in [0,1]$, the point $\gamma_0(\theta)$ can be joined to $\gamma_1(\theta)$ by a $C^1$-arc $\alpha_\theta \subset \overline{A_1} \setminus A_0$ of Euclidean length bounded by a constant $M$ independent of $\theta$. Since $\overline{A_1} \setminus A_0$ is a compact subset of $U\setminus\overline{A}$, the density $\varsigma$ of the metric $d\varsigma$ is bounded on this set, so the lengths of $\alpha_\theta$ with respect to the metric $d\varsigma$ are also uniformly bounded, that is
\[
\length_\varsigma \alpha_\theta \leq M'< \infty,
\]
where $M' > 0$ is a constant independent of $\theta$. 

We will show that the sequence $\gamma_n$ satisfies the uniform Cauchy condition with respect to the metric $d\varsigma$. By Proposition~\ref{prop:curvesattracting}, we can define inductively families of arcs $\alpha^{(n)}_\theta$, $n \ge 0$, $\theta \in [0,1]$ by 
\begin{align*}
\alpha^{(0)}_\theta &= \alpha_\theta,\\
\alpha^{(n+1)}_\theta &= f^{-1}_{n,\theta}\big(\alpha^{(n)}_{d\theta \bmod 1}\big),
\end{align*}
where $f^{-1}_{n,\theta}$ is the branch of $f^{-1}$ mapping $\gamma_n(d\theta \bmod 1)$ to $\gamma_{n+1}(\theta)$. Then $\alpha^{(n)}_\theta$ is a $C^1$-arc joining $\gamma_n(\theta)$ to $\gamma_{n+1}(\theta)$ within $\overline{A_{n+1}} \setminus A_n$ and $f^n$ maps $\alpha^{(n)}_\theta$ injectively onto $\alpha_{d^n\theta \bmod 1}$. Consequently,
\[
\dist_\varsigma (\gamma_n(\theta), \gamma_{n+1}(\theta)) \leq \int_{\alpha^{(n)}_\theta}\varsigma(z)|dz| \leq \frac{\length_\varsigma \alpha_{d^n\theta \bmod 1}}{\inf_{\alpha^{(n)}_\theta}|(f^n)'|_\varsigma}
< M'b_n.
\]
It follows that for every $\theta\in[0,1]$ and $m> n> 0$ we have
\[
\dist_\varsigma(\gamma_n(\theta),\gamma_m(\theta)) \leq 
\dist_\varsigma(\gamma_{n+1}(\theta),\gamma_n(\theta)) 
+ \cdots + 
\dist_\varsigma(\gamma_m(\theta),\gamma_{m-1}(\theta)) 
\leq M' \sum_{k=n}^m b_k.
\]
Since $\sum_{k=1}^\infty b_k$ is a convergent series, the sequence $\gamma_n$ satisfies the uniform Cauchy condition with respect to the metric $d\varsigma$.
We show that this implies that $\gamma_n$ satisfies also the uniform Cauchy condition with respect to the spherical metric. Suppose this does not hold. Then there exists $\varepsilon > 0$ and sequences $n_k, m_k \to \infty$, $\theta_k \in [0,1]$, such that 
\[
\dist_{sph} (\gamma_{n_k}(\theta_k), \gamma_{m_k}(\theta_k)) \geq \varepsilon.
\]
Passing to a subsequence, we can assume $\gamma_{n_k}(\theta_k) \to z$, $\gamma_{m_k}(\theta_k) \to w$ for some distinct points $z,w \in \clC$, where at least one of them (say $z$) is not equal to $\infty$. By Lemma~\ref{lem:varsigma-limit}, there exist $\delta, c > 0$, such that $w \notin \D(z, \delta)$ and $\varsigma > c$ on $W_N \cap \D(z, \delta)$. Then for sufficiently large $k$ we have $\gamma_{n_k}(\theta_k) \in \D(z, \frac{\delta}3)$ and $\gamma_{m_k}(\theta_k) \notin \D(z, \frac{2\delta}3)$, so
\[
\dist_\varsigma(\gamma_{n_k}(\theta_k),\gamma_{m_k}(\theta_k)) \ge \frac{c\delta}{3},
\]
which contradicts the uniform Cauchy condition with respect to the metric $d\varsigma$. Therefore, the sequence $\gamma_n$ satisfies the uniform Cauchy condition with respect to the spherical metric, which implies that it has a limit $\gamma$, which is equal to the boundary of $U$ in $\clC$, providing its local connectivity.


\section{Local connectivity of Julia sets: Proof of Theorem C}\label{sec:proofC}

Throughout this section, we consider the transcendental meromorphic map 
\[
f(z)=z-\tan z,
\]
which is the Newton map of the entire transcendental function $g(z)=\sin z$. We will prove that the Julia set $J(f)$ is locally connected, establishing Theorem~C. See Figure~\ref{dynplanesine}. Throughout the section, we will understand the word `boundary' as the boundary in $\clC$.

A useful tool to prove local connectivity of a compact connected set in $\clC$ (like the Julia set of a rational or transcendental map) is Whyburn's Theorem, which reads as follows.

\begin{thm}[{\cite[Theorem~4.4, p.~113]{whyburn}}]\label{thm:whyburn}
A compact connected set $J\subset \chat$ is locally connected if
and only if it satisfies the following properties.
\begin{enumerate}[$($a$)$]
\item The boundary of every connected component of $\chat \setminus J$ is locally connected.
\item For every $\varepsilon>0$, only a finite number of connected components of $\chat \setminus J$
have spherical diameter greater than $\varepsilon$.
\end{enumerate}
\end{thm}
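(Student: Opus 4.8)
The plan is to establish both implications, treating $\chat$ as the $2$-sphere and using several classical facts of plane topology: since $\chat$ is unicoherent, every connected component $V$ of $\chat\setminus J$ is simply connected and has connected boundary $\bd V$, a subcontinuum of $J$; a subcontinuum of a Peano continuum is again a Peano continuum; by Carathéodory's theorem, if $V$ is a simply connected domain whose boundary is a Peano continuum then the Riemann map $\D\to V$ extends continuously to $\overline{\D}$, so that $\overline V$ is a Peano continuum; and a continuum is locally connected iff it is connected im kleinen at each of its points iff it has Property~S. I will also use repeatedly the elementary fact that for an open set $V\subset\chat$ one has $\diam_{sph}\bd V=\diam_{sph}\overline V\ge\diam_{sph}V$.

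$(\Rightarrow)$ Suppose $J$ is locally connected. Then (a) is exactly Torhorst's theorem, quoted in the paper as \cite[Theorem~2.2]{whyburn}: the boundary of a complementary domain of a Peano continuum in $\chat$ is a Peano continuum. For (b), assume toward a contradiction that there are infinitely many complementary domains $V_1,V_2,\dots$ with $\diam_{sph}V_n\ge\varepsilon$. For each $n$ choose $a_n,b_n\in\bd V_n\subset J$ with $\dist_{sph}(a_n,b_n)\ge\varepsilon/2$ and an arc $\alpha_n$ joining them whose interior lies in $V_n$. Passing to a subsequence, the continua $\overline{V_n}$ converge in the Hausdorff metric to a continuum $K$; since the $V_n$ are pairwise disjoint (a limit point lying in some complementary domain $D$ would force $D$ to contain points of $\overline{V_n}$ for infinitely many $n$, impossible as $\bd V_n\subset J$ and $V_n\cap D=\emptyset$ for $D\ne V_n$), we get $K\subset J$, $\diam_{sph}K\ge\varepsilon$, and $a_n\to a_*$, $b_n\to b_*$ in $K$ with $\dist_{sph}(a_*,b_*)\ge\varepsilon/2$. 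Since $J$ is uniformly locally connected, for large $n,m$ one joins $a_n$ to $a_m$ and $b_n$ to $b_m$ by connected subsets of $J$ of diameter $<\varepsilon/10$; concatenating these with $\alpha_n$ and $\alpha_m$ yields a closed curve $\Gamma_{n,m}$. A third domain $V_\ell$ ($\ell$ large, $\ell\ne n,m$) is disjoint from $\Gamma_{n,m}$ (again because $\bd V_\ell\subset J$ and the $V$'s are disjoint), yet $V_\ell$ reaches from a neighbourhood of $a_*$ to a neighbourhood of $b_*$, hence meets both complementary regions of $\Gamma_{n,m}$ — contradicting the Jordan curve theorem, once one checks that $\Gamma_{n,m}$ may be taken to be a Jordan curve separating a neighbourhood of $a_*$ from a neighbourhood of $b_*$. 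Spelled out, this is Whyburn's classical argument, and it is equivalent to exhibiting a point of $K\subset J$ at which $J$ fails to be connected im kleinen.

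$(\Leftarrow)$ Assume (a) and (b); I show $J$ is connected im kleinen at an arbitrary $p\in J$, which for a continuum is equivalent to local connectivity. Fix $\varepsilon>0$. By (b) only finitely many complementary domains $V_1,\dots,V_k$ have $\diam_{sph}V_i\ge\varepsilon/10$; by (a) and Carathéodory each $\overline{V_i}$ is a Peano continuum, hence locally connected at $p$, and for every $i$ with $p\in\overline{V_i}$ we choose a connected, relatively open neighbourhood $G_i$ of $p$ in $\overline{V_i}$ with $\diam_{sph}G_i<\varepsilon/10$. Choose $\eta\in(0,\varepsilon/10)$ so small that $\overline{B(p,2\eta)}\cap\overline{V_i}\subset G_i$ when $p\in\overline{V_i}$ and $\overline{B(p,2\eta)}\cap\overline{V_i}=\emptyset$ otherwise; a segment argument then shows that any $V_i$ meeting $B(p,\eta)$ satisfies $p\in\bd V_i$ and $V_i\cap B(p,\eta)\subset G_i$. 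Put
\[
N=B(p,\eta)\ \cup\ \bigcup\bigl\{V:\ V\text{ a complementary domain of }J,\ \diam_{sph}V<\varepsilon/10,\ V\cap B(p,\eta)\ne\emptyset\bigr\}\ \cup\ \bigcup_{p\in\bd V_i}G_i.
\]
Then $N$ is connected (every piece meets $B(p,\eta)$ or contains $p$) and $\diam_{sph}N<\varepsilon$ (the small domains meeting $B(p,\eta)$ have closures inside $B(p,\eta+\varepsilon/10)$, and each $G_i$ lies in $B(p,\varepsilon/10)$). Finally one verifies that $\overline N$ is a Peano continuum — it is the union of $\overline{B(p,\eta)}$ with closures of complementary domains of $J$, all Peano by (a) — and that $\overline N\cap J$ is a connected subset of $J$ containing the $J$-neighbourhood $B(p,\eta')\cap J$ of $p$ for a suitable $\eta'\in(0,\eta)$, using that passing from a Peano continuum to its intersection with the closed set $J$ only deletes relatively open holes which are complementary domains of $J$. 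This gives the required small connected $J$-neighbourhood of $p$, so $J$ is locally connected.

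The main obstacle is the last verification in the reverse direction: controlling $N$ and $\overline N\cap J$ when the small complementary domains meeting $B(p,\eta)$ are infinite in number and accumulate — possibly on some $\bd V_i$, possibly on generic points of $J$ near $p$. Ruling out that this accumulation destroys connectivity of $\overline N\cap J$ or its being a neighbourhood of $p$ requires the boundary-bumping lemma and the simple connectivity of complementary domains of a continuum in $\chat$; it is exactly the point where hypothesis (b) (finiteness of the large domains) and hypothesis (a) (local connectivity of each boundary) are used in tandem. The analogous crux in the forward direction is the plane-topology step that converts "infinitely many pairwise disjoint large complementary domains" into a genuine point of non-local-connectivity of $J$, via the Jordan curve theorem.
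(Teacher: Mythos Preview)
The paper does not prove this theorem; it is quoted as a classical result from Whyburn's book \cite[Theorem~4.4, p.~113]{whyburn} and used as a black box in the proof of Theorem~C. There is therefore no paper proof against which to compare your attempt.

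As for your sketch on its own merits: the forward direction is sound in outline --- Torhorst's theorem gives (a), and the contradiction argument for (b) is the right shape --- but the separation step is not actually carried out: you have not constructed the promised Jordan curve $\Gamma_{n,m}$ (the pieces you concatenate need not form a simple closed curve, and the small connected sets in $J$ joining $a_n$ to $a_m$ may well meet $\alpha_n$ or $\alpha_m$), nor verified that a third domain $V_\ell$ meets both of its complementary regions. In the reverse direction your construction of $N$ is reasonable, but the decisive claim --- that $\overline N\cap J$ is connected and contains a $J$-neighbourhood of $p$ --- is precisely the step you label ``the main obstacle'' and do not prove; the assertion that $\overline N$ is a Peano continuum because it is a union of Peano continua is also false in general without further control (an infinite union of Peano continua need not be one). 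What you have written is an accurate map of where the difficulties lie rather than a proof; the missing arguments are essentially the content of Whyburn's original treatment.
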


To check the conditions of Whyburn's Theorem for $J = J(f)$, we first recall some properties of the map $f$. By \cite{bfjk}, $J$ is connected as the Julia set of  Newton's method for a transcendental entire map. Note that
\begin{equation}\label{eq:f+pi}
f(z+\pi) = f(z) + \pi, \qquad z \in \C,
\end{equation}
which implies a `translation invariance' of the dynamical plane of $f$ (see Figure~\ref{dynplanesine}). The properties listed below are proved in \cite[Example~7.2]{bfjkaccesses} and \cite[Proposition~4.1]{barfagjarkar2020}.

\begin{lem} \label{lem:Theorem B}
The following statements hold.
\begin{enumerate}[$($a$)$]
\item\label{item:U_k} $f$ has infinitely many simply connected immediate basins of attraction $U_k$, $k\in\Z$, of superattracting fixed points 
$$c_k=k\pi, \quad k \in\Z,$$
such that $U_k = U_0 + k\pi$ and $\deg_{c_k} f = \deg f|_{U_k} = 3$. The points $c_k$, $k \in \Z$, are the only critical points of~$f$. They are located in the vertical lines
\[
\ell_k(t)= k \pi + it, \qquad t \in \R, \; k \in\Z, 
\]
which are invariant and contained in $U_k$, respectively. Moreover, $f$ has no finite asymptotic values. 

\item The poles of $f$, 
\[
p_k = \frac{\pi}{2} + k \pi,\ k\in \mathbb Z,
\]
are simple. They are located in the vertical lines 
$$
r_k(t)=\frac \pi 2 + k \pi + it, \qquad t \in \R, \; k \in\Z, 
$$
which are contained in $J(f)$.

\item\label{item:asymptotic} The asymptotics of $f$ for $\Im(z) \to \pm\infty$ is given by 
\[
f(z) = \begin{cases}
z-i + \mathcal O(e^{-2\Im(z)})& \text{as} \;\; \Im (z) \to +\infty \\
z+i + \mathcal O(e^{2\Im(z)})& \text{as} \;\; \Im (z) \to -\infty
\end{cases}.
\]

\item We have
\[
J(f) \cap \C = \bigcup_{k\in\Z} J_k,
\]
where $J_k$ is the connected component of $J(f) \cap \C$ containing the line $r_k$. For every $k\in\Z$, the basin $U_k$ has exactly two accesses to infinity,
and $\bd U_k$ contains exactly two poles of $f$, i.e.~$p_{k-1}, p_k$, which are accessible from $U_k$.
\item All Fatou components of $f$ are preperiodic and eventually mapped by iterates of $f$ into $U_k$ for some $k\in\Z$. In particular, $f$ has no wandering domains. 
\end{enumerate}
\end{lem}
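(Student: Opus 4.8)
The lemma collects facts established in \cite{bfjk,bfjkaccesses,barfagjarkar2020}; here is how I would organise the proof. The statements separate into elementary identities for $\tan$ and genuinely dynamical assertions resting on the classification of Fatou components, and I would treat them in that order.

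\textbf{Elementary part.} From $\tan(z+\pi)=\tan z$ one gets $f(z+\pi)=f(z)+\pi$, whence $U_k=U_0+k\pi$, $p_k=p_0+k\pi$, $c_k=c_0+k\pi$. Since $f'(z)=1-\sec^2 z=-\tan^2 z$, the critical points of $f$ are exactly the zeros of $\tan$, i.e.\ $c_k=k\pi$; expanding $f$ at $c_k$ and using $f'(c_k)=f''(c_k)=0$, $f'''(c_k)=-2$ gives $\deg_{c_k}f=3$, and since $c_k$ is the only critical point in $U_k$ (the $U_j$ being pairwise disjoint), Riemann--Hurwitz yields $\deg f|_{U_k}=3$. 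The poles of $f$ are the poles of $\tan$, namely $p_k=\tfrac\pi2+k\pi$, and they are simple because $\cos$ has simple zeros; each $c_k$ is a superattracting fixed point, hence possesses an immediate basin $U_k$, which is simply connected by \cite{bfjk} since $f$ is the Newton map of an entire function. For the invariant lines I would use $\tan(k\pi+it)=i\tanh t$ and $\tan(\tfrac\pi2+k\pi+it)=i\coth t$, which give $f(\ell_k(t))=\ell_k(t-\tanh t)$ and $f(r_k(t))=r_k(t-\coth t)$ (the latter for $t\neq 0$); as $t\mapsto t-\tanh t$ is monotone, fixes $0$, and satisfies $|t-\tanh t|<|t|$ for $t\neq 0$, its iterates converge to $0$, so $\ell_k$ is forward invariant with all orbits tending to $c_k$, hence $\ell_k\subset U_k$. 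For (c) I would write $\tan z=\tfrac1i\cdot\tfrac{e^{2iz}-1}{e^{2iz}+1}$: as $\Im z\to+\infty$ one has $e^{2iz}\to 0$ and $\tan z-i=\tfrac{2e^{2iz}}{i(e^{2iz}+1)}=\mathcal O(e^{-2\Im z})$, and symmetrically as $\Im z\to-\infty$. The absence of finite asymptotic values then follows: along any curve tending to the essential singularity, either $\Im z\to\pm\infty$, where $f(z)=z\mp i+o(1)\to\infty$, or $\Re z\to\pm\infty$ inside a horizontal strip avoiding the poles, where $\tan z$ stays bounded and again $f(z)\to\infty$.

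\textbf{The lines $r_k$ lie in $J(f)$.} The line $r_k$ is forward invariant away from the pole $p_k=r_k(0)$, and in the $t$-coordinate $f$ acts by $\phi(t)=t-\coth t$ with $\tfrac{d}{dt}(t-\coth t)=1+\tfrac1{\sinh^2 t}>1$, so $f$ is expanding along $r_k$. I would rule out that a point of $r_k$ lies in a Fatou component $D$: by the classification, a periodic Fatou component is an attracting or parabolic basin, a rotation domain, or a Baker domain; $\phi$ has no fixed point, so orbits in $r_k$ cannot converge to a cycle; a rotation domain can contain neither the pole $p_k\in\overline{r_k}$ nor orbits that, like those on $r_k$, repeatedly leave every bounded set; and there is no Baker domain, since in the only candidate regions near $\infty$, the half-planes $\{\Im z>M\}$ and $\{\Im z<-M\}$ (repelling petals at infinity by the example following Proposition~\ref{prop:example_petal_inf}), $f$ decreases $|\Im z|$ and expels orbits. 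Hence $r_k\subset J(f)$, and $p_k\in J(f)$ as a pole; this makes $J_k$, the component of $J(f)\cap\C$ through $r_k$, well defined.

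\textbf{Global structure, (d) and (e).} Every singular value of $f$ is a critical value $c_k$ (there are no finite asymptotic values), hence a superattracting fixed point in the Fatou set, so the postsingular set is contained in $\{k\pi:k\in\Z\}\cup\{\infty\}$; a no-wandering-domains theorem applicable to this situation (as in \cite{bfjkaccesses,barfagjarkar2020}) then shows $f$ has no wandering domains. That every periodic Fatou component is some $U_k$ follows by classification: the fixed points of $f$ are exactly the superattracting $c_k$; parabolic cycles are excluded because no critical orbit accumulates on one (the critical orbits are the fixed points $c_k$); rotation domains are excluded because the postsingular set, discrete away from $\infty$, cannot bound such a domain and its orbits cannot escape to $\infty$; Baker domains are excluded as above. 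Together with the absence of wandering domains this forces every Fatou component to be preperiodic and eventually mapped into some $U_k$, giving (e). The remaining content of (d)---that $J(f)\cap\C=\bigsqcup_k J_k$ and that each $U_k$ has exactly two accesses to infinity, with $\partial U_k$ containing precisely the two poles $p_{k-1},p_k$, both accessible---is the combinatorial core and, I expect, the main obstacle. The plan is to follow \cite{bfjkaccesses}: the invariant line $\ell_k\subset U_k$ supplies one ``upward'' and one ``downward'' access; one shows these are the only ones by confining $U_k$ to the vertical strip $\{(k-1)\pi<\Re z<(k+1)\pi\}$ (using the translation symmetry together with $r_{k-1},r_k\subset J(f)$), analysing the boundary behaviour through the degree-$3$ Blaschke model of $f|_{U_k}$ and the relation $f(\partial U_k)=\partial U_k\cup\{\infty\}$, and identifying $p_{k-1},p_k$ as the unique poles on $\partial U_k$ from the fact that $f$ has exactly one pole per period strip.
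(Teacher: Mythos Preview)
The paper does not prove this lemma at all: it states just before the lemma that the listed properties are established in \cite[Example~7.2]{bfjkaccesses} and \cite[Proposition~4.1]{barfagjarkar2020}, and then moves on. Your proposal therefore goes well beyond the paper's treatment. For the elementary parts (a)--(c) your computations are correct and essentially complete (the formulae for $f'$, the Taylor expansion at $c_k$, the parametrisations of $\ell_k$ and $r_k$, and the asymptotics of $\tan$ are all right), and for the dynamical parts (d)--(e) you correctly isolate the combinatorial core and defer to the same references the paper cites. In that sense your write-up is strictly more informative than the paper's.

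There are, however, a few places where your sketch would need tightening before it could replace the citations. First, invoking Riemann--Hurwitz to get $\deg f|_{U_k}=3$ presupposes that $f|_{U_k}$ is proper of finite degree; at that point you have not yet confined $U_k$ to a vertical strip nor otherwise established finiteness, so this step needs independent justification. Second, your argument that $r_k\subset J(f)$ runs through the classification of \emph{periodic} Fatou components but does not address the possibility that a point of $r_k$ lies in a wandering domain; since you only exclude wandering domains later in part~(e), the argument as written is out of order (reordering fixes this, as the no-wandering result rests only on the postsingular structure from~(a)). Third, the no-asymptotic-values argument skips the case of curves to infinity whose imaginary part oscillates; the remedy is to observe that along any curve on which $f$ tends to a finite limit one must eventually stay away from fixed neighbourhoods of all poles, after which your bounded-$\tan$ reasoning applies. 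None of these is fatal, and all are handled in the cited references.
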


See Figure~\ref{fig:R}.
\begin{figure}[ht!]
\includegraphics[width=0.7\textwidth]{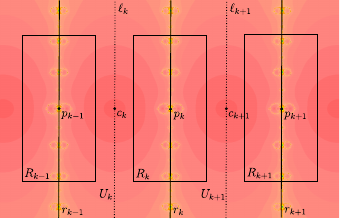}
\label{fig:R}
\caption{\small The dynamical plane of the map $f(z) = z - \tan z$.}
\end{figure}

The following proposition establishes the first condition in Whyburn's criterium (Theorem~\ref{thm:whyburn}).
\begin{prop} \label{prop:first_crit}
Every Fatou component of $f$ has locally connected boundary. 
\end{prop}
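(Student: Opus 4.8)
The plan is to verify the two conditions of Whyburn's Theorem (Theorem~\ref{thm:whyburn}) for the components of $\chat \setminus J(f)$, that is, for the Fatou components of $f$, though here Proposition~\ref{prop:first_crit} only asks for the first one: local connectivity of every Fatou component's boundary. By Lemma~\ref{lem:Theorem B}(e), every Fatou component is preperiodic and is eventually mapped by some iterate $f^n$ onto one of the basins $U_k$. So the proof splits into two parts: first handle the basins $U_k$ themselves, then propagate local connectivity of $\bd U_k$ backwards along preimages.

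For the basins $U_k$ I would apply Theorem~A (or Corollary~A'). By Lemma~\ref{lem:Theorem B}, each $U_k$ is a simply connected invariant (superattracting) basin, $f$ has no finite asymptotic values, the only critical point in $\overline{U_k}$ is the superattracting fixed point $c_k \in U_k$ itself, and there are no parabolic periodic points; hence $\PP_{asym}(f) = \emptyset$ and $\PP_{crit}(f) \cap \overline{U_k} = \{c_k\}$, so conditions (a) and (b) of Theorem~A hold trivially (taking $L = \{c_k\}$, and the degree bound is automatic since $c_k$ is fixed and $\deg f|_{U_k}=3$). The substantive point is condition (c): $U_k$ must be tame at infinity. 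Here I would invoke the Example following Proposition~\ref{prop:example_petal_inf}: using the asymptotics in Lemma~\ref{lem:Theorem B}(c), $f(z) = z \mp i + \mathcal{O}(e^{\mp 2\Im z})$ as $\Im z \to \pm\infty$, the half-planes $P_\pm = \{z : \pm \Im(z) > M\}$ for large $M$ are repelling petals of $f$ at infinity. Since by Lemma~\ref{lem:Theorem B}(d) the unbounded part of $\overline{U_k}$ lies in its two accesses to infinity, which (after translating so $k=0$, using $U_k = U_0 + k\pi$) are asymptotic to the directions $\Im z \to \pm\infty$, one can choose $M$ and a disc $D$ so that $\overline{U_k} \setminus D \subset P_+ \cup P_-$; the disjointness condition $U_k \cap f(P_\pm) \cap P_\mp = \emptyset$ follows because $f(P_\pm) \subset P_\pm$ (the petals are invariant for large $M$, being repelling with $f$ moving points deeper into them). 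This gives tameness at infinity, so Theorem~A yields that $\bd U_k$ is locally connected in $\clC$; by translation invariance \eqref{eq:f+pi} this holds for all $k$.

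For a general Fatou component $V$, by Lemma~\ref{lem:Theorem B}(e) there is a smallest $n \ge 0$ with $f^n(V) = U_k$ for some $k$; I would argue by induction on $n$ that $\bd V$ is locally connected. The inductive step: if $W$ is a Fatou component with $f(W) = V'$ where $\bd V'$ is locally connected (in $\clC$), then $\bd W$ is locally connected. Since local connectivity of a compact set is preserved under continuous surjections, and $f$ maps $\overline{W}$ onto $\overline{V'}$ (as a proper map when $W$ is bounded away from poles, and more care is needed near a pole), the image direction is easy but we need the preimage direction. The standard device: $f|_W \colon W \to V'$ is a proper holomorphic map of some finite degree (finite because $f$ has no asymptotic values, by the Gross/Iversen-type argument already cited in the proof of Theorem~B via \cite{herring}), so it extends continuously to $\overline{W} \to \overline{V'}$; the Riemann map of $V'$ extends continuously to $\overline{\D}$ by local connectivity of $\bd V'$ and Carathéodory, and pulling back through the Blaschke-product model of $f|_W$ gives a continuous extension of the Riemann map of $W$ to $\overline{\D}$, hence $\bd W$ is locally connected. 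One subtlety requiring attention: $\overline{V'}$ or $\overline{W}$ may contain the pole $p_k$ or $\infty$, so the argument should be run on the Riemann sphere throughout, using that $f$ extends continuously $\C \to \chat$ and is a branched covering near poles.

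The main obstacle I expect is the verification of tameness at infinity for $U_k$ — specifically, pinning down precisely that $\overline{U_k}$ minus a large disc is contained in the two half-plane petals $P_\pm$ and not leaking into the strip $|\Im z| \le M$ for arbitrarily large $|\Re z|$. This requires understanding the shape of the accesses to infinity of $U_k$, which is exactly the content of Lemma~\ref{lem:Theorem B}(d) together with the translation structure $U_k = U_0 + k\pi$ and the fact that the poles $p_k = \pi/2 + k\pi$ with their vertical lines $r_k \subset J(f)$ separate consecutive basins; so the horizontal extent of $\overline{U_k}$ at height $|\Im z| \le M$ is bounded (trapped between $r_{k-1}$ and $r_k$, hence inside a vertical strip of width $\pi$), which is what makes the disc $D$ work. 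The second, more routine obstacle is the careful bookkeeping of the pullback-of-local-connectivity argument near poles and $\infty$; this is standard but must be phrased spherically. I would present the $U_k$ case in detail and treat the pullback as a lemma (possibly citing that this kind of propagation is classical, cf.\ Torhorst/Whyburn and the references in the introduction).
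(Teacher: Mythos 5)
Your proposal follows essentially the same route as the paper: verify the hypotheses of Theorem~A for the basins $U_k$, using the half-planes $P_\pm$ as repelling petals at infinity (via Proposition~\ref{prop:example_petal_inf} and the asymptotics in Lemma~\ref{lem:Theorem B}), with tameness coming from the fact that $U_k$ is trapped between the Julia lines $r_{k-1}$ and $r_k$, and then transfer local connectivity to all remaining Fatou components through Lemma~\ref{lem:Theorem B}(e) — a step the paper treats even more briefly than you do. One small correction: for a repelling petal at infinity one has $\overline{P_\pm}\subset f(P_\pm)$, i.e.\ $f$ moves points out of the petal rather than deeper into it, so your claimed inclusion $f(P_\pm)\subset P_\pm$ is reversed; the disjointness $U_k\cap f(P_\pm)\cap P_\mp=\emptyset$ required for tameness nevertheless holds immediately, since $f(P_\pm)\approx P_\pm$ shifted by $\mp i$ stays far from $P_\mp$ for large $M$ (the paper records this as $f(P_+)\cap f(P_-)=\emptyset$).
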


\begin{proof} 
First, we show that the boundaries of the attracting basins $U_k$ are locally connected. To this end, we check that they fulfil the assumptions of Theorem~A.
Recall that the basins $U_k$ are simply connected, since $J(f)$ is connected. By Lemma~\ref{lem:Theorem B}, we have $\deg f|_{U_k} = 3$ and 
\begin{equation}\label{eq:P(f)}
\Crit(f) = \PP(f) = \{c_k\}_{k\in\Z} = \{k\pi\}_{k\in\Z}.
\end{equation}
This immediately implies the assumptions (a)--(b) of Theorem~A. To check the assumption~(c), note that by Lemma~\ref{lem:Theorem B}\ref{item:asymptotic}, for $M>0$ large enough, the map $f$ on the half planes 
\begin{equation}\label{eq:P_pm}
P_+=\{z \in \C: \Im (z)>M\}, \qquad P_-=\{z \in \C: \Im(z)<-M\}
\end{equation}
is arbitrarily close to $z \mapsto z\mp i$. In particular, this implies $\overline{P_\pm} \subset f(P_\pm)$ and $f(P_+) \cap f(P_-) = \emptyset$. We show that $P_\pm$ are repelling petals of $f$ at infinity. In view of Lemma~\ref{lem:Theorem B}\ref{item:asymptotic}, the univalency of $f$ on $P_\pm$ follows easily from Rouch\'e's Theorem, while the remaining conditions of Definition~\ref{defn:petal_at_inf} with $g(t)\equiv 1$ are satisfied by Proposition~\ref{prop:example_petal_inf}, where we take $r = M$, $\delta = \frac{\pi}{2}$, $d = 1$, $a= \mp i$ and $j = 1$. Hence, $P_\pm$ are repelling petals of $f$ at infinity. Note that Lemma~\ref{lem:Theorem B} asserts that the basins $U_k$ are located between the vertical lines $r_{k-1}$ and $r_k$, both contained in the Julia set of $f$. Hence, outside a compact set, $U_k$ is contained in the two repelling petals $P_\pm$ of $f$ at infinity, which shows that $f$ and $U_k$ satisfy the assumptions of Theorem~A. Hence, the boundaries of $U_k$ are locally connected.

By Lemma~\ref{lem:Theorem B}(e), all Fatou components of $f$ are successive  preimages of the basins $U_k$ which have locally connected boundaries, so their boundaries are also locally connected. 
\end{proof}

It remains to show the second condition in Whyburn's criterium. Note that in contrast to the rational or even entire (hyperbolic) cases, the meromorphic setting presents some additional difficulties, since for any given $n$, components of preperiod $n$ can accumulate at poles and prepoles. 

The idea of the proof is as follows. We show that the sum of the squares of the spherical diameters of all Fatou components is finite, which immediately implies that only a finite number of them can have diameter larger than any given $\varepsilon$. To this end, we prove that the spherical distortion of branches of $f^{-n}$ on Fatou components $U \subset f^{-1}(U_k) \setminus U_k$, $k\in\Z$, is uniformly bounded. From this, it follows that the ratio between the square of the spherical diameter  and the spherical area is approximately the same for the component $U$ and its all inverse images by branches of $f^{-n}$, $n > 0$. Since the sum of spherical areas over all components is finite (smaller than the spherical area of the whole sphere), the same holds for the sum of the squares of spherical diameters. 

Now we proceed to present the proof in detail. The following lemma is straightforward to check.

\begin{lem}\label{lem:sph-eucl}
For every $r_0 > 0$ there exist $c_1, c_2 > 0$ such that
\[
\DD_{sph}\Big(z, \frac{c_1 r}{|z|^2+1}\Big) \subset \D(z,r) \subset \DD_{sph}\Big(z, \frac{c_2 r}{|z|^2+1}\Big)
\]
for every $z \in \C$ and every $0 < r < r_0$.
\end{lem}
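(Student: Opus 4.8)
The plan is to reduce the statement to a single uniform comparison, on the Euclidean disc $\D(z,r_0)$, between the spherical density $\sigma_{sph}(w)=\frac{2}{1+|w|^2}$ and the constant $\frac{2}{1+|z|^2}$. Both displayed inclusions merely express that, near $z$, the spherical metric is comparable to a fixed multiple of the Euclidean one, the multiple being of order $\frac{1}{1+|z|^2}$; the only real content is that this comparison is uniform in $z\in\C$, in particular as $z\to\infty$.

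First I would record the density estimate: with $C=C(r_0):=1+r_0+r_0^2$ one has
\[
\frac{1}{C}\le\frac{1+|w|^2}{1+|z|^2}\le C\qquad\text{whenever }|w-z|\le r_0 .
\]
This follows from $|w|\le|z|+r_0$, which gives $1+|w|^2\le 1+|z|^2+2r_0|z|+r_0^2$, together with the elementary bounds $2r_0|z|\le r_0(1+|z|^2)$ and $r_0^2\le r_0^2(1+|z|^2)$, so that $1+|w|^2\le C(1+|z|^2)$; the reverse inequality is obtained by exchanging the roles of $z$ and $w$. Consequently $\frac{2}{C(1+|z|^2)}\le\sigma_{sph}(w)\le\frac{2C}{1+|z|^2}$ for all $w\in\overline{\D(z,r_0)}$.

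Given this, the inclusion $\D(z,r)\subset\DD_{sph}\bigl(z,\tfrac{c_2 r}{|z|^2+1}\bigr)$ follows by bounding the spherical length of the segment $[z,w]$ for $w\in\D(z,r)$: since the segment lies in $\D(z,r_0)$, its spherical length is at most $\frac{2C}{1+|z|^2}|w-z|<\frac{2Cr}{1+|z|^2}$, whence $\dist_{sph}(z,w)<\frac{2Cr}{1+|z|^2}$, so $c_2=2C$ works. For the reverse inclusion, if $|w-z|\ge r$ then every rectifiable curve from $z$ to $w$ contains an initial subarc joining $z$ to $\bd\D(z,r)$ and lying in $\overline{\D(z,r)}\subset\overline{\D(z,r_0)}$, of Euclidean length at least $r$ and hence of spherical length at least $\frac{2r}{C(1+|z|^2)}$; thus $\dist_{sph}(z,w)\ge\frac{2r}{C(1+|z|^2)}$, and any $c_1\le\frac{2}{C}$ works (curves running through $\infty$ are harmless, since only the initial subarc inside $\overline{\D(z,r_0)}$ is used). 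The argument is entirely elementary and there is no genuine obstacle; the only point deserving care is keeping the comparison constant $C$ bounded as $|z|\to\infty$, which is exactly what the inequality $2r_0|z|\le r_0(1+|z|^2)$ secures.
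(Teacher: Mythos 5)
Your proof is correct: the uniform comparison $\frac{1}{C}\le\frac{1+|w|^2}{1+|z|^2}\le C$ on $\overline{\D(z,r_0)}$ with $C=1+r_0+r_0^2$, combined with the segment estimate for the upper inclusion and the first-exit subarc estimate for the lower one, gives exactly the stated two-sided bound with $c_2=2C$ and $c_1=2/C$. The paper offers no proof (it calls the lemma "straightforward to check"), and your argument is precisely the standard verification intended there, so there is nothing further to compare.
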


For $n \ge 0$ define
\begin{align*}
\FF_n=\{U : \ &U \text{ is a Fatou component of $f$,}\\
&\text{and $n$ is minimal such that } f^n(U) \subset U_k \text{ for some $k\in\Z$}\}.
\end{align*}
In particular, $\FF_0 = \{U_k\}_{k\in \Z}$. In the following lemma we describe the Fatou components from $\FF_1$. Let
\[
R_k = R_k(\delta, R) = \{z\in \C: \Re(z) \in [k\pi + \delta, (k+1)\pi - \delta], \; \Im(z) \in [-R, R]\}, \quad k \in \Z
\]
for $\delta, R > 0$. See Figure~\ref{fig:R}.

\begin{lem}\label{lem:F1}
We have
\[
\FF_1 = \{ U_{k,l} : k\in \Z, \: l \in \Z \setminus \{k, k+1\}\},
\]
where 
\[
U_{k,l} \subset R_k, \qquad 
U_{k,l} = U_{0,l-k} + k \pi, \qquad
f(U_{k,l}) = U_l
\]
and $R_k = R_k(\delta, R)$ for some $\delta, R > 0$. Moreover, $f$ is univalent on $U_{k,l}$.
\end{lem}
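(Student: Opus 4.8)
The plan is to analyze directly the structure of $f^{-1}(U_l)$ for a fixed $l \in \Z$ and identify which of its connected components are \emph{not} equal to $U_l$ itself, showing that they are exactly the domains $U_{k,l}$ described in the statement. The starting observation is the translation relation \eqref{eq:f+pi}, which gives $f(z+\pi)=f(z)+\pi$; hence $f^{-1}(U_l) + k\pi = f^{-1}(U_l + k\pi) = f^{-1}(U_{l+k})$, so it suffices to understand $f^{-1}(U_0)$ and then translate. The key geometric input is the location of the poles $p_k = \pi/2 + k\pi$ and the vertical lines $r_k \subset J(f)$ from Lemma~\ref{lem:Theorem B}: since each line $r_k$ is invariant in the sense that it is contained in $J(f)$ and $f$ maps $J(f)$ to $J(f)$, and since $U_l$ lies in the strip between $r_{l-1}$ and $r_l$, any component of $f^{-1}(U_l)$ other than $U_l$ must lie in one of the open strips $S_k = \{z : \Re(z) \in (k\pi, (k+1)\pi)\}$, and cannot cross any line $r_j$.

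First I would show that for each $k$ there is exactly one component $V$ of $f^{-1}(U_l)$ contained in the strip $S_k$ with $V \ne U_l$ (the case $k=l$ being excluded because the only preimage component of $U_l$ inside $S_l$ that meets $U_l$ is $U_l$ itself, which has degree $3$ there accounting for all local preimages near $c_l$; and $k = l+1$ giving $U_l$ again after... — more precisely, I must check that $U_{k,l}$ is defined precisely for $l \notin \{k,k+1\}$, which corresponds to $V \subset S_k$, $V \ne U_l$, $f(V)=U_l$). The natural way is a degree/covering count: $f$ restricted to a suitable bounded strip-rectangle is a proper map whose boundary behaviour is controlled by the asymptotics in Lemma~\ref{lem:Theorem B}\ref{item:asymptotic} (namely $f(z) = z \mp i + o(1)$ as $\Im z \to \pm\infty$) together with the fact that on each vertical line $r_k \subset J(f)$ the map stays in $J(f)$. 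From the asymptotics, for $|\Im z|$ large the map $f$ is a near-translation, so $f^{-1}(U_l)$ in the region $|\Im z| > R$ consists only of $U_l$ shifted slightly; hence every extra component $U_{k,l}$ is bounded in the imaginary direction, giving $U_{k,l} \subset R_k(\delta,R)$ for suitable $\delta, R$. Univalence of $f$ on $U_{k,l}$ then follows because $\Crit(f) = \{c_k\} = \{k\pi\}$ by \eqref{eq:P(f)}, and each critical point $c_k$ lies on the boundary line $\partial S_{k}$ (indeed $c_k \in \ell_k \subset U_k$, not in any $U_{k',l}$ with $k' \ne$ the relevant index), so $U_{k,l}$, being an open subset of an open strip $S_{k'}$ disjoint from all critical points, is mapped by $f$ without critical points; combined with simple connectivity of $U_{k,l}$ (a Fatou component of a transcendental Newton map, by \cite{bfjk}) and properness of $f|_{U_{k,l}} \colon U_{k,l} \to U_l$, properness of degree $1$ gives univalence.

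The relations $U_{k,l} = U_{0,l-k} + k\pi$ and $f(U_{k,l}) = U_l$ are then immediate from \eqref{eq:f+pi} and the construction. To pin down the index range $l \in \Z \setminus \{k, k+1\}$ I would argue as follows: the strip $S_k$ is bounded by the lines $\Re z = k\pi$ (through $c_k \in U_k$) and $\Re z = (k+1)\pi$ (through $c_{k+1} \in U_{k+1}$), and contains the pole line $r_k$; the basin $U_k$ occupies a neighbourhood of $c_k$ inside $S_k$ near $\Re z = k\pi$ and $U_{k+1}$ a neighbourhood of $c_{k+1}$ near $\Re z = (k+1)\pi$, so these two values $l = k$ and $l = k+1$ are ``used up'' by the already-known pieces $U_k \cap S_k$ and $U_{k+1}\cap S_k$ (pieces of the basins themselves, not new components), while for every other $l$ there is a genuinely new bounded component $U_{k,l} \subset R_k$ mapping onto $U_l$.

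\textbf{Main obstacle.} The hard part will be the exact counting argument establishing that for each $k$ and each $l \notin \{k, k+1\}$ there is exactly \emph{one} such component $U_{k,l}$ and that these exhaust $\FF_1$ — i.e.\ that $f^{-1}(U_l)$ has no further components, and no component is counted twice. This requires a careful proper-mapping / argument-principle computation on truncated strips $R_k(\delta,R)$, using the near-translation asymptotics on the top and bottom edges and the fact that the vertical edges sit on $r_{k-1}, r_k \subset J(f)$ (hence are mapped into $J(f)$, away from $U_l$), to conclude that $f$ restricted to each $R_k$ is, up to the parts belonging to the basins, a degree-one cover onto a region meeting exactly the basins $U_l$ with $l \ne k, k+1$. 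The translation invariance \eqref{eq:f+pi} reduces everything to a single strip $S_0$, but making the boundary estimates rigorous — in particular handling the pole $p_k \in \partial S_k$ where $f$ blows up, and verifying that the components $U_{k,l}$ genuinely accumulate on $p_k$ and on the prepoles rather than leaking out of $R_k$ — is the delicate point. I expect the authors to import this from the detailed description in \cite{bfjkaccesses, barfagjarkar2020}, so in a full write-up I would lean on Lemma~\ref{lem:Theorem B} as much as possible and only add the short extra argument identifying the index set and univalence.
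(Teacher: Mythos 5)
Your proposal sets up the right framework (translation invariance via \eqref{eq:f+pi}, confinement of extra components away from $|\Im z|$ large using the near-translation asymptotics, univalence from $\Crit(f)=\{k\pi\}_{k\in\Z}$), but the central content of the lemma --- that for each $k$ and each $l\notin\{k,k+1\}$ there is \emph{exactly one} component of $\FF_1$ in the strip, that these exhaust $\FF_1$, and that each is mapped onto $U_l$ with degree one --- is exactly the part you flag as the ``main obstacle'' and do not carry out. The proper-map/argument-principle computation on truncated rectangles that you sketch is not what the paper does, and as sketched it has real problems: the vertical edges of $R_k(\delta,R)$ do \emph{not} lie on the pole lines $r_{k-1},r_k$ (those run through the interior of $R_k$; the edges at $\Re z=k\pi+\delta$ and $(k+1)\pi-\delta$ lie inside the basins $U_k,U_{k+1}$ near the critical lines $\ell_k,\ell_{k+1}$), so the boundary control you invoke is not available, and $f$ has a pole inside each $R_k$, so $f|_{R_k}$ is not a proper map onto a bounded region in any straightforward sense. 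Deferring the count to \cite{bfjkaccesses,barfagjarkar2020} also does not work, since those references (as used in Lemma~\ref{lem:Theorem B}) do not contain this classification of $\FF_1$.

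The paper closes this gap with a self-contained pointwise counting argument you did not propose: pick one point $z_l\in U_l$ with $|\Im z_l|$ large, and show $f^{-1}(z_l)\subset P_+\cup P_-\cup\bigcup_k\D(p_k,\varepsilon)$; univalence of $f$ on the petals and on small discs around the simple poles gives exactly one preimage $z'_l$ in the petals and one preimage $z_{k,l}$ near each pole $p_k$, and the degree count $\deg f|_{U_l}=3$ identifies $f^{-1}(z_l)\cap U_l=\{z'_l,z_{l-1,l},z_{l,l}\}$. Then any $U\in\FF_1$ with $f(U)\subset U_l$ is shown to avoid $\{|\Im z|>\tilde R+2\}$ (else its image would contradict the preimage structure), hence $U\subset R_k$ for some $k$; since $R_k$ contains no critical points and $U_l$ is simply connected, $U=g(U_l)$ for a branch $g$ of $f^{-1}$, which forces $z_{k,l}\in U$ with $k\ne l-1,l$ and gives univalence at once. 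Conversely each $z_{k,l}$ with $k\ne l-1,l$ lies in some component of $\FF_1$, and disjointness of the $R_k$ pins down uniqueness. Without this (or an equally precise substitute), your write-up establishes only the inclusions and symmetries, not the classification $\FF_1=\{U_{k,l}\}$ itself.
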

\begin{proof} Fix a large $\tilde R > 0$ such that $\{z \in \C: |\Im(z)| > \tilde R\} \subset P_+ \cup P_-$ for the repelling petals $P_\pm$ from \eqref{eq:P_pm}. Take an arbitrary point $z_0 \in U_0$ with $|\Im(z_0)| > \tilde R$ (note that such points exist since $\ell_0 \subset U_0$) and let $z_l = z_0 + l\pi$ for $l \in \Z$. By \eqref{eq:f+pi} and Lemma~\ref{lem:Theorem B}(a), we have $z_l \in U_l$ and $f^{-1}(z_l) \subset P_+ \cup P_- \cup \bigcup_{k\in \Z}\D(p_k, \varepsilon)$ for a small fixed $\varepsilon > 0$, provided $\tilde R$ was chosen sufficiently large. Moreover, since $f$ is univalent on $P_\pm$ with $f(z) \sim z \mp i$, the set $f^{-1}(z_l) \cap (P_+ \cup P_-)$ consists of exactly one point $z'_l \in P_\pm$ close to $z_l \pm i$. Similarly, as the pole $p_0$ is simple, the map $f$ is univalent on $\D(p_0, \varepsilon)$ and $f^{-1}(z_l) \cap \D(p_0, \varepsilon)$ consists of exactly one point $z_{0,l}$. Consequently, by 
\eqref{eq:f+pi}, $f$ is univalent on $\D(p_k, \varepsilon)$ and $f^{-1}(z_l) \cap \D(p_k, \varepsilon)$ consists of exactly one point $z_{k,l}$, where
\[
z_{k,l} \in  \D(p_k, \varepsilon), \qquad z_{k,l} = z_{0,l-k} + k\pi, \qquad k, l \in \Z.
\]
Hence, 
\begin{equation}\label{eq:f^-1zl}
f^{-1}(z_l) = \{z'_l\} \cup \{z_{k,l} : k \in \Z\},
\end{equation}
where all the points $z_{k,l}$, $k \in \Z \setminus \{l-1,l\}$ are outside $P_+ \cup P_- \cup S_l$ for
\[
S_l =  \Big\{z\in \C: \Re(z) \in \Big[\frac{\pi}{2} + (l-1)\pi, \frac{\pi}{2} + l\pi\Big]\Big\}.
\]
This implies $f^{-1}(z_l) \cap (P_+ \cup P_- \cup S_l) \subset \{z'_l, z_{l-1,l}, z_{l,l}\}$. On the other hand, since $\deg f|_{U_l} = 3$, the point $z_l$ has exactly three preimages under $f$ in $U_l$. As $U_l \subset S_l$, we conclude 
\begin{equation}\label{eq:f^-1Ul}
f^{-1}(z_l) \cap (P_+ \cup P_- \cup S_l) = f^{-1}(z_l) \cap U_l = \{z'_l, z_{l-1,l}, z_{l,l}\}.
\end{equation}

Take $U \in \FF_1$. Then $U \cap \bigcup_{k\in \Z} U_k = \emptyset$ and $f(U) \subset U_l$ for some $l \in \Z$. Suppose there exists a point $w \in U$ with $|\Im(w)| > \tilde R + 2$. Then $w \in P_+ \cup P_- \setminus U_l$ and $f(w) \in U_l$ with $|\Im(f(w))| > \tilde R$, contradicting \eqref{eq:f^-1Ul} for $z_0 = f(w) -l\pi$. Hence, $U \subset \{z \in \C: |\Im(z)| \le R\}$ for $R = \tilde R + 2$. Moreover, since $\ell_k \subset U_k$ and $U_k = U_0 + k\pi$ for $k \in \Z$ by Lemma~\ref{lem:Theorem B}(a), the set $\{z \in \C: \Re(z) \in (k\pi - \delta, k\pi + \delta), \; |\Im(z)| \le R\}$ is contained in $U_k$ for a sufficiently small $\delta > 0$ independent of $k \in \Z$. Consequently, $U \subset \bigcup_{k\in \Z} R_k$ for $R_k = R_k(\delta,R)$. Since the sets $R_k$ are disjoint and compact, and $U$ is connected, in fact $U \subset R_k$ for some $k \in \Z$. As $U_l$ is simply connected and $R_k \cap \Crit(f) = \emptyset$ by \eqref{eq:P(f)}, the component $U$ is the image of $U_l$ under a well-defined branch $g$ of $f^{-1}$. 
Note that by \eqref{eq:f^-1zl} and \eqref{eq:f^-1Ul}, we have $g(z_l) = z_{k',l}\in U$ for some $k' \in \Z \setminus \{l-1, l\}$. As $z_{k',l} \in R_{k'}$ and the sets $R_{k'}$ are disjoint, in fact $k'=k$, so $z_{k,l} \in U$ and $k \neq l-1, l$. 

We have proved that for every component $U \in \FF_1$ there exist $k, l\in \Z$ with $k \neq l-1, l$, such that $z_{k,l} \in U \subset R_k$ and $f$ maps $U$ univalently onto $U_l$. On the other hand, \eqref{eq:f^-1zl} and \eqref{eq:f^-1Ul} imply that for every $k, l\in \Z$ with $k \neq l-1, l$, there exists a component from $\FF_1$ containing $z_{k,l}$. This together with \eqref{eq:f+pi} and Lemma~\ref{lem:Theorem B}(a) ends the proof.
\end{proof}

Now we can show the second condition in Whyburn's criterium. 

\begin{prop}\label{prop:second_crit}
For every $\varepsilon > 0$ there are only a finite number of Fatou components $U$ of $f$ with $\diam_{sph} U \ge \varepsilon$.
\end{prop}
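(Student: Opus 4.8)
The plan is to prove the stronger statement that $\sum_U (\diam_{sph} U)^2 < \infty$, where the sum ranges over all Fatou components $U$ of $f$. This suffices, since the Fatou components are pairwise disjoint and so at most $\varepsilon^{-2}\sum_U(\diam_{sph}U)^2$ of them can have spherical diameter at least $\varepsilon$. As $\sum_U\area_{sph}U\le\area_{sph}\clC<\infty$, it is enough to prove a uniform estimate $(\diam_{sph}U)^2\le C\,\area_{sph}U$ for every Fatou component $U$ other than the basins $U_k$, together with the separate bound $\sum_{k\in\Z}(\diam_{sph}U_k)^2<\infty$. The latter is a direct computation using Lemma~\ref{lem:Theorem B}\ref{item:U_k} and \eqref{eq:f+pi}: since $U_k=U_0+k\pi$, pushing $U_0$ to infinity by this translation and applying $z\mapsto 1/z$ gives $\diam_{sph}U_k=O(1/|k|)$ as $|k|\to\infty$, so the series converges.

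For the remaining components, recall that by Lemma~\ref{lem:Theorem B}(e) every Fatou component lies in one of the classes $\FF_n$, with $\FF_0=\{U_k\}$ and $\FF_1$ described by Lemma~\ref{lem:F1}. If $U\in\FF_n$ with $n\ge1$, then the orbit $U,f(U),\dots,f^{n-1}(U)$ avoids the basins and hence, by \eqref{eq:P(f)}, avoids $\Crit(f)=\PP(f)$; therefore $f^{n-1}|_U$ is univalent onto a component $V\in\FF_1$, so $U=\psi(V)$ for a branch $\psi$ of $f^{-(n-1)}$ univalent on $V$ (with $\psi=\id$ if $n=1$). Thus it is enough to show: (i) the shape factor $(\diam_{sph}V)^2/\area_{sph}V$ is bounded uniformly over $V\in\FF_1$; and (ii) every branch $\psi$ of $f^{-m}$, $m\ge0$, defined on some $V\in\FF_1$ has uniformly bounded spherical distortion on $V$ — for then $U=\psi(V)$ has shape factor comparable to that of $V$, giving $(\diam_{sph}U)^2\le C\,\area_{sph}U$. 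Claim (i) is routine: by \eqref{eq:f+pi} one has $U_{k,l}=U_{0,l-k}+k\pi$, so up to the translation $z\mapsto z+\pi$ there are only the shapes $U_{0,m}$; for bounded $m$ these are finitely many fixed bounded domains with locally connected boundary (Proposition~\ref{prop:first_crit}) whose translates keep bounded shape factor, while for $|m|\to\infty$ the component $U_{0,m}$ clusters at the pole $p_0$ and is the image of the basin $U_m$ under the univalent branch of $f^{-1}$ near $\infty$ landing at $p_0$, which controls its shape factor via a Koebe estimate in the coordinate $1/z$.

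The main work is (ii). Put $\PP=\PP(f)=\{k\pi:k\in\Z\}$. By \eqref{eq:P(f)} and Lemma~\ref{lem:Theorem B}\ref{item:U_k}, $f$ has no finite asymptotic values and $\Crit(f)=\PP$, so $f\colon\C\setminus f^{-1}(\PP)\to\C\setminus\PP$ is a covering map and thus a local isometry for the hyperbolic metric $d\mu=d\varrho_{\C\setminus\PP}$ of $\C\setminus\PP$; since $f^{-1}(\PP)\supsetneq\PP$, the metric $d\mu$ is strictly expanded by $f$, and — being $\pi$-periodic by \eqref{eq:f+pi} — it is comparable to the Euclidean metric, uniformly, on every region that stays at a fixed positive Euclidean distance from $\PP$. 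Consequently each branch of $f^{-m}$ is a $d\mu$-contraction, and the components of $\FF_1$, whose closures are compact and disjoint from $\PP$, all have $d\mu$-diameter bounded by a uniform constant $D$. Given $V\in\FF_1$ and a branch $\psi$ of $f^{-m}$ on $V$, I would argue that $\psi$ continues analytically along every path in the concentric $d\mu$-ball of radius $3D$ about $V$: the only obstructions are the critical values of $f^m$, which equal $\PP$ because the $c_k$ are fixed and a finite-radius $d\mu$-ball avoids $\PP$ by a uniform amount; the continuation is single-valued over the prepoles it meets since all local degrees of $f^m$ along the relevant orbits equal $1$ (no $c_k$ occurs, the $c_k$ lying in basins while the orbit reaches $\infty\in J(f)$); and one removes the possible monodromy around points of $\PP$ by lifting to the universal cover of $\C\setminus\PP$. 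Then $\psi$ extends univalently to a $d\mu$-ball twice as large as one containing $V$, so a Koebe-type distortion estimate in the metric $d\mu$ (legitimate because $d\mu$, being hyperbolic, has bounded geometry at the fixed scale $D$) bounds the $d\mu$-distortion of $\psi$ on $V$. Finally, as $d\mu$ and $d\sigma_{sph}$ differ on $V$, and on $\psi(V)$, only by a bounded factor — using the $\pi$-periodicity of $d\mu$ together with the facts that $\psi(V)$ has $d\mu$-diameter at most $D$ and sits at a definite spherical scale — this yields a uniform bound on the spherical distortion of $\psi$ on $V$.

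Combining (i) and (ii) gives $(\diam_{sph}U)^2\le C\,\area_{sph}U$ for every non-basin Fatou component $U$, hence
\[
\sum_U(\diam_{sph}U)^2\ \le\ C\sum_U\area_{sph}U+\sum_{k\in\Z}(\diam_{sph}U_k)^2\ \le\ C\,\area_{sph}\clC+\sum_{k\in\Z}(\diam_{sph}U_k)^2\ <\ \infty,
\]
which is the proposition. I expect (ii) to be the genuine obstacle: since the prepoles of $f$ are dense in $J(f)$ and accumulate on $\partial V$, the branch $\psi$ does not extend to any Euclidean neighbourhood of $\overline V$, so its distortion cannot be controlled by a bare Koebe argument on a larger disc. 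The point is that it can be controlled in the expanding metric $d\mu$ adapted to the post-singular set (which meets $J(f)$ only at the essential singularity $\infty$, where the petal structure of Section~\ref{sec:petals_at_inf} takes over), and that the translation symmetry \eqref{eq:f+pi} makes all the passages between $d\mu$ and the spherical metric uniform.
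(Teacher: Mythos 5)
Your overall architecture---bound $\sum_U(\diam_{sph}U)^2$ against $\sum_U\area_{sph}U\le\area_{sph}\clC$ using a uniform distortion bound for inverse branches on the first-generation components---is the same as the paper's, but your step (i) contains a genuine error: the shape factor $(\diam_{sph}V)^2/\area_{sph}V$ is \emph{not} uniformly bounded over $V\in\FF_1$. Indeed, $U_{0,m}$ is the image of the basin $U_m$ under the inverse branch at the simple pole $p_0$, which to leading order is the inversion $w\mapsto p_0+1/(w-p_0)$. Since $U_m$ contains points tending to $\pm i\infty$, the component $U_{0,m}$ accumulates at $p_0$, while it also contains the preimage of $c_m$, which lies at distance $\asymp 1/|m|$ from $p_0$; hence $\diam U_{0,m}\asymp 1/|m|$. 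On the other hand, $U_{0,m}$ is contained in the image of the vertical strip of width $\pi$ containing $U_m$, whose image has area $\asymp\int |w-p_0|^{-4}\,dA\asymp |m|^{-3}$, so $(\diam U_{0,m})^2/\area U_{0,m}\gtrsim |m|$ (the spherical/Euclidean distinction is irrelevant here, since all these sets lie in the fixed rectangle $R_0$). Equivalently: the basins themselves have $\diam_{sph}U_l\asymp 1/|l|$ but $\area_{sph}U_l\lesssim 1/|l|^3$, so their spherical shape factor is already unbounded, and bounded distortion of the branch transmits this degeneracy to $U_{k,l}$ rather than curing it; no Koebe estimate ``in the coordinate $1/z$'' can repair this, because the relevant branch is univalent only on a strip-like region with a cusp, not on an essentially larger disc. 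So the inequality $(\diam_{sph}U)^2\le C\,\area_{sph}U$ you want for all non-basin components fails already at generation $n=1$, and the proposed reduction collapses.

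The paper avoids this by never asking for a per-component shape bound: it records the explicit estimates $\diam U_{k,l}\le c_1/(|l-k|+1)$ and $\area_{sph}U_{k,l}\ge c_2/\big(((l-k)^4+1)(k^4+1)\big)$, transports both through each branch $g_{k,n}$ with the same factor $M_{g_{k,n}}^2$, and absorbs the per-component loss of a factor $\asymp(l-k)^2$ at the level of the \emph{sum}: since $\sum_l 1/(l^2+1)$ and $\sum_l 1/(l^4+1)$ both converge, summing over $l$ first lets one trade the exponent $2$ for $4$ at the cost of a constant, after which the whole sum is dominated by $\sum_U\area_{sph}U\le\area_{sph}\clC$. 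Your step (ii) is the right idea but harder than necessary and not fully justified as written: because $\PP(f)=\{k\pi\}_{k\in\Z}$ lies on the lines $\Re z=k\pi$ and $f$ has no finite asymptotic values, every branch of every $f^{-(n-1)}$ is already defined and univalent on the whole open strip $\{\Re z\in(k\pi,(k+1)\pi)\}\supset R_k$, so there is no monodromy to remove (and your appeal to the universal cover of $\C\setminus\PP(f)$ would not by itself supply the univalence a Koebe-type estimate needs); the paper then obtains the uniform spherical distortion bound on $R_k$ from a chain of boundedly many spherical Koebe discs of definite size inside the strip. If you keep your framework, you must replace (i) by quantitative $(k,l)$-dependent bounds and perform the summation rearrangement; as it stands, the proposal does not prove the proposition.
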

\begin{proof}
Recall that by Lemma~\ref{lem:Theorem B}(e), all Fatou components of $f$ are elements of $\bigcup_{n=0}^\infty \FF_n$. Since $U_k \subset S_k \subset \C \setminus \D(0,\frac{\pi}{2} + (|k| - 1)\pi)$ for $k \in \Z\setminus\{0\}$, the spherical diameter of $U_k$ tends to zero as $|k|\to\infty$, so we only need to consider components in $\bigcup_{n=1}^\infty \FF_n$. 

By Lemma~\ref{lem:F1}, for every $U \in \FF_1$ we have $U = U_{k,l}$ for some $k\in \Z$, $l \in \Z \setminus \{k, k+1\}$, such that 
\begin{equation}\label{eq:F1}
U_{k,l} = U_{0,l-k} + k\pi \subset R_k
\end{equation}
and $f$ maps $U_{k,l}$ univalently onto $U_l$. Since $U_l = U_0 + l\pi \subset S_l$, $\D(l\pi, r_0) \subset U_l$ for some $r_0 > 0$ independent of $l$, and $p_0$ is a simple pole of $f$, there exists $c_1 > 0$ such that 
\[
\diam U_{0,l} \le \frac{c_1}{|l| + 1}, \qquad \area U_{0,l} \ge \frac{c_1}{l^4 + 1} 
\]
for $l \in \Z \setminus \{0, 1\}$. Consequently, by \eqref{eq:F1} and since $\sigma_{sph}|_{R_k} \asymp \frac{1}{k^2+1}$, we have
\begin{equation}\label{eq:diamUk,l}
\diam U_{k, l} \le \frac{c_1}{|l-k| + 1}, \quad \area_{sph} U_{k, l} \ge \frac{c_2}{((l-k)^4 + 1)(k^4 + 1)}
\end{equation}
for $k \in \Z$, $l \in \Z \setminus \{k, k+1\}$ and some constant $c_2 > 0$.

By \eqref{eq:P(f)}, all branches of $f^{-n}$, $n \ge 1$, are well-defined on $\{z \in \C: \Re(z) \in (k\pi, (k+1)\pi)\} \supset R_k$ for $k \in \Z$. Hence, for $n \ge 1$,
\[
\FF_n = \{g_{k,n}(U_{k, l}): k \in \Z, \: l \in \Z \setminus \{k, k+1\}, \: g_{k,n} \in \GG_{k,n}\},
\]
where $\GG_{k,n}$ is the family of all branches of $f^{-(n-1)}$ on $R_k$ (note that we include the case $n = 1$, for which $g_{k,1} = \Id$). We claim that all the branches $g_{k,n} \in \GG_{k,n}$ have spherical distortion bounded uniformly with respect to $k,n$. Indeed, for given $r > 0$, any two points $z, w \in R_k$ can be joined by $N = \lceil 2(2R+\pi)/r\rceil$ Euclidean disks $\D(z_1, r), \ldots, \D(z_N, r)$, such that $z_1, \ldots, z_N \in R_k$, $z_1 =z$, $z_N = w$ and 
\begin{equation}\label{eq:Djdisjoint}
\D(z_j, r) \cap \D(z_{j+1}, r) \neq \emptyset \quad\text{for } \;j = 1, \dots, N-1.
\end{equation}
Define
\[
r = \frac{c_1 \delta}{2c_2}, \qquad r_j = \frac{c_1\delta}{|z_j|^2+1}
\]
for the constants $c_1, c_2$ from Lemma~\ref{lem:sph-eucl}, and the constant $\delta$ from Lemma \ref{lem:F1}. Then, 
\[
\D(z_j,r)  \subset\DD_{sph}\Big(z_j, \frac{r_j}{2}\Big), \qquad \DD_{sph}(z_j, r_j) \subset \D(z_j,\delta)
\]
by Lemma~\ref{lem:sph-eucl}, so by \eqref{eq:Djdisjoint} and \eqref{eq:P(f)},
\[
\DD_{sph}\Big(z_j,\frac{r_j}{2}\Big) \cap \DD_{sph}\Big(z_{j+1}, \frac{r_j}{2}\Big) \neq \emptyset, \qquad \DD_{sph}(z_j, r_j) \cap \PP(f) = \emptyset,
\]
and hence using repeatedly Theorem~\ref{thm:sph-Koebe} for the chain of spherical disks with $\lambda=1/2$, we conclude that 
\begin{equation}\label{eq:distortion-g}
\frac{|g'_{k,n}(z)|_{sph}}{|g'_{k,n}(w)|_{sph}} \le c_3
\end{equation}
for some $c_3 > 0$ independent of $k,n,g_{k,n}, z,w$. 
Let
\[
M_{g_{k,n}} = \max_{R_k} |g'_{k,n}|_{sph}.
\]
By \eqref{eq:diamUk,l}, \eqref{eq:distortion-g} and since $\sigma_{sph}|_{R_k} \asymp \frac{1}{k^2+1}$,
\[
\diam_{sph}g_{k,n}(U_{k,l}) \le \frac{c_4 M_{g_{k,n}} \diam U_{k,l}}{k^2+1} \le \frac{c_1 c_4 M_{g_{k,n}}}{(|l-k|+1)(k^2+1)}
\]
and
\begin{align*}
\area_{sph}g_{k,n}(U_{k,l}) 
&\ge \big(\min_{R_k} |g'_{k,n}|_{sph}^2\big) \area_{sph}U_{k,l}\\ 
&\geq \frac{M_{g_{k,n}}^2}{c_3^2}\area_{sph}U_{k,l} \ge \frac{c_2}{c_3^2}\frac{M_{g_{k,n}}^2}{((l-k)^4 + 1)(k^4 + 1)},
\end{align*}
for a constant $c_4 > 0$. Hence, for some constants $c_5, c_6, c_7 > 0$,
\begin{align*}
\sum_{U \in \bigcup_{n= 1}^\infty \FF_n} (\diam_{sph} U)^2&=
\sum_{k\in\Z}\sum_{l \in \Z \setminus \{k,k+1\}} \sum_{\substack{n\ge 1,\\g_{k,n} \in \GG_{k,n}}}(\diam_{sph} g_{k,n}(U_{k,l}))^2\\
&\leq c_5\sum_{k\in\Z}\sum_{l \in \Z \setminus \{k,k+1\}}\sum_{n,g_{k,n}} \frac{M_{g_{k,n}}^2}{((l-k)^2+1)(k^4+1)}\\
&= c_5 \sum_{l \in \Z \setminus \{0,1\}} \frac{1}{l^2+1} \sum_{k \in \Z} \sum_{n,g_{k,n}} \frac{M_{g_{k,n}}^2}{k^4+1}\leq c_6\sum_{l \in \Z \setminus \{0,1\}} \frac{1}{l^4+1} \sum_{k \in \Z}\sum_{n,g_{k,n}}\frac{M_{g_{k,n}}^2}{k^4+1}\\ 
&= c_6\sum_{k\in\Z}\sum_{l \in \Z \setminus \{k,k+1\}} \sum_{n,g_{k,n}}\frac{M_{g_{k,n}}^2}{((l-k)^4+1)(k^4+1)}\\
&\leq c_7 \sum_{k\in\Z}\sum_{l \in \Z \setminus \{k,k+1\}} \sum_{n,g_{k,n}}\area_{sph}g_{k,n}(U_{k,l})
\le c_6 \sum_{U \in \bigcup_{n= 1}^\infty \FF_n} \area_{sph} U\\
&\le c_7 \area_{sph} \clC < \infty,
\end{align*}
as $U \in \bigcup_{n= 1}^\infty \FF_n$ are pairwise disjoint. Concluding, we have showed that the series $$\sum_{U \in \bigcup_{n= 1}^\infty \FF_n} (\diam_{sph} U)^2$$ is convergent, which immediately implies that for every $\varepsilon > 0$ there can be only a finite number of $U\in \bigcup_{n= 1}^\infty \FF_n$ with $\diam_{sph} U \ge \varepsilon$, proving the proposition.
\end{proof}

Propositions~\ref{prop:first_crit} and~\ref{prop:second_crit} complete the proof of Theorem~C.

\subsection*{Acknowledgements} The authors thank the Centro di Ricerca Matematica Ennio De Giorgi (Pisa) for its hospitality.

\subsection*{Conflict of interest} On behalf of all authors, the corresponding author states that there is no conflict of interest.

\subsection*{Data availability} Data sharing not applicable to this article as no datasets were generated or analysed during the current study.

\bibliographystyle{acm}
\bibliography{loc_con}

\end{document}